 \newtheorem{thm}{Theorem}[section]
 \newtheorem{prop}[thm]{Proposition}
 \newtheorem{cor}[thm]{Corollary}
 \newtheorem{lem}[thm]{Lemma}
 \newtheorem{assumption}[thm]{Assumption}
 \newtheorem{rem}[thm]{Remark}
 \newtheorem{ex}[thm]{Example}
\newcommand{\N}{\mathbb{N}}
\newcommand{\Z}{\mathbb{Z}}
\newcommand{\R}{\mathbb{R}}
\newcommand{\C}{\mathbb{C}}
\newcommand{\st}{\;:\;}
\newcommand{\sspace}{{\cdot}}
\newcommand{\ssspace}{{\cdot\cdot}}
\newcommand{\duale}[1]{#1^*}
\newcommand{\dualee}[1]{\left(#1\right)^*}
\newcommand{\scalar}[2]{\left\langle #1 \,\middle|\, #2 \right\rangle}
\newcommand{\norma}[1]{\left\| #1 \right\|}
\newcommand{\del}{\partial}
\newcommand{\delbar}{\overline{\del}}
\newcommand{\correnti}{\mathrm{D}}
\newcommand{\correntifascio}{\mathcal{D}}
\newcommand{\n}{\mathfrak{n}}
\newcommand{\g}{\mathfrak{g}}
\newcommand{\solvmfd}{\left. \Gamma \middle\backslash G \right.}
\newcommand{\perm}[1]{\mathfrak{S}_{#1}}
\newcommand{\paragrafo}[2]{\smallskip \noindent \texttt{Step {#1}} -- {\itshape #2}\\}
\DeclareMathOperator{\id}{id}
\DeclareMathOperator{\dom}{dom}
\DeclareMathOperator{\imm}{im}
\DeclareMathOperator{\de}{d}
\DeclareMathOperator{\esp}{e}
\DeclareMathOperator{\GL}{GL}
\DeclareMathOperator{\End}{End}
\DeclareMathOperator{\Aut}{Aut}
\DeclareMathOperator{\Hom}{Hom}
\DeclareMathOperator{\Der}{Der}
\DeclareMathOperator{\Char}{Char}
\DeclareMathOperator{\Tot}{Tot}
\DeclareMathOperator{\ad}{ad}
\DeclareMathOperator{\Ad}{Ad}
\DeclareMathOperator{\diag}{diag}
\newcommand{\pd}{\textsc{pd}}
\newcommand{\gl}{\mathfrak{gl}}
\title{Bott-Chern cohomology of solvmanifolds}
\author{Daniele Angella}
\address[Daniele Angella]{Dipartimento di Matematica e Informatica ``Ulisse Dini''\\
Universit\`a di Firenze\\
via Morgagni 67/A\\
50134 Firenze, Italy}
\email{daniele.angella@unifi.it}
\email{daniele.angella@gmail.com}
\author{Hisashi Kasuya}
\address[Hisashi Kasuya]{Department of Mathematics, Graduate School of Science \\
Osaka University \\
Osaka, Japan.}
\email{kasuya@math.sci.osaka-u.ac.jp}
\keywords{Dolbeault cohomology; Bott-Chern cohomology; solvmanifolds; invariant complex structure}
\thanks{During the preparation of this work, the first author has been granted with a research fellowship by Istituto Nazionale di Alta Matematica INdAM, and he is supported by the Project PRIN ``Varietà reali e complesse: geometria, topologia e analisi armonica'', by the Project FIRB ``Geometria Differenziale e Teoria Geometrica delle Funzioni'', by the project SIR2014 ``Analytic aspects in complex and hypercomplex geometry'' code RBSI14DYEB, and by GNSAGA of INdAM.
The second author  is supported by JSPS Research Fellowships for Young Scientists}
\subjclass[2010]{22E25; 53C30; 57T15; 53C55; 53D05}
\begin{document}

\begin{abstract}
We study conditions under which sub-complexes of a double complex of vector spaces allow to compute the Bott-Chern cohomology. We are especially aimed at studying the Bott-Chern cohomology of special classes of solvmanifolds, namely, complex parallelizable solvmanifolds and solvmanifolds of splitting type. More precisely, we can construct explicit finite-dimensional double complexes that allow to compute the Bott-Chern cohomology of compact quotients of complex Lie groups, respectively, of some Lie groups of the type $\C^n\ltimes_\varphi N$ where $N$ is nilpotent. As an application, we compute the Bott-Chern cohomology of the complex parallelizable Nakamura manifold and of the completely-solvable Nakamura manifold.
In particular, the latter shows that the property of satisfying the $\del\delbar$-Lemma is not strongly-closed under deformations of the complex structure.
\end{abstract}

\maketitle

\tableofcontents

\section*{Introduction}

Given a double complex $\left(A^{\bullet,\bullet},\, \del,\, \delbar\right)$ of vector spaces, both the cohomology of the associated total complex $\left(\bigoplus_{p+q=\bullet} A^{p,q},\, \del+\delbar\right)$ and the cohomologies of the rows $\left(A^{\bullet,q},\, \del\right)$ and of the columns $\left(A^{p,\bullet},\, \delbar\right)$ have been widely studied. Two other interesting cohomologies are the \emph{Bott-Chern cohomology}, namely, the cohomology of the complex
$$ \mathcal{BC}^{p,q}(A^{\bullet,\bullet}) \;:=\;
A^{p-1,q-1} \stackrel{\del\delbar}{\longrightarrow} A^{p,q} \stackrel{\del+\delbar}{\longrightarrow} A^{p+1,q}\oplus A^{p,q+1} \;, $$
and the \emph{Aeppli cohomology}, namely, the cohomology of the complex
$$ \mathcal{A}^{p,q}(A^{\bullet,\bullet}) \;:=\;
A^{p-1,q}\oplus A^{p,q-1} \stackrel{\left(\del,\, \delbar\right)}{\longrightarrow} A^{p,q} \stackrel{\del\delbar}{\longrightarrow} A^{p+1,q+1} \;.$$

For a compact complex manifold $X$, the Bott-Chern and the Aeppli cohomologies of the double complex $\left(\wedge^{\bullet,\bullet}X,\, \del,\, \delbar\right)$ have been studied by many authors in several contexts, see, {\itshape e.g.} \cite{aeppli, bott-chern, bigolin, deligne-griffiths-morgan-sullivan, varouchas, alessandrini-bassanelli, schweitzer, kooistra, bismut, bismut-book, tseng-yau-3, angella-1, angella-tomassini-3}. They appear to be a completing useful tool besides the de Rham and the Dolbeault cohomologies. In this spirit, in \cite{angella-tomassini-3}, it is shown that an inequality {\itshape à la} Fr\"olicher, involving just the dimensions of the Bott-Chern cohomology and of the de Rham cohomology, holds true on any compact complex manifold, and further allows to characterize the validity of the $\del\delbar$-Lemma (namely, the very special cohomological property that every $\del$-closed $\delbar$-closed $\de$-exact form is $\del\delbar$-exact too, see, {\itshape e.g.} \cite{deligne-griffiths-morgan-sullivan}).

A compact complex manifold satisfies the $\del\delbar$-Lemma if and only if the Bott-Chern cohomology is naturally isomorphic to the Dolbeault cohomology, \cite[Remark 5.16]{deligne-griffiths-morgan-sullivan}. Therefore, since compact K\"ahler manifolds satisfy the $\del\delbar$-Lemma because of the K\"ahler identities, \cite[Lemma 5.11]{deligne-griffiths-morgan-sullivan}, the Bott-Chern cohomology is particularly interesting in studying complex non-K\"ahler manifolds.

In non-K\"ahler geometry, a very fruitful source of examples is provided by the class of nilmanifolds and solvmanifolds, namely, compact quotients of connected simply-connected nilpotent, respectively solvable, Lie groups by co-compact discrete subgroups. For instance, the geometry of nilmanifolds can be often reduced to the study of the associated Lie algebras, \cite{console, rollenske-survey, belgun}. On the other hand, nilmanifolds do not admit too strong geometric structures, \cite{benson-gordon-nilmanifolds, hasegawa}. More precisely, on a nilmanifold, the finite-dimensional sub-complex of left-invariant forms (namely, the forms being invariant for the action of the Lie group on itself given by left-translations) suffices in computing the de Rham cohomology, \cite{nomizu, hattori}. Whenever the nilmanifold is endowed with a suitable left-invariant complex structure, also the Dolbeault cohomology, \cite{sakane, cordero-fernandez-gray-ugarte, console-fino, rollenske, rollenske-survey}, and the 
Bott-Chern cohomology, \cite{angella-1}, can be computed by means of just left-invariant forms.

Instead, for solvmanifolds, the left-invariant forms are usually not enough to recover the whole de Rham cohomology: an example is the non-completely-solvable solvmanifold provided in \cite[Corollary 4.2]{debartolomeis-tomassini}. The de Rham cohomology of solvmanifolds has been studied by several authors, {\itshape e.g.} A. Hattori \cite{hattori}, G.~D. Mostow \cite{mostow}, S. Console and A. Fino \cite{console-fino-solvmanifolds}, and the second author \cite{kasuya-jdg, kasuya-holpar}. Several results concerning the Dolbeault cohomology have been proven by the second author, \cite{kasuya-mathz, kasuya-holpar}; such results allow to study Hodge symmetry, Hodge decomposition, formality, and the Hodge and Fr\"olicher spectral sequence on solvmanifolds, \cite{kasuya-hodge, kasuya-formality, kasuya-degeneration}.

\medskip

In this note, we study the Bott-Chern cohomology of a certain class of solvmanifolds. This is done with the scope to further investigate the complex geometry of non-K\"ahler manifolds and especially its cohomological aspects. More precisely, we start by studying conditions under which the Bott-Chern cohomology of a double complex can be completely recovered by a suitable sub-complex; see Theorem \ref{surjBC} and Theorem \ref{thm:inj}. As an application, we get the following result. (For further applications to the study of the symplectic cohomologies studied by L.-S. Tseng and S.-T. Yau in \cite{tseng-yau-1, tseng-yau-2}, see \cite{angella-kasuya-3}.)

\smallskip
\noindent {\bfseries Theorem (see Theorem \ref{BCISO} and Theorem \ref{palBCISO}).\ }
{\itshape
Let $G$ be a connected simply-connected solvable Lie group admitting a co-compact discrete subgroup $\Gamma$ and endowed with a $G$-left-invariant complex structure.
If
\begin{itemize}
\item either $G$ is a semidirect product $\C^{n}\ltimes _{\phi}N$ of $\C^{n}$ and a connected simply-connected nilpotent Lie group $N$ endowed with an $N$-left-invariant complex structure satisfying some conditions (see Assumption \ref{ass:solvmanifolds}),
\item or $G$ is a complex Lie group,
\end{itemize}
then there is an explicit finite-dimensional sub-complex $C^{\bullet,\bullet}$ of  the double complex $\left(\wedge^{\bullet,\bullet}\solvmfd,\, \del,\, \delbar\right)$ which computes the Bott-Chern cohomology of the solvmanifold $\solvmfd$.
}
\smallskip

As an application, we explicitly compute the Bott-Chern cohomology of the completely-solvable Nakamura manifold and of the complex parallelizable Nakamura manifold. This gives us, as a corollary, the following result.

\smallskip
\noindent {\bfseries Theorem (see Theorem \ref{thm:deldelbar-non-closed}).\ }
{\itshape
 Satisfying the $\del\delbar$-Lemma is not a strongly-closed property under small deformations of the complex structure.
}
\smallskip

In \cite{angella-kasuya-2}, we prove (the stronger result) that satisfying the $\del\delbar$-Lemma is not a (Zariski-)closed property.

\medskip

\noindent{\sl Acknowledgments.}
The first author would like to warmly thank Adriano Tomassini for his constant support and encouragement, for his several advices, and for many inspiring conversations. The second author would like to express his gratitude to Toshitake Kohno for helpful suggestions and stimulating discussions. The authors would like also to thank Luis Ugarte for suggestions and remarks.
Thanks also to Maria Beatrice Pozzetti and to the anonymous Referees, whose suggestions improved the presentation of the paper.

\section{Computing the cohomologies of double complexes by means of sub-complexes}\label{sec:subcplx}

In this section, we study several cohomologies associated to a bounded double complex of $\C$-vector spaces; in particular, we are interested in studying when such cohomologies can be recovered by means of a suitable (possibly finite-dimensional) sub-complex.

\subsection{The cohomology of the associated total complex}

Let $\left(A^{\bullet,\bullet},\, \del,\, \delbar\right)$ be a bounded double complex of $\C$-vector spaces, namely, $\del\in \End^{1,0}\left(A^{\bullet,\bullet}\right)$ and $\delbar\in \End^{0,1}\left(A^{\bullet,\bullet}\right)$ are such that $\del^2=\delbar^2=\left[\del,\delbar\right]=0$, and $A^{p,q}=\{0\}$ but for finitely-many $(p,q)\in\Z^2$. Denote by
$$ \left(\Tot^\bullet \left( A^{\bullet,\bullet} \right) := \bigoplus_{p+q=\bullet} A^{p,q},\; \de:=\del+\delbar\right) $$
the total complex associated to $\left(A^{\bullet,\bullet},\, \del,\, \delbar\right)$. The bi-grading of $\left(A^{\bullet,\bullet},\, \del,\, \delbar\right)$ induces two natural bounded filtrations of $\left(\Tot^\bullet \left( A^{\bullet,\bullet} \right),\, \de\right)$, namely,
$$ \left\{ \left( {'F}^{p} \Tot^\bullet \left( A^{\bullet,\bullet} \right) := \bigoplus_{\substack{r+s=\bullet\\r\geq p}}A^{r,s} ,\; \de\lfloor_{{'F}^{p} \Tot^\bullet \left( A^{\bullet,\bullet} \right)} \right) \hookrightarrow \left( \Tot^\bullet\left(A^{\bullet,\bullet}\right) ,\; \de \right)\right\}_{p\in\Z} $$
and
$$ \left\{ \left( {''F}^{q} \Tot^\bullet \left( A^{\bullet,\bullet} \right) := \bigoplus_{\substack{r+s=\bullet\\s\geq q}}A^{r,s} ,\; \de\lfloor_{{''F}^{q} \Tot^\bullet \left( A^{\bullet,\bullet} \right)} \right) \hookrightarrow \left( \Tot^\bullet\left(A^{\bullet,\bullet}\right) ,\; \de \right)\right\}_{q\in\Z} \;. $$
Such filtrations induce naturally two spectral sequences, respectively, 
$$ \left\{\left({'E}_r^{\bullet,\bullet}\left(A^{\bullet,\bullet},\, \del,\, \delbar\right),\, {'\de}_r\right)\right\}_{r\in\Z} \qquad \text{ and } \qquad \left\{\left({''E}_r^{\bullet,\bullet}\left(A^{\bullet,\bullet},\, \del,\, \delbar\right),\, {''\de}_r\right)\right\}_{r\in\Z} \;,$$
such that
$$ {'E}_1^{\bullet_1,\bullet_2}\left(A^{\bullet,\bullet},\, \del,\, \delbar\right) \;\simeq\; H^{\bullet_2}\left(A^{\bullet_1,\bullet},\, \delbar\right) \;\Rightarrow\; H^{\bullet_1+\bullet_2}\left(\Tot^\bullet\left(A^{\bullet,\bullet}\right),\, \de\right) \;, $$
and 
$$ {''E}_1^{\bullet_1,\bullet_2}\left(A^{\bullet,\bullet},\, \del,\, \delbar\right) \;\simeq\; H^{\bullet_1}\left(A^{\bullet,\bullet_2},\, \del\right) \;\Rightarrow\; H^{\bullet_1+\bullet_2}\left(\Tot^\bullet\left(A^{\bullet,\bullet}\right),\, \de\right) \;, $$
(where "$\Rightarrow$" denotes convergence of the spectral sequence,)
see, {\itshape e.g.} \cite[\S2.4]{mccleary}, see also \cite[\S3.5]{griffiths-harris}, \cite[Theorem 1, Theorem 3]{cordero-fernandez-ugarte-gray}.

One gets straightforwardly the following result, providing a sufficient condition under which a sub-complex $\left(C^{\bullet,\bullet},\, \del,\, \delbar\right) \hookrightarrow \left(A^{\bullet,\bullet},\, \del,\, \delbar\right)$ allows to recover the cohomology of $\left(\Tot^\bullet\left(A^{\bullet,\bullet}\right),\, \de\right)$.
(Recall that a quasi-isomorphism is a map between complexes that induces an isomorphism in cohomology.)

\begin{prop}\label{spdb}
 Let $\left(A^{\bullet,\bullet},\, \del,\, \delbar\right)$ be a bounded double complex of $\C$-vector spaces, and let $\left(C^{\bullet,\bullet},\, \del,\, \delbar\right) \hookrightarrow \left(A^{\bullet,\bullet},\, \del,\, \delbar\right)$ be a sub-complex.
 If, for every $p\in\Z$, the induced map $\left(C^{p,\bullet},\, \delbar\right) \hookrightarrow \left(A^{p,\bullet},\, \delbar\right)$ of complexes is a quasi-isomorphism, then the induced map
  $$
    \left(\Tot^\bullet\left(C^{\bullet,\bullet}\right),\, \de\right) \hookrightarrow \left(\Tot^\bullet\left(A^{\bullet,\bullet}\right),\, \de\right)
  $$
 of complexes is a quasi-isomorphism.
\end{prop}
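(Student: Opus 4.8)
The plan is the standard spectral sequence comparison argument. First I would note that, since $\left(C^{\bullet,\bullet},\, \del,\, \delbar\right)$ is a sub-complex of $\left(A^{\bullet,\bullet},\, \del,\, \delbar\right)$ respecting the bi-grading, the induced map $\Tot^\bullet(\iota)\colon \left(\Tot^\bullet\left(C^{\bullet,\bullet}\right),\, \de\right) \to \left(\Tot^\bullet\left(A^{\bullet,\bullet}\right),\, \de\right)$ (where $\iota$ denotes the inclusion) is a morphism of filtered complexes with respect to the filtrations ${'F}^{\bullet}\Tot^\bullet$ on both sides. Hence it induces a morphism of the associated spectral sequences $\left\{{'E}_r^{\bullet,\bullet}(\iota)\right\}_{r}$.

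Next I would identify the map induced on the first pages. Under the natural identifications ${'E}_1^{p,q}\left(A^{\bullet,\bullet},\, \del,\, \delbar\right)\simeq H^q\left(A^{p,\bullet},\, \delbar\right)$ and ${'E}_1^{p,q}\left(C^{\bullet,\bullet},\, \del,\, \delbar\right)\simeq H^q\left(C^{p,\bullet},\, \delbar\right)$ recalled above, the morphism ${'E}_1^{p,q}(\iota)$ is exactly the map in $\delbar$-cohomology induced by $\left(C^{p,\bullet},\, \delbar\right)\hookrightarrow\left(A^{p,\bullet},\, \delbar\right)$, which is an isomorphism for every $p$ by hypothesis. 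So ${'E}_1^{\bullet,\bullet}(\iota)$ is an isomorphism. Then I would invoke the standard fact that a morphism of spectral sequences which is an isomorphism on the page $E_{r_0}$ is an isomorphism on every later page $E_r$, $r\geq r_0$, and in particular on $E_\infty$: this follows by induction on $r$, since $E_{r+1}$ is the cohomology of $\left(E_r,\, \de_r\right)$ and an isomorphism of complexes is a quasi-isomorphism. Thus ${'E}_\infty^{\bullet,\bullet}(\iota)$ is an isomorphism.

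Finally, since $\left(A^{\bullet,\bullet},\, \del,\, \delbar\right)$ is a \emph{bounded} double complex, the filtration ${'F}^{\bullet}\Tot^\bullet$ is bounded; hence both spectral sequences converge, the induced filtration on $H^\bullet\left(\Tot^\bullet(-),\, \de\right)$ is bounded, $\Tot^\bullet(\iota)$ respects it, and the associated graded map is identified with ${'E}_\infty^{\bullet,\bullet}(\iota)$, which we have just shown to be an isomorphism. A finite inductive application of the five lemma along this bounded filtration then yields that $H^\bullet\left(\Tot^\bullet(\iota)\right)$ is itself an isomorphism, i.e.\ $\Tot^\bullet(\iota)$ is a quasi-isomorphism.

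I do not expect a genuine obstacle here — this is the reason the statement is flagged as obtained ``straightforwardly''. The only point deserving a little care is to make explicit where the boundedness hypothesis on the double complex enters: it is needed both to guarantee convergence of the two spectral sequences and to pass from the $E_\infty$-level isomorphism back to an isomorphism on total cohomology, via the finite-filtration five-lemma step.
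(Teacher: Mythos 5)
Your argument is correct and is essentially the paper's own proof: both pass to the morphism of spectral sequences induced by the first filtration, use the hypothesis to get an isomorphism on the $E_1$-page, and conclude via boundedness/convergence (the paper simply cites the comparison theorem, \cite[Theorem 3.5]{mccleary}, for the final step that you spell out by hand). No changes needed.
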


\begin{proof}
The inclusion $\left(C^{\bullet,\bullet},\, \del,\, \delbar\right) \hookrightarrow \left(A^{\bullet,\bullet},\, \del,\, \delbar\right)$ induces a morphism
$$ \left\{ \left( {'F}^{p} \Tot^\bullet \left( C^{\bullet,\bullet} \right) ,\; \de \right) \right\}_{p\in\Z} \to \left\{ \left( {'F}^{p} \Tot^\bullet \left( A^{\bullet,\bullet} \right) ,\; \de \right) \right\}_{p\in\Z} $$
of the associated bounded filtrations, and hence in particular a morphism
$$ \left\{ \left( {'E}_r^{\bullet,\bullet}\left(C^{\bullet,\bullet},\, \del,\, \delbar\right) ,\, {'\de}_r \right) \right\}_{r\in\Z} \to \left\{ \left( {'E}_r^{\bullet,\bullet}\left(A^{\bullet,\bullet},\, \del,\, \delbar\right) ,\, {'\de}_r \right) \right\}_{r\in\Z} $$
of the associated spectral sequences.

By the hypothesis, the inclusion induces an isomorphism at the first level,
$$
\xymatrix{
{'E}_1^{\bullet,\bullet}\left(C^{\bullet,\bullet},\, \del,\, \delbar\right) \ar@{=>}[d] \ar[r]^{\simeq} & {'E}_1^{\bullet,\bullet}\left(A^{\bullet,\bullet},\, \del,\, \delbar\right) \ar@{=>}[d] \\
H^\bullet\left(\Tot^\bullet\left(C^{\bullet,\bullet}\right) ,\, \de\right) \ar[r] & H^\bullet\left(\Tot^\bullet\left(A^{\bullet,\bullet}\right) ,\, \de\right)
}
$$
and hence, $A^{\bullet,\bullet}$ being bounded, also an isomorphism
$$ H^\bullet\left(\Tot^\bullet\left(C^{\bullet,\bullet}\right) ,\, \de\right) \stackrel{\simeq}{\to} H^\bullet\left(\Tot^\bullet\left(A^{\bullet,\bullet}\right) ,\, \de\right) $$
see, {\itshape e.g.} \cite[Theorem 3.5]{mccleary}; in particular, the induced map
$$ \left(\Tot^\bullet\left(C^{\bullet,\bullet}\right),\, \de\right) \hookrightarrow \left(\Tot^\bullet\left(A^{\bullet,\bullet}\right),\, \de\right) $$
is a quasi-isomorphism. 
\end{proof}

\subsection{The Bott-Chern cohomology}

For any $\left(p,q\right)\in\Z^2$, other than the cohomologies of $\left(\Tot^\bullet\left(A^{\bullet,\bullet}\right),\, \de\right)$, of $\left(A^{\bullet,q},\, \del\right)$, and of $\left(A^{p,\bullet},\, \delbar\right)$, one can consider also the \emph{Bott-Chern cohomology}, \cite{bott-chern}, namely, the cohomology of the complex
$$ \mathcal{BC}^{p,q}(A^{\bullet,\bullet}) \;:=\; A^{p-1,q-1} \stackrel{\del\delbar}{\longrightarrow} A^{p,q} \stackrel{\del+\delbar}{\longrightarrow} A^{p+1,q}\oplus A^{p,q+1} \;, $$
and the \emph{Aeppli cohomology}, \cite{aeppli}, namely, the cohomology of the complex
$$ \mathcal{A}^{p,q}(A^{\bullet,\bullet}) \;:=\; A^{p-1,q}\oplus A^{p,q-1} \stackrel{\left(\del,\,	 \delbar\right)}{\longrightarrow} A^{p,q} \stackrel{\del\delbar}{\longrightarrow} A^{p+1,q+1} \;. $$

\medskip

In order to study conditions under which the Bott-Chern cohomology of a double complex can be recovered by means of a suitable sub-complex, we provide the following lemma. We first look at conditions yielding a surjective map in Bott-Chern cohomology.

\begin{lem}\label{lem:sol-delbar}
 Let $\left(A^{\bullet,\bullet},\, \del,\, \delbar\right)$ be a bounded double complex of $\C$-vector spaces, and let $\left(C^{\bullet,\bullet},\, \del,\, \delbar\right) \hookrightarrow \left(A^{\bullet,\bullet},\, \del,\, \delbar\right)$ be a sub-complex.
 Suppose that, for every $p\in\Z$, the induced map $\left(C^{p,\bullet},\, \delbar\right) \hookrightarrow \left(A^{p,\bullet},\, \delbar\right)$ of complexes is a quasi-isomorphism.
 If $\phi \in A^{p,q}$ is such that $\delbar\phi \in C^{p,q+1}$, then there exist $\tilde\phi \in C^{p,q}$ and $\hat\phi \in A^{p,q-1}$ such that $\phi = \tilde\phi + \delbar\hat\phi$.
\end{lem}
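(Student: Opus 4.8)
The plan is to run a short two-step diagram chase exploiting the fact that, for each fixed $p$, the quasi-isomorphism hypothesis means the induced map $H^\bullet\left(C^{p,\bullet},\,\delbar\right) \to H^\bullet\left(A^{p,\bullet},\,\delbar\right)$ is an isomorphism in every degree, hence in particular both injective and surjective.

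First I would deal with $\delbar\phi$. By hypothesis $\delbar\phi \in C^{p,q+1}$, and $\delbar\left(\delbar\phi\right)=0$, so $\delbar\phi$ defines a class in $H^{q+1}\left(C^{p,\bullet},\,\delbar\right)$. Its image in $H^{q+1}\left(A^{p,\bullet},\,\delbar\right)$ is zero, since $\delbar\phi$ is $\delbar$-exact in $A^{p,\bullet}$ by construction. By injectivity of the induced map, the class of $\delbar\phi$ in $H^{q+1}\left(C^{p,\bullet},\,\delbar\right)$ already vanishes; that is, there is $\psi \in C^{p,q}$ with $\delbar\psi = \delbar\phi$.

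Second, I would consider $\phi - \psi \in A^{p,q}$, which is $\delbar$-closed and so defines a class in $H^{q}\left(A^{p,\bullet},\,\delbar\right)$. By surjectivity of the induced map, there is a $\delbar$-closed $\eta \in C^{p,q}$ whose class maps to that of $\phi-\psi$, i.e. $\phi - \psi - \eta = \delbar\hat\phi$ for some $\hat\phi \in A^{p,q-1}$. Then $\tilde\phi := \psi + \eta \in C^{p,q}$ together with $\hat\phi$ satisfies $\phi = \tilde\phi + \delbar\hat\phi$, which is the assertion.

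I do not expect any real obstacle here: the statement is a formal consequence of the isomorphism on $\delbar$-cohomology. The only point to keep in mind is that one genuinely needs the injectivity half of that isomorphism in the first step -- it is what allows $\phi$ to be corrected by an element of the sub-complex so that the remainder $\phi-\psi$ is honestly $\delbar$-closed, at which stage the surjectivity half finishes the argument.
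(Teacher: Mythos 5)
Your proposal is correct and follows essentially the same two-step diagram chase as the paper's own proof: injectivity of $H^{q+1}\left(C^{p,\bullet},\,\delbar\right)\to H^{q+1}\left(A^{p,\bullet},\,\delbar\right)$ produces the corrector $\psi$ (the paper's $\tilde\phi_1$), and surjectivity in degree $q$ produces the $\delbar$-closed $\eta$ (the paper's $\tilde\phi_2$) and the exact remainder. No gaps.
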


\begin{proof}
One has
$$ H^{q+1}\left(C^{p,\bullet},\, \delbar\right) \;\ni\; \left(\delbar\phi \mod\imm\delbar\right) \;\mapsto\; \left(0 \mod\imm\delbar\right) \;\in\; H^{q+1}\left(A^{p,\bullet},\, \delbar\right) \:;$$
since the map $H^{q+1}\left(C^{p,\bullet},\, \delbar\right) \stackrel{\simeq}{\to} H^{q+1}\left(A^{p,\bullet},\, \delbar\right)$ is injective, one gets that $\delbar\phi \in \imm\left(\delbar\colon C^{p,q} \to C^{p,q+1}\right)$: let $\tilde\phi_1 \in C^{p,q}$ be such that
$$ \delbar\phi \;=\; \delbar\tilde\phi_1 \;.$$
Therefore,
$$ \left(\left(\phi-\tilde\phi_1\right) \mod\imm\delbar \right) \;\in\; H^{q}\left(A^{p,\bullet},\, \delbar\right) \;;$$
since the map $H^{q}\left(C^{p,\bullet},\, \delbar\right) \stackrel{\simeq}{\to} H^{q}\left(A^{p,\bullet},\, \delbar\right)$ is surjective, one gets that there exist $\tilde\phi_2 \in \ker\left(\delbar\colon C^{p,q}\to C^{p,q+1}\right)$ and $\hat\phi \in A^{p,q-1}$ such that
$$ \phi-\tilde\phi_1 \;=\; \tilde\phi_2 + \delbar\hat\phi \;,$$
that is, $\phi = \tilde\phi + \delbar\hat\phi$ where $\tilde\phi := \tilde\phi_1 + \tilde\phi_2 \in C^{p,q}$ and $\hat\phi \in A^{p,q-1}$.
\end{proof}

The following result gives a first partial answer concerning the relation between the Bott-Chern cohomology of a double complex and the Bott-Chern cohomology of a suitable sub-complex; compare it with \cite[Theorem 3.7]{angella-1}, which is in turn inspired by M. Schweitzer's computations on the Iwasawa manifold in \cite[\S1.c]{schweitzer}.

\begin{thm}\label{surjBC}
 Let $\left(A^{\bullet,\bullet},\, \del,\, \delbar\right)$ be a bounded double complex of $\C$-vector spaces, and let $\left(C^{\bullet,\bullet},\, \del,\, \delbar\right) \hookrightarrow \left(A^{\bullet,\bullet},\, \del,\, \delbar\right)$ be a sub-complex. Fix $\left(p,q\right)\in\Z^2$.
 Suppose that:
 \begin{enumerate}
  \item\label{item:thm-surg-hp-1} for every $r\in\Z$, the induced map $\left(C^{r,\bullet},\, \delbar\right) \hookrightarrow \left(A^{r,\bullet},\, \delbar\right)$ of complexes is a quasi-isomorphism,
  \item\label{item:thm-surg-hp-2} for every $s\in\Z$, the induced map $\left(C^{\bullet,s},\, \del\right) \hookrightarrow \left(A^{\bullet,s},\, \del\right)$ of complexes is a quasi-isomorphism, and
  \item\label{item:thm-surg-hp-3} the induced map
        $$ \frac{\ker\left(\de\colon \Tot^{p+q}\left(C^{\bullet,\bullet}\right) \to \Tot^{p+q+1}\left(C^{\bullet,\bullet}\right) \right) \cap C^{p,q}}{\imm\left(\de\colon \Tot^{p+q-1}\left(C^{\bullet,\bullet}\right) \to \Tot^{p+q}\left(C^{\bullet,\bullet}\right)\right)} \to \frac{\ker\left(\de\colon \Tot^{p+q}\left(A^{\bullet,\bullet}\right) \to \Tot^{p+q+1}\left(A^{\bullet,\bullet}\right) \right) \cap A^{p,q}}{\imm\left(\de\colon \Tot^{p+q-1}\left(A^{\bullet,\bullet}\right) \to \Tot^{p+q}\left(A^{\bullet,\bullet}\right)\right)} $$
        is surjective.
 \end{enumerate}
 Then the induced map $\mathcal{BC}^{p,q}(C^{\bullet,\bullet}) \hookrightarrow \mathcal{BC}^{p,q}(A^{\bullet,\bullet})$ of complexes induces a surjective map in cohomology.
\end{thm}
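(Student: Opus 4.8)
The plan is to unwind the statement. The cohomology in question is that of the three-term complexes at the middle spot, i.e. $H^{p,q}_{BC}(C^{\bullet,\bullet})=\frac{\ker\del\cap\ker\delbar\cap C^{p,q}}{\del\delbar\,C^{p-1,q-1}}$ and likewise for $A^{\bullet,\bullet}$; so surjectivity means that for every $\phi\in A^{p,q}$ with $\del\phi=\delbar\phi=0$ there is $\psi\in C^{p,q}$ with $\del\psi=\delbar\psi=0$ and $\phi-\psi\in\imm\left(\del\delbar\colon A^{p-1,q-1}\to A^{p,q}\right)$. In practice I will produce some $\xi\in A^{p-1,q-1}$ with $\phi-\del\delbar\xi\in C^{p,q}$ and set $\psi:=\phi-\del\delbar\xi$, which is then automatically $\del$- and $\delbar$-closed. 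Besides Lemma \ref{lem:sol-delbar}, the argument uses its $\del$-analogue: if $\psi\in A^{p,q}$ has $\del\psi\in C^{p+1,q}$, then $\psi=\tilde\psi+\del\hat\psi$ with $\tilde\psi\in C^{p,q}$, $\hat\psi\in A^{p-1,q}$; this holds under hypothesis \ref{item:thm-surg-hp-2} by the symmetric proof.

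First I would invoke hypothesis \ref{item:thm-surg-hp-3}: the class of the $\de$-closed form $\phi$ lies in the target of the surjective map there, so there is $\psi\in C^{p,q}$ with $\del\psi=\delbar\psi=0$ and $\phi-\psi=\de\gamma$ for some $\gamma\in\Tot^{p+q-1}\left(A^{\bullet,\bullet}\right)$. Decompose $\gamma=\sum_{r+s=p+q-1}\gamma^{r,s}$ into bidegree components (finitely many non-zero, $A^{\bullet,\bullet}$ being bounded). Since $\de\gamma=\phi-\psi$ has pure bidegree $(p,q)$, comparing bidegree components gives $\del\gamma^{p-1,q}+\delbar\gamma^{p,q-1}=\phi-\psi$ together with the "staircase" relations $\del\gamma^{r-1,s}=-\delbar\gamma^{r,s-1}$ for all $(r,s)\neq(p,q)$ with $r+s=p+q$.

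The technical heart — and the step I expect to be the main obstacle — is to tidy up $\gamma$ so that only the components $\gamma^{p-1,q}$ and $\gamma^{p,q-1}$ lie outside $C^{\bullet,\bullet}$; subtracting a $\de$-exact term from $\gamma$ changes neither $\de\gamma$ nor these relations, so this is permissible. Consider the "upper" tail of the components $\gamma^{r,s}$ with $r\geq p+1$: its extreme term is $\del$-closed (by the staircase relation one step beyond it, where $\gamma$ vanishes), hence by the surjectivity contained in hypothesis \ref{item:thm-surg-hp-2} its $\del$-class lifts to $C^{\bullet,\bullet}$, so after a $\de$-exact correction the extreme term lies in $C^{\bullet,\bullet}$; the next staircase relation then puts the $\del$ of the following component in $C^{\bullet,\bullet}$, so the $\del$-analogue of Lemma \ref{lem:sol-delbar} moves that component into $C^{\bullet,\bullet}$ (again up to a $\de$-exact correction), and one descends the tail step by step, terminating by boundedness. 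Symmetrically, using hypothesis \ref{item:thm-surg-hp-1} and Lemma \ref{lem:sol-delbar}, I would clear the "lower" tail of components with $r\leq p-2$. After this, the two staircase relations adjacent to $(p,q)$ read $\delbar\gamma^{p-1,q}=-\del\gamma^{p-2,q+1}\in C^{p-1,q+1}$ and $\del\gamma^{p,q-1}=-\delbar\gamma^{p+1,q-2}\in C^{p+1,q-1}$.

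Finally, since $\delbar\gamma^{p-1,q}\in C^{p-1,q+1}$, Lemma \ref{lem:sol-delbar} gives $\gamma^{p-1,q}=g_1+\delbar g_2$ with $g_1\in C^{p-1,q}$ and $g_2\in A^{p-1,q-1}$; since $\del\gamma^{p,q-1}\in C^{p+1,q-1}$, its $\del$-analogue gives $\gamma^{p,q-1}=h_1+\del h_2$ with $h_1\in C^{p,q-1}$ and $h_2\in A^{p-1,q-1}$. Substituting into $\phi-\psi=\del\gamma^{p-1,q}+\delbar\gamma^{p,q-1}$ and using $\del\delbar=-\delbar\del$,
$$ \phi-\psi \;=\; \del g_1+\del\delbar g_2+\delbar h_1-\del\delbar h_2 \;=\; \bigl(\del g_1+\delbar h_1\bigr)+\del\delbar\bigl(g_2-h_2\bigr)\;, $$
where $\del g_1+\delbar h_1\in C^{p,q}$. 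Hence $\psi':=\psi+\del g_1+\delbar h_1\in C^{p,q}$ satisfies $\phi-\psi'=\del\delbar(g_2-h_2)\in\imm\left(\del\delbar\colon A^{p-1,q-1}\to A^{p,q}\right)$, and $\psi'=\phi-\del\delbar(g_2-h_2)$ is $\del$- and $\delbar$-closed; so the Bott-Chern class $[\psi']$ of $C^{\bullet,\bullet}$ maps to $[\phi]$, which is the desired surjectivity. The delicate points are the bidegree bookkeeping in the tail-clearing and verifying that each move there uses only the injectivity/surjectivity already packaged in hypotheses \ref{item:thm-surg-hp-1} and \ref{item:thm-surg-hp-2}, boundedness ensuring termination.
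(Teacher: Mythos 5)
Your proposal is correct and follows essentially the same route as the paper's proof: the same staircase induction that clears the off-diagonal bidegree components of $\gamma$ by iterating Lemma \ref{lem:sol-delbar} and its $\del$-transpose (this is the paper's Step 1), combined with hypothesis \ref{item:thm-surg-hp-3} to handle the $\de$-cohomology class. The only difference is presentational: the paper packages the final assembly as a Five Lemma applied to two short exact sequences, whereas you carry out the corresponding diagram chase by hand.
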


\begin{proof}
Up to shifting, assume that $A^{r,s}=\{0\}$ whenever $\left(r,s\right)\not\in\N^2$.

\smallskip

\paragrafo{1}{Firstly, we prove that, under the hypotheses {\ref{item:thm-surg-hp-1}} and {\ref{item:thm-surg-hp-2}}, the inclusion $\left(C^{\bullet,\bullet},\, \del,\, \delbar\right) \hookrightarrow \left(A^{\bullet,\bullet},\, \del,\, \delbar\right)$ induces, for every $\left(r,s\right)\in\Z^2$, a surjective map
$$ \frac{\imm\left(\de\colon \Tot^{r+s-1} \left(C^{\bullet,\bullet}\right) \to \Tot^{r+s} \left(C^{\bullet,\bullet}\right) \right) \cap C^{r,s}}{\imm\left(\del\delbar\colon C^{r-1,s-1} \to C^{r,s}\right)}
\to
\frac{\imm\left(\de\colon \Tot^{r+s-1} \left(A^{\bullet,\bullet}\right) \to \Tot^{r+s} \left(A^{\bullet,\bullet}\right) \right) \cap A^{r,s}}{\imm\left(\del\delbar\colon A^{r-1,s-1} \to A^{r,s}\right)} \;. $$}
Indeed, let
\begin{eqnarray*}
\left( \omega^{r,s} \mod\imm\left(\del\delbar\colon A^{r-1,s-1} \to A^{r,s}\right) \right) &:=& \left( \de\eta \mod\imm\left(\del\delbar\colon A^{r-1,s-1} \to A^{r,s}\right) \right) \\[5pt]
 &\in& \frac{\imm\left(\de\colon \Tot^{r+s-1} \left(A^{\bullet,\bullet}\right) \to \Tot^{r+s} \left(A^{\bullet,\bullet}\right) \right) \cap A^{r,s}}{\imm\left(\del\delbar\colon A^{r-1,s-1} \to A^{r,s}\right)} \;.
\end{eqnarray*}
Consider the bi-degree decomposition $\eta=:\sum_{(a,b)\in\Z^2}\eta^{a,b}$ where $\eta^{a,b}\in A^{a,b}$, for $(a,b)\in\Z^2$. Hence, consider the system
$$
\left\{
\begin{array}{ccccclcc}
 && \del\eta^{r+s-1,0} &=&0 &&& \\[5pt]
\delbar\eta^{r+s-\ell,\ell-1} &+& \del\eta^{r+s-\ell-1,\ell} &=& 0 & & \text{ for } & \ell\in\{1,\ldots,s-1\} \\[5pt]
\delbar\eta^{r,s-1} &+& \del\eta^{r-1,s} &=& \omega^{r,s} & \mod\imm\left(\del\delbar\colon A^{r-1,s-1} \to A^{r,s}\right) && \\[5pt]
\delbar\eta^{\ell,r+s-\ell-1} &+& \del\eta^{\ell-1,r+s-\ell} &=& 0 & & \text{ for } & \ell\in\{1,\ldots,r-1\} \\[5pt]
\delbar\eta^{0,r+s-1} &&&=&0 &&&
\end{array}
\right. \;.
$$

Set $\eta^{r+s,-1}:=0$, and consider the equation
$$ \delbar\eta^{r+s-\ell,\ell-1} + \del\eta^{r+s-\ell-1,\ell} \;=\; 0 \mod\imm\left(\del\delbar\colon A^{r+s-\ell-1, \ell-1} \to A^{r+s-\ell, \ell}\right)\qquad \text{ for } \ell\in\left\{0,\ldots,s-1\right\} \;. $$

If $\eta^{r+s-\tilde\ell,\tilde\ell-1} \in C^{r+s-\tilde\ell,\tilde\ell-1}$ for some $\tilde\ell\in\left\{0,\ldots,s-1\right\}$, then, by applying Lemma \ref{lem:sol-delbar} to the double complex $\left(A^{\bullet,\bullet},\, \delbar,\, \del\right)$, one gets that there exist $\tilde\eta^{r+s-\tilde\ell-1,\tilde\ell} \in C^{r+s-\tilde\ell-1,\tilde\ell}$ and $\hat\eta^{r+s-\tilde\ell-2,\tilde\ell} \in A^{r+s-\tilde\ell-2,\tilde\ell}$ such that
$$ \eta^{r+s-\tilde\ell-1,\tilde\ell} \;=\; \tilde\eta^{r+s-\tilde\ell-1,\tilde\ell} + \del\hat\eta^{r+s-\tilde\ell-2,\tilde\ell} \;;$$
therefore, when $\tilde\ell \leq s-2$, one gets the system
$$
\left\{
\begin{array}{llcc}
\del\eta^{r+s-1,0} = 0 &&& \\[5pt]
\delbar\eta^{r+s-\ell,\ell-1} + \del\eta^{r+s-\ell-1,\ell} = 0 & \text{ for } & \ell\in\{1,\ldots,\tilde\ell-1\} \\[5pt]
\delbar\eta^{r+s-\tilde\ell,\tilde\ell-1} + \del\tilde\eta^{r+s-\tilde\ell-1,\tilde\ell} = 0 & & \\[5pt]
\delbar\tilde\eta^{r+s-\tilde\ell-1,\tilde\ell} + \del\left(\eta^{r+s-\tilde\ell-2,\tilde\ell+1}-\delbar\hat\eta^{r+s-\tilde\ell-2,\tilde\ell}\right) = 0 & & \\[5pt]
\delbar\left(\eta^{r+s-\tilde\ell-2,\tilde\ell+1}-\delbar\hat\eta^{r+s-\tilde\ell-2,\tilde\ell}\right) + \del\eta^{r+s-\tilde\ell-3,\tilde\ell+2} = 0 & & \\[5pt]
\delbar\eta^{r+s-\ell,\ell-1} + \del\eta^{r+s-\ell-1,\ell} = 0 & \text{ for } & \ell\in\{\tilde\ell+3,\ldots,s-1\} \\[5pt]
\delbar\eta^{r,s-1} + \del\eta^{r-1,s} = \omega^{r,s} \mod\imm\left(\del\delbar\colon A^{r-1,s-1} \to A^{r,s}\right) & & \\[5pt]
\delbar\eta^{\ell,r+s-\ell-1} + \del\eta^{\ell-1,r+s-\ell} = 0 & \text{ for } & \ell\in\{1,\ldots,r-1\} \\[5pt]
\delbar\eta^{0,r+s-1} = 0 &&
\end{array}
\right. \;,
$$
where $\tilde\eta^{r+s-\tilde\ell-1,\tilde\ell} \in C^{r+s-\tilde\ell-1,\tilde\ell}$, and when $\tilde\ell = s-1$, one gets the system
$$
\left\{
\begin{array}{llcc}
\del\eta^{r+s-1,0} = 0 &&& \\[5pt]
\delbar\eta^{r+s-\ell,\ell-1} + \del\eta^{r+s-\ell-1,\ell} = 0 & \text{ for } & \ell\in\{1,\ldots,s-2\} \\[5pt]
\delbar\eta^{r+1,s-2} + \del\tilde\eta^{r,s-1} = 0 & & \\[5pt]
\delbar\tilde\eta^{r,s-1} + \del\eta^{r-1,s} = \omega^{r,s} \mod\imm\left(\del\delbar\colon A^{r-1,s-1} \to A^{r,s}\right) & & \\[5pt]
\delbar\eta^{\ell,r+s-\ell-1} + \del\eta^{\ell-1,r+s-\ell} = 0 & \text{ for } & \ell\in\{1,\ldots,r-1\} \\[5pt]
\delbar\eta^{0,r+s-1} = 0 &&
\end{array}
\right. \;,
$$
where $\tilde\eta^{r,s-1} \in C^{r,s-1}$.

In particular, since $\eta^{r+s,-1} = 0 \in C^{r+s,-1}$, we may assume that $\eta^{r,s-1} \in C^{r,s-1}$.

Analogously, by applying Lemma \ref{lem:sol-delbar} to the double complex $\left(A^{\bullet,\bullet},\, \del,\, \delbar\right)$, we may assume that $\eta^{r-1,s} \in C^{r-1,s}$.

Therefore
\begin{eqnarray*}
\omega^{r,s} \mod\imm\left(\del\delbar\colon A^{r-1,s-1} \to A^{r,s}\right) &=& \left(\delbar\eta^{r,s-1} + \del \eta^{r-1,s}\right) \mod\imm\left(\del\delbar\colon A^{r-1,s-1} \to A^{r,s}\right) \\[5pt]
&\in& \frac{\imm\left(\de\colon \Tot^{r+s-1} \left(C^{\bullet,\bullet}\right) \to \Tot^{r+s} \left(C^{\bullet,\bullet}\right) \right) \cap C^{r,s}}{\imm\left(\del\delbar\colon A^{r-1,s-1} \to A^{r,s}\right)} \;,
\end{eqnarray*}
that is, the induced map
$$ \frac{\imm\left(\de\colon \Tot^{r+s-1} \left(C^{\bullet,\bullet}\right) \to \Tot^{r+s} \left(C^{\bullet,\bullet}\right) \right) \cap C^{r,s}}{\imm\left(\del\delbar\colon C^{r-1,s-1} \to C^{r,s}\right)}
\to
\frac{\imm\left(\de\colon \Tot^{r+s-1} \left(A^{\bullet,\bullet}\right) \to \Tot^{r+s} \left(A^{\bullet,\bullet}\right) \right) \cap A^{r,s}}{\imm\left(\del\delbar\colon A^{r-1,s-1} \to A^{r,s}\right)} $$
is surjective.

\smallskip

\paragrafo{2}{Now, we prove that, under the additional assumption {\ref{item:thm-surg-hp-3}}, the induced map
$$ \frac{\ker\left(\del\colon C^{p,q} \to C^{p+1,q}\right) \cap \ker\left(\delbar\colon C^{p,q} \to C^{p,q+1}\right)}{\imm\left(\del\delbar \colon C^{p-1,q-1} \to C^{p,q}\right)} \to \frac{\ker\left(\del\colon A^{p,q} \to A^{p+1,q}\right) \cap \ker\left(\delbar\colon A^{p,q} \to A^{p,q+1}\right)}{\imm\left(\del\delbar \colon A^{p-1,q-1} \to A^{p,q}\right)} $$
is surjective.}
Indeed, consider the commutative diagram
$$
\xymatrix{
& 0 \ar[d] \ar@{=}[r] & 0 \ar[d] & \\
& \frac{\imm\left(\de\colon \Tot^{p+q-1} \left(C^{\bullet,\bullet}\right) \to \Tot^{p+q} \left(C^{\bullet,\bullet}\right) \right) \cap C^{p,q}}{\imm\left(\del\delbar\colon C^{p-1,q-1} \to C^{p,q}\right)} \ar[d] \ar[r]
& \frac{\imm\left(\de\colon \Tot^{p+q-1} \left(A^{\bullet,\bullet}\right) \to \Tot^{p+q} \left(A^{\bullet,\bullet}\right) \right) \cap A^{p,q}}{\imm\left(\del\delbar\colon A^{p-1,q-1} \to A^{p,q}\right)} \ar[d] \ar[r] & 0 \\
& \frac{\ker\left(\del\colon C^{p,q} \to C^{p+1,q}\right) \cap \ker\left(\delbar\colon C^{p,q} \to C^{p,q+1}\right)}{\imm\left(\del\delbar \colon C^{p-1,q-1} \to C^{p,q}\right)} \ar[r] \ar[d]
& \frac{\ker\left(\del\colon A^{p,q} \to A^{p+1,q}\right) \cap \ker\left(\delbar\colon A^{p,q} \to A^{p,q+1}\right)}{\imm\left(\del\delbar \colon A^{p-1,q-1} \to A^{p,q}\right)} \ar[d] & \\
& \frac{\ker\left(\de\colon \Tot^{p+q}\left(C^{\bullet,\bullet}\right) \to \Tot^{p+q+1}\left(C^{\bullet,\bullet}\right) \right) \cap C^{p,q}}{\imm\left(\de\colon \Tot^{p+q-1}\left(C^{\bullet,\bullet}\right) \to \Tot^{p+q}\left(C^{\bullet,\bullet}\right)\right)} \ar[d] \ar[r] \ar[d]
& \frac{\ker\left(\de\colon \Tot^{p+q}\left(A^{\bullet,\bullet}\right) \to \Tot^{p+q+1}\left(A^{\bullet,\bullet}\right) \right) \cap A^{p,q}}{\imm\left(\de\colon \Tot^{p+q-1}\left(A^{\bullet,\bullet}\right) \to \Tot^{p+q}\left(A^{\bullet,\bullet}\right)\right)} \ar[d] \ar[r] & 0 \\
& 0 \ar@{=}[r] & 0 &
}
$$
whose rows and columns are exact. By the Five Lemma, see, {\itshape e.g.} \cite[page 26]{mccleary}, the map
$$ \frac{\ker\left(\del\colon C^{p,q} \to C^{p+1,q}\right) \cap \ker\left(\delbar\colon C^{p,q} \to C^{p,q+1}\right)}{\imm\left(\del\delbar \colon C^{p-1,q-1} \to C^{p,q}\right)} \to
\frac{\ker\left(\del\colon A^{p,q} \to A^{p+1,q}\right) \cap \ker\left(\delbar\colon A^{p,q} \to A^{p,q+1}\right)}{\imm\left(\del\delbar \colon A^{p-1,q-1} \to A^{p,q}\right)} $$
is surjective, completing the proof.
\end{proof}

\medskip

We study now injectivity of maps in Bott-Chern cohomology.
In order to provide conditions under which the inclusion of a suitable sub-complex induces an injective map in Bott-Chern cohomology, we consider a further structure of Hilbert space on the double complex. (For similar results in the case of solvmanifolds, see \cite[Lemma 9]{console-fino}, \cite[Lemma 3.6]{angella-1}.)

\medskip

Let $A$ be a Hilbert space, with inner product $\scalar{\sspace}{\ssspace}\colon A\times A \to \C$. Denote by $\norma{\sspace}:=\scalar{\sspace}{\sspace}^{1/2}$ the associated norm.

Given a densely-defined linear operator $L\colon A \supseteq \dom(L) \to A$ on $A$, denote by 
$$ L^{*}_{\scalar{\sspace}{\ssspace}} \colon \dom\left(L^{*}_{\scalar{\sspace}{\ssspace}}\right) \to A $$
its $\scalar{\sspace}{\ssspace}$-adjoint operator, that is, the unique linear operator with domain
$$ \dom\left(L^{*}_{\scalar{\sspace}{\ssspace}}\right) \;:=\; \left\{ y \in A \st \scalar{L\, \sspace}{y}\colon \dom(L) \to \C \text{ is continuous} \right\} $$
and defined by
$$ \forall x \in \dom(L),\; \forall y \in \dom\left(L^{*}_{\scalar{\sspace}{\ssspace}}\right), \qquad \scalar{Lx}{y} \;=\; \scalar{x}{L^{*}_{\scalar{\sspace}{\ssspace}}y} \;.$$

Given a closed sub-space $C$ of $A$, denote the induced inner product on $C$ by $\scalar{\sspace}{\ssspace}_C:=\scalar{\sspace}{\ssspace}\lfloor_{C\times C}\colon C\times C \to \C$, and the orthogonal projection onto $C$ by $\pi_{\scalar{\sspace}{\ssspace}}^C\colon A \to C \subseteq A$. One has that
$$ \pi_{\scalar{\sspace}{\ssspace}}^C\lfloor_{C} \;=\; \id_C \qquad \text{ and } \qquad \scalar{ C }{ \left(\id_A - \pi^C_{\scalar{\sspace}{\ssspace}}\right)(A) } \;=\; \left\{0\right\} \;.$$

(To simplify notations, we do not specify the inner product $\scalar{\sspace}{\ssspace}$ in writing the projection or the adjoint, whenever it is clear from the context.) 

\medskip

We firstly record the following lemma, stating that, if $L$ commutes with $\pi^C$, then also $L^*$ does.

\begin{lem}\label{lem:adjoint-subspace}
 Let $A$ be a Hilbert space, with inner product $\scalar{\sspace}{\ssspace}$. Let $L\colon A \supseteq \dom(L) \to A$ be a densely-defined linear operator on $A$. Let $C$ be a closed sub-space of $A$ contained in $\dom(L)$ and in $\dom\left(L^*_{\scalar{\sspace}{\ssspace}}\right)$. Suppose that
 $$ \pi_{\scalar{\sspace}{\ssspace}}^C \,\circ\, L \;=\; L \,\circ\, \pi_{\scalar{\sspace}{\ssspace}}^C\colon \dom(L) \to C \;. $$
 Then
 $$ \pi^C_{\scalar{\sspace}{\ssspace}} \,\circ\, L^*_{\scalar{\sspace}{\ssspace}} \;=\; L^*_{\scalar{\sspace}{\ssspace}} \,\circ\, \pi^C_{\scalar{\sspace}{\ssspace}} \colon \dom\left(L^*_{\scalar{\sspace}{\ssspace}}\right) \to C \;;$$
 in particular, $L^*_{\scalar{\sspace}{\ssspace}}\lfloor_C \colon C \to C$, and hence $\left(L\lfloor_C\right)^*_{\scalar{\sspace}{\ssspace}_{C}} = L^*_{\scalar{\sspace}{\ssspace}}\lfloor_{C}$.
\end{lem}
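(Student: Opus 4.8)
The plan is to reduce the whole statement to a single density argument. To lighten notation, within this proof I write $P := \pi^{C}$ and drop the subscript $\scalar{\sspace}{\ssspace}$ from the adjoint, as permitted above; from the two displayed properties of $\pi^{C}$ recorded just before the statement, $P$ is a bounded, self-adjoint, idempotent operator on $A$. The first thing to notice is that the hypothesis packages two facts: the commutation $P\circ L = L\circ P$ on $\dom(L)$, and --- since the common value is required to lie in $C$ --- the inclusion $L(C)\subseteq C$ (apply the commutation identity to $x\in C$, so that $LPx = Lx = PLx\in C$). These are exactly the ingredients fed into the computation below.

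The core step is to verify $P\circ L^{*} = L^{*}\circ P$ on $\dom\left(L^{*}\right)$ by testing against the dense subspace $\dom(L)$. Fix $y\in\dom\left(L^{*}\right)$; since $Py\in C\subseteq\dom\left(L^{*}\right)$ the vector $L^{*}(Py)$ is defined, and likewise $Px\in C\subseteq\dom(L)$ for every $x\in\dom(L)$. For such $x$ I would run the chain
$$ \scalar{x}{L^{*}(Py)} \;=\; \scalar{Lx}{Py} \;=\; \scalar{PLx}{Py} \;=\; \scalar{LPx}{Py} \;=\; \scalar{LPx}{y} \;=\; \scalar{Px}{L^{*}y} \;=\; \scalar{x}{P\,L^{*}y} \;, $$
in which the first and fifth equalities are the defining relation of the adjoint (legitimate because $x,\,Px\in\dom(L)$ and $y\in\dom\left(L^{*}\right)$), the second and sixth use that $P$ is self-adjoint and idempotent, the third is the commutation hypothesis, and the fourth uses $LPx = PLx\in C$ together with $y-Py\perp C$. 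Density of $\dom(L)$ in $A$ then forces $L^{*}(Py) = P\,L^{*}y$, which is the asserted identity (and its image lies in $C$ since $P$ projects onto $C$).

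Finally, the ``in particular'' clauses are immediate: taking $y\in C$, so $Py=y$, gives $L^{*}y = P\,L^{*}y\in C$, i.e. $L^{*}\lfloor_{C}\colon C\to C$; and for every $a,\,b\in C$ one has $\scalar{L\lfloor_{C}a}{b}_{C} = \scalar{La}{b} = \scalar{a}{L^{*}b} = \scalar{a}{L^{*}\lfloor_{C}b}_{C}$ (the last step because $L^{*}b\in C$), so by uniqueness of the adjoint on the Hilbert space $\left(C,\,\scalar{\sspace}{\ssspace}_{C}\right)$ one obtains $\left(L\lfloor_{C}\right)^{*}_{\scalar{\sspace}{\ssspace}_{C}} = L^{*}\lfloor_{C}$. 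I do not expect any genuine obstacle: the mathematical content is exactly the short chain of equalities above, and the only point requiring attention is the domain bookkeeping for the possibly unbounded operators $L$ and $L^{*}$ --- ensuring $Px\in\dom(L)$ and $Py\in\dom\left(L^{*}\right)$ so that each term is well-defined --- together with the observation that the hypothesis as stated already encodes the inclusion $L(C)\subseteq C$ that makes the fourth equality valid.
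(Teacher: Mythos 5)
Your proof is correct and takes essentially the same approach as the paper: the paper notes that $\pi^C$ is self-adjoint and then asserts that $\pi^C\circ L^* = L^*\circ\pi^C$ ``follows straightforwardly,'' and your chain of equalities tested against the dense subspace $\dom(L)$ is exactly that straightforward step written out in full, with the domain bookkeeping ($\pi^C x\in C\subseteq\dom(L)$, $\pi^C y\in C\subseteq\dom(L^*)$) handled correctly. The concluding ``in particular'' clauses also match the paper's argument.
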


\begin{proof}
It suffices to note that $\pi^C \colon A \to C \subseteq A$ is self-$\scalar{\sspace}{\ssspace}$-adjoint: for any $\alpha,\beta\in A$,
$$ \scalar{\pi^C \alpha}{\beta} \;=\; \scalar{\pi^C \alpha}{\beta-\left(\beta-\pi^C\beta\right)} \;=\; \scalar{\pi^C \alpha}{\pi^C\beta} \;=\; \scalar{\pi^C \alpha + \left(\alpha-\pi^C\alpha\right)}{\pi^C\beta} \;=\; \scalar{\alpha}{\pi^C\beta} \;. $$
It follows straightforwardly that $\pi^C \circ L^* = L^* \circ \pi^C \colon \dom\left(L^*\right) \to C$.
In particular, since $\pi^C\lfloor_C = \id_C$ and $C \subseteq \dom\left(L^*\right)$, it follows that $L^*\left(C\right) = \left(L^* \circ \pi^C\right)\left(C\right) = \left(\pi^C \circ L^*\right)\left(C\right) \subseteq C$, and hence $L^*\lfloor_{C} = \left(L\lfloor_C\right)^*_{\scalar{\sspace}{\ssspace}_{C}} \colon C \to C$.
\end{proof}

\medskip

Now, let $A^{\bullet,\bullet}$ be a bounded $\Z^2$-graded vector space with a structure of Hilbert space, with inner product $\scalar{\sspace}{\ssspace}$ such that $\scalar{ A^{p,q} }{ A^{p',q'} } = \{0\}$ for every $(p,q) \neq (p',q')$. Let
$$ \del\colon A^{\bullet,\bullet} \supseteq \dom(\del)^{\bullet,\bullet} \to A^{\bullet+1,\bullet} \qquad \text{ and } \qquad \delbar\colon A^{\bullet,\bullet} \supseteq \dom(\delbar)^{\bullet,\bullet} \to A^{\bullet,\bullet+1} $$
be densely-defined linear operators yielding a structure $\left(\left(\dom(\del)\cap\dom(\delbar)\right)^{\bullet,\bullet},\, \del,\, \delbar\right)$ of bounded double complex of $\C$-vector spaces.
Denote by
$$ \del^* \;:=\; \del^*_{\scalar{\sspace}{\ssspace}}\colon A^{\bullet,\bullet} \supseteq \dom\left(\del^*\right)^{\bullet,\bullet} \to A^{\bullet-1,\bullet} \qquad \text{ and } \qquad \delbar^* \;:=\; \delbar^*_{\scalar{\sspace}{\ssspace}}\colon A^{\bullet,\bullet} \supseteq \dom\left(\delbar^*\right)^{\bullet,\bullet} \to A^{\bullet,\bullet-1} $$
the $\scalar{\sspace}{\ssspace}$-adjoint operators of $\del$ and, respectively, $\delbar$.

Following \cite[Proposition 5]{kodaira-spencer-3}, see also \cite[\S2.b, \S2.c]{schweitzer}, define the (densely-defined) self-$\scalar{\sspace}{\ssspace}$-adjoint operator
\begin{eqnarray*}
\tilde\Delta^{BC} \;:=\; \tilde\Delta^{BC}_{\scalar{\sspace}{\ssspace}} &:=& \left(\del\delbar\right)\left(\del\delbar\right)^*+\left(\del\delbar\right)^*\left(\del\delbar\right)+\left(\delbar^*\del\right)\left(\delbar^*\del\right)^*+\left(\delbar^*\del\right)^*\left(\delbar^*\del\right)+\delbar^*\delbar+\del^*\del \\[5pt]
&\in& \Hom^{0,0}\left(\dom\left(\tilde\Delta^{BC}_{\scalar{\sspace}{\ssspace}}\right)^{\bullet,\bullet};\, A^{\bullet,\bullet}\right) \;.
\end{eqnarray*}

The following lemma states that, under a suitable decomposition hypothesis, the Bott-Chern cohomology of $\left(A^{\bullet,\bullet},\, \del,\, \delbar\right)$ is isomorphic to $\ker \tilde\Delta^{BC}$.

\begin{lem}\label{lem:hodge-isom-bc}
 Let $A^{\bullet,\bullet}$ be a bounded $\Z^2$-graded vector space with a structure of Hilbert space, with inner product $\scalar{\sspace}{\ssspace}$ such that $\scalar{ A^{p,q} }{ A^{p',q'} } = \{0\}$ for every $(p,q) \neq (p',q')$. Let $\del\colon A^{\bullet,\bullet} \supseteq \dom(\del)^{\bullet,\bullet} \to A^{\bullet+1,\bullet}$ and $\delbar\colon A^{\bullet,\bullet} \supseteq \dom(\delbar)^{\bullet,\bullet} \to A^{\bullet,\bullet+1}$ be densely-defined linear operators yielding a structure $\left(\left(\dom(\del)\cap\dom(\delbar)\right)^{\bullet,\bullet},\, \del,\, \delbar\right)$ of bounded double complex of $\C$-vector spaces.
 Suppose that the operator $\tilde\Delta^{BC}_{\scalar{\sspace}{\ssspace}} \in \Hom^{0,0}\left(\dom\left(\tilde\Delta^{BC}_{\scalar{\sspace}{\ssspace}}\right)^{\bullet,\bullet};\, A^{\bullet,\bullet}\right)$ induces the decomposition
 $$ \dom\left(\tilde\Delta^{BC}_{\scalar{\sspace}{\ssspace}}\right) \;=\; \ker \tilde\Delta^{BC}_{\scalar{\sspace}{\ssspace}} \oplus \imm \tilde\Delta^{BC}_{\scalar{\sspace}{\ssspace}} \;. $$
 Then, for every $\left(p,q\right)\in\Z^2$, the induced map
 $$
 \left( 0 \to \ker \tilde\Delta^{BC}_{\scalar{\sspace}{\ssspace}} \cap A^{p,q} \to 0 \right) \hookrightarrow \mathcal{BC}^{p,q}(A^{\bullet,\bullet})
 $$
 is a quasi-isomorphism.
\end{lem}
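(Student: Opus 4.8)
The plan is to carry out the standard Hodge-theoretic argument for Bott--Chern cohomology (compare \cite[Proposition~5]{kodaira-spencer-3} and \cite[\S2.b, \S2.c]{schweitzer}), which here becomes essentially formal, since the only genuinely analytic ingredient, namely the splitting $\dom\bigl(\tilde\Delta^{BC}\bigr)=\ker\tilde\Delta^{BC}\oplus\imm\tilde\Delta^{BC}$, is part of the hypotheses. Throughout I suppress the fixed inner product $\scalar{\sspace}{\ssspace}$ from the notation.

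First I would identify the harmonic space. Since $\tilde\Delta^{BC}$ is a sum of operators of the form $TT^{*}$ or $T^{*}T$, for $\eta$ in its domain one gets
\[
 \scalar{\tilde\Delta^{BC}\eta}{\eta}=\norma{(\del\delbar)^{*}\eta}^{2}+\norma{\del\delbar\,\eta}^{2}+\norma{(\delbar^{*}\del)^{*}\eta}^{2}+\norma{\delbar^{*}\del\,\eta}^{2}+\norma{\delbar\,\eta}^{2}+\norma{\del\,\eta}^{2}\;\geq\;0\;,
\]
so, using $\del^{2}=\delbar^{2}=\del\delbar+\delbar\del=0$ to see that the first, third and fourth vanishing conditions follow from the last two, one concludes
\[
 \ker\tilde\Delta^{BC}=\ker\del\,\cap\,\ker\delbar\,\cap\,\ker\bigl((\del\delbar)^{*}\bigr)\;.
\]
In particular $\ker\tilde\Delta^{BC}\cap A^{p,q}\subseteq\ker(\del+\delbar)\cap A^{p,q}$, so the inclusion in the statement really is a morphism of complexes, and it induces a well-defined map from $\ker\tilde\Delta^{BC}\cap A^{p,q}$ to the Bott--Chern cohomology in bi-degree $(p,q)$, namely to $\ker(\del+\delbar)\cap A^{p,q}$ modulo the image of $\del\delbar\colon A^{p-1,q-1}\to A^{p,q}$; the whole content is that this map is an isomorphism.

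Next I would establish, in each bi-degree, the orthogonal decomposition
\[
 A^{p,q}=\bigl(\ker\tilde\Delta^{BC}\cap A^{p,q}\bigr)\;\oplus\;\imm\bigl(\del\delbar\colon A^{p-1,q-1}\to A^{p,q}\bigr)\;\oplus\;\bigl(\imm\del^{*}+\imm\delbar^{*}\bigr)\cap A^{p,q}\;,
\]
the three summands being mutually $\scalar{\sspace}{\ssspace}$-orthogonal. Pairwise orthogonality is a one-line computation in each case: $\scalar{\del\delbar\alpha}{\del^{*}\beta}=\scalar{\del\del\delbar\alpha}{\beta}=0$, $\scalar{\del\delbar\alpha}{\delbar^{*}\gamma}=\scalar{\delbar\del\delbar\alpha}{\gamma}=0$, $\scalar{h}{\del\delbar\alpha}=\scalar{(\del\delbar)^{*}h}{\alpha}=0$ by the description of $\ker\tilde\Delta^{BC}$, $\scalar{h}{\del^{*}\beta}=\scalar{\del h}{\beta}=0$, and $\scalar{h}{\delbar^{*}\gamma}=\scalar{\delbar h}{\gamma}=0$. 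That the three summands together span $A^{p,q}$ comes from the decomposition hypothesis: every $x\in\dom\tilde\Delta^{BC}$ equals $h+\tilde\Delta^{BC}y$ with $h\in\ker\tilde\Delta^{BC}$, and expanding $\tilde\Delta^{BC}y$ and using $(\del\delbar)^{*}=\delbar^{*}\del^{*}$ and $(\delbar^{*}\del)^{*}=\del^{*}\delbar$ exhibits $\tilde\Delta^{BC}y$ as an element of $\imm(\del\delbar)+\imm\del^{*}+\imm\delbar^{*}$.

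Finally I would conclude as follows. Given $z\in\ker(\del+\delbar)\cap A^{p,q}$, write $z=h+\del\delbar\alpha+w$ with $w\in(\imm\del^{*}+\imm\delbar^{*})\cap A^{p,q}$, say $w=\del^{*}\beta+\delbar^{*}\gamma$; applying $\del$ and $\delbar$ kills $h$ and $\del\delbar\alpha$, hence $\del w=\delbar w=0$, and then $\norma{w}^{2}=\scalar{w}{\del^{*}\beta}+\scalar{w}{\delbar^{*}\gamma}=\scalar{\del w}{\beta}+\scalar{\delbar w}{\gamma}=0$, so $w=0$. Thus $\ker(\del+\delbar)\cap A^{p,q}=\bigl(\ker\tilde\Delta^{BC}\cap A^{p,q}\bigr)\oplus\imm(\del\delbar)$, and quotienting by $\imm(\del\delbar)$ turns the map above into the desired isomorphism onto the Bott--Chern cohomology; this is precisely the statement that the inclusion is a quasi-isomorphism. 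The argument is basically linear algebra, and the one point that needs some care is the bookkeeping of domains in the (formally permitted) infinite-dimensional situation, so that the spanning part of the decomposition really holds for all of $A^{p,q}$ and not just for $\dom\tilde\Delta^{BC}$ — this is exactly what the decomposition hypothesis is there to neutralise, and in the finite-dimensional case relevant to (invariant forms on) solvmanifolds it is automatic.
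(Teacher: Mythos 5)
Your proof is correct and follows essentially the same route as the paper's: you identify $\ker\tilde\Delta^{BC}=\ker\del\cap\ker\delbar\cap\ker\left(\del\delbar\right)^*$ via the norm identity, then use the decomposition hypothesis together with $\imm\tilde\Delta^{BC}\subseteq\imm\del\delbar+\left(\imm\del^*+\imm\delbar^*\right)$ and $\left(\imm\del^*+\imm\delbar^*\right)\cap\ker\del\cap\ker\delbar=\{0\}$ to split $\ker\left(\del+\delbar\right)$ in bi-degree $(p,q)$ as the harmonic space plus $\imm\del\delbar$, and quotient. Your explicit orthogonality computations and the closing remark on domain bookkeeping merely spell out what the paper leaves implicit.
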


\begin{proof}
  Note that, for every $\eta\in \dom\left(\tilde\Delta^{BC}\right)$, one has
 $$ \scalar{\tilde\Delta^{BC} \eta}{\eta} \;=\; \norma{\left(\del\delbar\right)^*\eta}^2 + \norma{\del\delbar\eta}^2 + \norma{\del^*\delbar\eta}^2 + \norma{\delbar^*\del\eta}^2 + \norma{\delbar\eta}^2 + \norma{\del\eta}^2 \;, $$
 hence
 $$ \ker\tilde\Delta^{BC} \;=\; \ker\del \cap \ker\delbar \cap \ker\left(\del\delbar\right)^* \;.$$

 On the other hand, since $\imm\tilde\Delta^{BC} \subseteq \imm\del\delbar \oplus \left(\imm\del^* + \imm\delbar^*\right)$ and $\left(\imm\del^* + \imm\delbar^*\right) \cap \left(\ker\del\cap\ker\delbar\right) = \{0\}$, one has
 $$ \imm\tilde\Delta^{BC} \cap \left(\ker\del\cap\ker\delbar\right) \;\subseteq\; \imm\del\delbar \;. $$
 It follows that
 $$
 \ker\tilde\Delta^{BC} \cap A^{p,q}
 \stackrel{\simeq}{\to}
 \frac{\ker\tilde\Delta^{BC} \cap A^{p,q} + \imm\del\delbar\cap A^{p,q}}{\imm\left(\del\delbar\colon A^{p-1,q-1}\to A^{p,q}\right)}
 \;\simeq\;
 \frac{\ker\left(\del+\delbar \colon A^{p,q}\to A^{p+1,q}\oplus A^{p,q+1}\right)}{\imm\left(\del\delbar\colon A^{p-1,q-1}\to A^{p,q}\right)}
  \;,
 $$
 completing the proof.
\end{proof}

We have now the following result.

\begin{thm}\label{thm:inj}
 Let $A^{\bullet,\bullet}$ be a bounded $\Z^2$-graded vector space with a structure of Hilbert space, with inner product $\scalar{\sspace}{\ssspace}$ such that $\scalar{ A^{p,q} }{ A^{p',q'} } = \{0\}$ for every $(p,q) \neq (p',q')$. Let $\del\colon A^{\bullet,\bullet} \supseteq \dom(\del)^{\bullet,\bullet} \to A^{\bullet+1,\bullet}$ and $\delbar\colon A^{\bullet,\bullet} \supseteq \dom(\delbar)^{\bullet,\bullet} \to A^{\bullet,\bullet+1}$ be densely-defined linear operators yielding a structure $\left(\left(\dom(\del)\cap\dom(\delbar)\right)^{\bullet,\bullet},\, \del,\, \delbar\right)$ of bounded double complex of $\C$-vector spaces.
 Let
 $$ j\colon \left(C^{\bullet,\bullet},\, \del,\, \delbar\right) \hookrightarrow \left(\left(\dom(\del)\cap\dom(\delbar)\right)^{\bullet,\bullet},\, \del,\, \delbar\right) $$
 be a sub-complex.
 Suppose that:
 \begin{enumerate}
  \item\label{item:thm-inj-1} the operator $\tilde\Delta^{BC}_{\scalar{\sspace}{\ssspace}} \in \Hom^{0,0}\left(\dom\left(\tilde\Delta^{BC}_{\scalar{\sspace}{\ssspace}}\right)^{\bullet,\bullet};\, A^{\bullet,\bullet}\right)$ induces the decomposition
  $$ \dom\left(\tilde\Delta^{BC}_{\scalar{\sspace}{\ssspace}}\right) \;=\; \ker \tilde\Delta^{BC}_{\scalar{\sspace}{\ssspace}} \oplus \imm \tilde\Delta^{BC}_{\scalar{\sspace}{\ssspace}} \;;$$
  \item\label{item:thm-inj-2} it holds that
  $$ \del^*_{\scalar{\sspace}{\ssspace}}\lfloor_{C^{\bullet,\bullet}} \;=\; \left(\del\lfloor_{C^{\bullet,\bullet}}\right)^*_{\scalar{\sspace}{\ssspace}_{C^{\bullet,\bullet}}} \colon \dom\left( \del^*_{\scalar{\sspace}{\ssspace}}\lfloor_{C^{\bullet,\bullet}} \right)^{\bullet,\bullet} \to C^{\bullet-1,\bullet} $$
  and
  $$ \delbar^*_{\scalar{\sspace}{\ssspace}}\lfloor_{C^{\bullet,\bullet}} \;=\; \left(\delbar\lfloor_{C^{\bullet,\bullet}}\right)^*_{\scalar{\sspace}{\ssspace}_{C^{\bullet,\bullet}}} \colon \dom\left( \delbar^*_{\scalar{\sspace}{\ssspace}}\lfloor_{C^{\bullet,\bullet}} \right)^{\bullet,\bullet} \to C^{\bullet,\bullet-1} \;; $$
  in particular, it follows that
  $$ \tilde\Delta^{BC}_{\scalar{\sspace}{\ssspace}}\lfloor_{C^{\bullet,\bullet}} \;=\; \tilde\Delta^{BC}_{\scalar{\sspace}{\ssspace}_{C^{\bullet,\bullet}}} \;\in\; \Hom^{0,0}\left(\dom\left(\tilde\Delta^{BC}\lfloor_{C^{\bullet,\bullet}}\right)^{\bullet,\bullet};\, C^{\bullet,\bullet}\right) \;; $$
  \item\label{item:thm-inj-3} the operator $\tilde\Delta^{BC}_{\scalar{\sspace}{\ssspace}}\lfloor_{C^{\bullet,\bullet}} \in \Hom^{0,0}\left(\dom\left(\tilde\Delta^{BC}_{\scalar{\sspace}{\ssspace}}\lfloor_{C^{\bullet,\bullet}}\right)^{\bullet,\bullet};\, C^{\bullet,\bullet}\right)$ induces the decomposition
  $$ \dom\left(\tilde\Delta^{BC}_{\scalar{\sspace}{\ssspace}}\lfloor_{C^{\bullet,\bullet}}\right) \;=\; \ker \tilde\Delta^{BC}_{\scalar{\sspace}{\ssspace}}\lfloor_{C^{\bullet,\bullet}} \oplus \imm \tilde\Delta^{BC}_{\scalar{\sspace}{\ssspace}}\lfloor_{C^{\bullet,\bullet}} \;.$$
 \end{enumerate}
 Then, for every $\left(p,q\right)\in\Z^2$, the induced map $j \colon \mathcal{BC}^{p,q}(C^{\bullet,\bullet})  \hookrightarrow \mathcal{BC}^{p,q}(A^{\bullet,\bullet})$
 of complexes induces an injective map $j^*$ in cohomology.
\end{thm}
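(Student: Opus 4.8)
The plan is to use Lemma~\ref{lem:hodge-isom-bc} twice — once on the ambient double complex and once on the sub-complex $C^{\bullet,\bullet}$ — so as to replace both Bott-Chern cohomologies by spaces of $\tilde\Delta^{BC}$-harmonic forms, and then to check that, under these Hodge-type isomorphisms, $j^*$ is simply the inclusion of a linear subspace; injectivity is then automatic.

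First I would apply Lemma~\ref{lem:hodge-isom-bc} to the double complex $\left(\left(\dom(\del)\cap\dom(\delbar)\right)^{\bullet,\bullet},\,\del,\,\delbar\right)$ with the inner product $\scalar{\sspace}{\ssspace}$: hypothesis~\ref{item:thm-inj-1} is precisely its decomposition assumption, so for every $(p,q)\in\Z^2$ the inclusion of $\ker\tilde\Delta^{BC}_{\scalar{\sspace}{\ssspace}}\cap A^{p,q}$ into the Bott-Chern complex of $A^{\bullet,\bullet}$ is a quasi-isomorphism; inspecting the proof of that lemma, the resulting isomorphism onto the Bott-Chern cohomology $H^{p,q}_{BC}\left(A^{\bullet,\bullet}\right)$ is the map sending a harmonic form to its Bott-Chern class.

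Next I would feed the sub-complex into the same machine. By hypothesis~\ref{item:thm-inj-2} (which, as recalled before the statement, is what Lemma~\ref{lem:adjoint-subspace} delivers once $\del^*_{\scalar{\sspace}{\ssspace}}$ and $\delbar^*_{\scalar{\sspace}{\ssspace}}$ commute with the orthogonal projection onto $C^{\bullet,\bullet}$), the operator $\tilde\Delta^{BC}_{\scalar{\sspace}{\ssspace}}$ maps $C^{\bullet,\bullet}$ into itself and restricts there to $\tilde\Delta^{BC}_{\scalar{\sspace}{\ssspace}_{C^{\bullet,\bullet}}}$, the analogous operator built from $\del\lfloor_{C^{\bullet,\bullet}}$, $\delbar\lfloor_{C^{\bullet,\bullet}}$ and the induced inner product; in particular $\ker\tilde\Delta^{BC}_{\scalar{\sspace}{\ssspace}_{C^{\bullet,\bullet}}}\cap C^{p,q}=\ker\tilde\Delta^{BC}_{\scalar{\sspace}{\ssspace}}\cap C^{p,q}$. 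Hypothesis~\ref{item:thm-inj-3} is exactly the decomposition assumption needed to apply Lemma~\ref{lem:hodge-isom-bc} to $\left(C^{\bullet,\bullet},\,\del\lfloor_{C^{\bullet,\bullet}},\,\delbar\lfloor_{C^{\bullet,\bullet}}\right)$ with inner product $\scalar{\sspace}{\ssspace}_{C^{\bullet,\bullet}}$, and it yields an isomorphism $\ker\tilde\Delta^{BC}_{\scalar{\sspace}{\ssspace}}\cap C^{p,q}\stackrel{\simeq}{\to}H^{p,q}_{BC}\left(C^{\bullet,\bullet}\right)$, again given by $h\mapsto[h]$.

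Finally I would assemble the commutative square
$$
\xymatrix{
\ker\tilde\Delta^{BC}_{\scalar{\sspace}{\ssspace}}\cap C^{p,q} \ar@{^{(}->}[r] \ar[d]_{\simeq} & \ker\tilde\Delta^{BC}_{\scalar{\sspace}{\ssspace}}\cap A^{p,q} \ar[d]^{\simeq} \\
H^{p,q}_{BC}\left(C^{\bullet,\bullet}\right) \ar[r]^{j^*} & H^{p,q}_{BC}\left(A^{\bullet,\bullet}\right)
}
$$
whose top arrow is the tautological inclusion of subspaces and whose vertical arrows are the two isomorphisms just produced; commutativity is immediate, since $j$ is an inclusion and both vertical maps send a harmonic representative to its Bott-Chern class. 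As the top arrow is injective, so is $j^*$, which is the claim. The only point here that is not purely formal is the compatibility supplied by hypothesis~\ref{item:thm-inj-2}, namely that a form which is $\tilde\Delta^{BC}_{\scalar{\sspace}{\ssspace}_{C^{\bullet,\bullet}}}$-harmonic inside $C^{\bullet,\bullet}$ is again $\tilde\Delta^{BC}_{\scalar{\sspace}{\ssspace}}$-harmonic inside $A^{\bullet,\bullet}$ — without it the two Hodge isomorphisms would not be comparable — so I expect that assumption (already built into the statement) to be the real crux, the rest being a diagram chase.
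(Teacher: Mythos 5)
Your proposal is correct and follows essentially the same route as the paper's own proof: apply Lemma \ref{lem:hodge-isom-bc} once to the ambient complex (via hypothesis \ref{item:thm-inj-1}) and once to the sub-complex (via hypotheses \ref{item:thm-inj-2} and \ref{item:thm-inj-3}), then conclude from the commutative square whose top arrow is the inclusion of harmonic spaces $\ker\tilde\Delta^{BC}\lfloor_{C^{\bullet,\bullet}}\cap C^{p,q}\hookrightarrow\ker\tilde\Delta^{BC}\cap A^{p,q}$. Your explicit remark that hypothesis \ref{item:thm-inj-2} is what makes the two harmonic spaces comparable is exactly the point the paper encodes in the identification $\ker\tilde\Delta^{BC}\lfloor_{C^{\bullet,\bullet}}\cap C^{p,q}=\ker\tilde\Delta^{BC}_{\scalar{\sspace}{\ssspace}_{C^{\bullet,\bullet}}}\cap C^{p,q}$.
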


\begin{proof}
 By Lemma \ref{lem:hodge-isom-bc} and under the hypotheses \ref{item:thm-inj-1}, \ref{item:thm-inj-2}, and \ref{item:thm-inj-3}, one gets that both
 $$
 \left( 0 \to \ker \tilde\Delta^{BC} \cap A^{p,q} \to 0 \right) \hookrightarrow \mathcal{BC}^{p,q}(A^{\bullet,\bullet})
 $$
 and
 $$
 \left( 0 \to \ker \tilde\Delta^{BC}\lfloor_{C^{\bullet,\bullet}} \cap C^{p,q} = \ker\tilde\Delta^{BC}_{\scalar{\sspace}{\ssspace}_{C^{\bullet,\bullet}}} \cap C^{p,q} \to 0 \right) \hookrightarrow \mathcal{BC}^{p,q}(C^{\bullet,\bullet})
 $$
 are quasi-isomorphisms.

 Hence, one has the commutative diagram
 $$
 \xymatrix{
  \ker \tilde\Delta^{BC}\lfloor_{C^{\bullet,\bullet}} \cap C^{p,q} \ar[r]^{\simeq} \ar@{^{(}->}[d]^{j} & \frac{\ker\left(\del+\delbar \colon C^{p,q}\to C^{p+1,q}\oplus C^{p,q+1}\right)}{\imm\left(\del\delbar\colon C^{p-1,q-1}\to C^{p,q}\right)} \ar[d]^{j^*} \\
  \ker \tilde\Delta^{BC} \cap A^{p,q} \ar[r]^{\simeq} & \frac{\ker\left(\del+\delbar \colon A^{p,q}\to A^{p+1,q}\oplus A^{p,q+1}\right)}{\imm\left(\del\delbar\colon A^{p-1,q-1}\to A^{p,q}\right)}
 }
 $$
 getting that $j^*$ is injective.
\end{proof}

By using Lemma \ref{lem:adjoint-subspace}, one gets the following corollary of Theorem \ref{thm:inj}, concerning closed sub-complexes.

\begin{cor}\label{cor:inj-closed}
 Let $A^{\bullet,\bullet}$ be a bounded $\Z^2$-graded vector space with a structure of Hilbert space, with inner product $\scalar{\sspace}{\ssspace}$ such that $\scalar{ A^{p,q} }{ A^{p',q'} } = \{0\}$ for every $(p,q) \neq (p',q')$. Let $\del\colon A^{\bullet,\bullet} \supseteq \dom(\del)^{\bullet,\bullet} \to A^{\bullet+1,\bullet}$ and $\delbar\colon A^{\bullet,\bullet} \supseteq \dom(\delbar)^{\bullet,\bullet} \to A^{\bullet,\bullet+1}$ be densely-defined linear operators yielding a structure $\left(\left(\dom(\del)\cap\dom(\delbar)\right)^{\bullet,\bullet},\, \del,\, \delbar\right)$ of bounded double complex of $\C$-vector spaces.
 Let $j\colon \left(C^{\bullet,\bullet},\, \del,\, \delbar\right) \hookrightarrow \left(\left(\dom(\del)\cap\dom(\delbar)\right)^{\bullet,\bullet},\, \del,\, \delbar\right)$ be a closed sub-complex.
 Suppose that:
 \begin{enumerate}
  \item the operator $\tilde\Delta^{BC}_{\scalar{\sspace}{\ssspace}} \in \Hom^{0,0}\left(\dom\left(\tilde\Delta^{BC}_{\scalar{\sspace}{\ssspace}}\right)^{\bullet,\bullet};\, A^{\bullet,\bullet}\right)$ induces the decomposition
  $$ \dom\left(\tilde\Delta^{BC}_{\scalar{\sspace}{\ssspace}}\right) \;=\; \ker \tilde\Delta^{BC}_{\scalar{\sspace}{\ssspace}} \oplus \imm \tilde\Delta^{BC}_{\scalar{\sspace}{\ssspace}} \;; $$
  \item $C^{\bullet,\bullet}\subseteq \dom(\del)\cap\dom(\delbar)\cap\dom\left(\del^*_{\scalar{\sspace}{\ssspace}}\right)\cap\dom\left(\delbar^*_{\scalar{\sspace}{\ssspace}}\right)$, and $\pi^{C^{\bullet,\bullet}} \circ \del = \del \circ \pi^{C^{\bullet,\bullet}} \colon \dom(\del)^{\bullet,\bullet} \to C^{\bullet+1,\bullet}$ and $\pi^{C^{\bullet,\bullet}} \circ \delbar = \delbar \circ \pi^{C^{\bullet,\bullet}} \colon \dom(\delbar)^{\bullet,\bullet} \to C^{\bullet,\bullet+1}$.
 \end{enumerate}
 Then, for every $\left(p,q\right)\in\Z^2$, the induced map $j \colon \mathcal{BC}^{p,q}(C^{\bullet,\bullet}) \hookrightarrow \mathcal{BC}^{p,q}(A^{\bullet,\bullet})$ of complexes induces an injective map $j^*$ in cohomology.
\end{cor}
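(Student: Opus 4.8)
The plan is to obtain the statement as a direct consequence of Theorem \ref{thm:inj}, by checking that its three hypotheses \ref{item:thm-inj-1}, \ref{item:thm-inj-2}, \ref{item:thm-inj-3} are implied by the two hypotheses assumed here (closedness of $C^{\bullet,\bullet}$ being exactly what makes the orthogonal projection $\pi^{C^{\bullet,\bullet}}$ well-defined). Hypothesis \ref{item:thm-inj-1} is literally the first hypothesis of the corollary, so there is nothing to do. For \ref{item:thm-inj-2}, I would invoke Lemma \ref{lem:adjoint-subspace} twice, once with $L=\del$ and once with $L=\delbar$: the assumption $C^{\bullet,\bullet}\subseteq\dom(\del)\cap\dom(\delbar)\cap\dom(\del^*_{\scalar{\sspace}{\ssspace}})\cap\dom(\delbar^*_{\scalar{\sspace}{\ssspace}})$ together with the commutation relations $\pi^{C^{\bullet,\bullet}}\circ\del=\del\circ\pi^{C^{\bullet,\bullet}}$ and $\pi^{C^{\bullet,\bullet}}\circ\delbar=\delbar\circ\pi^{C^{\bullet,\bullet}}$ are precisely the hypotheses of that lemma, whose conclusion gives $\del^*_{\scalar{\sspace}{\ssspace}}\lfloor_{C^{\bullet,\bullet}}=\left(\del\lfloor_{C^{\bullet,\bullet}}\right)^*_{\scalar{\sspace}{\ssspace}_{C^{\bullet,\bullet}}}$ and $\delbar^*_{\scalar{\sspace}{\ssspace}}\lfloor_{C^{\bullet,\bullet}}=\left(\delbar\lfloor_{C^{\bullet,\bullet}}\right)^*_{\scalar{\sspace}{\ssspace}_{C^{\bullet,\bullet}}}$, together with the extra information that $\del^*_{\scalar{\sspace}{\ssspace}}$ and $\delbar^*_{\scalar{\sspace}{\ssspace}}$ send $C^{\bullet,\bullet}$ into itself and that $\pi^{C^{\bullet,\bullet}}$ commutes with $\del^*_{\scalar{\sspace}{\ssspace}}$ and $\delbar^*_{\scalar{\sspace}{\ssspace}}$ as well.

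For the \emph{in particular} clause of \ref{item:thm-inj-2} I would argue as follows. Since $\pi^{C^{\bullet,\bullet}}$ now commutes with each of $\del$, $\delbar$, $\del^*_{\scalar{\sspace}{\ssspace}}$, $\delbar^*_{\scalar{\sspace}{\ssspace}}$ and preserves each of their domains, it commutes with every composition of these operators, and hence — recalling $(\del\delbar)^*_{\scalar{\sspace}{\ssspace}}=\delbar^*_{\scalar{\sspace}{\ssspace}}\del^*_{\scalar{\sspace}{\ssspace}}$ and $(\delbar^*_{\scalar{\sspace}{\ssspace}}\del)^*_{\scalar{\sspace}{\ssspace}}=\del^*_{\scalar{\sspace}{\ssspace}}\delbar$ — with the six summands defining $\tilde\Delta^{BC}_{\scalar{\sspace}{\ssspace}}$; moreover $C^{\bullet,\bullet}$, lying in the common domain of $\del$, $\delbar$, $\del^*_{\scalar{\sspace}{\ssspace}}$, $\delbar^*_{\scalar{\sspace}{\ssspace}}$ and being stable under each of them, lies in $\dom(\tilde\Delta^{BC}_{\scalar{\sspace}{\ssspace}})$ and satisfies $\tilde\Delta^{BC}_{\scalar{\sspace}{\ssspace}}(C^{\bullet,\bullet})=\tilde\Delta^{BC}_{\scalar{\sspace}{\ssspace}}\,\pi^{C^{\bullet,\bullet}}(C^{\bullet,\bullet})=\pi^{C^{\bullet,\bullet}}\tilde\Delta^{BC}_{\scalar{\sspace}{\ssspace}}(C^{\bullet,\bullet})\subseteq C^{\bullet,\bullet}$. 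Finally $\tilde\Delta^{BC}_{\scalar{\sspace}{\ssspace}}\lfloor_{C^{\bullet,\bullet}}=\tilde\Delta^{BC}_{\scalar{\sspace}{\ssspace}_{C^{\bullet,\bullet}}}$ because, by the previous paragraph, $\del$, $\delbar$ and their $\scalar{\sspace}{\ssspace}$-adjoints restrict on $C^{\bullet,\bullet}$ to $\del\lfloor_{C^{\bullet,\bullet}}$, $\delbar\lfloor_{C^{\bullet,\bullet}}$ and their $\scalar{\sspace}{\ssspace}_{C^{\bullet,\bullet}}$-adjoints.

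It remains to verify \ref{item:thm-inj-3}. By the above, $\dom(\tilde\Delta^{BC}_{\scalar{\sspace}{\ssspace}}\lfloor_{C^{\bullet,\bullet}})=C^{\bullet,\bullet}$, so I must show $C^{\bullet,\bullet}=\left(\ker\tilde\Delta^{BC}_{\scalar{\sspace}{\ssspace}}\cap C^{\bullet,\bullet}\right)\oplus\imm\left(\tilde\Delta^{BC}_{\scalar{\sspace}{\ssspace}}\lfloor_{C^{\bullet,\bullet}}\right)$. Given $x\in C^{\bullet,\bullet}$, use \ref{item:thm-inj-1} to write $x=x_0+\tilde\Delta^{BC}_{\scalar{\sspace}{\ssspace}}y$ with $x_0\in\ker\tilde\Delta^{BC}_{\scalar{\sspace}{\ssspace}}$ and $y$ in the domain (so that $\tilde\Delta^{BC}_{\scalar{\sspace}{\ssspace}}y=x-x_0$ lies in the domain too); applying $\pi^{C^{\bullet,\bullet}}$ and using $\pi^{C^{\bullet,\bullet}}x=x$ together with the commutation gives $x=\pi^{C^{\bullet,\bullet}}x_0+\tilde\Delta^{BC}_{\scalar{\sspace}{\ssspace}}\,\pi^{C^{\bullet,\bullet}}y$, where $\pi^{C^{\bullet,\bullet}}x_0\in\ker\tilde\Delta^{BC}_{\scalar{\sspace}{\ssspace}}\cap C^{\bullet,\bullet}$ (being fixed by $\pi^{C^{\bullet,\bullet}}$ and killed by $\tilde\Delta^{BC}_{\scalar{\sspace}{\ssspace}}$) and $\pi^{C^{\bullet,\bullet}}y\in C^{\bullet,\bullet}$, so that $\tilde\Delta^{BC}_{\scalar{\sspace}{\ssspace}}\,\pi^{C^{\bullet,\bullet}}y\in\imm\left(\tilde\Delta^{BC}_{\scalar{\sspace}{\ssspace}}\lfloor_{C^{\bullet,\bullet}}\right)$; the sum is direct since $\ker\tilde\Delta^{BC}_{\scalar{\sspace}{\ssspace}}\cap\imm\tilde\Delta^{BC}_{\scalar{\sspace}{\ssspace}}=\{0\}$ by \ref{item:thm-inj-1}. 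With \ref{item:thm-inj-1}, \ref{item:thm-inj-2} and \ref{item:thm-inj-3} all in place, Theorem \ref{thm:inj} applies and yields that $j^*$ is injective. I expect the only delicate point to be the passage, in the second paragraph, from ``$\pi^{C^{\bullet,\bullet}}$ commutes with $\del$, $\delbar$, $\del^*_{\scalar{\sspace}{\ssspace}}$, $\delbar^*_{\scalar{\sspace}{\ssspace}}$ and preserves their domains'' to ``$\pi^{C^{\bullet,\bullet}}$ commutes with $\tilde\Delta^{BC}_{\scalar{\sspace}{\ssspace}}$ and $C^{\bullet,\bullet}$ is $\tilde\Delta^{BC}_{\scalar{\sspace}{\ssspace}}$-stable'', since $\tilde\Delta^{BC}_{\scalar{\sspace}{\ssspace}}$ is a sum of compositions of densely-defined unbounded operators and one must track that every intermediate application stays within the relevant domain; in the finite-dimensional setting to which the corollary will be applied, this is automatic.
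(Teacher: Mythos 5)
Your proposal is correct and follows essentially the same route as the paper's proof: both deduce hypothesis \emph{(ii)} of Theorem \ref{thm:inj} from Lemma \ref{lem:adjoint-subspace}, then show that $\pi^{C^{\bullet,\bullet}}$ commutes with $\tilde\Delta^{BC}_{\scalar{\sspace}{\ssspace}}$ and push the decomposition $\dom\left(\tilde\Delta^{BC}_{\scalar{\sspace}{\ssspace}}\right)=\ker\tilde\Delta^{BC}_{\scalar{\sspace}{\ssspace}}\oplus\imm\tilde\Delta^{BC}_{\scalar{\sspace}{\ssspace}}$ through $\pi^{C^{\bullet,\bullet}}$ to obtain hypothesis \emph{(iii)}, before invoking Theorem \ref{thm:inj}. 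Your element-wise verification of \emph{(iii)} is just an unwinding of the paper's one-line computation $\pi^{C^{\bullet,\bullet}}\left(\dom\tilde\Delta^{BC}\right)=\pi^{C^{\bullet,\bullet}}\left(\ker\tilde\Delta^{BC}\right)+\pi^{C^{\bullet,\bullet}}\left(\imm\tilde\Delta^{BC}\right)$, and the domain subtlety you flag at the end is glossed over in the paper in exactly the same way.
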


\begin{proof}
 By Lemma \ref{lem:adjoint-subspace}, one has $\pi^{C^{\bullet,\bullet}} \circ \del^* = \del^* \circ \pi^{C^{\bullet,\bullet}} \colon \dom\left(\del^*\right)^{\bullet,\bullet} \to C^{\bullet-1,\bullet}$ and $\pi^{C^{\bullet,\bullet}} \circ \delbar^* = \delbar^* \circ \pi^{C^{\bullet,\bullet}} \colon \dom\left(\delbar^*\right)^{\bullet,\bullet} \to C^{\bullet,\bullet-1}$, and hence in particular $\del^*\lfloor_{C^{\bullet,\bullet}} = \left(\del\lfloor_{C^{\bullet,\bullet}}\right)^*_{\scalar{\sspace}{\ssspace}_{C^{\bullet,\bullet}}} \colon C^{\bullet,\bullet} \to C^{\bullet-1,\bullet}$
 and $\delbar^*\lfloor_{C^{\bullet,\bullet}} = \left(\delbar\lfloor_{C^{\bullet,\bullet}}\right)^*_{\scalar{\sspace}{\ssspace}_{C^{\bullet,\bullet}}} \colon C^{\bullet,\bullet} \to C^{\bullet,\bullet-1}$.

 Furthermore, it follows that $\pi^{C^{\bullet,\bullet}} \circ \tilde\Delta^{BC} = \tilde\Delta^{BC} \circ \pi^{C^{\bullet,\bullet}} \colon \dom\left(\tilde\Delta^{BC}\right)^{\bullet,\bullet} \to C^{\bullet,\bullet}$. In particular, it follows that
 $$ \pi^{C^{\bullet,\bullet}} \left(\ker\tilde\Delta^{BC}\right) \;=\; \ker\tilde\Delta^{BC}\lfloor_{C^{\bullet,\bullet}} \qquad \text{ and } \qquad \pi^{C^{\bullet,\bullet}} \left(\imm\tilde\Delta^{BC}\right) \;=\; \imm\tilde\Delta^{BC}\lfloor_{C^{\bullet,\bullet}} \;, $$
 and hence one gets the decomposition
 \begin{eqnarray*}
 \dom\left(\tilde\Delta^{BC}\lfloor_{C^{\bullet,\bullet}}\right)^{\bullet,\bullet} &=& \pi^{C^{\bullet,\bullet}}\left(\dom\left(\tilde\Delta^{BC}\right)^{\bullet,\bullet}\right) \;=\; \pi^{C^{\bullet,\bullet}}\left(\ker\tilde\Delta^{BC}\right) + \pi^{C^{\bullet,\bullet}} \left(\imm\tilde\Delta^{BC}\right) \\[5pt]
 &=& \ker\tilde\Delta^{BC} \lfloor_{C^{\bullet,\bullet}} \oplus \imm\tilde\Delta^{BC} \lfloor_{C^{\bullet,\bullet}} \;.
 \end{eqnarray*}
 Hence the hypotheses of Theorem \ref{thm:inj} are satisfied, completing the proof.
\end{proof}

Note that hypothesis \ref{item:thm-inj-3} in Theorem \ref{thm:inj} is satisfied whenever the sub-complex $C^{\bullet,\bullet}$ is finite-dimensional.

\begin{cor}\label{cor:inj-fin-dim}
 Let $A^{\bullet,\bullet}$ be a bounded $\Z^2$-graded vector space with a structure of Hilbert space, with inner product $\scalar{\sspace}{\ssspace}$ such that $\scalar{ A^{p,q} }{ A^{p',q'} } = \{0\}$ for every $(p,q) \neq (p',q')$. Let $\del\colon A^{\bullet,\bullet} \supseteq \dom(\del)^{\bullet,\bullet} \to A^{\bullet+1,\bullet}$ and $\delbar\colon A^{\bullet,\bullet} \supseteq \dom(\delbar)^{\bullet,\bullet} \to A^{\bullet,\bullet+1}$ be densely-defined linear operators yielding a structure $\left(\left(\dom(\del)\cap\dom(\delbar)\right)^{\bullet,\bullet},\, \del,\, \delbar\right)$ of bounded double complex of $\C$-vector spaces.
 Let $j\colon \left(C^{\bullet,\bullet},\, \del,\, \delbar\right) \hookrightarrow \left(\left(\dom(\del)\cap\dom(\delbar)\right)^{\bullet,\bullet},\, \del,\, \delbar\right)$ be a sub-complex.
 Suppose that:
 \begin{enumerate}
  \item\label{item:cor-inj-fin-dim-1} the operator $\tilde\Delta^{BC}_{\scalar{\sspace}{\ssspace}} \in \Hom^{0,0}\left(\dom\left(\tilde\Delta^{BC}_{\scalar{\sspace}{\ssspace}}\right)^{\bullet,\bullet};\, A^{\bullet,\bullet}\right)$ induces the decomposition
  $$ \dom\left(\tilde\Delta^{BC}_{\scalar{\sspace}{\ssspace}}\right)^{\bullet,\bullet} \;=\; \ker \tilde\Delta^{BC}_{\scalar{\sspace}{\ssspace}} \oplus \imm \tilde\Delta^{BC}_{\scalar{\sspace}{\ssspace}} \;;$$
  \item\label{item:cor-inj-fin-dim-2} $C^{\bullet,\bullet}$ is finite-dimensional;
  \item\label{item:cor-inj-fin-dim-3} it holds that
  $$ \del^*_{\scalar{\sspace}{\ssspace}}\lfloor_{C^{\bullet,\bullet}} \;=\; \left(\del\lfloor_{C^{\bullet,\bullet}}\right)^*_{\scalar{\sspace}{\ssspace}_{C^{\bullet,\bullet}}} \colon C^{\bullet,\bullet} \to C^{\bullet-1,\bullet} $$
  and
  $$ \delbar^*_{\scalar{\sspace}{\ssspace}}\lfloor_{C^{\bullet,\bullet}} \;=\; \left(\delbar\lfloor_{C^{\bullet,\bullet}}\right)^*_{\scalar{\sspace}{\ssspace}_{C^{\bullet,\bullet}}} \colon C^{\bullet,\bullet} \to C^{\bullet,\bullet-1} \;. $$
 \end{enumerate}
 Then, for every $\left(p,q\right)\in\Z^2$, the induced map $j \colon \mathcal{BC}^{p,q}(C^{\bullet,\bullet}) \hookrightarrow \mathcal{BC}^{p,q}(A^{\bullet,\bullet})$
 of complexes induces an injective map $j^*$ in cohomology.
\end{cor}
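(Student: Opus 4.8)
The plan is to deduce the statement directly from Theorem \ref{thm:inj}, by observing that the finite-dimensionality hypothesis \ref{item:cor-inj-fin-dim-2}, in the presence of hypothesis \ref{item:cor-inj-fin-dim-3}, automatically forces hypothesis \ref{item:thm-inj-3} of that theorem. Note that hypotheses \ref{item:cor-inj-fin-dim-1} and \ref{item:cor-inj-fin-dim-3} here are literally hypotheses \ref{item:thm-inj-1} and \ref{item:thm-inj-2} of Theorem \ref{thm:inj}, so the only thing left to check is the decomposition $\dom\left(\tilde\Delta^{BC}_{\scalar{\sspace}{\ssspace}}\lfloor_{C^{\bullet,\bullet}}\right) = \ker \tilde\Delta^{BC}_{\scalar{\sspace}{\ssspace}}\lfloor_{C^{\bullet,\bullet}} \oplus \imm \tilde\Delta^{BC}_{\scalar{\sspace}{\ssspace}}\lfloor_{C^{\bullet,\bullet}}$.

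First I would record, exactly as in the ``in particular'' clause of hypothesis \ref{item:thm-inj-2} of Theorem \ref{thm:inj}, that hypothesis \ref{item:cor-inj-fin-dim-3} guarantees $\tilde\Delta^{BC}_{\scalar{\sspace}{\ssspace}}\lfloor_{C^{\bullet,\bullet}} = \tilde\Delta^{BC}_{\scalar{\sspace}{\ssspace}_{C^{\bullet,\bullet}}}$, i.e. the restriction of the Bott-Chern Laplacian coincides with the Bott-Chern Laplacian of the double complex $\left(C^{\bullet,\bullet},\, \del,\, \delbar\right)$ equipped with the induced Hilbert space structure $\scalar{\sspace}{\ssspace}_{C^{\bullet,\bullet}}$. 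In particular, since $C^{\bullet,\bullet}$ is finite-dimensional, this operator is a genuine self-$\scalar{\sspace}{\ssspace}_{C^{\bullet,\bullet}}$-adjoint endomorphism of $C^{\bullet,\bullet}$ defined on all of $C^{\bullet,\bullet}$, preserving each bidegree.

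Then the required decomposition is elementary linear algebra: on a finite-dimensional Hilbert space $V$, any self-adjoint operator $T$ satisfies $\imm T = \left(\ker T^{*}\right)^{\perp} = \left(\ker T\right)^{\perp}$, whence $V = \ker T \oplus \imm T$ (indeed an orthogonal direct sum). Applying this bidegree by bidegree, with $V = C^{p,q}$ and $T = \tilde\Delta^{BC}_{\scalar{\sspace}{\ssspace}_{C^{\bullet,\bullet}}}\lfloor_{C^{p,q}}$, yields hypothesis \ref{item:thm-inj-3} of Theorem \ref{thm:inj}.

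With all three hypotheses of Theorem \ref{thm:inj} in force, that theorem gives that $j^{*}$ is injective in Bott-Chern cohomology, which completes the proof. There is essentially no obstacle here; the only point that deserves care is that the self-adjointness of $\tilde\Delta^{BC}\lfloor_{C^{\bullet,\bullet}}$ on $C^{\bullet,\bullet}$ — which is precisely what makes $\imm = \left(\ker\right)^{\perp}$ available — genuinely uses hypothesis \ref{item:cor-inj-fin-dim-3}: without it, $\del^{*}_{\scalar{\sspace}{\ssspace}}\lfloor_{C^{\bullet,\bullet}}$ need neither take values in $C^{\bullet,\bullet}$ nor represent the adjoint of $\del\lfloor_{C^{\bullet,\bullet}}$ computed inside $C^{\bullet,\bullet}$, so the operator assembled from $\del\lfloor_{C^{\bullet,\bullet}}$, $\delbar\lfloor_{C^{\bullet,\bullet}}$ and their restricted adjoints would fail to be self-adjoint and the argument would break down.
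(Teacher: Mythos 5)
Your proposal is correct and follows essentially the same route as the paper: reduce to Theorem \ref{thm:inj} by checking that hypothesis \ref{item:thm-inj-3} there is automatic, using that the restricted Bott-Chern Laplacian is an everywhere-defined self-adjoint endomorphism of the finite-dimensional space $C^{\bullet,\bullet}$ and that any such operator $L$ on a finite-dimensional inner-product space satisfies $C = \ker L \oplus \imm L$. The paper proves this last fact by the inclusion $\imm L \subseteq \left(\ker L\right)^{\perp}$ plus a dimension count, while you invoke $\imm L = \left(\ker L\right)^{\perp}$ directly; these are the same elementary argument.
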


\begin{proof}
 Note that, if $C^{\bullet,\bullet}\subseteq \left(\dom\del\cap\dom\delbar\right)^{\bullet,\bullet}$ is finite-dimensional, as in \ref{item:cor-inj-fin-dim-2}, then the $\C$-linear operators $\del\lfloor_{C^{\bullet,\bullet}} \colon C^{\bullet,\bullet}\to C^{\bullet+1,\bullet}$ and $\delbar\lfloor_{C^{\bullet,\bullet}} \colon C^{\bullet,\bullet} \to C^{\bullet,\bullet+1}$ are continuous, and hence $\dom \left(\del\lfloor_{C^{\bullet,\bullet}}\right)^*_{\scalar{\sspace}{\ssspace}_{C^{\bullet,\bullet}}} = \dom \left(\del^*\lfloor_{C^{\bullet,\bullet}}\right) = C^{\bullet,\bullet}$ and $\dom \left(\delbar\lfloor_{C^{\bullet,\bullet}}\right)^*_{\scalar{\sspace}{\ssspace}_{C^{\bullet,\bullet}}} = \dom \left(\delbar^*\lfloor_{C^{\bullet,\bullet}}\right) = C^{\bullet,\bullet}$. By hypothesis \ref{item:cor-inj-fin-dim-3}, it follows that
 $\tilde\Delta^{BC}\lfloor_{C^{\bullet,\bullet}} = \tilde\Delta^{BC}_{\scalar{\sspace}{\ssspace}_{C^{\bullet,\bullet}}} \in \End^{0,0}\left(C^{\bullet,\bullet}\right)$.
 In particular, $\dom \tilde\Delta^{BC}_{\scalar{\sspace}{\ssspace}_{C^{\bullet,\bullet}}} = \dom\tilde\Delta^{BC}\lfloor_{C^{\bullet,\bullet}} =  C^{\bullet,\bullet}$.

 Hence, in order to apply Theorem \ref{thm:inj}, it suffices to show that, given a finite-dimensional $\C$-vector space $C$ endowed with an inner product $\scalar{\sspace}{\ssspace}$, any self-$\scalar{\sspace}{\ssspace}$-adjoint endomorphism $L\in\Hom(C)$ yields a decomposition
 $$ C \;=\; \ker L \oplus \imm L \;.$$
 Indeed, take $\ker L \subseteq C$ and let $V\subseteq C$ be the $\C$-vector sub-space of $C$ being $\scalar{\sspace}{\ssspace}$-orthogonal to $\ker L$; in particular, $C = \ker L \stackrel{\perp}{\oplus} V$. It suffices to show that $V = \imm L$. Since $L$ is self-$\scalar{\sspace}{\ssspace}$-adjoint, then $\scalar{\imm L}{\ker L}=\{0\}$, and hence $\imm L \subseteq V$. Since $\dim_\C C = \dim_\C \imm L+ \dim_\C \ker L < +\infty$, it follows that $V=\imm L$.
\end{proof}

\begin{rem}\label{rem:other-cohom-inj}
 Obviously, Theorem \ref{thm:inj}, as well as its corollaries, holds, with straightforward modifications, also for the cohomologies associated to the operators $\Delta_{\scalar{\sspace}{\ssspace}} := \left[\de,\, \de^*\right]$, and $\square_{\scalar{\sspace}{\ssspace}} := \left[\del,\, \del^*\right]$, and $\overline\square_{\scalar{\sspace}{\ssspace}} := \left[\delbar,\, \delbar^*\right]$, and $\tilde\Delta^{A}_{\scalar{\sspace}{\ssspace}} := \del\del^*+\delbar\delbar^*+\left(\del\delbar\right)^*\left(\del\delbar\right)+\left(\del\delbar\right)\left(\del\delbar\right)^*+\left(\delbar\del^*\right)^*\left(\delbar\del^*\right)+\left(\delbar\del^*\right)\left(\delbar\del^*\right)^*$.
\end{rem}

\section{Applications}

We are now interested in applying the general results of the previous section to suitable sub-complexes of the double complex $\left(\wedge^{\bullet,\bullet}X,\, \del,\, \delbar\right)$, where $X$ is a compact complex manifold. We are especially interested in the case when $X$ is a solvmanifold.

\subsection{Complexes of PD-type}

Let $\left( A^{\bullet,\bullet} ,\, \del ,\, \delbar \right)$ be a double complex of $\C$-vector spaces. Suppose that $A^{\bullet,\bullet}$ have a structure $\wedge$ of $\C$-algebra being compatible with the $\Z^2$-grading (namely, $A^{p,q}\wedge A^{p',q'}\subseteq A^{p+p',q+q'}$ for every $(p,q),(p',q')\in\Z^2$), and with respect to which $\de := \del + \delbar$ satisfies the Leibniz rule, namely,
$$ \text{for every } a\in \Tot^{\hat a}A^{\bullet,\bullet} \;, \qquad  \left[\de, \, a\wedge\sspace\right] \;=\; \de a \wedge \sspace \;\in\; \End^{\hat a + 1}\left(\Tot^\bullet A^{\bullet,\bullet}\right) \;. $$

Following the notation introduced in \cite[\S2]{kasuya-degeneration} by the second author, $\left( A^{\bullet,\bullet} ,\, \del ,\, \delbar \right)$ is said to be a \emph{bi-differential $\Z^2$-graded algebra of \pd-type} if
\begin{enumerate}
 \item whenever $p<0$ or $q<0$, then $A^{p,q}=\{0\}$, and $H^{0}(\Tot^\bullet A^{\bullet,\bullet}) = \C \left\langle 1 \right\rangle$;
 \item there exists $n\in\N$ such that, whenever $p>n$ or $q>n$, then $A^{p,q} = \{0\}$, and $H^{2n}(\Tot^\bullet A^{\bullet,\bullet}) = \C \left\langle v \right\rangle$; (call $n$ the \emph{\pd-dimension of $A^{\bullet,\bullet}$};)
 \item for every $\left(h,k\right)\in\{0,\ldots,n\}^2$, the bi-$\C$-linear map $A^{h,k} \times A^{n-h,n-k} \to A^{n,n} \stackrel{\simeq}{\to} \C$ induced by $\wedge$ is non-degenerate;
 \item $\de \Tot^0 A^{\bullet,\bullet} = \{0\}$ and $\de \Tot^{2n-1} A^{\bullet,\bullet} = \{0\}$.
\end{enumerate}

Given a bi-differential $\Z^2$-graded algebra $\left(A^{\bullet,\bullet},\, \del,\, \delbar\right)$ of \pd-type, let $\scalar{\sspace}{\ssspace}$ be an inner product on $A^{\bullet,\bullet}$ being compatible with the $\Z^2$-grading, namely, $\scalar{A^{p,q}}{A^{p',q'}}=\{0\}$ whenever $\left(p,q\right) \neq \left(p',q'\right)$, and being compatible with the \pd-type structure, namely, $\scalar{v}{v}=1$.
Define the $\C$-anti-linear map
$$ \bar*_{\scalar{\sspace}{\ssspace}} \colon A^{\bullet_1,\bullet_2} \to A^{n-\bullet_1,n-\bullet_2} \qquad \text{ such that } \qquad \text{for every }\alpha,\beta\in A^{\bullet,\bullet}\;, \quad \alpha \wedge \bar*_{\scalar{\sspace}{\ssspace}}\beta \;=\; \scalar{\alpha}{\beta} \cdot v $$
(as above, we will understand the scalar product $\scalar{\sspace}{\ssspace}$ whenever it is clear from the context).

By considering the Hilbert space given by the $\scalar{\sspace}{\ssspace}$-completion of $A^{\bullet,\bullet}$, one has that the operators
$$ \del^* \;:=\; -\bar*_{\scalar{\sspace}{\ssspace}} \, \del \, \bar*_{\scalar{\sspace}{\ssspace}} \colon A^{\bullet,\bullet} \to A^{\bullet-1,\bullet} \qquad \text{ and } \qquad \delbar^* \;:=\; -\bar*_{\scalar{\sspace}{\ssspace}} \, \delbar \, \bar*_{\scalar{\sspace}{\ssspace}} \colon A^{\bullet,\bullet} \to A^{\bullet,\bullet-1} $$
are in fact the $\scalar{\sspace}{\ssspace}$-adjoint operators $\del^*_{\scalar{\sspace}{\ssspace}}$, respectively $\delbar^*_{\scalar{\sspace}{\ssspace}}$, of $\del\colon A^{\bullet,\bullet}\to A^{\bullet+1,\bullet}$, respectively $\delbar\colon A^{\bullet,\bullet}\to A^{\bullet,\bullet+1}$, and the operator
$$ \de^* \;:=\; -\bar*_{\scalar{\sspace}{\ssspace}} \, \de \, \bar*_{\scalar{\sspace}{\ssspace}} \;=\; \del^* + \delbar^* \colon \Tot^\bullet A^{\bullet,\bullet} \to \Tot^{\bullet-1} A^{\bullet,\bullet} $$
is in fact the $\scalar{\sspace}{\ssspace}$-adjoint operator $\de^*_{\scalar{\sspace}{\ssspace}}$ of $\de := \del+\delbar \colon \Tot^\bullet A^{\bullet,\bullet} \to \Tot^{\bullet+1} A^{\bullet,\bullet}$, \cite[Lemma 2.4]{kasuya-degeneration}.

The following result is an application of Corollary \ref{cor:inj-fin-dim} to the case of bi-differential $\Z^2$-graded algebras of \pd-type.

\begin{prop}\label{prop:inj-pd-type}
 Let $\left(A^{\bullet,\bullet},\, \del,\, \delbar\right)$ be a bi-differential $\Z^2$-graded algebra of \pd-type of \pd-dimension $n$. Let $\scalar{\sspace}{\ssspace}$ be an inner product on $A^{\bullet,\bullet}$ being compatible with the $\Z^2$-grading and with the \pd-type structure. Consider the Hilbert space given by the $\scalar{\sspace}{\ssspace}$-completion of $A^{\bullet,\bullet}$, and suppose that the operator $\tilde\Delta^{BC}_{\scalar{\sspace}{\ssspace}} \in \End^{0,0}\left(A^{\bullet,\bullet}\right)$ induces the decomposition
 $$ A^{\bullet,\bullet} \;=\; \ker \tilde\Delta^{BC}_{\scalar{\sspace}{\ssspace}} \oplus \imm \tilde\Delta^{BC}_{\scalar{\sspace}{\ssspace}} \;.$$
 Let $\left(C^{\bullet,\bullet},\, \del,\, \delbar\right) \hookrightarrow \left(A^{\bullet,\bullet},\, \del,\, \delbar\right)$ be a finite-dimensional sub-complex of $\left(A^{\bullet,\bullet},\, \del,\, \delbar\right)$ having a structure of bi-differential $\Z^2$-graded algebra of \pd-type of \pd-dimension $n$ induced by $A^{\bullet,\bullet}$. Suppose that
 $$ \bar*_{\scalar{\sspace}{\ssspace}} \lfloor_{C^{\bullet,\bullet}}\colon C^{\bullet,\bullet} \to C^{n-\bullet,n-\bullet} \;. $$

 Then, for any $\left(p,q\right)\in\Z^2$, the induced inclusions
 $$ \left(\Tot^\bullet \left(C^{\bullet,\bullet}\right),\, \del+\delbar\right) \hookrightarrow \left(\Tot^\bullet A^{\bullet,\bullet},\, \del+\delbar\right) \;,$$
and 
$$ \left(C^{\bullet,q},\, \del\right) \hookrightarrow \left(A^{\bullet,q},\, \del\right) \;, \qquad \left(C^{p,\bullet},\, \delbar\right) \hookrightarrow \left(A^{p,\bullet},\, \delbar\right) \;,$$
 and
 $$
   \mathcal{BC}^{p,q}(C^{\bullet,\bullet}) \hookrightarrow \mathcal{BC}^{p,q}(A^{\bullet,\bullet}) \;, \qquad
   \mathcal{A}^{p,q}(C^{\bullet,\bullet}) \hookrightarrow \mathcal{A}^{p,q}(A^{\bullet,\bullet})
 $$
 induce injective maps in cohomology.
\end{prop}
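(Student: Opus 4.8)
The plan is to deduce each of the five statements from Corollary \ref{cor:inj-fin-dim} and from the variants of it pointed out in Remark \ref{rem:other-cohom-inj}, i.e.\ the ones attached to the Laplacians $\Delta$, $\square$, $\overline\square$ and $\tilde\Delta^{A}$ in place of $\tilde\Delta^{BC}$. So the task reduces to verifying, for each of $\tilde\Delta^{BC}$, $\tilde\Delta^{A}$, $\Delta$, $\square$, $\overline\square$, the hypotheses of that corollary: that $C^{\bullet,\bullet}$ is finite-dimensional (given), that the relevant adjoints restrict correctly to $C^{\bullet,\bullet}$, and that the relevant Hodge-type decomposition holds on $A^{\bullet,\bullet}$.

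First I would record the formal consequences of the \pd-type structure. Since $\bar*_{\scalar{\sspace}{\ssspace}}\lfloor_{C^{\bullet,\bullet}}$ maps $C^{\bullet,\bullet}$ into $C^{n-\bullet,n-\bullet}$ and $C^{\bullet,\bullet}$ is a sub-complex, the formulas $\del^{*} = -\bar*_{\scalar{\sspace}{\ssspace}}\,\del\,\bar*_{\scalar{\sspace}{\ssspace}}$ and $\delbar^{*} = -\bar*_{\scalar{\sspace}{\ssspace}}\,\delbar\,\bar*_{\scalar{\sspace}{\ssspace}}$ show that $\del^{*}$, $\delbar^{*}$, $\de^{*}$ and hence all of $\tilde\Delta^{BC}$, $\tilde\Delta^{A}$, $\Delta$, $\square$, $\overline\square$ stabilize $C^{\bullet,\bullet}$. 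Moreover, for $\alpha,\beta\in C^{\bullet,\bullet}$ one has $\scalar{\del\alpha}{\beta}_{C^{\bullet,\bullet}} = \scalar{\del\alpha}{\beta} = \scalar{\alpha}{\del^{*}\beta} = \scalar{\alpha}{\del^{*}\beta}_{C^{\bullet,\bullet}}$ with $\del^{*}\beta\in C^{\bullet,\bullet}$, so that $\del^{*}\lfloor_{C^{\bullet,\bullet}} = \left(\del\lfloor_{C^{\bullet,\bullet}}\right)^{*}_{\scalar{\sspace}{\ssspace}_{C^{\bullet,\bullet}}}$, and similarly for $\delbar$; this is exactly hypothesis \ref{item:cor-inj-fin-dim-3} of Corollary \ref{cor:inj-fin-dim} and its analogues. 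Finally, on the finite-dimensional space $C^{\bullet,\bullet}$ every self-$\scalar{\sspace}{\ssspace}_{C^{\bullet,\bullet}}$-adjoint endomorphism splits $C^{\bullet,\bullet}$ as kernel plus image (as observed inside the proof of Corollary \ref{cor:inj-fin-dim}), so the ``decomposition on $C^{\bullet,\bullet}$'' condition is automatic for each of the five Laplacians.

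It then remains to secure, on $A^{\bullet,\bullet}$, the decomposition $A^{\bullet,\bullet} = \ker L \oplus \imm L$ for $L \in \{\tilde\Delta^{BC},\, \tilde\Delta^{A},\, \Delta,\, \square,\, \overline\square\}$. For $L = \tilde\Delta^{BC}$ this is the standing assumption. For $L = \tilde\Delta^{A}$ I would use that the invertible conjugate-linear operator $\bar*_{\scalar{\sspace}{\ssspace}}$ conjugates $\tilde\Delta^{BC}$ to $\tilde\Delta^{A}$: it interchanges $\del\leftrightarrow\del^{*}$ and $\delbar\leftrightarrow\delbar^{*}$ up to sign, hence $\del^{*}\del\leftrightarrow\del\del^{*}$, $\delbar^{*}\delbar\leftrightarrow\delbar\delbar^{*}$ and $\delbar^{*}\del\leftrightarrow\delbar\del^{*}$, carrying the defining sum of $\tilde\Delta^{BC}$ onto that of $\tilde\Delta^{A}$; applying $\bar*_{\scalar{\sspace}{\ssspace}}$ to the Bott-Chern decomposition then gives the Aeppli one. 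For $L\in\{\Delta,\, \square,\, \overline\square\}$ I would argue that the Bott-Chern decomposition forces the images of $\del$, $\delbar$ and $\del\delbar$ (and of their adjoints) to be closed in $A^{\bullet,\bullet}$, and that from this uniform closedness the usual $L^{2}$-type decompositions for $\Delta$, $\square$ and $\overline\square$ follow; this is the \pd-type analogue of the situation on compact complex manifolds and is the step I expect to require the most care.

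With all this in place, Corollary \ref{cor:inj-fin-dim} applied to $\tilde\Delta^{BC}$ gives injectivity in Bott-Chern cohomology, its variant for $\tilde\Delta^{A}$ gives injectivity in Aeppli cohomology, and its variants for $\Delta$, $\square$ and $\overline\square$ give injectivity for the total complex, for the rows $\left(C^{\bullet,q},\, \del\right)\hookrightarrow\left(A^{\bullet,q},\, \del\right)$, and for the columns $\left(C^{p,\bullet},\, \delbar\right)\hookrightarrow\left(A^{p,\bullet},\, \delbar\right)$, respectively. The only genuinely delicate point is the propagation of the Hodge-type decomposition from $\tilde\Delta^{BC}$ to the remaining Laplacians on $A^{\bullet,\bullet}$; the rest is a formal combination of the \pd-duality $\bar*_{\scalar{\sspace}{\ssspace}}$ with the finite-dimensionality of $C^{\bullet,\bullet}$.
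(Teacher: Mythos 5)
Your overall strategy is the paper's: check the hypotheses of Corollary \ref{cor:inj-fin-dim} (finite-dimensionality, compatibility of the adjoints with $C^{\bullet,\bullet}$) and invoke it together with Remark \ref{rem:other-cohom-inj} for each of the five cohomologies. Your verification of hypothesis \ref{item:cor-inj-fin-dim-3} is correct and essentially equivalent to the paper's: you observe that $\del^* = -\bar*_{\scalar{\sspace}{\ssspace}}\,\del\,\bar*_{\scalar{\sspace}{\ssspace}}$ and $\delbar^* = -\bar*_{\scalar{\sspace}{\ssspace}}\,\delbar\,\bar*_{\scalar{\sspace}{\ssspace}}$ stabilize $C^{\bullet,\bullet}$ and then identify the restricted adjoint with the intrinsic one, whereas the paper first proves $\bar*_{\scalar{\sspace}{\ssspace}}\lfloor_{C^{\bullet,\bullet}} = \bar*_{\scalar{\sspace}{\ssspace}_{C^{\bullet,\bullet}}}$ using non-degeneracy of the pairing on $C^{\bullet,\bullet}$; the two computations carry the same content.

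The genuine gap is in your last step, where you claim that the decomposition $A^{\bullet,\bullet} = \ker L \oplus \imm L$ for $L \in \{\Delta,\, \square,\, \overline\square\}$ can be \emph{derived} from the assumed Bott-Chern decomposition via ``closedness of the images''. You flag this as delicate but give no argument, and in fact no such formal implication is available: $\ker\tilde\Delta^{BC} = \ker\del\cap\ker\delbar\cap\ker(\del\delbar)^*$ and $\imm\tilde\Delta^{BC}\subseteq\imm\del\delbar+(\imm\del^*+\imm\delbar^*)$ simply do not control the subspaces $\imm\delbar$ and $\imm\delbar^*$ needed for the $\overline\square$-decomposition, and since $A^{\bullet,\bullet}$ is only a dense subspace of its completion, a closed-range argument would not close the gap either. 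The paper does not attempt this: Remark \ref{rem:other-cohom-inj} is to be read as replacing the decomposition hypothesis by the one for the corresponding Laplacian in each case, and in the only place the proposition is used (Proposition \ref{injCo}) all five decompositions are supplied simultaneously by elliptic theory on a compact complex manifold (\cite[page 450]{kodaira}, \cite[Théorème 2.2]{schweitzer}). Your conjugation argument $\bar*_{\scalar{\sspace}{\ssspace}}\circ\tilde\Delta^{BC} = \tilde\Delta^{A}\circ\bar*_{\scalar{\sspace}{\ssspace}}$ for the Aeppli case is fine (granted bijectivity of $\bar*_{\scalar{\sspace}{\ssspace}}$), but for $\Delta$, $\square$, $\overline\square$ you should simply assume the corresponding decompositions as part of the ``straightforward modifications'' rather than try to deduce them from the Bott-Chern one.
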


\begin{proof}
 Note that also
 $$ A^{\bullet,\bullet} \;=\; \ker \tilde\Delta^{A}_{\scalar{\sspace}{\ssspace}} \oplus \imm \tilde\Delta^{A}_{\scalar{\sspace}{\ssspace}} \;,$$
 since $\bar*_{\scalar{\sspace}{\ssspace}} \, \tilde\Delta^{A}_{\scalar{\sspace}{\ssspace}} = \tilde\Delta^{BC}_{\scalar{\sspace}{\ssspace}} \, \bar*_{\scalar{\sspace}{\ssspace}}$.

By the hypothesis that $\bar*_{\scalar{\sspace}{\ssspace}} \lfloor_{C^{\bullet,\bullet}}\colon C^{\bullet,\bullet} \to C^{n-\bullet,n-\bullet}$, one gets that
$$ \bar*_{\scalar{\sspace}{\ssspace}}\lfloor_{C^{\bullet,\bullet}} \;=\; \bar*_{\scalar{\sspace}{\ssspace}_{C^{\bullet,\bullet}}} $$
(indeed, let $\alpha\in C^{\bullet,\bullet}$; then, for any $\beta\in C^{\bullet,\bullet}$, it holds that $\left(\bar*_{\scalar{\sspace}{\ssspace}_{C^{\bullet,\bullet}}}\alpha-\bar*_{\scalar{\sspace}{\ssspace}}\alpha\right)\wedge\beta=0$; by taking $\beta=\bar*_{\scalar{\sspace}{\ssspace}}\left(\bar*_{\scalar{\sspace}{\ssspace}_{C^{\bullet,\bullet}}}\alpha-\bar*_{\scalar{\sspace}{\ssspace}}\alpha\right) \in C^{\bullet,\bullet}$, one gets hence that $\bar*_{\scalar{\sspace}{\ssspace}_{C^{\bullet,\bullet}}}\alpha=\bar*_{\scalar{\sspace}{\ssspace}}\alpha$).
In particular, it follows that
\begin{eqnarray*}
\del^*_{\scalar{\sspace}{\ssspace}}\lfloor_{C^{\bullet,\bullet}} &=& \left( - \bar*_{\scalar{\sspace}{\ssspace}} \, \del \, \bar*_{\scalar{\sspace}{\ssspace}} \right)\lfloor_{C^{\bullet,\bullet}} \;=\; - \bar*_{\scalar{\sspace}{\ssspace}_{C^{\bullet,\bullet}}} \, \del\lfloor_{C^{\bullet,\bullet}} \, \bar*_{\scalar{\sspace}{\ssspace}_{C^{\bullet,\bullet}}} \\[5pt]
&=& \left(\del\lfloor_{C^{\bullet,\bullet}}\right)^*_{\scalar{\sspace}{\ssspace}_{C^{\bullet,\bullet}}} \colon C^{\bullet,\bullet} \to C^{\bullet-1,\bullet}
\end{eqnarray*}
and
\begin{eqnarray*}
\delbar^*_{\scalar{\sspace}{\ssspace}}\lfloor_{C^{\bullet,\bullet}} &=& \left( - \bar*_{\scalar{\sspace}{\ssspace}} \, \delbar \, \bar*_{\scalar{\sspace}{\ssspace}} \right)\lfloor_{C^{\bullet,\bullet}} \;=\; - \bar*_{\scalar{\sspace}{\ssspace}_{C^{\bullet,\bullet}}} \, \delbar\lfloor_{C^{\bullet,\bullet}} \, \bar*_{\scalar{\sspace}{\ssspace}_{C^{\bullet,\bullet}}} \\[5pt]
&=& \left(\delbar\lfloor_{C^{\bullet,\bullet}}\right)^*_{\scalar{\sspace}{\ssspace}_{C^{\bullet,\bullet}}} \colon C^{\bullet,\bullet} \to C^{\bullet,\bullet-1} \;.
\end{eqnarray*}

Hence Corollary \ref{cor:inj-fin-dim}, see also Remark \ref{rem:other-cohom-inj}, applies.
\end{proof}

\subsection{Compact complex manifolds}

Let $X$ be a compact complex manifold of complex dimension $n$ endowed with a Hermitian metric $g$. (Note that all manifolds are assumed to have no boundary.)

By considering the ($\C$-anti-linear) Hodge-$*$-operator
$$ \bar*_g \colon \wedge^{\bullet_1, \bullet_2}X \to \wedge^{n-\bullet_1, n-\bullet_2}X $$
and the inner product
$$ \scalar{\sspace}{\ssspace} \;:=\; \int_X \sspace \wedge \bar*_g (\ssspace) \;, $$
one gets that the double complex $\left(\wedge^{\bullet,\bullet}X,\, \del,\, \delbar\right)$ has a structure of bi-differential $\Z^2$-graded algebra of \pd-type of \pd-dimension $n$, such that $\scalar{\sspace}{\ssspace}$ is compatible with the $\Z^2$-grading and with the \pd-type structure of $\wedge^{\bullet,\bullet}X$.

The $2^\text{nd}$ order self-$\scalar{\sspace}{\ssspace}$-adjoint elliptic differential operators
$$ \Delta_g \;:=\; \left[\de,\, \de^*\right] \;\in\; \End^0\left(\wedge^\bullet X\otimes\C\right) \;, $$
and
$$
\square_g \;:=\; \left[\del,\, \del^*\right] \;\in\; \End^{0,0}\left(\wedge^{\bullet,\bullet} X\right) \;,
\qquad
\overline\square_g \;:=\; \left[\delbar,\, \delbar^*\right] \;\in\; \End^{0,0}\left(\wedge^{\bullet,\bullet} X\right) \;,
$$
and the $4^\text{th}$ order self-$\scalar{\sspace}{\ssspace}$-adjoint elliptic differential operators,  \cite[Proposition 5]{kodaira-spencer-3}, \cite[\S2.b, \S2.c]{schweitzer},
$$ \tilde \Delta^{BC}_g \;:=\; \left(\del\delbar\right)\left(\del\delbar\right)^*+\left(\del\delbar\right)^*\left(\del\delbar\right)+\left(\delbar^*\del\right)\left(\delbar^*\del\right)^*+\left(\delbar^*\del\right)^*\left(\delbar^*\del\right)+\delbar^*\delbar+\del^*\del \;\in\; \End^{0,0}\left(\wedge^{\bullet, \bullet}X\right) $$
and
$$ \tilde\Delta^{A}_g \;:=\; \del\del^*+\delbar\delbar^*+\left(\del\delbar\right)^*\left(\del\delbar\right)+\left(\del\delbar\right)\left(\del\delbar\right)^*+\left(\delbar\del^*\right)^*\left(\delbar\del^*\right)+\left(\delbar\del^*\right)\left(\delbar\del^*\right)^* \;\in\; \End^{0,0}\left(\wedge^{\bullet, \bullet}X\right) \;, $$
(from now on, the metric $g$ will be understood whenever it is clear from the context,) induce the $\scalar{\sspace}{\ssspace}$-orthogonal decompositions, \cite[page 450]{kodaira},
$$ \wedge^\bullet X\otimes_\R\C \;=\; \ker \Delta \oplus \imm \Delta \;=\; \ker \Delta \oplus \imm\de \oplus \imm\de^* $$
and
\begin{eqnarray*}
\wedge^{\bullet,\bullet}X &=& \ker\square \oplus \imm\square \;=\; \ker\square \oplus \imm\del \oplus \imm\del^* \\[5pt]
&=& \ker\overline\square \oplus \imm\overline\square \;=\; \ker\overline\square \oplus \imm\delbar \oplus \imm\delbar^* \;,
\end{eqnarray*}
and, \cite[Théorème 2.2, \S2.c]{schweitzer},
\begin{eqnarray*}
\wedge^{\bullet,\bullet}X &=& \ker\tilde\Delta^{BC} \oplus \imm\tilde\Delta^{BC} \;=\; \ker\tilde\Delta^{BC} \oplus \imm\del\delbar \oplus \left(\imm\del^* + \imm\delbar^*\right) \\[5pt]
 &=& \ker\tilde\Delta^{A} \oplus \imm\tilde\Delta^{A} \;=\; \ker\tilde\Delta^{A} \oplus \left(\imm\del+\imm\delbar\right) \oplus \imm\left(\del\delbar\right)^* \;.
\end{eqnarray*}

In particular, by arguing as in Lemma \ref{lem:hodge-isom-bc}, it follows that
$$
H^\bullet_{dR}(X;\C) \;:=\; \frac{\ker\de}{\imm\de} \;\simeq\; \ker\Delta \;,
\qquad
H^{\bullet,\bullet}_{\del}(X) \;:=\; \frac{\ker\del}{\imm\del} \;\simeq\; \ker\square \;,
\qquad
H^{\bullet,\bullet}_{\delbar}(X) \;:=\; \frac{\ker\delbar}{\imm\delbar} \;\simeq\; \ker\overline\square \;,
$$
and, \cite[Corollaire 2.3, \S2.c]{schweitzer},
$$
H^{\bullet,\bullet}_{BC}(X) \;:=\; \frac{\ker\del \cap \ker\delbar}{\imm\del\delbar} \;\simeq\; \ker\tilde\Delta^{BC} \;,
\qquad
H^{\bullet,\bullet}_{A}(X) \;:=\; \frac{\ker\del\delbar}{\imm\del+\imm\delbar} \;\simeq\; \ker\tilde\Delta^{A} \;.
$$

Note that $\bar*_g \circ \tilde\Delta^{BC} = \tilde\Delta^{A} \circ \bar*_g$, and hence the Hodge-$*$-operator induces the isomorphism
$$ H^{\bullet,\bullet}_{BC}(X) \stackrel{\simeq}{\to} H^{n-\bullet,n-\bullet}_{A}(X) \;.$$

In particular, by Proposition \ref{prop:inj-pd-type}, one gets straightforwardly the following result, which provides a condition under which the Bott-Chern cohomology of a finite-dimensional sub-complex of $\wedge^{\bullet,\bullet}X$ is a subgroup of $H^{\bullet,\bullet}_{BC}(X)$. Such a result will be applied in the next section with the aim to study the Bott-Chern cohomology of a certain class of solvmanifolds.

\begin{prop}\label{injCo}
 Let $X$ be a compact complex manifold of complex dimension $n$ endowed with a Hermitian metric $g$. Let $\left(C^{\bullet,\bullet},\, \del,\, \delbar\right) \hookrightarrow \left(\wedge^{\bullet,\bullet}X,\, \del,\, \delbar\right)$ be a finite-dimensional sub-complex of $\left(\wedge^{\bullet,\bullet}X,\, \del,\, \delbar\right)$ having a structure of bi-differential $\Z^2$-graded algebra of \pd-type of \pd-dimension $n$ induced by $\wedge^{\bullet,\bullet}X$.
 Suppose that
 $$ \bar*_g\lfloor_{C^{\bullet,\bullet}}\colon C^{\bullet,\bullet} \to C^{n-\bullet,n-\bullet} \;. $$
 Then, for any $\left(p,q\right)\in\Z^2$, the induced inclusions
 $$ \left(\Tot^\bullet \left(C^{\bullet,\bullet}\right),\, \del+\delbar\right) \hookrightarrow \left(\wedge^\bullet X \otimes_\R\C,\, \de\right) \;, $$
and
$$ \left(C^{\bullet,q},\, \del\right) \hookrightarrow \left(\wedge^{\bullet,q}X,\, \del\right) \;, \qquad \left(C^{p,\bullet},\, \delbar\right) \hookrightarrow \left(\wedge^{p,\bullet}X,\, \delbar\right) \;,$$
 and
 $$
   \mathcal{BC}^{p,q}(C^{\bullet,\bullet})
   \hookrightarrow
   \mathcal{BC}^{p,q}(\wedge^{\bullet,\bullet}X)
\;, \qquad
   \mathcal{A}^{p,q}(C^{\bullet,\bullet})
   \hookrightarrow
   \mathcal{A}^{p,q}(\wedge^{\bullet,\bullet}X)
 $$
 induce injective maps in cohomology.
\end{prop}

\begin{proof}
 The proof follows straightforwardly by \cite[Théorème 2.2, \S2.c]{schweitzer} and \cite[page 450]{kodaira}, and by Proposition \ref{prop:inj-pd-type}.
\end{proof}

\begin{rem}
 By applying Corollary \ref{cor:inj-closed} to the $\scalar{\sspace}{\ssspace}$-completion of $\wedge^{\bullet,\bullet}X$, the same conclusion of Proposition \ref{injCo} holds true for a (possibly non-finite-dimensional) closed sub-complex $\left(C^{\bullet,\bullet},\, \del,\, \delbar\right) \hookrightarrow \left(\wedge^{\bullet,\bullet}X,\, \del,\, \delbar\right)$ such that $\pi^{C^{\bullet,\bullet}} \circ \del = \del \circ \pi^{C^{\bullet,\bullet}} \colon \wedge^{\bullet,\bullet}X \to C^{\bullet,\bullet}$ and $\pi^{C^{\bullet,\bullet}} \circ \delbar = \delbar \circ \pi^{C^{\bullet,\bullet}} \colon \wedge^{\bullet,\bullet}X \to C^{\bullet,\bullet}$.
\end{rem}

\medskip

In order to study cohomologies of solvmanifolds, we need also the following result.

To simplify the notation, we say that a sub-complex $\left(C^{\bullet,\bullet},\, \del,\, \delbar\right) \hookrightarrow \left(\wedge^{\bullet,\bullet}X,\, \del,\, \delbar\right)$ \emph{suffices in computing the de Rham, respectively conjugate Dolbeault, respectively Dolbeault, respectively Bott-Chern, respectively Aeppli cohomology of $X$} if the induced inclusion
$$
\left(\Tot^\bullet C^{\bullet,\bullet},\, \del+\delbar\right) \hookrightarrow \left(\wedge^\bullet X \otimes_\R\C,\, \de\right) \;, $$
respectively, for any $q\in\N$,
$$\left(C^{\bullet,q},\, \del\right) \hookrightarrow \left(\wedge^{\bullet,q},\, \del\right) \;, $$
respectively, for any $p\in\N$,
$$ \left(C^{p,\bullet},\, \delbar\right) \hookrightarrow \left(\wedge^{p,\bullet},\, \delbar\right) \;,$$
respectively, for any $(p,q)\in\Z^2$,
$$
  \mathcal{BC}^{p,q}(C^{\bullet,\bullet})
  \hookrightarrow
  \mathcal{BC}^{p,q}(\wedge^{\bullet,\bullet}X)
$$
respectively, for any $(p,q)\in\Z^2$,
$$
  \mathcal{A}^{p,q}(C^{\bullet,\bullet})
  \hookrightarrow
  \mathcal{A}^{p,q}(\wedge^{\bullet,\bullet}X)
$$
is a quasi-isomorphism.

\begin{prop}\label{subiso}
 Let $X$ be a compact complex manifold of complex dimension $n$ endowed with a Hermitian metric $g$. Let $\left(C^{\bullet,\bullet},\, \del,\, \delbar\right) \hookrightarrow \left(\wedge^{\bullet,\bullet}X,\, \del,\, \delbar\right)$ be a finite-dimensional sub-complex of $\left(\wedge^{\bullet,\bullet}X,\, \del,\, \delbar\right)$ having a structure of bi-differential $\Z^2$-graded algebra of \pd-type of \pd-dimension $n$ induced by $\wedge^{\bullet,\bullet}X$ and such that
 $$ \bar*_g\lfloor_{C^{\bullet,\bullet}} \colon C^{\bullet,\bullet} \to C^{n-\bullet,n-\bullet} \;. $$
 Let $\left(B^{\bullet,\bullet},\, \del,\, \delbar\right) \hookrightarrow \left(C^{\bullet,\bullet},\, \del,\, \delbar\right)$ be a sub-complex of $\left(C^{\bullet,\bullet},\, \del,\, \delbar\right)$ having a structure of bi-differential $\Z^2$-graded algebra of \pd-type of \pd-dimension $n$ induced by $C^{\bullet,\bullet}$ and such that
 $$ \bar*_g\lfloor_{B^{\bullet,\bullet}}\colon B^{\bullet,\bullet} \to B^{n-\bullet,n-\bullet} \;. $$
 If $\left(B^{\bullet,\bullet},\, \del,\, \delbar\right)$ suffices in computing the cohomologies of $X$, then also $\left(C^{\bullet,\bullet},\, \del,\, \delbar\right)$ suffices in computing the corresponding cohomologies of $X$.
\end{prop}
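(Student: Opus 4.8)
The plan is to exploit the tautological factorization $B^{\bullet,\bullet} \hookrightarrow C^{\bullet,\bullet} \hookrightarrow \wedge^{\bullet,\bullet}X$ of inclusions of bi-differential $\Z^2$-graded algebras of \pd-type, together with Proposition \ref{injCo}. The idea is that injectivity in cohomology is automatic for $C^{\bullet,\bullet}$, while surjectivity is inherited from $B^{\bullet,\bullet}$, so that the two together force $C^{\bullet,\bullet}$ to compute the relevant cohomology.

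First I would check that $\left(C^{\bullet,\bullet},\, \del,\, \delbar\right)$ satisfies exactly the hypotheses of Proposition \ref{injCo}: it is a finite-dimensional sub-complex of $\left(\wedge^{\bullet,\bullet}X,\, \del,\, \delbar\right)$ carrying a structure of bi-differential $\Z^2$-graded algebra of \pd-type of \pd-dimension $n$ induced by $\wedge^{\bullet,\bullet}X$, and $\bar*_g\lfloor_{C^{\bullet,\bullet}}\colon C^{\bullet,\bullet} \to C^{n-\bullet,n-\bullet}$ by assumption. Hence Proposition \ref{injCo} applies to $C^{\bullet,\bullet}$, and the inclusion $j_C\colon \left(C^{\bullet,\bullet},\, \del,\, \delbar\right) \hookrightarrow \left(\wedge^{\bullet,\bullet}X,\, \del,\, \delbar\right)$ induces injective maps in the de Rham, Dolbeault, conjugate Dolbeault, Bott-Chern, and Aeppli cohomologies.

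Next I would write the inclusion $j_B\colon \left(B^{\bullet,\bullet},\, \del,\, \delbar\right) \hookrightarrow \left(\wedge^{\bullet,\bullet}X,\, \del,\, \delbar\right)$ as $j_B = j_C \circ i$, where $i\colon \left(B^{\bullet,\bullet},\, \del,\, \delbar\right) \hookrightarrow \left(C^{\bullet,\bullet},\, \del,\, \delbar\right)$ is the inclusion. Fixing any one of the five cohomology theories, the hypothesis that $B^{\bullet,\bullet}$ suffices in computing it means precisely that the map induced by $j_B$ is an isomorphism; since it factors through the map induced by $j_C$, the latter is surjective. Combining this with the injectivity obtained in the previous step, the map induced by $j_C$ is an isomorphism, i.e. $C^{\bullet,\bullet}$ suffices in computing that cohomology of $X$. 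Running this argument over each of the five cohomologies for which $B^{\bullet,\bullet}$ suffices yields the statement, since the factorization argument is completely uniform in the cohomology theory.

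I do not expect a genuine obstacle here: the one point requiring attention is verifying that the hypotheses of Proposition \ref{injCo} are actually met by $C^{\bullet,\bullet}$ — in particular that its \pd-structure is the one restricted from $\wedge^{\bullet,\bullet}X$ and that $\bar*_g$ maps $C^{\bullet,\bullet}$ into itself — but both are part of the assumptions, so the proof reduces to the two short formal steps above.
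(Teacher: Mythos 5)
Your proposal is correct and follows essentially the same route as the paper: apply Proposition \ref{injCo} to $C^{\bullet,\bullet}\hookrightarrow\wedge^{\bullet,\bullet}X$ to get injectivity, and deduce surjectivity from the factorization of the isomorphism induced by $B^{\bullet,\bullet}\hookrightarrow\wedge^{\bullet,\bullet}X$. The paper additionally notes (via Proposition \ref{prop:inj-pd-type}) that $B^{\bullet,\bullet}\hookrightarrow C^{\bullet,\bullet}$ is also injective in cohomology, but as you observed this is not needed for the stated conclusion.
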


\begin{proof}
 By Proposition \ref{prop:inj-pd-type} and Proposition \ref{injCo}, both the inclusions $B^{\bullet,\bullet}\hookrightarrow C^{\bullet,\bullet}$ and $C^{\bullet,\bullet}\hookrightarrow \wedge^{\bullet,\bullet}X$ induce injective maps in cohomology, whose composition is an isomorphism by the hypothesis.
\end{proof}

\subsection{Complex nilmanifolds}

Let $X = \left. \Gamma \right\backslash G$ be a \emph{solvmanifold} (respectively, a \emph{nilmanifold}), namely, a compact quotient of a connected simply-connected solvable (respectively, nilpotent) Lie group $G$ by a co-compact discrete subgroup $\Gamma$, endowed with a $G$-left-invariant (almost-)complex structure $J$.
We recall that a solvmanifold is called \emph{completely-solvable} if, for any $g\in G$, all the eigenvalues of $\Ad_g:=\de\left(\psi_g\right)_e\in\Aut(\g)$ are real, equivalently, for any $X\in\g$, all the eigenvalues of $\ad_X:=\left[X,\sspace\right]\in\End(\g)$ are real, where $\psi\colon G \ni g \mapsto \left( \psi_g \colon h \mapsto g\,h\,g^{-1} \right) \in \Aut(G)$ and $e$ is the identity element of $G$.

Recall that, by J. Milnor's Lemma \cite[Lemma 6.2]{milnor}, $G$ is unimodular (that is, ${\det} (\Ad _{g})=1 $ for any $g\in G$), and hence, in particular, there exists a $G$-bi-invariant volume form $\eta$ on $X$ such that $\int_X\eta=1$. Therefore, consider the \emph{F.~A. Belgun symmetrization map} in \cite[Theorem 7]{belgun}, namely,
$$ \mu\colon \wedge^\bullet X \otimes_\R \C \to \wedge^\bullet\dualee{\mathfrak{g} \otimes_\R\C} \;,\qquad \mu(\alpha)\;:=\;\int_X \alpha\lfloor_x \, \eta(x) \;.$$
Note, \cite[Theorem 7]{belgun}, that $\mu$ commutes with $\de$ and with $J$, and hence also with $\del$ and $\delbar$, and that $\mu\lfloor_{\wedge^\bullet\dualee{\mathfrak{g}\otimes_\R\C}} = \id_{\wedge^\bullet\dualee{\mathfrak{g}\otimes_\R\C}}$.

\begin{lem}\label{wedco}
 Let $\solvmfd$ be a solvmanifold, and consider the F.~A. Belgun symmetrization map $\mu\colon \wedge^\bullet X \otimes_\R \C \to \wedge^\bullet\dualee{\mathfrak{g} \otimes_\R\C}$ in \cite[Theorem 7]{belgun}.
 For a $G$-left-invariant differential form $\theta$ on $\solvmfd$ and for a differential form $\omega$ on $\solvmfd$, we have
\[\mu(\theta\wedge\omega)=\theta\wedge\mu(\omega).\]
\end{lem}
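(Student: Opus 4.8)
The plan is to reduce the statement to a pointwise computation, exploiting the fact that $\theta$ is $G$-left-invariant and the Belgun symmetrization is an integral over the compact manifold $X$ against the bi-invariant volume form $\eta$. First I would recall that, by definition, $\mu(\theta\wedge\omega) = \int_X (\theta\wedge\omega)\lfloor_x\, \eta(x)$, where the integrand $(\theta\wedge\omega)\lfloor_x$ is an element of $\wedge^\bullet\dualee{\g\otimes_\R\C}$ obtained by identifying the fibre $\wedge^\bullet T^*_x X \otimes \C$ with $\wedge^\bullet\dualee{\g\otimes_\R\C}$ via left-translation by $x^{-1}$. The key algebraic observation is that, since $\theta$ is left-invariant, its value $\theta\lfloor_x$ corresponds to the \emph{same} element $\theta \in \wedge^\bullet\dualee{\g\otimes_\R\C}$ for every $x\in X$; hence $(\theta\wedge\omega)\lfloor_x = \theta\wedge\left(\omega\lfloor_x\right)$ in $\wedge^\bullet\dualee{\g\otimes_\R\C}$, where the wedge on the right is the wedge of the exterior algebra $\wedge^\bullet\dualee{\g\otimes_\R\C}$.

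The second step is then to pull the fixed element $\theta$ out of the integral. The map $\omega\lfloor_x \mapsto \theta\wedge\left(\omega\lfloor_x\right)$ is a fixed $\C$-linear map $\wedge^\bullet\dualee{\g\otimes_\R\C} \to \wedge^\bullet\dualee{\g\otimes_\R\C}$, independent of $x$; since the integral $\int_X (\sspace)\lfloor_x\, \eta(x)$ is a $\wedge^\bullet\dualee{\g\otimes_\R\C}$-valued integral, linearity gives
\[
\mu(\theta\wedge\omega) \;=\; \int_X \theta\wedge\left(\omega\lfloor_x\right)\, \eta(x) \;=\; \theta\wedge\int_X \omega\lfloor_x\, \eta(x) \;=\; \theta\wedge\mu(\omega) \;.
\]
This is the whole argument; the only point requiring a little care is making sure that the identification of fibres used to define $\mu$ is compatible with the wedge product and with left-invariance, so that the factorization $(\theta\wedge\omega)\lfloor_x = \theta\wedge(\omega\lfloor_x)$ is legitimate for \emph{every} $x$ and not merely at the identity.

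I expect the main (minor) obstacle to be purely notational: one must be precise about the convention under which the symmetrization $\mu$ sends a form $\alpha$ on $X$ to an element of $\wedge^\bullet\dualee{\g\otimes_\R\C}$, namely via the left-translations $L_x$, and verify that $L_x^*(\theta\lfloor_x) = \theta\lfloor_e$ for all $x$ precisely expresses left-invariance of $\theta$, while $L_x^*$ is an algebra homomorphism so that it intertwines the wedge products. Once this bookkeeping is in place, the identity follows from the elementary fact that a vector-valued integral commutes with a fixed linear map, here left-wedging by the invariant form $\theta$. No deep input is needed beyond the defining properties of $\mu$ recorded from \cite[Theorem 7]{belgun}.
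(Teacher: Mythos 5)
Your argument is correct and is essentially the paper's: both rest on the observation that a $G$-left-invariant form is a \emph{constant} element of $\wedge^\bullet\dualee{\g\otimes_\R\C}$ when read through the left-invariant frame, so left-wedging by it is a fixed linear map that commutes with the fibrewise integral defining $\mu$. The only difference is presentational — the paper evaluates $\mu(\theta\wedge\omega)$ on invariant vector fields for a $1$-form $\theta$ and leaves the general degree to the reader, whereas you carry out the same pull-out-of-the-integral step abstractly in arbitrary degree, which is if anything cleaner.
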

\begin{proof}
Suppose that $\theta$ is a $G$-left-invariant $1$-form on $\solvmfd$. Let $\omega$ be a $p$-form on $\solvmfd$.
Then for $X_{1},\dots,X_{p+1}\in \g$, since $\theta(X_{j})$ is constant for every $j\in\{1,\ldots,p+1\}$, we have
\begin{eqnarray*}
\mu(\theta\wedge\omega)(X_{1},\dots, X_{p+1}) &=& \int_{\solvmfd} \sum_{\sigma\in\perm{p+1}} \theta_{x}\left(X_{\sigma(1)}\right)\cdot \omega\left(X_{\sigma(2)},\dots, X_{\sigma(p+1)}\right) \, \eta(x) \\[5pt]
&=& \sum_{\sigma\in\perm{p+1}} \theta\left(X_{\sigma(1)}\right) \cdot \int_{\solvmfd}\omega_{x}\left(X_{\sigma(2)},
\dots, X_{\sigma(p+1)}\right) \, \eta(x) \\[5pt]
&=& \left(\theta\wedge \mu (\omega)\right)\left(X_{1},\dots, X_{p+1}\right) \;,
\end{eqnarray*}
where $\perm{p+1}$ is the set of permutations of $p+1$ elements.
Hence, in this case, the lemma holds.
We can easily check that the lemma holds in the general case.
\end{proof}

\begin{lem}[{see \cite[Proposition 5.4]{angella-tomassini-zhang}}]\label{lem:cond-3-surj-solvmfd}
 Let $X = \left. \Gamma \right\backslash G$ be a completely-solvable solvmanifold endowed with a $G$-left-invariant complex structure $J$. Consider the sub-complex
 $$ j\colon \left(\wedge^\bullet\dualee{\mathfrak{g}\otimes_\R\C},\, \de\right) \hookrightarrow \left(\wedge^{\bullet}X\otimes_\R \C,\, \de\right) \;, $$
 which is a quasi-isomorphism by A. Hattori's theorem \cite[Corollary 4.2]{hattori}. The induced map
 \begin{eqnarray*}
 \lefteqn{ j\colon \frac{\ker\left(\de\colon \wedge^{p+q}\dualee{\mathfrak{g}\otimes_\R\C} \to \wedge^{p+q+1}\dualee{\mathfrak{g}\otimes_\R\C} \right) \cap \wedge^{p,q}\dualee{\mathfrak{g}\otimes_\R\C}}{\imm\left(\de\colon \wedge^{p+q-1}\dualee{\mathfrak{g}\otimes_\R\C} \to \wedge^{p+q}\dualee{\mathfrak{g}\otimes_\R\C}\right)} } \\[5pt]
 &\to& \frac{\ker\left(\de\colon \wedge^{p+q}X\otimes_\R \C \to \wedge^{p+q+1}X \otimes_\R \C \right) \cap \wedge^{p,q}X}{\imm\left(\de\colon \wedge^{p+q-1}X \otimes_\R \C \to \wedge^{p+q}X\otimes_\R \C \right)}
 \end{eqnarray*}
 is an isomorphism.
\end{lem}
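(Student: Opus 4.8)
The plan is to exploit the F.~A. Belgun symmetrization map $\mu\colon\wedge^{\bullet}X\otimes_\R\C\to\wedge^{\bullet}\dualee{\g\otimes_\R\C}$ recalled above, together with A. Hattori's theorem. First I would record the properties of $\mu$ that are needed: it commutes with $\de$; it preserves the bidegree, that is, $\mu\left(\wedge^{p,q}X\right)\subseteq\wedge^{p,q}\duale{\g}$ for every $(p,q)\in\Z^2$ (clear from the definition of $\mu$, since the identification of the fibres of $\wedge^{\bullet}X$ with $\wedge^{\bullet}\dualee{\g\otimes_\R\C}$ by left-translations is $\C$-linear, $J$ being $G$-left-invariant); and $\mu\lfloor_{\wedge^{\bullet}\dualee{\g\otimes_\R\C}}=\id$, whence $\mu\circ\mu=\mu$. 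Moreover, by \cite[Corollary 4.2]{hattori}, the inclusion $j$ induces an isomorphism $j_*\colon H^{\bullet}\left(\wedge^{\bullet}\dualee{\g\otimes_\R\C},\,\de\right)\stackrel{\simeq}{\to}H^{\bullet}_{dR}(X;\C)$; since $\mu\circ j=\id$ gives $\mu_*\circ j_*=\id$ and $j_*$ is invertible, it follows that $\mu_*=\left(j_*\right)^{-1}$ and hence $j_*\circ\mu_*=\id$ as well.

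For surjectivity, given $\alpha\in\wedge^{p,q}X$ with $\de\alpha=0$, I would consider $\mu(\alpha)\in\wedge^{p,q}\duale{\g}$, which is $\de$-closed. Since $\mu\left(\alpha-\mu(\alpha)\right)=\mu(\alpha)-\mu\left(\mu(\alpha)\right)=0$, one has $\mu_*\left[\alpha-\mu(\alpha)\right]=0$ in $H^{p+q}_{dR}(X;\C)$, whence $\alpha-\mu(\alpha)=\de\beta$ for some $\beta\in\wedge^{p+q-1}X\otimes_\R\C$ because $\mu_*$ is injective. As $\de\beta=\alpha-\mu(\alpha)\in\wedge^{p,q}X$, the class of $\alpha$ in the target quotient coincides with the image under $j$ of the class of $\mu(\alpha)$ in the source quotient, proving surjectivity.

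For injectivity, I would take $\alpha\in\wedge^{p,q}\duale{\g}$ with $\de\alpha=0$ whose image in the target vanishes, that is, $\alpha=\de\beta$ for some $\beta\in\wedge^{p+q-1}X\otimes_\R\C$. Applying $\mu$ and using $\mu(\alpha)=\alpha$ yields $\alpha=\de\mu(\beta)$ with $\mu(\beta)\in\wedge^{p+q-1}\dualee{\g\otimes_\R\C}$, so $\alpha$ is already $\de$-exact inside the complex of invariant forms and the original class is zero. Note that this half uses only the existence of $\mu$, not complete solvability.

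The step carrying the actual content is the surjectivity part: it is not a formal manipulation with $\mu$ alone, but genuinely relies on Hattori's theorem (equivalently, on the hypothesis of complete solvability) through the identity $j_*\circ\mu_*=\id$. Once this isomorphism and Belgun's symmetrization operator are in hand, I do not anticipate any further obstacle; the argument is a bidegree-refinement of the classical nilmanifold computations (compare \cite{console-fino}, \cite{angella-1}).
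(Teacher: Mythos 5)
Your proposal is correct and follows essentially the same route as the paper: both hinge on the Belgun symmetrization map $\mu$ (commuting with $\de$ and preserving bidegree), the retraction identity $\mu\circ j=\id$, and A. Hattori's theorem. The only difference is organizational --- you establish injectivity and surjectivity of $j$ directly, whereas the paper deduces injectivity of $j$ and surjectivity of $\mu$ from the retraction, then injectivity of $\mu$ from Hattori, and concludes by finite-dimensionality --- but the ingredients are identical.
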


\begin{proof}
For the sake of completeness, we recall here the argument of the proof (note that the statement holds, more in general, in the almost-complex setting).

The F.~A. Belgun symmetrization map $\mu\colon \wedge^\bullet X \otimes_\R \C \to \wedge^\bullet\dualee{\mathfrak{g}\otimes_\R\C}$ induces the map
\begin{eqnarray*} 
 \lefteqn{ \mu \colon \frac{\ker\left(\de\colon \wedge^{p+q}X\otimes_\R \C \to \wedge^{p+q+1}X \otimes_\R \C \right) \cap \wedge^{p,q}X}{\imm\left(\de\colon \wedge^{p+q-1}X \otimes_\R \C \to \wedge^{p+q}X\otimes_\R \C \right)} } \\[5pt]
 &\to& \frac{\ker\left(\de\colon \wedge^{p+q}\dualee{\mathfrak{g}\otimes_\R\C} \to \wedge^{p+q+1}\dualee{\mathfrak{g}\otimes_\R\C} \right) \cap \wedge^{p,q}\dualee{\mathfrak{g}\otimes_\R\C}}{\imm\left(\de\colon \wedge^{p+q-1}\dualee{\mathfrak{g}\otimes_\R\C} \to \wedge^{p+q}\dualee{\mathfrak{g}}\otimes_\R\C\right)} \;.
\end{eqnarray*}

Hence, one gets the commutative diagram
$$
\xymatrix{
\frac{\ker\left(\de\colon \wedge^{p+q}\dualee{\mathfrak{g}\otimes_\R\C} \to \wedge^{p+q+1}\dualee{\mathfrak{g}\otimes_\R\C} \right) \cap \wedge^{p,q}\dualee{\mathfrak{g}\otimes_\R\C}}{\imm\left(\de\colon \wedge^{p+q-1}\dualee{\mathfrak{g}\otimes_\R\C} \to \wedge^{p+q}\dualee{\mathfrak{g}\otimes_\R\C}\right)}
\ar[d]^{j} \ar@/_8.5pc/[dd]_{\id}
\\
\frac{\ker\left(\de\colon \wedge^{p+q}X\otimes_\R \C \to \wedge^{p+q+1}X \otimes_\R \C \right) \cap \wedge^{p,q}X}{\imm\left(\de\colon \wedge^{p+q-1}X \otimes_\R \C \to \wedge^{p+q}X\otimes_\R \C \right)}
\ar[d]^\mu
\\
\frac{\ker\left(\de\colon \wedge^{p+q}\dualee{\mathfrak{g}\otimes_\R\C} \to \wedge^{p+q+1}\dualee{\mathfrak{g}\otimes_\R\C} \right) \cap \wedge^{p,q}\dualee{\mathfrak{g}\otimes_\R\C}}{\imm\left(\de\colon \wedge^{p+q-1}\dualee{\mathfrak{g}\otimes_\R\C} \to \wedge^{p+q}\dualee{\mathfrak{g}\otimes_\R\C}\right)}
} \;,
$$
from which one gets that $j$ is injective, and that $\mu$ is surjective.

Moreover, since $j\colon \left(\wedge^\bullet\dualee{\mathfrak{g}\otimes_\R\C},\, \de\right) \hookrightarrow \left(\wedge^\bullet X\otimes_\R\C,\, \de\right)$ is a quasi-isomorphism by A. Hattori's theorem \cite[Theorem 4.2]{hattori}, one gets that $\mu\colon H^\bullet_{dR}(X;\C) \to H^\bullet\left(\wedge^\bullet\dualee{\mathfrak{g}\otimes_\R\C},\, \de\right)$ is in fact the identity map, and hence
\begin{eqnarray*}
\lefteqn{ \mu \colon
\frac{\ker\left(\de\colon \wedge^{p+q}X\otimes_\R \C \to \wedge^{p+q+1}X \otimes_\R \C \right) \cap \wedge^{p,q}X}{\imm\left(\de\colon \wedge^{p+q-1}X \otimes_\R \C \to \wedge^{p+q}X\otimes_\R \C \right)} } \\[5pt]
&\to&
\frac{\ker\left(\de\colon \wedge^{p+q}\dualee{\mathfrak{g}\otimes_\R\C} \to \wedge^{p+q+1}\dualee{\mathfrak{g}\otimes_\R\C} \right) \cap \wedge^{p,q}\dualee{\mathfrak{g}\otimes_\R\C}}{\imm\left(\de\colon \wedge^{p+q-1}\dualee{\mathfrak{g}\otimes_\R\C} \to \wedge^{p+q}\dualee{\mathfrak{g}\otimes_\R\C}\right)}
\end{eqnarray*}
is also injective.

Since $X$ is compact, the dimension of $H^\bullet_{dR}(X;\C)$ is finite, and hence $\mu$ is in fact an isomorphism.
\end{proof}

As an application of Theorem \ref{surjBC} and Proposition \ref{injCo}, one recovers the following results, concerning the Bott-Chern cohomology of nilmanifolds. (We refer to \cite{wang, nakamura, barberis-dottimiatello-miatello, andrada-barberis-dottimiatello, cordero-fernandez-gray-ugarte, console-fino, rollenske, salamon} for definitions and notation.)

\begin{cor}[{\cite[Theorem 3.8]{angella-1}}]
 Let $X = \left. \Gamma \right\backslash G$ be a nilmanifold endowed with a $G$-left-invariant complex structure $J$, and denote the Lie algebra naturally associated to $G$ by $\mathfrak{g}$.
 Suppose that one of the following conditions holds:
  \begin{itemize}
    \item $X$ is complex parallelizable;
    \item $J$ is an Abelian complex structure;
    \item $J$ is a nilpotent complex structure;
    \item $J$ is a rational complex structure;
    \item $\mathfrak{g}$ admits a torus-bundle series compatible with $J$ and with the rational structure induced by $\Gamma$;
    \item $\dim_\R\mathfrak{g}=6$ and $\mathfrak{g}$ is not isomorphic to $\mathfrak{h}_7:=\left(0^3,\, 12,\, 13,\, 23\right)$.
  \end{itemize}
 Then the inclusion $j\colon \left(\wedge^{\bullet,\bullet}\dualee{\mathfrak{g}\otimes_\R\C},\, \del,\, \delbar\right) \hookrightarrow \left(\wedge^{\bullet,\bullet}X,\, \del,\, \delbar\right)$ induces the isomorphisms
  $$ H^{\bullet,\bullet}_{BC}(X) \;\simeq\; \frac{\ker\left(\de\colon \wedge^{\bullet,\bullet}\dualee{\mathfrak{g}\otimes_\R\C}\to \wedge^{\bullet+\bullet+1}\dualee{\mathfrak{g}\otimes_\R\C}\right)}{\imm\left(\del\delbar\colon \wedge^{\bullet-1,\bullet-1}\dualee{\mathfrak{g}\otimes_\R\C}\to \wedge^{\bullet,\bullet}\dualee{\mathfrak{g}\otimes_\R\C}\right)} $$
 and
  $$ H^{\bullet,\bullet}_{A}(X) \;\simeq\; \frac{\ker\left(\del\delbar\colon \wedge^{\bullet,\bullet}\dualee{\mathfrak{g}\otimes_\R\C}\to \wedge^{\bullet+1,\bullet+1}\dualee{\mathfrak{g}\otimes_\R\C}\right)}{\imm\left(\del\colon \wedge^{\bullet-1,\bullet}\dualee{\mathfrak{g}\otimes_\R\C}\to \wedge^{\bullet,\bullet}\dualee{\mathfrak{g}\otimes_\R\C}\right) + \imm\left(\delbar\colon \wedge^{\bullet,\bullet-1}\dualee{\mathfrak{g}\otimes_\R\C}\to \wedge^{\bullet,\bullet}\dualee{\mathfrak{g}\otimes_\R\C}\right)} \;. $$
\end{cor}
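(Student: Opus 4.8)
The plan is to read off both isomorphisms from the machinery of Section \ref{sec:subcplx}, applied to the finite-dimensional sub-complex $C^{\bullet,\bullet}:=\wedge^{\bullet,\bullet}\dualee{\g\otimes_\R\C}\hookrightarrow\wedge^{\bullet,\bullet}X$ of $G$-left-invariant forms. First I would fix a $G$-left-invariant Hermitian metric $g$ on $X$ — that is, a $J$-Hermitian inner product on $\g$ — and record the two structural facts that the results of Section \ref{sec:subcplx} require: that $\left(C^{\bullet,\bullet},\, \del,\, \delbar\right)$ inherits from $\left(\wedge^{\bullet,\bullet}X,\, \del,\, \delbar\right)$ the structure of a bi-differential $\Z^2$-graded algebra of \pd-type of \pd-dimension $n:=\dim_\C X$ (the top piece $C^{n,n}$ being generated by the invariant volume form, the induced wedge pairing being non-degenerate, and $\de$ vanishing on $\Tot^0 C^{\bullet,\bullet}$ and on $\Tot^{2n-1}C^{\bullet,\bullet}$, the latter because a nilpotent Lie algebra is unimodular); and that, $g$ being $G$-left-invariant, the Hodge-$*$-operator restricts to $\bar*_g\lfloor_{C^{\bullet,\bullet}}\colon C^{\bullet,\bullet}\to C^{n-\bullet,n-\bullet}$. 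With this in place, Proposition \ref{injCo} immediately yields that $j$ induces injective maps in Bott-Chern and in Aeppli cohomology.

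For surjectivity in Bott-Chern cohomology the plan is to verify, for an arbitrary fixed $(p,q)$, the three hypotheses of Theorem \ref{surjBC} for the inclusion $j\colon\left(C^{\bullet,\bullet},\, \del,\, \delbar\right)\hookrightarrow\left(\wedge^{\bullet,\bullet}X,\, \del,\, \delbar\right)$. Hypothesis \ref{item:thm-surg-hp-3} is precisely Lemma \ref{lem:cond-3-surj-solvmfd}, which applies since every nilmanifold is completely-solvable (A. Hattori's theorem used there specializing, in the nilpotent case, to K. Nomizu's theorem \cite{nomizu}). Hypothesis \ref{item:thm-surg-hp-2}, namely that $\left(C^{\bullet,s},\, \del\right)\hookrightarrow\left(\wedge^{\bullet,s}X,\, \del\right)$ is a quasi-isomorphism for every $s$, I would deduce from hypothesis \ref{item:thm-surg-hp-1} by complex conjugation: this complex is, up to conjugation, the Dolbeault complex $\left(\wedge^{s,\bullet}X,\, \delbar\right)$ attached to the conjugate complex structure $\bar J$, and each of the six conditions in the list is stable under $J\mapsto\bar J$.

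The remaining hypothesis \ref{item:thm-surg-hp-1} — that the left-invariant forms suffice in computing the Dolbeault cohomology of $X$ — is the real content, and here I would quote the literature case by case: \cite{sakane} for complex parallelizable nilmanifolds, and \cite{console-fino, cordero-fernandez-gray-ugarte, rollenske, salamon} for Abelian, nilpotent, and rational complex structures, for the case where $\g$ admits a torus-bundle series compatible with $J$ and the rational structure induced by $\Gamma$, and for the six-dimensional classification with $\g\not\simeq\mathfrak{h}_7$. This is the main obstacle, in the sense that it is not one argument but an assembly of separate results from the literature; everything else is formal. Granting it, Theorem \ref{surjBC} makes $j^*$ surjective in Bott-Chern cohomology, hence — together with the injectivity already obtained — an isomorphism; identifying $\de$-closedness of a form of pure bidegree $(p,q)$ with simultaneous $\del$- and $\delbar$-closedness then rewrites the source as the stated quotient, giving the first displayed isomorphism.

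Finally, for the Aeppli isomorphism I would not rerun the surjectivity argument but transport it through the Hodge-$*$-operator: since $\bar*_g\circ\tilde\Delta^{BC}=\tilde\Delta^{A}\circ\bar*_g$ and $\bar*_g$ preserves $C^{\bullet,\bullet}$ and commutes with $j$, the operator $\bar*_g$ intertwines $j^*$ in Aeppli bidegree $(p,q)$ with $j^*$ in Bott-Chern bidegree $(n-p,n-q)$; the latter being an isomorphism, so is the former, which is the second displayed isomorphism.
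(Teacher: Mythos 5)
Your proposal is correct and follows essentially the same route as the paper's proof: injectivity from Proposition \ref{injCo} applied to the finite-dimensional invariant sub-complex, surjectivity from Theorem \ref{surjBC} with hypothesis \ref{item:thm-surg-hp-1} supplied case by case by the cited Dolbeault results, hypothesis \ref{item:thm-surg-hp-2} by conjugation, and hypothesis \ref{item:thm-surg-hp-3} by Lemma \ref{lem:cond-3-surj-solvmfd}, and finally the Aeppli statement transported through $\bar*_g$. The only (harmless) elaborations beyond the paper's text are your explicit check of the \pd-type structure and your remark that the six listed conditions are stable under $J\mapsto\bar J$.
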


\begin{proof}
 Choose a $G$-left-invariant Hermitian metric $g$ on $X$.
 The sub-complex $\left(\wedge^{\bullet,\bullet}\dualee{\mathfrak{g}\otimes_\R\C},\, \del,\, \delbar\right)$ being finite-dimensional, the induced maps in Bott-Chern, respectively Aeppli cohomologies are injective by Proposition \ref{injCo}.

 Under the hypothesis, by \cite[Theorem 1]{sakane}, \cite[Main Theorem]{cordero-fernandez-gray-ugarte}, \cite[Theorem 2, Remark 4]{console-fino}, \cite[Theorem 1.10]{rollenske}, and \cite[Corollary 3.10]{rollenske-survey}, one has that, for any fixed $p\in\N$, the induced map
 $$ j\colon \left(\wedge^{p,\bullet}\dualee{\mathfrak{g}\otimes_\R\C},\, \delbar\right) \hookrightarrow \left(\wedge^{p,\bullet}X,\, \delbar\right) $$
 is a quasi-isomorphism. By conjugation, one has also that, for any fixed $q\in\N$, the induced map
 $$ j\colon \left(\wedge^{\bullet,q}\dualee{\mathfrak{g}\otimes_\R\C},\, \del\right) \hookrightarrow \left(\wedge^{\bullet,q}X,\, \del\right) $$
 is a quasi-isomorphism.
 Lastly, condition \ref{item:thm-surg-hp-3} in Theorem \ref{surjBC} is satisfied by Lemma \ref{lem:cond-3-surj-solvmfd}. Hence, by Theorem \ref{surjBC}, the induced map in Bott-Chern cohomology is surjective.

 As regards Aeppli cohomologies, it suffices to note that the Hodge-$*$-operator $\bar*_g$ induces the isomorphisms $H^{\bullet,\bullet}_{BC}(X) \stackrel{\simeq}{\to} H^{n-\bullet,n-\bullet}_{A}(X)$ and $\frac{\ker\de\lfloor_{\wedge^{\bullet,\bullet}\dualee{\mathfrak{g}\otimes_\R\C}}}{\imm\del\delbar} \stackrel{\simeq}{\to} \frac{\ker\del\delbar\lfloor_{\wedge^{n-\bullet,n-\bullet}\dualee{\mathfrak{g}\otimes_\R\C}}}{\imm\del + \imm\delbar}$, where $n$ is the complex dimension of $X$.
\end{proof}

The previous result can be used to compute the cohomology of the left-invariant complex structures classified by M. Ceballos, A. Otal, L. Ugarte, and R. Villacampa in \cite{ceballos-otal-ugarte-villacampa}, as in \cite{angella-franzini-rossi} and \cite{latorre-ugarte-villacampa}.

\subsection{Complex solvmanifolds}\label{subsec:solvmfd}

Let $G$ be a connected simply-connected $n$-dimensional solvable Lie group admitting a discrete co-compact subgroup $\Gamma$, and denote by $\g$ the (solvable) Lie algebra of $G$. Set $\g_\C := \g\otimes_\R\C$.

Consider the \emph{adjoint action}
$$ \ad \colon \g \to \gl(\g) \;, \qquad \ad_X \;:=\; \left[X,\sspace\right] \;;$$
by denoting by $\Der(\g):=\left\{D\in\gl(\g) \st \forall X\in\g,\; \left[D,\ad_X\right]=\ad_{DX}\right\}$ the $\R$-vector space of {\em derivations} on $\g$, one has that $\ad(\g)\subseteq\Der(\g)$. One has that every derivation $\ad_X$, for $X\in\g$, admits a unique {\em Jordan decomposition}, see, {\itshape e.g.} \cite[I\hspace{-.1em}I.1.10]{DER}, namely,
$$ \ad_X \;=\; \left(\ad_X\right)_{\mathrm{s}}+\left(\ad_X\right)_{\mathrm{n}} \;, $$
where $\left(\ad_X\right)_{\mathrm{s}}\in\gl(\g)$ is \emph{semi-simple} (that is, each $\left(\ad_X\right)_{\mathrm{s}}$-invariant sub-space of $\g$ admits an $\left(\ad_X\right)_{\mathrm{s}}$-invariant complementary sub-space in $\g$), and $\left(\ad_X\right)_{\mathrm{n}}\in\gl(\g)$ is \emph{nilpotent} (that is, there exists $N\in\N$ such that $\left(\ad_X\right)_{\mathrm{n}}^N=0$).

Let $\n$ be the {\em nilradical} of $\g$, that is, the maximal nilpotent ideal in $\g$.
Since $\g$ is solvable, there exists an $\R$-vector sub-space $V$ (which is not necessarily a Lie algebra) of $\g$ so that
\begin{inparaenum}[\itshape (i)]
\item $\g=V\oplus \n$ as the direct sum of $\R$-vector spaces, and,
\item for any  $A,B\in V$, it holds that $\left(\ad_A\right)_{\mathrm{s}}(B)=0$,
\end{inparaenum}
see, {\itshape e.g.} \cite[Proposition I\hspace{-.1em}I\hspace{-.1em}I.1.1] {DER}.
Hence, one can define the  map
$$ \ad_{\mathrm{s}}\colon \g\to \Der(\g) \;, \qquad \g = V \oplus \n \ni \left(A,X\right) \mapsto \left(\ad_{\mathrm{s}}\right)_{A+X} := (\ad_{A})_{\mathrm{s}} \in \Der(\g) \;. $$
Moreover, one has that
\begin{inparaenum}[\itshape (i)]\setcounter{enumi}{2}
\item $\left[\ad_{\mathrm{s}}(\g), \ad_{\mathrm{s}}(\g)\right]=\{0\}$, and
\item $\ad_{\mathrm{s}} \colon \g \to \gl(\g)$ is $\R$-linear,
\end{inparaenum}
see, {\itshape e.g.} \cite[Proposition {{I\hspace{-.1em}I\hspace{-.1em}I}.1.1}] {DER}.

Since we have $[\g,\g]\subseteq \n$, see, {\itshape e.g.} \cite[I\hspace{-.1em}I.1.9]{DER}, and $\ad_{\mathrm{s}}(\n)=\{0\}$, the  map $\ad_{\mathrm{s}}\colon \g\to \gl(\g)$ is a representation of $\g$, whose image $\ad_{\mathrm{s}}(\g)$ is Abelian and consists of semi-simple elements.
Hence, denote by
$$ \Ad_{\mathrm{s}}\colon G\to {\Aut}(\g) \;, \qquad \text{respectively } \Ad_{\mathrm{s}}\colon G\to {\Aut}\left(\g_\C\right) \;, $$
the unique representation which lifts ${\ad}_{\mathrm{s}}\colon \g \to \gl(\g)$, see, {\itshape e.g.} \cite[Theorem 3.27]{warner}, respectively the natural $\C$-linear extension.

\medskip
The following arguments on characters of $G$ are very useful.
For $\alpha\in \Hom\left(G;\C^{\ast}\right)$, since we have $\alpha(g_{1}g_{2})=\alpha(g_{1})\alpha(g_{2})$ for any $g_{1}, g_{2}$, we can easily check that $\frac{\de\alpha}{\alpha}$ is $G$-left-invariant.
For a  $G$-left-invariant differential form $\omega$, we have 
\[\de(\alpha\omega)=\de\alpha\wedge \omega +\alpha \de\omega=\alpha\left(\frac{\de\alpha}{\alpha}\wedge \omega+ \de\omega\right)
\]
and hence $\de(\alpha\omega)$ is also a product of  $\alpha$ and a  $G$-left-invariant differential form.

Let $T$ be  the Zariski-closure of ${\Ad}_{\mathrm{s}}(G)$ in ${\Aut}(\g_\C)$.
Denote by $\Char(T):=\Hom(T;\C^*)$ the set of all $1$-dimensional algebraic group representations of $T$.
Set 
\[{\mathcal C}_{\Gamma} \;:=\; \left\{ \beta\circ \Ad_{\mathrm{s}} \in \Hom\left(G;\C^{\ast}\right) \st \beta\in  \Char(T),\; \left(\beta\circ \Ad_{\mathrm{s}}\right)\lfloor_\Gamma=1 \right\} \;. \]
By the above arguments on characters of $G$,
we have the differential graded sub-algebra
\[\bigoplus _{\alpha\in {\mathcal C}_{\Gamma}}\alpha\cdot \wedge^\bullet \g^{\ast}_\C
\]
of $\wedge^{\bullet}\solvmfd \otimes_\R \C$.
(Note that we have used left-translations on $G$ to identify the elements of $\wedge^\bullet\duale{\g_\C}$ with the $G$-left-invariant complex forms in $\wedge^{\bullet} \solvmfd \otimes_\R\C$, namely, the complex forms being invariant for the action of the Lie group $G$ on $\solvmfd$ given by left-translations.)
By $\Ad_{\mathrm{s}}(G) \subseteq \Aut(\g_\C)$ we have the $\Ad_{\mathrm{s}}(G)$-action on the differential graded algebra $\bigoplus_{\alpha\in {\mathcal C}_{\Gamma}}\alpha\cdot \wedge^\bullet \g_\C^{\ast}$.
We denote by $A^{\bullet}_{\Gamma}$ the space consisting of the $\Ad_{\mathrm{s}}(G)$-invariant elements of $\bigoplus _{\alpha\in {\mathcal C}_{\Gamma}}\alpha\cdot \wedge^\bullet \g^{\ast}_\C$, namely,
\begin{equation}\label{eq:def-a1}
 A^{\bullet}_{\Gamma} \;:=\; \left\{ \varphi \in \bigoplus _{\alpha\in {\mathcal C}_{\Gamma}}\alpha\cdot \wedge^\bullet \g^{\ast}_\C \st \left(\Ad_{\mathrm{s}}\right)_g(\varphi)=\varphi \text{ for every } g\in G \right\} \;. 
\end{equation}
Since the action commutes with the structure of the  differential graded algebra, $A^{\bullet}_{\Gamma}$ is also a  differential graded algebra.
Now we consider the inclusion
$$ A^{\bullet}_{\Gamma} \subseteq \wedge^\bullet \solvmfd \otimes_\R \C $$
of differential graded algebras.
We have the following result.

\begin{thm}[{\cite[Corollary 7.6]{kasuya-jdg}}]\label{isoc}
Let $\solvmfd$ be a solvmanifold, and consider $A^\bullet_\Gamma$ as defined in \eqref{eq:def-a1}.
Then the inclusion
\[ \left( A^{\bullet}_\Gamma,\, \de \right) \hookrightarrow \left(\wedge^\bullet \solvmfd \otimes_\R \C,\, \de\right) \]
of differential graded algebras induces an isomorphism in cohomology.
\end{thm}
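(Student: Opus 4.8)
The plan is to reconstruct the argument of \cite[Corollary 7.6]{kasuya-jdg}, which reduces the statement to Mostow's structure theory for solvmanifolds combined with Nomizu's theorem for the nilradical and a twisted Hochschild--Serre (equivalently Leray--Serre) spectral sequence. The starting point is that $\solvmfd$ is aspherical with fundamental group $\Gamma$ and $G$ is diffeomorphic to a Euclidean space, so the task is to recognise $H^\bullet_{dR}(\solvmfd)$ inside $G$-left-invariant data twisted by functions on $G$ descending to $\solvmfd$. For a general, possibly non-completely-solvable, solvmanifold the plain invariant forms $\left(\wedge^\bullet\dualee{\g_\C},\,\de\right)$ are not enough — this is exactly where A. Hattori's theorem \cite{hattori} fails outside the completely-solvable case — and $A^\bullet_\Gamma$ is the enlargement obtained by allowing the characters of $\mathcal{C}_\Gamma$ as coefficients and then imposing $\Ad_{\mathrm s}(G)$-invariance.

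First I would exploit the data already set up above: $\ad_{\mathrm s}$ vanishes on $\n$, hence factors through the abelian quotient $\g/\n$, so that $\Ad_{\mathrm s}$ factors through a representation $G/N\to\Aut(\g_\C)$ of the simply-connected abelian group $G/N\cong\R^{k}$ (with $N$ the nilradical), whose Zariski closure is the algebraic torus $T$. By Mostow's theorem \cite{mostow} the lattice $\Gamma$ meets $N$ in a lattice $\Gamma_N:=\Gamma\cap N$ and maps onto a lattice $\Lambda$ of $G/N$, so that $\solvmfd$ is the total space of a bundle $\Gamma_N\backslash N\hookrightarrow\solvmfd\twoheadrightarrow T^{k}:=\Lambda\backslash(G/N)$ with nilmanifold fibre; note that every $\alpha\in\mathcal{C}_\Gamma$ restricts trivially to $N$ (since $\Ad_{\mathrm s}\lfloor_N=\id$), hence is fibrewise constant. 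By Nomizu's theorem \cite{nomizu} the fibre cohomology $H^\bullet(\Gamma_N\backslash N;\C)$ is computed by $\wedge^\bullet\dualee{\n_\C}$, and the monodromy of the bundle acts on it through $\Ad$, with semisimple part recorded by $\Ad_{\mathrm s}$.

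Next I would run the spectral sequence of this fibration, which on the algebraic side is the Hochschild--Serre spectral sequence of the Lie-algebra extension $0\to\n\to\g\to\g/\n\to0$ twisted by characters. Decomposing $H^\bullet(\n;\C)$ into generalized weight spaces for $T$, the $E_{2}$-term $H^\bullet\bigl(T^{k};\,H^\bullet(\n;\C)\bigr)$ gets a nonzero contribution from a weight $\beta$ only when the semisimple part $\beta\lfloor_{\Lambda}$ of the monodromy is trivial, i.e.\ when $\alpha:=\beta\circ\Ad_{\mathrm s}$ is trivial on $\Gamma$, that is $\alpha\in\mathcal{C}_\Gamma$; in that case the contribution is realised by forms of the shape $\alpha\cdot\omega$ with $\omega\in\wedge^\bullet\dualee{\g_\C}$, while the remaining (unipotent) directions of the monodromy together with the spectral-sequence differentials are encoded by $\Ad_{\mathrm s}(G)$-invariance — since $\Ad_{\mathrm s}(G)$ is Zariski-dense in $T$, invariance under $\Ad_{\mathrm s}(G)$ is the same as invariance under $T$, which selects a finite sum of $T$-weight subspaces of $\bigoplus_{\alpha\in\mathcal{C}_\Gamma}\alpha\cdot\wedge^\bullet\dualee{\g_\C}$, namely $A^\bullet_\Gamma$ as in \eqref{eq:def-a1} (the horizontal forms $\wedge^\bullet\dualee{(\g/\n)_\C}$ being of weight $0$, hence automatically retained). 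Filtering $\left(A^\bullet_\Gamma,\,\de\right)$ by horizontal degree and comparing the resulting spectral sequence with the twisted Serre spectral sequence of the fibration, one checks that the inclusion is an isomorphism at the $E_2$-level (twisted Nomizu for the fibre and for the torus base), hence on the abutment $H^\bullet_{dR}(\solvmfd)$.

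The step I expect to be the main obstacle is the weight-space bookkeeping: one must prove rigorously that only the characters of $\mathcal{C}_\Gamma$ survive, that continuous $\Ad_{\mathrm s}(G)$-invariance — as opposed to mere $\Gamma$-invariance — matches exactly the differentials together with the unipotent part coming from the nilpotent parts $\left(\ad_X\right)_{\mathrm n}$, and that the comparison of spectral sequences is compatible with the inclusion of complexes. This is precisely where one needs the full strength of Mostow's theorem on lattices in solvable Lie groups and a careful analysis of the algebraic torus $T$ and of the algebraic hull of $\Gamma$, as carried out in \cite[\S\S5--7]{kasuya-jdg}.
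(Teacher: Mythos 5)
The paper does not prove Theorem \ref{isoc} at all: it is imported verbatim from \cite[Corollary 7.6]{kasuya-jdg}, so there is no internal proof to compare your attempt against. Judged on its own terms, your outline assembles the right ingredients and is consistent in spirit with the strategy of the cited reference: the Mostow bundle $\left.\Gamma_N\middle\backslash N\right.\hookrightarrow\solvmfd\twoheadrightarrow\left.\Lambda\middle\backslash(G/N)\right.$, Nomizu for the nilmanifold fibre (legitimate here since every $\alpha\in\mathcal{C}_\Gamma$ is trivial on $N$ because $\ad_{\mathrm s}(\n)=\{0\}$), the observation that Zariski density of $\Ad_{\mathrm s}(G)$ in $T$ makes $\Ad_{\mathrm s}(G)$-invariance equal to $T$-invariance and hence cuts out exactly the weight spaces appearing in \eqref{eq:def-a}, and the diagnosis that the failure of plain invariant forms comes from characters of $G/N$ that are nontrivial on $\R^k$ but trivial on the lattice $\Lambda$.

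The genuine gap is that the decisive step is asserted rather than proved: you must show that, for each generalized $T$-weight space $V_\beta$ of $H^\bullet(\n_\C)$, the complex of $\left(\beta\circ\Ad_{\mathrm s}\right)$-twisted invariant forms on the torus base computes $H^\bullet\left(\Z^k;V_\beta\right)$ and not merely $H^\bullet\left(\R^k;V_\beta\right)$ --- these differ precisely when $\beta\lfloor_\Lambda=1$ but $\beta\neq1$, which is the whole point of introducing $\mathcal{C}_\Gamma$ --- and that after twisting away the semisimple part the remaining unipotent monodromy is handled by a Hattori/van Est type argument. You would then still need the comparison of the Hochschild--Serre filtration on $A^\bullet_\Gamma$ with the Serre filtration on $\wedge^\bullet\solvmfd\otimes_\R\C$ to be a map of spectral sequences inducing an isomorphism on $E_2$; this compatibility is not automatic and is where the actual argument of \cite{kasuya-jdg} invests most of its effort (via the algebraic hull of $\Gamma$ and cohomology with $\mathcal{O}(T)$-coefficients rather than a bare fibration argument). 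As written, your text is a credible program, explicitly deferring these points to \cite[\S\S5--7]{kasuya-jdg}, and so does not constitute an independent proof.
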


Note that ${\Ad}_{\mathrm{s}}(G)\subseteq\Aut(\g_\C)$ consists of simultaneously diagonalizable elements.
Let $\left\{ X_{1},\ldots ,X_{n} \right\}$ be a basis of $\g_{\C}$ with respect to which
$$ \Ad_{\mathrm{s}} \;=\; \diag\left(\alpha_1, \ldots, \alpha_n\right) \colon G \to \Aut(\g_\C) $$
for some characters
$$ \alpha_1 \in \Hom(G;\C^*) , \ldots , \alpha_n \in \Hom(G;\C^*) \;. $$
Let $\left\{ x_{1},\ldots,x_{n}\right\}$ be the dual basis of $\duale{\g_\C}$ of $\left\{X_{1},\ldots ,X_{n}\right\}$.
For the basis $\left\{ x_{i_{1}}\wedge \dots \wedge x_{i_{p}} \right\}_{1\le i_{1}<i_{2}<\dots <i_{p}\le n}$ of $\wedge^\bullet \g^{\ast}_\C$, for $\alpha\in  {\mathcal C}_{\Gamma}$, we have
\[ \left({\Ad}_{\mathrm{s}}\right)_{g} \left(\alpha \, x_{i_{1}}\wedge \dots \wedge x_{i_{p}} \right) \;=\; \alpha(g) \, \alpha_{i_{1}\cdots i_{p}}^{-1}(g) \, \alpha \, x_{i_{1}}\wedge \dots \wedge x_{i_{p}} \;, \]
where we have shortened $\alpha_{i_{1}\cdots i_{p}} := \alpha_{i_{1}}\cdot \cdots \cdot \alpha_{i_{p}} \in \Hom\left(G;\C^*\right)$. 
Then the basis
$$ \left\{ \alpha\, x_{i_{1}}\wedge \dots \wedge x_{i_{p}} \;\middle\vert\; 1\le i_{1}<i_{2}<\dots <i_{p}\le n \text{ and } \alpha\in {\mathcal C}_{\Gamma} \right\} $$
of $\bigoplus_{\alpha\in {\mathcal C}_{\Gamma}}\alpha\cdot \wedge^\bullet \g_\C^{\ast}$ diagonalizes the $\Ad_{\mathrm{s}}(G)$-action, 
and $\alpha\, x_{i_{1}}\wedge \dots \wedge x_{i_{p}} \in A^{\bullet}_\Gamma$ if and only if $\alpha= \alpha_{i_{1}\cdots i_{p}}$ and $\alpha_{i_{1}\cdots i_{p}}\lfloor_\Gamma = 1$.
Hence the differential graded algebra $ A^{\bullet}_\Gamma$ is written as
\begin{equation}\label{eq:def-a}
A^{p}_\Gamma
\;=\;
\C\left\langle \alpha_{i_{1}\cdots i_{p}}\, x_{i_{1}}\wedge \dots \wedge x_{i_{p}} \;\middle\vert\; 1\le i_{1}<i_{2}<\dots <i_{p}\le n \; \text{ such that } \alpha_{i_{1}\cdots i_{p}}\lfloor_\Gamma = 1\right\rangle \;.
\end{equation}

In fact, the following result holds.

\begin{thm}\label{p,q-co}
Let $\solvmfd$ be a solvmanifold.
Let $\left\{ X_{1},\ldots ,X_{n} \right\}$ be a basis of the $\C$-vector space $\g_{\C}$ with respect to which $\Ad_{\mathrm{s}} = \diag\left(\alpha_1, \ldots, \alpha_n\right)$ for some characters $\alpha_1 , \ldots , \alpha_n \in \Hom(G;\C^*)$.
Consider the finite set of characters
\[{\mathcal A}_\Gamma \;:=\; \left\{ 
\alpha_{i_{1}\cdots i_{p}} \in \Hom(G;\C^*) \st 1\le i_{1}<i_{2}<\dots <i_{p}\le n \; \text{ such that } \alpha_{i_{1}\cdots i_{p}}\lfloor_\Gamma = 1
\right\} \;.\]
Then the sub-complex
$$ \iota\colon \left(\bigoplus_{\alpha\in {\mathcal A}_\Gamma} \alpha\cdot \wedge^\bullet\duale{\g_{\C}} ,\, \de\right) \hookrightarrow \left(\wedge^{\bullet}\solvmfd\otimes_\R\C ,\, \de\right) $$
induces an isomorphism in cohomology.

Suppose furthermore that $G$ is endowed with a $G$-left-invariant complex structure.
Consider the bi-graded $\C$-vector sub-space
$$ \iota \colon \bigoplus_{\alpha\in {\mathcal A}_\Gamma} \alpha\cdot \wedge^{\bullet,\bullet} \duale{\g_{\C}} \hookrightarrow \wedge^{\bullet,\bullet}\solvmfd \;; $$
then $\iota$ induces, for any $(p,q)\in\Z^2$, the isomorphism
\[
\iota^* \colon \frac{\ker \de\lfloor_{\bigoplus_{\alpha\in {\mathcal A}_\Gamma} \alpha\cdot \wedge^{p,q} \duale{\g_{\C}}}}{\de\left(\bigoplus_{\alpha\in {\mathcal A}_\Gamma} \alpha\cdot \wedge^{p+q-1}\duale{\g_{\C}}\right)} \stackrel{\simeq}{\longrightarrow} \frac{\ker\de\lfloor_{\wedge^{p,q}\solvmfd}}{\de\left(\wedge^{p+q-1}\solvmfd\otimes_\R\C\right)} \;.
\]
\end{thm}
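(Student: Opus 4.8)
The plan is to realise the sub-complex $B^{\bullet} := \bigoplus_{\alpha\in\mathcal{A}_\Gamma}\alpha\cdot\wedge^\bullet\duale{\g_\C}$ as a \emph{retract} of $\wedge^\bullet\solvmfd\otimes_\R\C$ by means of a character-twisted version of F.~A. Belgun's symmetrization map $\mu$; granting this, both assertions follow from Theorem \ref{isoc} by formal diagram chasing. First some preliminary remarks. For $\alpha\in\mathcal{A}_\Gamma$ (so $\alpha\lfloor_\Gamma=1$, hence $\alpha$ descends to a smooth function on $\solvmfd$) write $\theta_\alpha := \alpha^{-1}\de\alpha\in\duale{\g_\C}$ for its logarithmic differential, a left-invariant $\de$-closed $1$-form; then $\de(\alpha\,\omega)=\alpha\,(\theta_\alpha\wedge\omega+\de\omega)$ and, once a $G$-left-invariant complex structure is fixed, $\del(\alpha\,\omega)=\alpha\,(\theta_\alpha^{1,0}\wedge\omega+\del\omega)$ and $\delbar(\alpha\,\omega)=\alpha\,(\theta_\alpha^{0,1}\wedge\omega+\delbar\omega)$ for $\omega\in\wedge^\bullet\duale{\g_\C}$. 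Hence $B^\bullet$ is a $\de$-sub-complex of $\wedge^\bullet\solvmfd\otimes_\R\C$ and $B^{\bullet,\bullet}:=\bigoplus_{\alpha\in\mathcal{A}_\Gamma}\alpha\cdot\wedge^{\bullet,\bullet}\duale{\g_\C}$ is a sub-bigraded-complex of $\wedge^{\bullet,\bullet}\solvmfd$. Moreover \eqref{eq:def-a} shows $A^\bullet_\Gamma\subseteq B^\bullet$, so the inclusion $A^\bullet_\Gamma\hookrightarrow\wedge^\bullet\solvmfd\otimes_\R\C$ of Theorem \ref{isoc} factors through $B^\bullet$; since the former induces an isomorphism in cohomology, the inclusion $\iota\colon B^\bullet\hookrightarrow\wedge^\bullet\solvmfd\otimes_\R\C$ induces a surjective map in cohomology.

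To build the retraction, for $\alpha\in\mathcal{A}_\Gamma$ set $\mu_\alpha\colon\wedge^\bullet\solvmfd\otimes_\R\C\to\alpha\cdot\wedge^\bullet\duale{\g_\C}$, $\mu_\alpha(\omega):=\alpha\cdot\mu(\alpha^{-1}\omega)$, and $\mu_B:=\sum_{\alpha\in\mathcal{A}_\Gamma}\mu_\alpha$ (a finite sum), which maps into $B^\bullet$. I would check three properties. First, $\mu_B$ commutes with $\de$ (and, in the presence of $J$, with $\del$ and $\delbar$): since $\mu$ commutes with $\de$ and, by Lemma \ref{wedco}, $\mu(\theta_\alpha\wedge\sspace)=\theta_\alpha\wedge\mu(\sspace)$ because $\theta_\alpha$ is left-invariant, one computes $\de\mu_\alpha(\omega)=\alpha\,\theta_\alpha\wedge\mu(\alpha^{-1}\omega)+\alpha\,\mu\bigl(\alpha^{-1}(\de\omega-\theta_\alpha\wedge\omega)\bigr)=\alpha\,\mu(\alpha^{-1}\de\omega)=\mu_\alpha(\de\omega)$, and the identical computation with $\theta_\alpha^{1,0},\theta_\alpha^{0,1}$ (also left-invariant) handles $\del$ and $\delbar$. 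Second, $\mu_B\lfloor_{B^\bullet}=\id_{B^\bullet}$: on $\alpha\cdot\wedge^\bullet\duale{\g_\C}$ the map $\mu_\alpha$ is the identity (as $\mu$ fixes left-invariant forms), while for $\beta\in\mathcal{A}_\Gamma\setminus\{\alpha\}$ one has $\mu_\alpha(\beta\,\omega')=\alpha\,\mu\bigl((\alpha^{-1}\beta)\,\omega'\bigr)$, and evaluating on left-invariant vector fields gives $\mu\bigl((\alpha^{-1}\beta)\,\omega'\bigr)=\omega'\cdot\int_\solvmfd(\alpha^{-1}\beta)\,\eta=0$, since $\int_\solvmfd\chi\,\eta=\chi(g)\int_\solvmfd\chi\,\eta$ for every $g\in G$ and every non-trivial character $\chi$ descending to $\solvmfd$, the volume form $\eta$ being $G$-invariant. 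Third, $\mu_B$ preserves the bidegree, because $\mu$ does and multiplication by the function $\alpha$ does.

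The rest is formal. From $\mu_B\circ\iota=\id_{B^\bullet}$ the map induced by $\iota$ in cohomology is injective; with the surjectivity noted above it is an isomorphism, which is the first assertion, and consequently $\iota$ and $\mu_B$ induce mutually inverse isomorphisms in de Rham cohomology. For the second assertion, $\iota^*$ is well-defined on the displayed quotients since $B^{\bullet,\bullet}$ is a sub-bigraded-complex. Injectivity of $\iota^*$: if $\omega\in\bigoplus_{\alpha\in\mathcal{A}_\Gamma}\alpha\cdot\wedge^{p,q}\duale{\g_\C}$ satisfies $\de\omega=0$ and $\omega=\de\eta$ with $\eta\in\wedge^{p+q-1}\solvmfd\otimes_\R\C$, then $\omega=\mu_B\omega=\de(\mu_B\eta)$ with $\mu_B\eta\in\bigoplus_{\alpha}\alpha\cdot\wedge^{p+q-1}\duale{\g_\C}$, so $[\omega]=0$ already in the source. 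Surjectivity of $\iota^*$: given $\de$-closed $\omega\in\wedge^{p,q}\solvmfd$, the form $\mu_B\omega$ lies in $\bigoplus_{\alpha}\alpha\cdot\wedge^{p,q}\duale{\g_\C}$ (bidegree is preserved) and is $\de$-closed; by the first assertion $[\mu_B\omega]=[\omega]$ in $H^{p+q}_{dR}(\solvmfd;\C)$, i.e.\ $\omega-\mu_B\omega=\de\xi$ for some $\xi\in\wedge^{p+q-1}\solvmfd\otimes_\R\C$, and since $\omega-\mu_B\omega$ is still of bidegree $(p,q)$ this gives $[\omega]=[\mu_B\omega]=\iota^*[\mu_B\omega]$ in the target.

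The main obstacle is the middle paragraph: one has to recognise that Belgun symmetrization can be twisted by the finitely many characters of $\mathcal{A}_\Gamma$ so as to range in $B^\bullet$, that these twists still commute with $\del$ and $\delbar$ (here the left-invariance of $\theta_\alpha$ and Lemma \ref{wedco} are what make it work), and that each $\mu_\alpha$ kills the other character-summands (which rests on the orthogonality of distinct characters against the $G$-invariant volume). An alternative route to the first assertion, bypassing $\mu_B$, would be to prove directly that $A^\bullet_\Gamma\hookrightarrow B^\bullet$ is a quasi-isomorphism, using that $\Ad_{\mathrm{s}}(G)$ acts on $\wedge^\bullet\duale{\g_\C}$ by Lie-algebra automorphisms commuting with $\de$, so that $B^\bullet$ splits into $\Ad_{\mathrm{s}}(G)$-weight sub-complexes with the non-trivial-weight parts acyclic; but the retraction approach has the advantage of delivering the refined $(p,q)$-statement at the same time.
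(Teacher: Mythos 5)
Your proof is correct, and its engine --- the character-twisted symmetrization $\mu_B=\sum_{\alpha\in\mathcal{A}_\Gamma}\alpha\cdot\mu\left(\alpha^{-1}\,\cdot\,\right)$, its commutation with $\de$ via Lemma \ref{wedco}, the identities $\mu_\alpha\circ\iota_\alpha=\id$ and $\mu_\alpha\circ\iota_\beta=0$, and the bidegree-preservation giving the $(p,q)$-refinement --- is precisely the map $\Phi=\sum_{\alpha}\varphi_\alpha$ that the paper constructs for the second assertion, used in the same way. You deviate in two places. For the first assertion the paper does not use the retraction at all: it verifies that the invariant metric $g=\sum_j x_j\odot\bar x_j$ satisfies $\bar\ast_g\left(\alpha_{i_1\cdots i_p}\cdot\wedge^\bullet\duale{\g_\C}\right)=\alpha_{j_1\cdots j_{n-p}}\cdot\wedge^{n-\bullet}\duale{\g_\C}$ (via unimodularity) and then invokes Theorem \ref{isoc} together with Proposition \ref{subiso}, i.e.\ the Hodge-theoretic injectivity of Proposition \ref{prop:inj-pd-type}; your route --- injectivity from $\mu_B\circ\iota=\id$, surjectivity from factoring $A^\bullet_\Gamma\hookrightarrow B^\bullet\hookrightarrow\wedge^\bullet\solvmfd\otimes_\R\C$ through Theorem \ref{isoc} --- is more elementary and simply reuses the retraction you must build anyway for the second assertion. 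Second, for the vanishing of the cross-terms the paper exhibits $\frac{\alpha}{\alpha'}\,\eta$ as $\de\left(\frac{\alpha}{\alpha'}\lambda\right)$ and applies Stokes' theorem, whereas you obtain $\int_{\solvmfd}\chi\,\eta=0$ for non-trivial $\chi$ with $\chi\lfloor_\Gamma=1$ from right-translation invariance of the bi-invariant volume form; both are valid, and yours is arguably cleaner. A minor point worth making explicit in your write-up: the relations $\mu_\alpha\circ\iota_\beta=\delta_{\alpha\beta}\id$ are also what justify that the sum $\bigoplus_{\alpha\in\mathcal{A}_\Gamma}\alpha\cdot\wedge^\bullet\duale{\g_\C}$ is indeed direct inside $\wedge^\bullet\solvmfd\otimes_\R\C$, which your identity $\mu_B\lfloor_{B^\bullet}=\id_{B^\bullet}$ tacitly uses.
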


\begin{proof}
Consider the $G$-left-invariant Hermitian metric
\[ g \;:=\; \sum_{j=1}^{n} x_{j}\odot\bar x_{j} \]
on $\solvmfd$, and the associated $\C$-anti-linear Hodge-$*$-operator $\bar*_g\colon \wedge^{\bullet}\solvmfd \otimes_\R \C \to \wedge^{n-\bullet} \solvmfd \otimes_\R \C$, where $n$ is the dimension of $\solvmfd$.
If the restriction of a character $\alpha$ of $G$ on $\Gamma$ is trivial, then $\alpha$ induces a function on $\solvmfd$ and  the image $\alpha(G)$ is a compact subgroup of $\C^{\ast}$, and hence $\alpha$ is unitary.
For $\alpha_{i_{1}\cdots i_{p}} := \alpha_{i_{1}} \cdot \cdots \cdot \alpha_{i_{p}} \in {\mathcal A}_\Gamma$, since $G$ is unimodular, \cite[Lemma 6.2]{milnor}, for the complement $\{j_{1},\dots, j_{n-p}\}:=\{1,\dots,n\} \setminus \{i_{1},\dots, i_{p}\}$ we have
\[\bar \alpha_{i_{1}\dots i_{p}}=\alpha^{-1}_{i_{1}\cdots i_{p}}=\alpha_{j_{1}\dots j_{n-p}}.
\]
By this, we have
\[\bar \ast_g \left(\alpha_{i_{1}\cdots i_{p}}\cdot\wedge^\bullet \g^{\ast}_{\C}\right) \;=\; \alpha_{j_{1}\dots j_{n-p}}\cdot\wedge^{n-\bullet} \g^{\ast}_{\C}\]
and, for $\alpha_{i_{1}\dots i_{p}} \, x_{i_{1}}\wedge\dots \wedge x_{i_{p}}\in A_{\Gamma}^{\bullet}$, we have
\[\bar\ast_g \left(\alpha_{i_{1}\dots i_{p}} \, x_{i_{1}}\wedge\dots \wedge x_{i_{p}}\right) \;=\; \alpha_{j_{1}\dots j_{n-p}} \, x_{j_{1}}\wedge\dots \wedge x_{j_{n-p}}\in A_{\Gamma}^{n-\bullet} \;.\]
Hence
the sub-complexes
$$ \left(A^\bullet_\Gamma,\, \de\right) \hookrightarrow \left(\bigoplus_{\alpha\in {\mathcal A}_\Gamma} \alpha\cdot \wedge^\bullet\duale{\g_{\C}} ,\, \de\right) \hookrightarrow \left(\wedge^\bullet \solvmfd \otimes_\R\C,\, \de\right) $$
are such that
$$ \bar*_g\lfloor_{A^{\bullet}_\Gamma} \colon A^{\bullet}_{\Gamma} \to A^{n-\bullet}_{\Gamma}
\qquad \text{ and } \qquad
\bar*_g\lfloor_{\bigoplus_{\alpha\in {\mathcal A}_\Gamma} \alpha\cdot \wedge^\bullet\duale{\g_{\C}}} \colon \bigoplus_{\alpha\in {\mathcal A}_\Gamma} \alpha\cdot \wedge^\bullet\duale{\g_{\C}} \to \bigoplus_{\alpha\in {\mathcal A}_\Gamma} \alpha\cdot \wedge^{n-\bullet} \duale{\g_{\C}}
\;, $$
therefore the first assertion follows from Theorem \ref{isoc} and Proposition \ref{subiso}.

Consider the F.~A. Belgun symmetrization map $\mu\colon \wedge^\bullet \solvmfd \otimes_\R \C \to \wedge^\bullet\duale{\mathfrak{g}_\C}$, \cite[Theorem 7]{belgun}.
For $\alpha\in {\mathcal A}_\Gamma$, we define the map
$$ \varphi_{\alpha}\colon \wedge^{\bullet}\solvmfd \otimes_\R\C \to \alpha \cdot \wedge^\bullet \g^{\ast}_{\C} \;, \qquad \varphi _{\alpha}(\omega) \;:=\; \alpha\cdot \mu\left(\frac{\omega}{\alpha}\right) \;. $$
By the definition of $\mu$, for a $G$-left-invariant differential form $\theta$ on $\solvmfd$ and for a differential form $\omega$ on $\solvmfd$, we have $\mu(\theta\wedge\omega)=\theta\wedge\mu(\omega)$, see Lemma \ref{wedco}.
By this we have, for any $\alpha\in\mathcal{A}_\Gamma$,
\begin{eqnarray*}
\varphi_{\alpha}(\de\omega) &=&
\alpha\cdot \mu\left(\frac{\de\omega}{\alpha}\right) \;=\;
\alpha\cdot\mu \left(\de\left(\frac{\omega}{\alpha}\right)+\frac{\de\alpha}{\alpha}\wedge\frac{\omega}{\alpha}\right) \\[5pt]
&=&
\alpha\cdot \de\mu\left(\frac{\omega}{\alpha}\right)+\de\alpha\wedge\mu\left(\frac{\omega}{\alpha}\right) \;=\;
\de\left(\alpha\cdot \mu\left(\frac{\omega}{\alpha}\right)\right) \\[5pt]
&=&
\de\varphi_{\alpha}(\omega) \;,
\end{eqnarray*}
and hence $\varphi _{\alpha}$ is a morphism of cochain complexes. Furthermore, for $\alpha\in\mathcal{A}_\Gamma$, by considering the inclusion
$$ \iota_{\alpha}\colon \alpha\cdot \wedge^\bullet\g^{\ast}_{\C} \hookrightarrow \wedge^{\bullet}\solvmfd \otimes_\R\C \;, $$
we have that
$$ \varphi_{\alpha} \circ \iota_{\alpha} \;=\; \id_{\alpha\cdot \wedge^\bullet \g^{\ast}_{\C}} \;. $$
For distinct characters $\alpha, \alpha^{\prime}\in {\mathcal A}_\Gamma$, for the $G$-left-invariant form $\frac{\alpha^{\prime}}{\alpha}\de\left(\frac{\alpha}{\alpha^{\prime}}\right)$, since $\eta$ is a $G$-left-invariant volume form, we can choose $\lambda \in  \wedge^{\dim G-1} \g^{\ast}_{\C}$ such that $\frac{\alpha^{\prime}}{\alpha}\de\left(\frac{\alpha}{\alpha^{\prime}}\right) \wedge \lambda=\eta$.
Then we have
$$ \de\left(\frac{\alpha}{\alpha^{\prime}}\lambda\right) \;=\; \frac{\alpha}{\alpha^{\prime}}\,\frac{\alpha^{\prime}}{\alpha}\, \de \left(\frac{\alpha}{\alpha^{\prime}}\right) \wedge \lambda \;=\; \frac{\alpha}{\alpha^{\prime}}\, \eta \;.
$$
By this, using Stokes' theorem, for $\alpha\,\omega\in \alpha\cdot \wedge^p \g^{\ast}_{\C}$ and for $X_{1},\dots,X_{p}\in \g \otimes_\R\C$, we have
\begin{eqnarray*}
\mu \left(\frac{\alpha}{\alpha^{\prime}}\,\omega\right) \left(X_{1},\dots,X_{p}\right)
&=& \int_{\solvmfd} \frac{\alpha(x)}{\alpha^{\prime}(x)}\,\omega\lfloor_{x} \left(X_{1}\lfloor_{x},\dots, X_{p}\lfloor_{x}\right)\, \eta(x)
\;=\; \omega \left(X_{1},\dots X_{p}\right)\, \int_{\solvmfd} \frac{\alpha(x)}{\alpha^{\prime}(x)}\, \eta(x) \\[5pt]
&=& \omega \left(X_{1},\dots X_{p}\right)\, \int_{\solvmfd} \de \left(\frac{\alpha}{\alpha^{\prime}}\,\lambda\right) \;=\; 0 \;
\end{eqnarray*}
and hence we have
$$ \varphi_{\alpha^{\prime}}\circ \iota_{\alpha} \;=\; 0 \;. $$
By the definition and since the complex structure on $\solvmfd$ is $G$-left-invariant, we have that, for any $\alpha\in\mathcal{A}_\Gamma$, for any $(p,q)\in\Z^2$,
$$ \varphi_{\alpha}\left(\wedge^{p,q}\solvmfd\right) \subseteq \alpha\cdot \wedge^{p,q} \g^{\ast}_{\C} \;. $$
By noting that the set ${\mathcal A}_\Gamma$ is finite, we define the map
$$ \Phi \;:=\; \sum_{\alpha\in{\mathcal A}_\Gamma} \varphi_{\alpha} \colon \wedge^{\bullet,\bullet}\solvmfd \to \bigoplus_{\alpha\in {\mathcal A}_\Gamma} \alpha\cdot \wedge^{\bullet,\bullet} \g^{\ast}_{\C} \;; $$
note that $\Phi$ is a morphism of cochain complexes and we have, for any $(p,q)\in\Z^2$,
$$ \Phi\left(\wedge^{p,q}\solvmfd\right) \subseteq \bigoplus_{\alpha\in {\mathcal A}_\Gamma} \alpha\cdot \wedge^{p,q}\g^{\ast}_{\C} \qquad \text{ and } \qquad \Phi\circ \iota \;=\; \id_{\bigoplus_{\alpha\in {\mathcal A}_\Gamma}\alpha\cdot \wedge^{p,q}\g^{\ast}_{\C}} \;, $$
where $\iota$ denotes the inclusion $\iota\colon \bigoplus_{\alpha \in {\mathcal A}_\Gamma} \alpha\cdot \wedge^{\bullet,\bullet} \g^{\ast}_{\C} \hookrightarrow \wedge^{\bullet,\bullet}\solvmfd$.
Consider the induced maps
$$ \iota^{\ast}\colon H^{\bullet}\left( \Tot^\bullet \bigoplus_{\alpha\in {\mathcal A}_\Gamma} \alpha\cdot \wedge^{\bullet,\bullet}\g^{\ast}_{\C} ,\, \de \right) \to H^{\bullet}_{dR}\left(\solvmfd; \C\right) $$
and
$$ \Phi^{\ast}\colon H^{\bullet}_{dR}\left(\solvmfd; \C\right) \to H^{\bullet}\left( \Tot^\bullet \bigoplus_{\alpha\in {\mathcal A}_\Gamma} \alpha\cdot \wedge^{\bullet,\bullet} \g^{\ast}_{\C} ,\, \de \right) \;. $$
Since $\iota^{\ast}$ is an isomorphism by the first assertion and $\Phi^\ast\circ\iota^\ast=\id$, then $\Phi^{\ast}$ is the inverse of $\iota^{\ast}$.
By $\Phi\left(\wedge^{p,q}\solvmfd\right) \subseteq \bigoplus_{\alpha\in {\mathcal A}_\Gamma} \alpha\cdot \wedge^{p,q} \g^{\ast}_{\C}$, we have
\[\Phi^{\ast}\left(\frac{\ker \de\lfloor_{\wedge^{p,q}\solvmfd}}{\de\left(\wedge^{p+q-1}\solvmfd\otimes_\R\C\right)}\right) \;\subseteq\; \frac{\ker \de\lfloor_{\bigoplus_{\alpha\in {\mathcal A}_\Gamma} \alpha\cdot \wedge^{p,q}\g^{\ast}_{\C}}}{\de\left(\bigoplus_{\alpha\in {\mathcal A}_\Gamma} \alpha\cdot \wedge^{p+q-1} \g^{\ast}_{\C} \right)} \;.
\]
Hence the restriction of $\Phi^{\ast}$ to $\frac{\ker \de\lfloor_{\wedge^{p,q}\solvmfd}}{\de\left(\wedge^{p+q-1}\solvmfd\right)}$ is the inverse of the restriction of $\iota^{\ast}$ to $\frac{\ker \de\lfloor_{\bigoplus_{\alpha\in {\mathcal A}_\Gamma} \alpha\cdot \wedge^{p,q}\g^{\ast}_{\C} }}{\de\left(\bigoplus_{\alpha\in {\mathcal A}_\Gamma} \alpha\cdot \wedge^{p+q-1} \g^{\ast}_{\C} \right)}$, which is hence an isomorphism. Therefore the second assertion follows.
\end{proof}

\begin{cor}\label{AG}
Let $\solvmfd$ be a solvmanifold.
Let $J$ be a $G$-left-invariant complex structure on $G$ satisfying, for all $g\in G$,
$$ J\circ \left(\Ad_{\mathrm{s}}\right)_g \;=\; \left(\Ad_{\mathrm{s}}\right)_g\circ J \;. $$
Then, by setting $A^{p,q}_{\Gamma}:=A_{\Gamma}^{\bullet}\cap \wedge^{p,q} \solvmfd$ for any $(p,q)\in\Z^2$, we have that the differential graded sub-algebra $\left(A^\bullet_\Gamma,\, \de\right) \hookrightarrow \left(\wedge^\bullet\solvmfd\otimes_\R\C,\, \de\right)$ defined in \eqref{eq:def-a1} is actually $\Z^2$-graded,
$$ A_{\Gamma}^{\bullet} \;=\; \bigoplus_{p+q=\bullet} A^{p,q}_{\Gamma} \;, $$
and the inclusion $A_{\Gamma}^{\bullet,\bullet} \subset  \wedge^{\bullet,\bullet}\solvmfd$ induces the isomorphism
\[\frac{\ker \de\lfloor_{A^{p,q}_{\Gamma}}}{\de\left(A^{p+q-1}_{\Gamma}\right)} \stackrel{\simeq}{\to} \frac{\ker \de\lfloor_{\wedge^{p,q} \solvmfd}}{\de\left(\wedge^{p+q-1}\solvmfd\otimes_\R\C\right)} \;.
\]
\end{cor}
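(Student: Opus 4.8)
The plan is to deduce the statement from Theorem \ref{p,q-co}. Write $B^{\bullet,\bullet}:=\bigoplus_{\alpha\in\mathcal{A}_\Gamma}\alpha\cdot\wedge^{\bullet,\bullet}\duale{\g_\C}$, so that one has inclusions $A^\bullet_\Gamma\subseteq B^{\bullet}\subseteq\wedge^\bullet\solvmfd\otimes_\R\C$ of complexes (indeed $A^\bullet_\Gamma\subseteq B^\bullet$, since every generator $\alpha_{i_1\cdots i_p}\,x_{i_1}\wedge\cdots\wedge x_{i_p}$ from \eqref{eq:def-a} has $\alpha_{i_1\cdots i_p}\in\mathcal{A}_\Gamma$). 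By Theorem \ref{p,q-co}, the inclusion $B^{\bullet,\bullet}\hookrightarrow\wedge^{\bullet,\bullet}\solvmfd$ already induces, for every $(p,q)$, an isomorphism $\frac{\ker\de\lfloor_{B^{p,q}}}{\de\left(\Tot^{p+q-1}B^{\bullet,\bullet}\right)}\stackrel{\simeq}{\to}\frac{\ker\de\lfloor_{\wedge^{p,q}\solvmfd}}{\de\left(\wedge^{p+q-1}\solvmfd\otimes_\R\C\right)}$. Hence it suffices to prove (a) that $A^\bullet_\Gamma$ is $\Z^2$-graded, i.e. $A^\bullet_\Gamma=\bigoplus_{p+q=\bullet}A^{p,q}_\Gamma$, and (b) that the inclusion $A^{\bullet,\bullet}_\Gamma\hookrightarrow B^{\bullet,\bullet}$ induces, for every $(p,q)$, an isomorphism $\frac{\ker\de\lfloor_{A^{p,q}_\Gamma}}{\de\left(\Tot^{p+q-1}A^{\bullet,\bullet}_\Gamma\right)}\stackrel{\simeq}{\to}\frac{\ker\de\lfloor_{B^{p,q}}}{\de\left(\Tot^{p+q-1}B^{\bullet,\bullet}\right)}$; composing these two isomorphisms gives the claim.

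Claim (a) is immediate: the $\Ad_{\mathrm{s}}(G)$-action on $B^{\bullet,\bullet}$ (acting on the factor $\wedge^\bullet\duale{\g_\C}$ through $\Ad_{\mathrm{s}}(G)\subseteq\Aut(\g_\C)$ and by scalars on the characters) commutes with $J$, hence with the bidegree projections, since by hypothesis $J\circ(\Ad_{\mathrm{s}})_g=(\Ad_{\mathrm{s}})_g\circ J$; therefore the invariant subspace $A^\bullet_\Gamma$ is stable under the bidegree projections, i.e. $A^\bullet_\Gamma=\bigoplus_{p+q=\bullet}\left(A^\bullet_\Gamma\cap\wedge^{p,q}\solvmfd\right)=\bigoplus_{p+q=\bullet}A^{p,q}_\Gamma$. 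Equivalently, one may choose the basis $\left\{X_1,\dots,X_n\right\}$ of Theorem \ref{p,q-co} adapted to $J$ (possible because $\Ad_{\mathrm{s}}(G)$ preserves the decomposition $\g_\C=\g^{1,0}\oplus\g^{0,1}$), so that every generator in \eqref{eq:def-a} becomes of pure bidegree.

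For (b), the same $\Ad_{\mathrm{s}}(G)$-action turns $B^{\bullet,\bullet}$ into a direct sum of double complexes $B^{\bullet,\bullet}=\bigoplus_{\chi}B^{\bullet,\bullet}_\chi$, indexed by the characters $\chi=\alpha\cdot\alpha_{i_1\cdots i_p}^{-1}$ appearing on the basis $\left\{\alpha\,x_{i_1}\wedge\cdots\wedge x_{i_p}\right\}$ (computed right before Theorem \ref{p,q-co}), with $B^{\bullet,\bullet}_{1}=A^{\bullet,\bullet}_\Gamma$; here one uses that the action preserves the bidegree and commutes with $\de$, hence with $\del$ and $\delbar$ (see below). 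Put $B'^{\bullet,\bullet}:=\bigoplus_{\chi\neq1}B^{\bullet,\bullet}_\chi$, so $B^{\bullet,\bullet}=A^{\bullet,\bullet}_\Gamma\oplus B'^{\bullet,\bullet}$ as double complexes. By Theorem \ref{isoc} and the first assertion of Theorem \ref{p,q-co}, the inclusions $A^\bullet_\Gamma\hookrightarrow B^\bullet\hookrightarrow\wedge^\bullet\solvmfd\otimes_\R\C$ are quasi-isomorphisms for $\de$; hence $A^\bullet_\Gamma\hookrightarrow B^\bullet$ is a quasi-isomorphism, so the complementary summand $\left(\Tot^\bullet B'^{\bullet,\bullet},\,\de\right)$ is acyclic. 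Thus every $\de$-closed element of $B'^{p,q}$ lies in $\de\left(\Tot^{p+q-1}B'^{\bullet,\bullet}\right)$, so $\frac{\ker\de\lfloor_{B'^{p,q}}}{\de\left(\Tot^{p+q-1}B'^{\bullet,\bullet}\right)}=0$; since $\ker\de$ and $\imm\de$ respect the decomposition $B^{\bullet,\bullet}=A^{\bullet,\bullet}_\Gamma\oplus B'^{\bullet,\bullet}$ and the bigrading, the inclusion $A^{\bullet,\bullet}_\Gamma\hookrightarrow B^{\bullet,\bullet}$ induces the desired isomorphism.

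The main obstacle is the commutation of $\de$ with the $\Ad_{\mathrm{s}}(G)$-action on $B^\bullet$. On a generator $\alpha\cdot\xi$ with $\alpha\in\mathcal{A}_\Gamma$ and $\xi\in\wedge^\bullet\duale{\g_\C}$ one has $\de(\alpha\cdot\xi)=\alpha\cdot\left((\de\log\alpha)\wedge\xi+\de_{\g}\xi\right)$, where $\de_\g$ is the Chevalley--Eilenberg differential of $\g_\C$ and $\de\log\alpha\in\duale{\g_\C}$. Each $(\Ad_{\mathrm{s}})_g$ is a Lie-algebra automorphism of $\g_\C$ — being $\exp$ of the semisimple derivation $\ad_{\mathrm{s}}(Y)$, where $g=\exp Y$ ($G$ being connected and simply-connected solvable) — hence commutes with $\de_\g$; and $\de\log\alpha$, which factors through $\ad_{\mathrm{s}}\colon\g\to\operatorname{Lie}(T)$ (writing $\alpha=\beta\circ\Ad_{\mathrm{s}}$ for a character $\beta$ of the torus $T$), is $\Ad_{\mathrm{s}}(G)$-fixed, because $\ad_{\mathrm{s}}\circ(\Ad_{\mathrm{s}})_g=\ad_{\mathrm{s}}$ — indeed $(\Ad_{\mathrm{s}})_g-\id$ has image in $\operatorname{im}\bigl((\ad_A)_{\mathrm{s}}\bigr)\subseteq[\g,\g]\subseteq\ker\ad_{\mathrm{s}}$. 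Hence $(\Ad_{\mathrm{s}})_g$ also commutes with $\xi\mapsto(\de\log\alpha)\wedge\xi$, and the displayed formula gives $\de\circ(\Ad_{\mathrm{s}})_g=(\Ad_{\mathrm{s}})_g\circ\de$. Concretely, since each $(\ad_X)_{\mathrm{s}}$ is a derivation, the bracket of two $\ad_{\mathrm{s}}$-weight vectors is a weight vector for the sum of the weights, so $\de$ preserves the $\Ad_{\mathrm{s}}(G)$-eigenvalue in the basis of \eqref{eq:def-a}; this is exactly the point needed, and once it is in place the rest is the bookkeeping above.
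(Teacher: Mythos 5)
Your proposal is correct and follows essentially the same route as the paper: the paper also decomposes $\bigoplus_{\alpha\in\mathcal{A}_\Gamma}\alpha\cdot\wedge^{\bullet,\bullet}\duale{\g_\C}$ as $A_\Gamma^{\bullet,\bullet}\oplus D^{\bullet,\bullet}$ using diagonalizability of the $\Ad_{\mathrm{s}}(G)$-action and its compatibility with the bigrading coming from the $J$-commutation hypothesis, and then deduces the pure-type isomorphism from Theorem \ref{p,q-co}; your "acyclicity of the complementary summand" step is just a rephrasing of the paper's use of the projection $p$ with $p\circ\iota=\id_{A_\Gamma^{\bullet,\bullet}}$. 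The only added value is that you verify explicitly that $\de$ commutes with the $\Ad_{\mathrm{s}}(G)$-action (so that $D^{\bullet}$ really is a sub-complex), a point the paper asserts without proof.
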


\begin{proof}
Consider the $\Ad_{\mathrm{s}}(G)$-action on  $\bigoplus_{\alpha\in {\mathcal A}_\Gamma} \alpha\cdot \wedge^{\bullet,\bullet} \g^{\ast}_{\C}$.
Then $A_{\Gamma}^{\bullet,\bullet}$ is the sub-complex that consists of the elements of $\bigoplus_{\alpha\in {\mathcal A}_\Gamma} \alpha\cdot \wedge^{\bullet,\bullet} \g^{\ast}_{\C}$ fixed by this action.
Since $\Ad_{\mathrm{s}}$ is diagonalizable, we have the decomposition
\[ \bigoplus_{\alpha\in {\mathcal A}_\Gamma} \alpha\cdot \wedge^{\bullet} \g^{\ast}_{\C} \;=\; A_{\Gamma}^{\bullet} \oplus D^{\bullet}
\]
such that $D^{\bullet}$ is a sub-complex and this decomposition is a direct sum of cochain complexes.
By the assumption $J\circ \left(\Ad_{\mathrm{s}}\right)_g = \left(\Ad_{\mathrm{s}}\right)_g \circ J$ for any $g\in G$, the $\Ad_{\mathrm{s}}(G)$-action is compatible with the bi-grading $\bigoplus_{\alpha\in {\mathcal A}_\Gamma} \alpha\cdot \wedge^{\bullet,\bullet} \g^{\ast}_{\C}$.
Hence we have in fact
 \[\bigoplus_{\alpha\in {\mathcal A}_\Gamma} \alpha\cdot \wedge^{\bullet,\bullet} \g^{\ast}_{\C} \;=\; A_{\Gamma}^{\bullet,\bullet}\oplus D^{\bullet,\bullet} \;.
\]
Consider the projection $p\colon \bigoplus_{\alpha\in {\mathcal A}_\Gamma} \alpha\cdot \wedge^{\bullet,\bullet} \g^{\ast}_{\C} \to A_{\Gamma}^{\bullet,\bullet}$ and the inclusion $\iota \colon A_{\Gamma}^{\bullet,\bullet} \hookrightarrow \bigoplus_{\alpha\in {\mathcal A}_\Gamma} \alpha\cdot \wedge^{\bullet,\bullet} \g^{\ast}_{\C}$.
Then we have $p\circ \iota = {\id}_{A_{\Gamma}^{\bullet,\bullet}}$.
As similar to the proof of Theorem \ref{p,q-co}, we have that $\iota$ induces, for any $(p,q)\in\Z^2$, the isomorphism
\[
\iota^\ast \colon \frac{{\ker} \de\lfloor_{A^{p,q}_{\Gamma}}}{\de\left( A^{p+q-1}_{\Gamma}\right)} \stackrel{\simeq}{\to} \frac{\ker \de\lfloor_{\bigoplus_{\alpha\in {\mathcal A}_\Gamma} \alpha\cdot \wedge^{p,q}\g^{\ast}_{\C} }}{\de\left(\bigoplus_{\alpha\in {\mathcal A}_\Gamma} \alpha\cdot \wedge^{p+q-1} \g^{\ast}_{\C} \right)} \;.\]
Hence the corollary follows from Theorem \ref{p,q-co}.
\end{proof}

\subsection{Complex solvmanifolds of splitting type}\label{spl}

We consider now solvmanifolds of the following type.
\begin{assumption}\label{ass:solvmanifolds}
Consider a solvmanifold $X = \solvmfd$ endowed with a $G$-left-invariant complex structure $J$. Assume that $G$ is the semi-direct product $\C^{n}\ltimes_{\phi}N$ so that:
\begin{enumerate}
 \item\label{item:ass-1} $N$ is a connected simply-connected $2m$-dimensional nilpotent Lie group endowed with an $N$-left-invariant complex structure $J_N$; (denote the Lie algebras of $\C^{n}$ and $N$ by $\mathfrak{a}$ and, respectively, $\n$;)
 \item\label{item:ass-2} for any $t\in \C^{n}$, it holds that $\phi(t)\in \GL(N)$ is a holomorphic automorphism of $N$ with respect to $J_N$;
 \item\label{item:ass-3} $\phi$ induces a semi-simple action on $\n$;
 \item\label{item:ass-4} $G$ has a lattice $\Gamma$; (then $\Gamma$ can be written as $\Gamma = \Gamma_{\C^n} \ltimes_{\phi} \Gamma_{N}$ such that $\Gamma_{\C^n}$ and $\Gamma_{N}$ are  lattices of $\C^{n}$ and, respectively, $N$, and, for any $t\in \Gamma_{\C^n}$, it holds $\phi(t)\left(\Gamma_N\right)\subseteq\Gamma_N$;)
 \item\label{item:ass-5} the inclusion $\wedge^{\bullet,\bullet}\dualee{\n\otimes_\R\C} \hookrightarrow \wedge^{\bullet,\bullet}\left(\left. \Gamma_{N} \right\backslash N\right)$ induces the isomorphism
$$ H^{\bullet}\left(\wedge^{\bullet,\bullet}\left(\n\otimes_\R\C\right)^*,\, \delbar\right) \stackrel{\simeq}{\to} H^{\bullet,\bullet}_{\bar\partial }\left(\left.\Gamma_{N}\middle\backslash N\right.\right) \;. $$
\end{enumerate}
\end{assumption}

Consider the standard basis $\left\{ X_{1},\, \dots,\, X_{n} \right\}$ of $\C^n$.
Consider the decomposition $\n\otimes_\R{\C}=\n^{1,0}\oplus \n^{0,1}$ induced by $J_N$.
By the condition {\ref{item:ass-2}}, this decomposition is a direct sum of $\C^{n}$-modules.
By the condition {\ref{item:ass-3}}, we have a basis $\left\{Y_{1},\, \dots,\, Y_{m}\right\}$ of $\n^{1,0}$ and characters $\alpha_1,\ldots,\alpha_m\in\Hom(\C^n;\C^*)$ such that the induced action $\phi$ on $\n^{1,0}$ is represented by
$$ \C^n \ni t \mapsto \phi(t) \;=\; \diag \left( \alpha_{1}(t),\, \dots,\, \alpha_{m} (t) \right) \in \GL(\n^{1,0}) \;. $$
For any $j\in\{1,\ldots,m\}$, since $Y_{j}$ is an $N$-left-invariant $(1,0)$-vector field on $N$,
the $(1,0)$-vector field $\alpha_{j}Y_{j}$ on $\C^{n}\ltimes _{\phi} N$ is $G$-left-invariant.
Consider the Lie algebra $\g$ of $G$ and the decomposition $\g_\C := \g \otimes_\R \C = \g^{1,0} \oplus \g^{0,1}$ induced by $J$.
Hence we have a basis $\left\{ X_{1},\, \dots,\, X_{n},\, \alpha_{1}Y_{1},\, \dots,\, \alpha_{m}Y_{m}\right\}$ of $\g^{1,0}$, and let $\left\{ x_{1},\, \dots,\, x_{n},\, \alpha^{-1}_{1}y_{1},\, \dots,\, \alpha_{m}^{-1}y_{m} \right\}$ be its dual basis of $\wedge^{1,0}\duale{\g_\C}$.
Then we have 
$$ \wedge^{p,q}\duale{\g_\C} \;=\; \wedge^{p} \left\langle x_{1},\, \dots,\, x_{n},\, \alpha^{-1}_{1}y_{1},\, \dots,\,\alpha^{-1}_{m}y_{m} \right\rangle \otimes \wedge^{q} \left\langle \bar x_{1},\, \dots,\, \bar x_{n},\, \bar\alpha^{-1}_{1}\bar y_{1},\, \dots,\, \bar\alpha^{-1}_{m}\bar y_{m} \right\rangle \;.$$

\medskip

The following lemma holds.

\begin{lem}[{\cite[Lemma 2.2]{kasuya-mathz}}]\label{charr}
Let $X = \solvmfd$ be a solvmanifold endowed with a $G$-left-invariant complex structure $J$ as in Assumption \ref{ass:solvmanifolds}.
Consider a basis $\left\{Y_{1},\, \dots,\, Y_{m}\right\}$ of $\n^{1,0}$ such that the induced action $\phi$ on $\n^{1,0}$ is represented by $\phi(t) = \diag \left( \alpha_{1}(t),\, \dots,\, \alpha_{m} (t) \right)$ for $\alpha_1,\ldots,\alpha_m\in\Hom(\C^n;\C^*)$ characters of $\C^n$.
For any $j\in\{1,\ldots,m\}$, there exist unique unitary characters $\beta_{j}\in\Hom(\C^n;\C^*)$ and $\gamma_{j}\in\Hom(\C^n;\C^*)$ on $\C^{n}$ such that $\alpha_{j}\beta_{j}^{-1}$ and $\bar\alpha_{j}\gamma^{-1}_{j}$ are holomorphic.
\end{lem}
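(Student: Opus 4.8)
The plan is to reduce the statement to an elementary decomposition of the characters of $\C^{n}$. First I would set up the standard dictionary between characters and linear functionals: since $\C^{n}$ is connected and simply connected and $\exp\colon\C\to\C^{*}$ is a covering homomorphism, every character $\alpha\in\Hom(\C^{n};\C^{*})$ (a smooth homomorphism) lifts uniquely to an $\R$-linear map $\ell_{\alpha}\colon\C^{n}\to\C$ with $\alpha=\exp\circ\ell_{\alpha}$, and conversely every such functional defines a character; moreover $\alpha\mapsto\ell_{\alpha}$ turns products of characters into sums of functionals. Under this dictionary, $\alpha$ is \emph{holomorphic} exactly when $\ell_{\alpha}$ is $\C$-linear, and $\alpha$ is \emph{unitary} (i.e.\ $\alpha(\C^{n})\subseteq S^{1}$) exactly when $\ell_{\alpha}$ takes purely imaginary values, equivalently $\overline{\ell_{\alpha}}=-\ell_{\alpha}$, where $\overline{\ell}(t):=\overline{\ell(t)}$.

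The key point is then the internal direct sum decomposition
$$ \Hom_{\R}(\C^{n},\C) \;=\; \left\{\ell \st \overline{\ell}=-\ell\right\} \;\oplus\; \left\{\ell \st \ell \ \C\text{-linear}\right\} \;. $$
For existence of the splitting I would write $\ell=\ell^{1,0}+\ell^{0,1}$ for the decomposition of $\ell$ into its $\C$-linear and $\C$-antilinear parts; then $\ell=\bigl(\ell^{0,1}-\overline{\ell^{0,1}}\bigr)+\bigl(\ell^{1,0}+\overline{\ell^{0,1}}\bigr)$, where the first summand is purely imaginary-valued and the second is $\C$-linear. The two subspaces meet only in $0$: a functional that is $\C$-linear and satisfies $\overline{\ell}=-\ell$ has $-\ell=\overline{\ell}$ both $\C$-linear and $\C$-antilinear, forcing $\ell=0$. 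Exponentiating, this says precisely that every character $\alpha$ of $\C^{n}$ factors \emph{uniquely} as $\alpha=\beta\cdot h$ with $\beta$ unitary and $h$ holomorphic: existence from the splitting of $\ell_{\alpha}$, and uniqueness because two such factorizations $\beta h=\beta'h'$ give $\beta'\beta^{-1}=h(h')^{-1}$, a character that is simultaneously unitary and holomorphic, hence trivial.

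To finish, for each fixed $j\in\{1,\dots,m\}$ I would apply this factorization twice: once to $\alpha_{j}$, obtaining the unique unitary $\beta_{j}$ with $\alpha_{j}\beta_{j}^{-1}$ holomorphic, and once to $\bar\alpha_{j}\in\Hom(\C^{n};\C^{*})$ — whose lift is the $\R$-linear functional $\overline{\ell_{\alpha_{j}}}$ — obtaining the unique unitary $\gamma_{j}$ with $\bar\alpha_{j}\gamma_{j}^{-1}$ holomorphic. This is exactly the assertion of the lemma. One checks moreover, from the explicit splitting, that in fact $\alpha_{j}\beta_{j}^{-1}=\bar\alpha_{j}\gamma_{j}^{-1}$, both lifting to $\ell_{\alpha_{j}}^{1,0}+\overline{\ell_{\alpha_{j}}^{0,1}}$, although this is not needed.

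There is no serious obstacle here: the whole content is the linear decomposition above, and the argument reproduces \cite[Lemma 2.2]{kasuya-mathz}. The only steps deserving a sentence of justification are the correspondence between characters of the simply connected group $\C^{n}$ and $\R$-linear functionals (smoothness of the characters together with simple connectedness, to lift along $\exp$), and the translation of "holomorphic" and "unitary" into the conditions "$\ell_{\alpha}$ is $\C$-linear" and "$\ell_{\alpha}$ is purely imaginary"; everything else is formal.
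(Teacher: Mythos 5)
Your argument is correct and complete; note, though, that the paper itself contains no proof of Lemma \ref{charr} --- it is imported verbatim from \cite[Lemma 2.2]{kasuya-mathz} --- so the only in-text material to compare against is the closely related Lemma \ref{uniim}. Your route is the coordinate-free form of the computation used there and in the cited reference: writing a character as $\alpha=\exp\left(\sum_{i}\left(a_{i}z_{i}+b_{i}\bar z_{i}\right)\right)$, your splitting of $\Hom_{\R}(\C^{n},\C)$ into the purely-imaginary-valued and the $\C$-linear functionals reads $\beta=\exp\left(\sum_{i}\left(-\bar b_{i}z_{i}+b_{i}\bar z_{i}\right)\right)$ and $\alpha\beta^{-1}=\exp\left(\sum_{i}\left(a_{i}+\bar b_{i}\right)z_{i}\right)$, and the uniqueness step (a simultaneously unitary and holomorphic character is trivial) is exactly the content of Lemma \ref{uniim} specialized to $\C^{n}$. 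All the individual claims check out, including the incidental identity $\alpha_{j}\beta_{j}^{-1}=\bar\alpha_{j}\gamma_{j}^{-1}$, since conjugation swaps the $(1,0)$ and $(0,1)$ parts of the lift. The one step I would spell out slightly more is the lifting dictionary itself: continuity of the character together with simple connectedness of $\C^{n}$ gives a continuous lift along $\exp\colon\C\to\C^{*}$ normalized at $0$; the lift is additive because the additivity defect is a continuous function into the discrete group $2\pi\sqrt{-1}\,\Z$ vanishing at the origin; and a continuous additive map $\C^{n}\to\C$ is $\R$-linear. With that made explicit, your proof is a clean, self-contained substitute for the external citation.
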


We recall the following result by the second author.

\begin{thm}{\rm (\cite[Corollary 4.2]{kasuya-mathz})}\label{CORR}
Let $X = \solvmfd$ be a solvmanifold endowed with a $G$-left-invariant complex structure $J$ as in Assumption \ref{ass:solvmanifolds}.
Consider the standard basis $\left\{ X_{1},\, \dots,\, X_{n} \right\}$ of $\C^n$.
Consider a basis $\left\{Y_{1},\, \dots,\, Y_{m}\right\}$ of $\n^{1,0}$ such that the induced action $\phi$ on $\n^{1,0}$ is represented by $\phi(t) = \diag \left( \alpha_{1}(t),\, \dots,\, \alpha_{m} (t) \right)$ for $\alpha_1,\ldots,\alpha_m\in\Hom(\C^n;\C^*)$ characters of $\C^n$.
Let $\left\{ x_{1},\, \dots,\, x_{n},\, \alpha^{-1}_{1}y_{1},\, \dots,\, \alpha_{m}^{-1}y_{m} \right\}$ be the basis of $\wedge^{1,0}\duale{\g_\C}$ which is dual to $\left\{ X_{1},\, \dots,\, X_{n},\, \alpha_{1}Y_{1},\, \dots,\, \alpha_{m}Y_{m}\right\}$.
For any $j\in\{1,\ldots,m\}$, let $\beta_{j}$ and $\gamma_{j}$ be the unique unitary characters on $\C^{n}$ such that $\alpha_{j}\beta_{j}^{-1}$ and $\bar\alpha_{j}\gamma^{-1}_{j}$ are holomorphic, as in Lemma \ref{charr}.
Define the differential bi-graded sub-algebra $B^{\bullet,\bullet}_{\Gamma} \subset \wedge^{\bullet,\bullet} \solvmfd$, for $(p,q)\in\Z^2$, as
\begin{eqnarray}\label{eq:def-b}
B^{p,q}_{\Gamma} &:=& \C\left\langle x_{I} \wedge \left(\alpha^{-1}_{J}\beta_{J}\right)\, y_{J} \wedge \bar x_{K} \wedge \left( \bar\alpha^{-1}_{L}\gamma_{L} \right) \, \bar y_{L}
\;\middle\vert\;
\left|I\right| + \left|J\right|=p \text{ and } \left|K\right| + \left|L\right| = q \right. \\[5pt]
\nonumber
&& \left. \text{ such that } \left( \beta_{J}\gamma_{L} \right)\lfloor_{\Gamma} = 1
\right\rangle \;.
\end{eqnarray}
Then the inclusion $B^{\bullet,\bullet}_{\Gamma}\subset \wedge^{\bullet,\bullet}\solvmfd$ induces the cohomology isomorphism
\[
H^{\bullet,\bullet}\left(B^{\bullet,\bullet}_{\Gamma},\, \delbar\right) \stackrel{\simeq}{\to} H^{\bullet,\bullet}_{\delbar}\left(\solvmfd\right) \;.
\]
\end{thm}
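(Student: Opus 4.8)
The plan is to realise $X = \solvmfd$ as the total space of a holomorphic fibre bundle $\pi\colon X \to T := \left.\Gamma_{\C^n}\middle\backslash \C^n\right.$ over the complex torus $T$, with fibre the nilmanifold $M := \left.\Gamma_N\middle\backslash N\right.$, and to compute $H^{\bullet,\bullet}_{\delbar}(X)$ by feeding the fibrewise statement of condition \ref{item:ass-5} in Assumption \ref{ass:solvmanifolds} into the Dolbeault spectral sequence of this fibration and then running a twisted Dolbeault computation on the base torus.

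First I would check that $B^{\bullet,\bullet}_\Gamma$, as defined in \eqref{eq:def-b}, is a genuine $\delbar$-subcomplex of $\wedge^{\bullet,\bullet}X$. A generator $x_I \wedge \bigl(\alpha^{-1}_J\beta_J\bigr)\, y_J \wedge \bar x_K \wedge \bigl(\bar\alpha^{-1}_L\gamma_L\bigr)\, \bar y_L$ descends to a well-defined form on $\solvmfd$: the coframe $\left\{ x_i,\, \alpha_j^{-1}y_j,\, \bar x_i,\, \bar\alpha_j^{-1}\bar y_j \right\}$ is $G$-left-invariant, hence $\Gamma$-invariant and automatically $\Gamma_N$-invariant, while the extra unitary factors $\beta_J, \gamma_L$ depend only on the $\C^n$-coordinate, so $\Gamma_{\C^n}$-invariance of the product is exactly the requirement $\left(\beta_J\gamma_L\right)\lfloor_\Gamma = 1$. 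That $\delbar$ preserves the span rests on Lemma \ref{charr}: since $\alpha_j\beta_j^{-1}$ and $\bar\alpha_j\gamma_j^{-1}$ are holomorphic, $\bigl(\alpha_j^{-1}\beta_j\bigr)y_j = \bigl(\alpha_j\beta_j^{-1}\bigr)^{-1}\bigl(\alpha_j^{-1}y_j\bigr)$ is a holomorphic function times an element of the invariant coframe, hence $\delbar$ of it equals that holomorphic function times the left-invariant $2$-form $\de\bigl(\alpha_j^{-1}y_j\bigr)$; expanding the latter through the structure equations of $\g$ (the brackets $[\mathfrak{a},\n]$ and $[\n,\n]$) and redistributing the holomorphic factors, which is legitimate because holomorphic and unitary parts of characters multiply (again by the uniqueness in Lemma \ref{charr}), puts every resulting term back inside the span, and likewise for $\delbar$ of the $\bar y_l$-generators.

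For the quasi-isomorphism I would argue in two steps. \emph{Step 1: reduction to fibrewise-invariant forms.} The bundle $\pi$ is holomorphic with all fibres biholomorphic by condition \ref{item:ass-2} in Assumption \ref{ass:solvmanifolds}, and by condition \ref{item:ass-5} the $\delbar$-cohomology of $M$ is computed by the invariant forms $\wedge^{\bullet,\bullet}\dualee{\n\otimes_\R\C}$; hence the relative Dolbeault cohomology sheaves $R^q\pi_*\Omega^p_{X/T}$ are locally free, and, via the monodromy action of $\Gamma_{\C^n}$ on $H^{\bullet,\bullet}_{\delbar}(M)$ (which on invariant forms is the diagonal character action $y_J \mapsto \alpha_J(\gamma)\,y_J$, $\bar y_L \mapsto \bar\alpha_L(\gamma)\,\bar y_L$), they split as direct sums of flat line bundles on $T$ with characters the products $\alpha_J\bar\alpha_L$. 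Running the Leray-type spectral sequence for $\delbar$ associated with $\pi$, that is, filtering $\wedge^{\bullet,\bullet}X$ by the degree coming from $\pi^*\wedge^{\bullet,\bullet}T$ in the spirit of \cite{console-fino}, the fibrewise statement identifies the $E_1$-page with the finite-dimensional invariant twisted Dolbeault complexes of $T$ built from these flat line bundles. \emph{Step 2: computation on the torus.} For each flat line bundle $L_{\alpha_J\bar\alpha_L}$, write $\alpha_J\bar\alpha_L = \bigl((\alpha_J\beta_J^{-1})(\bar\alpha_L\gamma_L^{-1})\bigr)\cdot\left(\beta_J\gamma_L\right)$, a holomorphic character times the unitary character $\beta_J\gamma_L$ (Lemma \ref{charr}); the holomorphic factor, being $\esp^{\ell}$ for some $\C$-linear $\ell\colon \C^n \to \C$, determines a holomorphically trivial bundle on $T$ (via the nowhere-vanishing holomorphic section $z \mapsto \esp^{\ell(z)}$), so multiplication by it is an isomorphism of Dolbeault complexes $\bigl(\wedge^{\bullet,\bullet}T \otimes L_{\alpha_J\bar\alpha_L},\, \delbar\bigr) \simeq \bigl(\wedge^{\bullet,\bullet}T \otimes L_{\beta_J\gamma_L},\, \delbar\bigr)$. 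A unitary flat line bundle on a complex torus has vanishing Dolbeault cohomology unless it is trivial, that is, unless $\left(\beta_J\gamma_L\right)\lfloor_\Gamma = 1$, in which case the cohomology is carried by the invariant forms $\wedge^{\bullet,\bullet}\dualee{\mathfrak{a}\otimes_\R\C}$. Feeding this back, the surviving $E_1$-terms are precisely those indexed by the multi-indices with $\left(\beta_J\gamma_L\right)\lfloor_\Gamma = 1$, i.e. a basis of $B^{\bullet,\bullet}_\Gamma$; since these classes already lie in $B^{\bullet,\bullet}_\Gamma$, the spectral sequence degenerates on the relevant pieces and the inclusion $B^{\bullet,\bullet}_\Gamma \hookrightarrow \wedge^{\bullet,\bullet}X$ induces the asserted isomorphism $H^{\bullet,\bullet}\bigl(B^{\bullet,\bullet}_\Gamma,\, \delbar\bigr) \stackrel{\simeq}{\to} H^{\bullet,\bullet}_{\delbar}\left(\solvmfd\right)$. (Alternatively, one may bypass the spectral sequence by exhibiting an explicit $\delbar$-homotopy retraction of $\wedge^{\bullet,\bullet}X$ onto $B^{\bullet,\bullet}_\Gamma$, averaging over the fibre $N$ via the nilmanifold result and over the compact torus $T$, in the spirit of the symmetrisation arguments of Theorem \ref{p,q-co} and Corollary \ref{AG}.)

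The step I expect to be the main obstacle is the interface between fibre and base: making precise that the invariant fibre cohomology globalises to a holomorphic vector bundle on $T$ carrying the monodromy flat structure, and that the bidegree bookkeeping in the spectral sequence matches the $(p,q)$-grading used to define $B^{p,q}_\Gamma$ in \eqref{eq:def-b}. Once the vanishing/triviality dichotomy for unitary flat line bundles on $T$ is in hand and the holomorphic characters have been peeled off as above, the remainder is bookkeeping; but ensuring that the surviving cohomology sits inside $B^{\bullet,\bullet}_\Gamma$, hence that no higher differentials contribute, is precisely where the full force of Assumption \ref{ass:solvmanifolds} is used, and is the delicate point of a fully rigorous proof.
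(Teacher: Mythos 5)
You should first note that the paper itself contains no proof of Theorem \ref{CORR}: it is imported verbatim from \cite[Corollary 4.2]{kasuya-mathz}, so the only meaningful comparison is with the argument of that reference. Your sketch reproduces that argument's strategy faithfully: realise $\solvmfd$ as a holomorphic fibre bundle over the complex torus $\left.\Gamma_{\C^n}\middle\backslash\C^n\right.$ with nilmanifold fibre $\left.\Gamma_N\middle\backslash N\right.$, use condition \ref{item:ass-5} of Assumption \ref{ass:solvmanifolds} (together with the semisimplicity in condition \ref{item:ass-3}) to identify the fibre Dolbeault cohomology with a sum of monodromy character spaces, and then compute on the base by splitting each flat line bundle character into a holomorphic factor, which trivialises the holomorphic structure, times a unitary factor, whose Dolbeault cohomology on the torus vanishes unless the character is trivial on the lattice; this is precisely the role of Lemma \ref{charr} and of the condition $\left(\beta_J\gamma_L\right)\lfloor_\Gamma=1$. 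The one step that is not a valid inference as written is the last one: from ``the surviving $E_1$-terms are represented by elements of $B^{\bullet,\bullet}_\Gamma$'' you cannot conclude degeneration, nor that the inclusion induces an isomorphism in cohomology. The clean way to finish is to put compatible bounded filtrations on both $B^{\bullet,\bullet}_\Gamma$ and $\wedge^{\bullet,\bullet}\solvmfd$ (for each fixed $p$, filtering the $\delbar$-complex by the base degree), compute the first page of the spectral sequence of the \emph{subcomplex} as well --- a computation your sketch omits --- check that the inclusion is an isomorphism there, and invoke the comparison theorem for bounded filtrations exactly as in the proof of Proposition \ref{spdb}. With that repair, and with the fibre-to-base bidegree bookkeeping that you already flag as the delicate point, your reconstruction is sound and coincides with the route taken in the cited source.
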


\medskip

As a straightforward consequence, by means of conjugation, we get the following result.

\begin{cor}\label{bar}
Let $X = \solvmfd$ be a solvmanifold endowed with a $G$-left-invariant complex structure $J$ as in Assumption \ref{ass:solvmanifolds}.
Consider $B^{\bullet,\bullet}_{\Gamma}$ as in \eqref{eq:def-b}, and let
\begin{equation}\label{eq:def-b-bar}
\bar B^{\bullet,\bullet}_{\Gamma} \;:=\; \left\{ \bar \omega \in \wedge^{\bullet,\bullet}\solvmfd \st \omega\in B^{\bullet,\bullet}_{\Gamma} \right\} \;.
\end{equation}
The inclusion $\bar B_{\Gamma}^{\bullet,\bullet} \hookrightarrow \wedge^{\bullet,\bullet}\solvmfd$ induces the cohomology isomorphism
\[ H^{\bullet,\bullet}\left(\bar B_{\Gamma}^{\bullet,\bullet},\, \del\right) \stackrel{\simeq}{\to} H^{\bullet,\bullet}_{\del}\left(\solvmfd\right) \;. \]
\end{cor}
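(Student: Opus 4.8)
The plan is to deduce the statement directly from Theorem~\ref{CORR} by transporting the whole picture through complex conjugation. First I would recall that $\omega \mapsto \bar\omega$ is a $\C$-anti-linear automorphism of $\wedge^\bullet\solvmfd\otimes_\R\C$ sending $\wedge^{p,q}\solvmfd$ onto $\wedge^{q,p}\solvmfd$ and satisfying $\overline{\del\omega}=\delbar\bar\omega$ and $\overline{\delbar\omega}=\del\bar\omega$; in particular it intertwines the complexes $\left(\wedge^{\bullet,q}\solvmfd,\, \delbar\right)$ and $\left(\wedge^{p,\bullet}\solvmfd,\, \del\right)$, and more generally it carries $\delbar$-cocycles and $\delbar$-coboundaries to $\del$-cocycles and $\del$-coboundaries. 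Hence it descends to the classical conjugation isomorphism $H^{p,q}_{\delbar}(\solvmfd)\stackrel{\simeq}{\to} H^{q,p}_{\del}(\solvmfd)$.

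Next I would observe that, by the very definition \eqref{eq:def-b-bar} of $\bar B^{\bullet,\bullet}_\Gamma$, conjugation restricts to a $\C$-anti-linear isomorphism $B^{q,p}_\Gamma \stackrel{\simeq}{\to} \bar B^{p,q}_\Gamma$ for every $(p,q)\in\Z^2$, compatibly with the inclusions into $\wedge^{\bullet,\bullet}\solvmfd$; since $B^{\bullet,\bullet}_\Gamma$ is a $\delbar$-subcomplex, the identity $\overline{\delbar B^{\bullet,\bullet}_\Gamma}=\del\bar B^{\bullet,\bullet}_\Gamma$ shows that $\bar B^{\bullet,\bullet}_\Gamma$ is indeed a $\del$-subcomplex, and conjugation identifies the complex $\left(B^{\bullet,\bullet}_\Gamma,\, \delbar\right)$ with $\left(\bar B^{\bullet,\bullet}_\Gamma,\, \del\right)$. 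Thus for each $(p,q)$ I obtain a commutative square whose horizontal arrows are the inclusions $B^{q,p}_\Gamma\hookrightarrow\wedge^{q,p}\solvmfd$ and $\bar B^{p,q}_\Gamma\hookrightarrow\wedge^{p,q}\solvmfd$, whose vertical arrows are the conjugation isomorphisms, and which passes to cohomology as a commutative square with vertical (anti-linear) isomorphisms $H^{\bullet,\bullet}(B^{\bullet,\bullet}_\Gamma,\delbar)\stackrel{\simeq}{\to} H^{\bullet,\bullet}(\bar B^{\bullet,\bullet}_\Gamma,\del)$ and $H^{\bullet,\bullet}_{\delbar}(\solvmfd)\stackrel{\simeq}{\to} H^{\bullet,\bullet}_{\del}(\solvmfd)$.

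Finally, chasing this square and invoking Theorem~\ref{CORR}, which asserts that the top inclusion induces the isomorphism $H^{\bullet,\bullet}(B^{\bullet,\bullet}_\Gamma,\delbar)\stackrel{\simeq}{\to} H^{\bullet,\bullet}_{\delbar}(\solvmfd)$, I conclude that the induced map of the bottom inclusion is the composite of an anti-linear isomorphism, a $\C$-linear isomorphism, and the inverse of an anti-linear isomorphism, hence a $\C$-linear isomorphism $H^{\bullet,\bullet}(\bar B^{\bullet,\bullet}_\Gamma,\del)\stackrel{\simeq}{\to} H^{\bullet,\bullet}_{\del}(\solvmfd)$. There is essentially no obstacle: the only points requiring a line of verification are that $\bar B^{\bullet,\bullet}_\Gamma$ is a $\del$-subcomplex and the correct bookkeeping of bidegrees under conjugation, both of which are immediate; the rest is a formal transport of structure.
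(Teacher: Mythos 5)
Your proposal is correct and is exactly the argument the paper intends: the corollary is stated as ``a straightforward consequence, by means of conjugation'' of Theorem~\ref{CORR}, with no further proof given, and your transport-of-structure argument (conjugation intertwining $\del$ and $\delbar$, identifying $\left(B^{q,p}_\Gamma,\,\delbar\right)$ with $\left(\bar B^{p,q}_\Gamma,\,\del\right)$, and descending to the standard anti-linear isomorphism $H^{q,p}_{\delbar}\simeq H^{p,q}_{\del}$) is precisely the omitted verification. No gaps.
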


Hence we get the following result.

\begin{cor}\label{DBARD}
Let $\solvmfd$ be a solvmanifold endowed with a $G$-left-invariant complex structure $J$ as in Assumption \ref{ass:solvmanifolds}.
Consider $B^{\bullet,\bullet}_{\Gamma}$ as in \eqref{eq:def-b}, and $\bar B^{\bullet,\bullet}_{\Gamma}$ as in \eqref{eq:def-b-bar}.
Let
\begin{equation}\label{eq:def-c}
C^{\bullet,\bullet}_{\Gamma} \;:=\; B^{\bullet,\bullet}_{\Gamma} + \bar B_{\Gamma}^{\bullet,\bullet} \;.
\end{equation}
Then we have
\begin{enumerate}
 \item\label{item:DBARD-1} the inclusion $C^{\bullet,\bullet}_{\Gamma} \hookrightarrow \wedge^{\bullet,\bullet}\solvmfd$ induces the cohomology isomorphism
 \[ H^{\bullet,\bullet}\left(C^{\bullet,\bullet}_{\Gamma},\, \del\right) \stackrel{\simeq}{\to} H^{\bullet,\bullet}_{\del}\left(\solvmfd\right) \;;\]
 \item\label{item:DBARD-2} the inclusion $C^{\bullet,\bullet}_{\Gamma} \hookrightarrow \wedge^{\bullet,\bullet}\solvmfd$ induces the cohomology isomorphism
 \[ H^{\bullet,\bullet}\left(C^{\bullet,\bullet}_{\Gamma},\, \delbar\right) \stackrel{\simeq}{\to} H^{\bullet,\bullet}_{\delbar}\left(\solvmfd\right) \;; \]
 \item\label{item:DBARD-3} for any $(p,q)\in\Z^2$, the inclusion $C^{\bullet,\bullet}_{\Gamma} \hookrightarrow \wedge^{\bullet,\bullet}\solvmfd$ induces the surjective map
 \[ \frac{\ker \de\lfloor_{C^{p,q}_\Gamma}}{\de\left(\Tot^{p+q-1} C_{\Gamma}^{\bullet,\bullet}\right)} \to \frac{\ker \de\lfloor_{\wedge^{p,q}\solvmfd}}{\de\left(\wedge^{p+q-1}\solvmfd\otimes_\R \C\right)} \;. \]
\end{enumerate}
\end{cor}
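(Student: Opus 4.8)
The plan is to obtain \ref{item:DBARD-1}--\ref{item:DBARD-2} by combining the injectivity criterion of Corollary~\ref{cor:inj-fin-dim} with the Dolbeault-type computations of Theorem~\ref{CORR} and Corollary~\ref{bar}, and to obtain \ref{item:DBARD-3} by a symmetrization argument modelled on the proof of Theorem~\ref{p,q-co}. First I would fix on $X=\solvmfd$ the $G$-left-invariant Hermitian metric $g$ making the left-invariant coframe $\left\{x_1,\dots,x_n,\alpha_1^{-1}y_1,\dots,\alpha_m^{-1}y_m\right\}$, together with its conjugate, unitary, and let $\bar*_g$ be the associated conjugate-linear Hodge operator. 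The crucial observation is that $\bar*_g$ preserves $C^{\bullet,\bullet}_\Gamma$, namely $\bar*_g\lfloor_{C^{\bullet,\bullet}_\Gamma}\colon C^{p,q}_\Gamma\to C^{n'-p,\,n'-q}_\Gamma$ with $n':=\dim_\C X$; this is checked on the spanning left-invariant monomials of $B^{\bullet,\bullet}_\Gamma$ and $\bar B^{\bullet,\bullet}_\Gamma$ in \eqref{eq:def-b}, \eqref{eq:def-b-bar} exactly as $\bar*_g$ is computed in the proof of Theorem~\ref{p,q-co}: $\bar*_g$ sends such a monomial to a sign times the complementary monomial, replacing its attached unitary character by the conjugate one, and, since $G$ is unimodular, $\prod_j\beta_j\cdot\prod_l\gamma_l=1$, so that this conjugate character is precisely the one attached by \eqref{eq:def-b} to the complementary $\n$-indices (triviality on $\Gamma$ being preserved). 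Consequently $\del^*=-\bar*_g\,\del\,\bar*_g$ and $\delbar^*=-\bar*_g\,\delbar\,\bar*_g$ map the finite-dimensional subcomplex $C^{\bullet,\bullet}_\Gamma$ into itself, so on $C^{\bullet,\bullet}_\Gamma$ they coincide with the adjoints of $\del\lfloor_{C^{\bullet,\bullet}_\Gamma}$ and $\delbar\lfloor_{C^{\bullet,\bullet}_\Gamma}$ for the restricted inner product; applying Corollary~\ref{cor:inj-fin-dim} --- via Remark~\ref{rem:other-cohom-inj} --- to $\square=\left[\del,\del^*\right]$, $\overline\square=\left[\delbar,\delbar^*\right]$ and $\Delta=\left[\de,\de^*\right]$, the inclusion $C^{\bullet,\bullet}_\Gamma\hookrightarrow\wedge^{\bullet,\bullet}\solvmfd$ induces injective maps in $\del$-, $\delbar$- and de Rham cohomology.

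For \ref{item:DBARD-1}, I would factor $\bar B^{\bullet,\bullet}_\Gamma\hookrightarrow C^{\bullet,\bullet}_\Gamma\hookrightarrow\wedge^{\bullet,\bullet}\solvmfd$: the composite induces an isomorphism in $\del$-cohomology by Corollary~\ref{bar}, and the second inclusion is injective in $\del$-cohomology by the previous paragraph; since the composite is also surjective, the map induced by $C^{\bullet,\bullet}_\Gamma\hookrightarrow\wedge^{\bullet,\bullet}\solvmfd$ is injective and surjective, hence an isomorphism. Statement \ref{item:DBARD-2} follows identically, factoring through $B^{\bullet,\bullet}_\Gamma$ and invoking Theorem~\ref{CORR}. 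In particular, by \ref{item:DBARD-2} and Proposition~\ref{spdb}, the same inclusion induces an isomorphism $\iota^*\colon H^\bullet\!\left(\Tot^\bullet C^{\bullet,\bullet}_\Gamma,\de\right)\stackrel{\simeq}{\to}H^\bullet_{dR}(\solvmfd;\C)$.

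For \ref{item:DBARD-3}, the plan is to build a bidegree-preserving morphism of cochain complexes $\Phi\colon\left(\wedge^{\bullet,\bullet}\solvmfd,\de\right)\to\left(C^{\bullet,\bullet}_\Gamma,\de\right)$ retracting the inclusion. Writing $C^{\bullet,\bullet}_\Gamma\subseteq\bigoplus_\rho\rho\cdot\wedge^{\bullet,\bullet}\duale{\g_\C}$, the finite sum being over the unitary characters $\rho$ with $\rho\lfloor_\Gamma=1$ occurring in \eqref{eq:def-b}, \eqref{eq:def-b-bar} (each of which descends to a nowhere-vanishing function on $\solvmfd$), I would set $\varphi_\rho(\omega):=\rho\cdot\mu\!\left(\rho^{-1}\omega\right)$ with $\mu$ the Belgun symmetrization map; arguing as in the proof of Theorem~\ref{p,q-co} (using Lemma~\ref{wedco} and Stokes' theorem), $\varphi_\rho$ is a bidegree-preserving morphism of complexes with $\varphi_\rho\circ\iota_\rho=\id$ on $\rho\cdot\wedge^{\bullet,\bullet}\duale{\g_\C}$ and $\varphi_{\rho'}\circ\iota_\rho=0$ for $\rho'\neq\rho$. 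Let $p_\rho$ be the projection of $\rho\cdot\wedge^{\bullet,\bullet}\duale{\g_\C}$ onto its direct summand $C^{\bullet,\bullet}_\Gamma\cap\left(\rho\cdot\wedge^{\bullet,\bullet}\duale{\g_\C}\right)$, and put $\Phi:=\sum_\rho p_\rho\circ\varphi_\rho$. Then $\Phi$ preserves bidegree, and for $c\in C^{\bullet,\bullet}_\Gamma$ (which may be taken homogeneous for the $\rho$-decomposition, lying say in the $\rho$-summand) one computes $\Phi(c)=p_\rho(\varphi_\rho(c))+\sum_{\rho'\neq\rho}p_{\rho'}(\varphi_{\rho'}(c))=p_\rho(c)=c$, so $\Phi$ restricts to the identity on $C^{\bullet,\bullet}_\Gamma$. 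Hence $\Phi^*$ is a left inverse of the isomorphism $\iota^*$, so $\Phi^*=(\iota^*)^{-1}$ on de Rham cohomology; given a $\de$-closed $\omega\in\wedge^{p,q}\solvmfd$, the form $\Phi(\omega)\in C^{p,q}_\Gamma$ is $\de$-closed and satisfies $\omega-\Phi(\omega)\in\de\!\left(\wedge^{p+q-1}\solvmfd\otimes_\R\C\right)$ because $\iota^*\circ\Phi^*=\id$, so the class of $\Phi(\omega)$ is sent to the class of $\omega$: this is the surjectivity asserted in \ref{item:DBARD-3} (in fact the map is bijective).

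The main obstacle is the claim used in the last paragraph that $p_\rho$ is a morphism of double complexes --- equivalently, that the direct-sum complement of $C^{\bullet,\bullet}_\Gamma$ inside $\bigoplus_\rho\rho\cdot\wedge^{\bullet,\bullet}\duale{\g_\C}$, i.e.\ the span of the left-invariant monomials violating the character conditions of \eqref{eq:def-b}, \eqref{eq:def-b-bar}, is again $\del$- and $\delbar$-stable and bi-graded. This is exactly where Assumption~\ref{ass:solvmanifolds}\,\ref{item:ass-2}--\ref{item:ass-3} (semisimplicity and holomorphicity of the action $\phi$) are used, in the same way the splitting $\bigoplus_\alpha\alpha\cdot\wedge^\bullet\duale{\g_\C}=A^\bullet_\Gamma\oplus D^\bullet$ of cochain complexes is used in the proof of Corollary~\ref{AG}. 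Apart from the $\bar*_g$-invariance of $C^{\bullet,\bullet}_\Gamma$ in the first step and this stability statement, the argument is a diagram chase.
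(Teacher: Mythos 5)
Your treatment of \ref{item:DBARD-1} and \ref{item:DBARD-2} is essentially the paper's: the same metric $g=\sum_j x_j\odot\bar x_j+\sum_k\alpha_k^{-1}\bar\alpha_k^{-1}y_k\odot\bar y_k$, the same unimodularity computation showing $\bar*_g\left(C^{\bullet,\bullet}_\Gamma\right)\subseteq C^{N-\bullet,N-\bullet}_\Gamma$, and then injectivity from the \pd-type/finite-dimensionality criterion combined with the known isomorphisms for $B^{\bullet,\bullet}_\Gamma$ and $\bar B^{\bullet,\bullet}_\Gamma$ (this two-step factoring is exactly Proposition \ref{subiso}). So far this is fine.

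For \ref{item:DBARD-3} you diverge from the paper, and this is where your argument has a gap. The paper never constructs a retraction onto $C^{\bullet,\bullet}_\Gamma$: it observes that $J$ commutes with $\Ad_{\mathrm{s}}$, that $A^{\bullet,\bullet}_\Gamma\subseteq C^{\bullet,\bullet}_\Gamma$, and that by Corollary \ref{AG} the composite $\frac{\ker\de\lfloor_{A^{p,q}_\Gamma}}{\de(A^{p+q-1}_\Gamma)}\to\frac{\ker\de\lfloor_{C^{p,q}_\Gamma}}{\de(\Tot^{p+q-1}C^{\bullet,\bullet}_\Gamma)}\to\frac{\ker\de\lfloor_{\wedge^{p,q}\solvmfd}}{\de(\wedge^{p+q-1}\solvmfd\otimes_\R\C)}$ is an isomorphism, whence the second arrow is surjective --- no splitting of $C^{\bullet,\bullet}_\Gamma$ is ever needed. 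Your route instead requires the projections $p_\rho$ to be cochain maps, i.e.\ that the span of the monomials of $\bigoplus_\rho\rho\cdot\wedge^{\bullet,\bullet}\duale{\g_\C}$ \emph{not} lying in $C^{\bullet,\bullet}_\Gamma$ be $\de$-stable, and your justification --- ``in the same way as the splitting $\bigoplus_\alpha\alpha\cdot\wedge^\bullet\duale{\g_\C}=A^\bullet_\Gamma\oplus D^\bullet$ in Corollary \ref{AG}'' --- does not apply: that splitting exists because $A^\bullet_\Gamma$ is the \emph{trivial isotypic component} of the semisimple $\Ad_{\mathrm{s}}(G)$-action, whereas $C^{\bullet,\bullet}_\Gamma$ is strictly larger than $A^{\bullet,\bullet}_\Gamma$ and is not an isotypic component of that action (its spanning monomials have various nonzero weights). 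The stability claim is in fact true, but it needs a different argument: since each $\phi(t)$ is a holomorphic automorphism acting diagonally on $\n^{1,0}$, equivariance of the bracket forces $\alpha_j=\alpha_k\alpha_l$ (resp.\ $\alpha_j=\alpha_k\bar\alpha_l$, etc.) whenever $Y_j$ occurs in $[Y_k,Y_l]$ (resp.\ $[Y_k,\bar Y_l]$), and the uniqueness in Lemma \ref{charr} then gives $\beta_j=\beta_k\beta_l$ (resp.\ $\beta_j=\beta_k\gamma_l$), so that the characters $\beta_J\gamma_L$ and $\bar\gamma_J\bar\beta_L$ attached to a monomial are unchanged by every term of the Chevalley--Eilenberg differential; hence, for each fixed $\rho$, the conditions $\beta_J\gamma_L=\rho$ and $\bar\gamma_J\bar\beta_L=\rho$ cut out $\de$-stable spans, and so does the span of the monomials satisfying neither. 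With that lemma supplied your retraction $\Phi$ is a bidegree-preserving cochain map and the rest of your diagram chase goes through (indeed giving bijectivity); without it, the step ``$\Phi$ is a morphism of complexes'' is unproved. You should either prove this character-invariance or, more economically, replace the whole construction by the paper's two-line factorization through $A^{\bullet,\bullet}_\Gamma$.
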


\begin{proof}
Let $g$ be the $G$-left-invariant Hermitian metric on $G$ defined by 
\[ g \;:=\; \sum_{j=1}^{n} x_{j}\odot \bar x_{j} + \sum_{k=1}^{m} \alpha_{k}^{-1}\bar\alpha_{k}^{-1}y_{k}\odot\bar y_{k} \;, \]
and consider its associated $\C$-anti-linear Hodge-$*$-operator $\bar*_g\colon \wedge^{\bullet}\solvmfd \to \wedge^{2N-\bullet}\solvmfd$, where $2N := 2n+2m = \dim_\R\solvmfd$.
Then for multi-indices $I, J \subset \{1,\dots, n\}$ and $K ,L\subset \{1,\dots, m\}$, and their complements $I^{\prime},J^{\prime}\subset \{1,\dots, n\}$ and $K^{\prime} ,L^{\prime}\subset \{1,\dots, m\}$, we have
\[\bar*_g\left(
x_{I} \wedge \left(\alpha^{-1}_{J}\beta_{J}\right)\, y_{J} \wedge \bar x_{K} \wedge \left( \bar\alpha^{-1}_{L}\gamma_{L} \right)\, \bar y_{L}
\right)=x_{I^{\prime}} \wedge \left(\alpha^{-1}_{J^{\prime}}\bar\beta_{J}\right)\, y_{J^{\prime}} \wedge \bar x_{K^{\prime}} \wedge \left( \bar\alpha^{-1}_{L^{\prime}}\bar\gamma_{L} \right) \, \bar y_{L^{\prime}}.
\]
Since $G$ is unimodular by the existence of a lattice, \cite[Lemma 6.2]{milnor}, we have
$\alpha_{J}\alpha_{J^{\prime}}\bar\alpha_{L}\bar\alpha_{L^{\prime}}=1$ and so we have $\beta_{J^{\prime}}\gamma_{L^{\prime}}=\beta^{-1}_{J}\gamma^{-1}_{L}=\bar\beta_{J}\bar\gamma_{L}$.
This implies 
\[x_{I^{\prime}} \wedge \left(\alpha^{-1}_{J^{\prime}}\bar\beta_{J}\right)\, y_{J^{\prime}} \wedge \bar x_{K^{\prime}} \wedge \left( \bar\alpha^{-1}_{L^{\prime}}\bar\gamma_{L} \right) \, \bar y_{L^{\prime}}=x_{I^{\prime}} \wedge \left(\alpha^{-1}_{J^{\prime}}\beta_{J^{\prime}}\right)\, y_{J^{\prime}} \wedge \bar x_{K^{\prime}} \wedge \left( \bar\alpha^{-1}_{L^{\prime}}\gamma_{L^{\prime}} \right) \, \bar y_{L^{\prime}}\in B_{\Gamma}^{\bullet,\bullet}.
\]
Then we have $\bar\ast_g \left( B_{\Gamma}^{\bullet,\bullet}\right) \subseteq B^{N-\bullet,N-\bullet}_{\Gamma}$ and so also
$$ \bar\ast_g \left( C_{\Gamma}^{\bullet,\bullet} \right) \;\subseteq\; C^{N-\bullet,N-\bullet}_{\Gamma} \;. $$
Hence \ref{item:DBARD-1}, respectively \ref{item:DBARD-2}, follows from Theorem \ref{CORR}, respectively Corollary \ref{bar}, and Proposition \ref{subiso}.

We consider the sub-complex $A^{\bullet}_{\Gamma} \subseteq \wedge^{\bullet}\solvmfd \otimes_\R\C$ defined in \eqref{eq:def-a1}.
Consider the standard basis $\left\{ X_{1},\, \dots,\, X_{n} \right\}$ of $\C^n$.
Consider a basis $\left\{Y_{1},\, \dots,\, Y_{m}\right\}$ of $\n^{1,0}$ such that the induced action $\phi$ on $\n^{1,0}$ is represented by $\phi(t) = \diag \left( \alpha_{1}(t),\, \dots,\, \alpha_{m} (t) \right)$ for $\alpha_1,\ldots,\alpha_m\in\Hom(\C^n;\C^*)$ characters of $\C^n$.
Then, with respect to the basis $\left\{ X_{1},\, \dots,\, X_{n},\,\bar X_{1},\, \dots,\, \bar X_{n},\, \alpha_{1}Y_{1},\, \dots,\, \alpha_{m}Y_{m},\, \bar\alpha_{1}{\bar Y}_{1},\, \dots,\, \bar\alpha_{m}\bar Y_{m}\right\}$ of $\g_{\C}=\g^{1,0}\oplus \g^{0,1}$, we have, for $(t,n)\in G=\C^{n}\ltimes_{\phi}N$,
\begin{eqnarray*}
\left(\Ad_{\mathrm{s}}\right)_{(t,n)}
&=&
\left(
\begin{array}{c|c}
{\id}_{(\C^{n})^{1,0}\oplus (\C^{n})^{0,1}}& 0 \\
\hline
0&    \phi_{\ast}\lfloor_{\n^{1,0}\oplus \n^{0,1}}(t)
\end{array}
\right) \\[5pt]
&=& \diag\left(\underbrace{1, \, \dots, \, 1}_{2n \text{ times}}, \, \alpha_{1}(t), \, \dots, \, \alpha_{m}(t), \, {\bar \alpha}_{1}(t), \, \dots, \, \bar \alpha_{m}(t)\right) \;.
\end{eqnarray*}
Hence we have $J\circ \left(\Ad_{\mathrm{s}}\right)_{(t,n)} = \left(\Ad_{\mathrm{s}}\right)_{(t,n)}\circ J$, and 
we can easily see that $A_{\Gamma}^{\bullet,\bullet}\subseteq C_{\Gamma}^{\bullet,\bullet} \subseteq \wedge^{\bullet,\bullet}\solvmfd$.
Since the composition
\[\frac{\ker\de\lfloor_{A^{p,q}_{\Gamma}}}{\de\left(A^{p+q-1}_{\Gamma}\right)} \to \frac{\ker\de\lfloor_{C^{p,q}}}{\de\left(\Tot^{p+q-1} C_{\Gamma}^{\bullet,\bullet}\right)}\to \frac{\ker\de\lfloor_{\wedge^{p,q}\solvmfd}}{\de\left(\wedge^{p-q-1}\solvmfd \otimes_\R\C\right)}
\] 
is an isomorphism, then \ref{item:DBARD-3} of the corollary follows.
\end{proof}

Finally we get the following theorem.
\begin{thm}\label{BCISO}
Let $\solvmfd$ be a solvmanifold endowed with a $G$-left-invariant complex structure $J$ as in Assumption \ref{ass:solvmanifolds}.
Consider $C^{\bullet,\bullet}_{\Gamma}$ as in \eqref{eq:def-c}.
For any $(p,q)\in\Z^2$, the inclusion $C^{\bullet,\bullet}_{\Gamma}\subseteq \wedge^{\bullet,\bullet}\solvmfd$ induces the isomorphism
\[ H \left( C_{\Gamma}^{p-1,q-1} \stackrel{\del\delbar}{\to} C_{\Gamma}^{p,q} \stackrel{\del+\delbar}{\to} C_{\Gamma}^{p+1,q} \oplus C_{\Gamma}^{p,q+1} \right) \stackrel{\simeq}{\to} H^{p,q}_{BC}\left(\solvmfd\right) \;. \]
\end{thm}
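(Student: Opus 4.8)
The plan is to show that the inclusion $\iota\colon C^{\bullet,\bullet}_\Gamma \hookrightarrow \wedge^{\bullet,\bullet}\solvmfd$ induces, at every bidegree $(p,q)$, a map on Bott-Chern cohomology that is at once surjective, by Theorem~\ref{surjBC}, and injective, by Corollary~\ref{cor:inj-fin-dim}; being simultaneously surjective and injective, it is then an isomorphism, which is the assertion. The point is that essentially all of the substantial work is already contained in Corollary~\ref{DBARD} and in the symmetric construction \eqref{eq:def-c} of $C^{\bullet,\bullet}_\Gamma$, so that the proof amounts to assembling the machinery of Section~\ref{sec:subcplx}.

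\emph{Surjectivity.} I would apply Theorem~\ref{surjBC} with $A^{\bullet,\bullet} := \wedge^{\bullet,\bullet}\solvmfd$ and $C^{\bullet,\bullet} := C^{\bullet,\bullet}_\Gamma$. Its hypothesis~\ref{item:thm-surg-hp-1}, that for every $r$ the inclusion $\left(C^{r,\bullet}_\Gamma,\,\delbar\right) \hookrightarrow \left(\wedge^{r,\bullet}\solvmfd,\,\delbar\right)$ is a quasi-isomorphism, is precisely Corollary~\ref{DBARD}\,\ref{item:DBARD-2}; its hypothesis~\ref{item:thm-surg-hp-2}, the analogous statement for $\del$ along the rows, is Corollary~\ref{DBARD}\,\ref{item:DBARD-1}; and its hypothesis~\ref{item:thm-surg-hp-3}, the surjectivity at bidegree $(p,q)$ of the induced map between $\frac{\ker\de\lfloor_{C^{p,q}_\Gamma}}{\imm\left(\de\colon\Tot^{p+q-1}C^{\bullet,\bullet}_\Gamma\to\Tot^{p+q}C^{\bullet,\bullet}_\Gamma\right)}$ and $\frac{\ker\de\lfloor_{\wedge^{p,q}\solvmfd}}{\imm\left(\de\colon\wedge^{p+q-1}\solvmfd\otimes_\R\C\to\wedge^{p+q}\solvmfd\otimes_\R\C\right)}$, is exactly Corollary~\ref{DBARD}\,\ref{item:DBARD-3}. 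Hence Theorem~\ref{surjBC} yields that
$$ \iota^*\colon H\left( C_{\Gamma}^{p-1,q-1} \stackrel{\del\delbar}{\to} C_{\Gamma}^{p,q} \stackrel{\del+\delbar}{\to} C_{\Gamma}^{p+1,q} \oplus C_{\Gamma}^{p,q+1} \right) \longrightarrow H^{p,q}_{BC}\left(\solvmfd\right) $$
is surjective.

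\emph{Injectivity.} Here I would equip $\solvmfd$ with the $G$-left-invariant Hermitian metric $g := \sum_{j=1}^{n} x_{j}\odot \bar x_{j} + \sum_{k=1}^{m} \alpha_{k}^{-1}\bar\alpha_{k}^{-1}\, y_{k}\odot\bar y_{k}$ already used in the proof of Corollary~\ref{DBARD}, and recall from that proof that the associated Hodge-$*$-operator satisfies $\bar*_g\left(C^{\bullet,\bullet}_\Gamma\right) \subseteq C^{N-\bullet,N-\bullet}_\Gamma$, where $2N := \dim_\R\solvmfd$, the key ingredients being the unimodularity of $G$ (Milnor's lemma, \cite[Lemma~6.2]{milnor}) and the conjugation symmetry built into $C^{\bullet,\bullet}_\Gamma = B^{\bullet,\bullet}_\Gamma + \bar B^{\bullet,\bullet}_\Gamma$. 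Since $\del^* = -\bar*_g\,\del\,\bar*_g$ and $\delbar^* = -\bar*_g\,\delbar\,\bar*_g$ on $\wedge^{\bullet,\bullet}\solvmfd$ and $C^{\bullet,\bullet}_\Gamma$ is a sub-complex (hence closed under $\del$ and $\delbar$), it follows that $\del^*$ and $\delbar^*$ map $C^{\bullet,\bullet}_\Gamma$ into itself; and, the inner product on $C^{\bullet,\bullet}_\Gamma$ being the restriction of the one on $\wedge^{\bullet,\bullet}\solvmfd$, one gets $\del^*\lfloor_{C^{\bullet,\bullet}_\Gamma} = \left(\del\lfloor_{C^{\bullet,\bullet}_\Gamma}\right)^*_{\scalar{\sspace}{\ssspace}_{C^{\bullet,\bullet}_\Gamma}}$ and likewise for $\delbar$. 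Thus hypothesis~\ref{item:cor-inj-fin-dim-3} of Corollary~\ref{cor:inj-fin-dim} holds; hypothesis~\ref{item:cor-inj-fin-dim-2} holds because $C^{\bullet,\bullet}_\Gamma$ is finite-dimensional; and hypothesis~\ref{item:cor-inj-fin-dim-1}, the $\tilde\Delta^{BC}_g$-decomposition of $\wedge^{\bullet,\bullet}\solvmfd$, is the ellipticity of $\tilde\Delta^{BC}_g$ on the compact manifold $\solvmfd$, see \cite{schweitzer}. Corollary~\ref{cor:inj-fin-dim} (equivalently, Proposition~\ref{injCo}) then gives that $\iota^*$ is injective, and combining with the surjectivity above completes the proof.

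I expect the genuine difficulty to lie not in this final assembly but in its inputs, which are by now available: condition~\ref{item:thm-surg-hp-3} of Theorem~\ref{surjBC}, furnished by Corollary~\ref{DBARD}\,\ref{item:DBARD-3}, whose proof passes through the invariant sub-complex $A^\bullet_\Gamma$ of \eqref{eq:def-a1}, Corollary~\ref{AG}, and Theorem~\ref{p,q-co}; and the $\bar*_g$-stability of $C^{\bullet,\bullet}_\Gamma$, which depends on the carefully balanced definition \eqref{eq:def-c} and on unimodularity. The one point one should double-check explicitly in the assembly is that $C^{\bullet,\bullet}_\Gamma$ is closed under both $\del$ and $\delbar$ — so that its Bott-Chern complex makes sense and the adjoint argument above is valid — which is already part of what is used in Corollary~\ref{DBARD}.
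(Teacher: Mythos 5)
Your proposal is correct and follows exactly the paper's route: surjectivity is obtained from Theorem \ref{surjBC}, whose three hypotheses are supplied by the three parts of Corollary \ref{DBARD}, and injectivity from Proposition \ref{injCo} (equivalently Corollary \ref{cor:inj-fin-dim}), using the $\bar*_g$-stability of $C^{\bullet,\bullet}_\Gamma$ established in the proof of Corollary \ref{DBARD}. Your write-up merely makes explicit the hypothesis-checking that the paper leaves implicit in its two-line proof.
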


\begin{proof}
By Corollary \ref{DBARD}, the surjectivity follows from Theorem \ref{surjBC}.
The injectivity follows from Proposition \ref{injCo}.
\end{proof}

As an application, we will study the completely-solvable Nakamura manifold in Example \ref{CSN}.

\medskip

Given a property depending on the complex structure, one says that it is \emph{open under small deformations} (respectively, \emph{strongly-closed under small deformations}) if, for any complex-analytic families of compact complex manifolds parametrized by $\mathcal{B}$, the set of parameters for which the property holds is open (respectively, closed) in the strong topology of $\mathcal{B}$.

We recall that satisfying the $\del\delbar$-Lemma is an open property under small deformations, see \cite[Proposition 9.21]{voisin}, \cite[Theorem 5.12]{wu}, \cite[\S B]{tomasiello}, \cite[Corollary 2.7]{angella-tomassini-3}. On the other hand, as pointed out by Luis Ugarte, the completely-solvable Nakamura manifold provides a counterexample to the strongly-closedness of the property of satisfying the $\del\delbar$-Lemma: indeed, complex structures in class \ref{item:nakamura-3} satisfy the $\del\delbar$-Lemma while complex structures in classes \ref{item:nakamura-1} and \ref{item:nakamura-2} do not. We have hence the following theorem.

\begin{thm}\label{thm:deldelbar-non-closed}
 Satisfying the $\del\delbar$-Lemma is not a strongly-closed property under small deformations of the complex structure.
\end{thm}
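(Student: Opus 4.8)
The plan is to realize both phenomena---validity and failure of the $\del\delbar$-Lemma---inside a single complex-analytic family, with the ``good'' parameters forming a dense but non-closed subset of the base; since strong-closedness would force this subset to be closed in the strong topology of the base, this yields the statement.

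First I would use the completely-solvable Nakamura manifold of Example~\ref{CSN}. Fix $a\in\R$ such that $\diag(\esp^{a},\esp^{-a})$ is conjugate to an element of $\mathrm{SL}(2;\Z)$, fix a lattice $\Gamma''$ of $\C^{2}$ with $\phi(a)(\Gamma'')\subseteq\Gamma''$, and, for $b$ ranging in a connected open neighbourhood $\mathcal{B}\subseteq\C$ of the positive real half-line on which $a\,\Z+b\sqrt{-1}\,\Z$ is a lattice of $\C$, put $\Gamma_{b}:=(a\,\Z+b\sqrt{-1}\,\Z)\ltimes_{\phi}\Gamma''$ and $X_{b}:=\Gamma_{b}\backslash G$ with $G=\C\ltimes_{\phi}\C^{2}$. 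Since $\phi(\sqrt{-1}\,b)=\id$, each $\Gamma_{b}$ is a lattice of $G$; rescaling the imaginary part of the first coordinate produces diffeomorphisms identifying all the underlying smooth manifolds with a fixed $M$, and transporting the complex structure along these diffeomorphisms yields a family $\{J_{b}\}_{b\in\mathcal{B}}$ of complex structures on $M$ depending holomorphically on $b$, that is, a complex-analytic family $\mathcal{X}\to\mathcal{B}$ with fibres $(M,J_{b})\cong X_{b}$.

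Next I would feed in the explicit cohomology computations of Example~\ref{CSN}. For the Nakamura manifold the finite-dimensional sub-complexes $C^{\bullet,\bullet}_{\Gamma_{b}}$ of \eqref{eq:def-c} satisfy $\del\delbar\,C^{\bullet,\bullet}_{\Gamma_{b}}=\{0\}$, so Theorem~\ref{BCISO}, together with Corollary~\ref{DBARD}, the (Hattori-type) de Rham computation, and the Hodge-$*$-isomorphism $H^{\bullet,\bullet}_{A}\cong H^{n-\bullet,n-\bullet}_{BC}$, gives the dimensions of $H^{\bullet,\bullet}_{BC}(X_{b})$, $H^{\bullet,\bullet}_{\delbar}(X_{b})$, $H^{\bullet,\bullet}_{A}(X_{b})$ and $H^{\bullet}_{dR}(X_{b};\C)$ collected in Table~\ref{table:delbar-BC-nakamura} and the following Remark. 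If $b\neq m\pi$ for every $m\in\Z$ (case~\ref{item:nakamura-3}), then $\sum_{p+q=k}\bigl(\dim_{\C}H^{p,q}_{BC}(X_{b})+\dim_{\C}H^{p,q}_{A}(X_{b})\bigr)=2\dim_{\C}H^{k}_{dR}(X_{b};\C)$ for every $k$, so $X_{b}$ satisfies the $\del\delbar$-Lemma by the equality case of \cite[Theorem A, Theorem B]{angella-tomassini-3} (equivalently by \cite[Remark 5.16]{deligne-griffiths-morgan-sullivan}), as already observed in Remark~\ref{rem:nakamura-deldelbar-1}; if instead $b=m\pi$ for some $m\in\Z$ (cases~\ref{item:nakamura-1} and~\ref{item:nakamura-2}), then the same inequality is strict for $k=2$, hence $X_{b}$ does not satisfy the $\del\delbar$-Lemma.

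Finally, the set $\{b\in\mathcal{B}\st b\neq m\pi \text{ for all }m\in\Z\}$ of parameters for which the $\del\delbar$-Lemma holds is dense in $\mathcal{B}$ and contains in its closure, e.g., the parameter $b=\pi$ (case~\ref{item:nakamura-2}); hence it is not closed in the strong topology of $\mathcal{B}$, and so satisfying the $\del\delbar$-Lemma is not strongly-closed under small deformations. (In \cite{angella-kasuya-2} this is pushed further, showing it is not even a Zariski-closed property.) I expect the only genuinely delicate point to be the verification that $\{X_{b}\}_{b\in\mathcal{B}}$ is a \emph{complex-analytic} family over a complex base---i.e. that, after the rescaling identification of the smooth manifolds, the complex structures $J_{b}$ vary holomorphically in $b$---since all the cohomological content is already supplied by Example~\ref{CSN}; this is precisely where one exploits that $\phi$ is trivial on $\sqrt{-1}\,\R$, so that the deformation only reparametrizes the first holomorphic coordinate in a holomorphic fashion.
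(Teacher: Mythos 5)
Your proposal is in substance the paper's own argument: the proof given there is precisely the observation (attributed to Luis Ugarte) that the completely-solvable Nakamura manifold realizes both behaviours, with the $\del\delbar$-Lemma holding in case \ref{item:nakamura-3} ($b\notin\pi\Z$) and failing in cases \ref{item:nakamura-1} and \ref{item:nakamura-2} ($b\in\pi\Z$), as read off from Table \ref{table:delbar-BC-nakamura} and \cite[Theorem B]{angella-tomassini-3}; you have correctly supplied the cohomological input and the elementary topological conclusion. One slip in your construction of the family, however: the identity $\phi(\sqrt{-1}\,b)=\id$ holds only for \emph{real} $b$, since for $b=\beta_1+\sqrt{-1}\,\beta_2$ one has $\phi(\sqrt{-1}\,b)=\diag(\esp^{-\beta_2},\esp^{\beta_2})$, which for $\beta_2\neq 0$ does not preserve $\Gamma''$ in general; so $\Gamma_b$ is not even a subgroup of $G$ for non-real $b$, and you cannot obtain the complex base $\mathcal{B}$ by letting the lattice parameter range over a complex neighbourhood of the half-line. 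The correct route — which your rescaling construction already suggests — is to fix one quotient $M=\Gamma_{b_0}\backslash G$ and complexify the \emph{rescaling} parameter: declaring $\de x+\sqrt{-1}\,\lambda\,\de y$, $\esp^{-x-\sqrt{-1}\lambda y}\de z_2$, $\esp^{x+\sqrt{-1}\lambda y}\de z_3$ to be of type $(1,0)$ defines, for $\lambda$ in an open subset of $\C$ with $\Re\lambda>0$, an integrable almost-complex structure on $M$ depending holomorphically on $\lambda$ (integrability for real $\lambda$ follows by pullback, and extends by analytic continuation), and for real $\lambda$ one recovers $X_{\lambda b_0}$. Since only real parameters are needed to exhibit a sequence of $\del\delbar$-manifolds converging to a non-$\del\delbar$ one, the argument then closes as you state.
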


\begin{rem}
 Actually, as remarked by Luis Ugarte, in defining closedness for deformations, one usually considers the Zariski topology, see, {\itshape e.g.} \cite{popovici-annsns}: namely, a property $\mathcal{P}$ is said to be \emph{(Zariski-)closed} if, for any family $\left\{X_t\right\}_{t\in\Delta}$ of compact complex manifolds such that $\mathcal{P}$ holds for any $t\in\Delta\setminus\{0\}$ in the punctured-disk, then $\mathcal{P}$ holds also for $X_0$. In \cite{angella-kasuya-2}, a family of deformations of the complex parallelizable Nakamura manifold is studied in order to prove that satisfying the $\del\delbar$-Lemma is also non-(Zariski-)closed.
\end{rem}

\subsection{Complex parallelizable solvmanifolds}
Let $G$ be a connected simply-connected complex solvable Lie group admitting a lattice $\Gamma$, and denote by $2n$ the real dimension of $G$. Denote the Lie algebra naturally associated to $G$ by $\g$.
We use the following lemma.

\begin{lem}\label{uniim}
Let $\alpha$, $\beta$ be holomorphic characters of a connected simply-connected complex solvable Lie group $G$.
If $\alpha\bar\beta$ is a unitary character, then $\alpha=\beta^{-1}$.
\end{lem}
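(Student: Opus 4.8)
The plan is to reduce the statement to the elementary fact that a holomorphic character of a connected complex Lie group whose values lie in the unit circle is necessarily trivial; everything else will be formal manipulation of homomorphisms into $\C^*$. The first step is to notice that, for every $g\in G$, one has $\left|\left(\alpha\bar\beta\right)(g)\right| = \left|\alpha(g)\right|\cdot\left|\overline{\beta(g)}\right| = \left|\alpha(g)\right|\cdot\left|\beta(g)\right| = \left|\left(\alpha\beta\right)(g)\right|$, since complex conjugation preserves the modulus. Hence $\alpha\bar\beta$ is unitary if and only if $\alpha\beta$ is, both conditions being equivalent to $\left|\alpha(g)\right|\cdot\left|\beta(g)\right| = 1$ for all $g\in G$.

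Under the hypothesis, then, $\gamma := \alpha\beta \in \Hom(G;\C^*)$ is a holomorphic character (a product of two holomorphic characters) with $\left|\gamma(g)\right| = 1$ for every $g\in G$; that is, $\gamma$ is a holomorphic map from the connected complex manifold $G$ to the unit circle of $\C$. The plan is to conclude that such a $\gamma$ is constant: working in local holomorphic coordinates, it suffices to check that a holomorphic function $f$ of one complex variable with $\left|f\right|$ constant is itself constant, which follows immediately from the maximum modulus principle (or directly from the Cauchy--Riemann equations, which force the differential of $f$ to vanish wherever $\left|f\right|^2$ is constant). Therefore $\gamma$ is locally constant, hence constant since $G$ is connected, so $\gamma \equiv \gamma(e) = 1$; this gives $\alpha\beta = 1$, i.e., $\alpha = \beta^{-1}$.

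The only point requiring genuine care is this last step — excluding non-constant holomorphic circle-valued characters — which is exactly where the complex structure of $G$ and the holomorphy of $\alpha,\beta$ are used; note that only connectedness of $G$, and not simple-connectedness, is needed in this route. An alternative, perhaps more in the spirit of the paper, is to use simple-connectedness to lift $\alpha$ and $\beta$ along the universal covering $\exp\colon \C \to \C^*$ to additive holomorphic homomorphisms $\tilde\alpha,\tilde\beta\colon G \to \C$, rewrite the hypothesis as $\mathrm{Re}\left(\tilde\alpha + \tilde\beta\right)\equiv 0$, and then observe that a holomorphic homomorphism $G\to\C$ with identically vanishing real part must be zero; I would keep the first argument as the main line and mention this one as a remark.
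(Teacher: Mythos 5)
Your proof is correct, but it takes a genuinely different route from the one in the paper. You observe that the hypothesis constrains only moduli, so that $\alpha\bar\beta$ being unitary is equivalent to $\left|\alpha(g)\right|\cdot\left|\beta(g)\right|=1$ for all $g\in G$, i.e.\ to the \emph{holomorphic} character $\gamma:=\alpha\beta$ taking values in the unit circle; since a holomorphic function of constant modulus on a connected complex manifold is locally constant, $\gamma\equiv\gamma(e)=1$ and hence $\alpha=\beta^{-1}$. The paper argues instead through the structure of $G$: characters kill $[G,G]$, so one may pass to the abelianization $\left. G \middle/ [G,G] \right.$, which is connected and simply-connected, hence isomorphic to $\C^{n}$; there one writes $\alpha=\exp\left(\sum_{j} a_{j}z_{j}\right)$ and $\beta=\exp\left(\sum_{j} b_{j}z_{j}\right)$ and reads off $a_{j}=-b_{j}$ from the vanishing of $\Re\left(\sum_{j}\left(a_{j}z_{j}+\bar b_{j}\bar z_{j}\right)\right)$. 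Your argument is more economical in its hypotheses — it uses neither solvability nor simple-connectedness, only connectedness of $G$ and holomorphy of the characters — whereas the paper's reduction produces the explicit exponential normal form for characters that is used throughout the surrounding section; your alternative remark about lifting along $\exp\colon\C\to\C^{*}$ and showing that a holomorphic homomorphism with vanishing real part is zero is essentially the paper's computation restated in coordinate-free form. Both arguments are sound.
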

\begin{proof}
Since we have $\alpha([G,G])= [\alpha(G),\alpha(G)]=1$ and $\beta([G,G])= [\beta(G),\beta(G)]=1$, we can regard $\alpha$ and $\beta$ as characters of $G/[G,G]$. Since $G$ is connected simply-connected, $\left. G \middle/ [G,G] \right.$ is also connected simply-connected, see \cite[Theorem 3.5]{dek}.
Since $\left. G \middle/ [G,G] \right.$ is Abelian, it is sufficient to show the lemma in the case $G=\C^{n}$.
For the coordinate set $\left( z_{1}, \dots, z_{n} \right)$ of $\C^{n}$, we write
$\alpha=\exp \left(\sum_{j=1}^{n} a_{j}z_{j}\right)$ and $\beta=\exp \left(\sum_{j=1}^{n} b_{j}z_{j}\right)$, for some $a_1,\ldots,a_n,b_1,\ldots,b_n\in\C$.
If $\alpha\bar\beta$ is unitary, then we have
$\Re \left(\sum_{j=1}^{n} \left( a_{j}z_{j}+\bar b_{j}\bar z_{j}\right) \right)=0$.
By simple computations, we have $a_{j}=-b_{j}$ for any $j\in\{1,\ldots,n\}$.
Hence the lemma follows.
\end{proof}

Denote by $ \g_{+}$ (respectively, $ \g_{-}$) the Lie algebra of the $G$-left-invariant holomorphic (respectively, anti-holomorphic) vector fields on $G$.
As a (real) Lie algebra, we have an isomorphism  $ \g_{+}\simeq \g_{-}$ by means of the complex conjugation.

Let $N$ be the nilradical of $G$.
We can take a connected simply-connected complex nilpotent subgroup $C\subseteq G$  such that $G=C\cdot N$, see, {\itshape e.g.} \cite[Proposition 3.3]{dek}.
Since $C$ is nilpotent, the map
\[C\ni c \mapsto ({\Ad}_{c})_{\mathrm{s}}\in {\Aut}(\g_{+})\]
is a homomorphism, where $({\Ad}_{c})_{\mathrm{s}}$ is the semi-simple part of the Jordan decomposition of ${\Ad}_{c}$.
Let $\mathfrak{c}$ be the Lie algebra of $C$; we take a subspace $V\subseteq \mathfrak{c}$ such that $\g=V\oplus \n$.
Then the diagonalizable representation $\Ad_{\mathrm{s}}$ constructed above, \S\ref{subsec:solvmfd}, is identified with the map
\[G=C\cdot N\ni c\cdot n\mapsto ({\rm Ad}_{c})_{s} \in{\rm Aut}(\g),
\] see \cite[Remark 4]{kasuya-holpar}.

We have a basis $\left\{ X_{1},\dots,X_{n} \right\}$ of $\g_{+}$ such that, for $c \in C$,
$$ ({\Ad}_{c})_{\mathrm{s}} \;=\; {\diag} \left(\alpha_{1}(c),\dots,\alpha_{n}(c) \right) \;, $$
for some characters $\alpha_1,\dots,\alpha_n$ of $C$.
By $G=C\cdot N$, we have $G/N=C/C\cap N$ and regard $\alpha_1,\dots,\alpha_n$ as characters of $G$.
Let $\left\{ x_{1},\dots, x_{n} \right\}$ be the basis of $\g^{\ast}_{+}$ which is dual to $\left\{ X_{1},\dots ,X_{n} \right\}$.

\begin{thm}{\rm (\cite[Corollary 6.2 and its proof]{kasuya-holpar})}\label{MMTT}
Let $G$ be a connected simply-connected complex solvable Lie group admitting a lattice $\Gamma$. Denote the Lie algebra naturally associated to $G$ by $\g$.
Consider a basis $\left\{ X_{1},\dots,X_{n} \right\}$ of the Lie algebra $\g_+$ of the $G$-left-invariant holomorphic vector fields on $G$ with respect to which $({\Ad}_{c})_{\mathrm{s}} = {\diag} \left(\alpha_{1}(c),\dots,\alpha_{n}(c) \right)$ for some characters $\alpha_1,\dots,\alpha_n$ of $C$.
Regard $\alpha_1,\dots,\alpha_n$ as characters of $G$.
Let $B^{\bullet}_{\Gamma}$ be the sub-complex of $\left( \wedge^{0,\bullet} \solvmfd, \, \delbar \right) $ defined as
\begin{equation}\label{eq:def-b-holpar}
B^{\bullet}_{\Gamma} \;:=\; \left\langle \frac{\bar\alpha_{I}}{\alpha_{I} }\, \bar x_{I} \;\middle\vert\; I \subseteq \{1,\ldots,n\} \text{ such that } \left.\left(\frac{\bar\alpha_{I}}{\alpha_{I}}\right)\right\lfloor_{\Gamma}=1 \right\rangle \;, 
\end{equation}
(where we shorten, {\itshape e.g.} $\alpha_I:=\alpha_{i_1}\cdot\cdots\cdot\alpha_{i_{k}}$ for a multi-index $I=\left(i_1,\dots,i_k\right)$).
Then the inclusion $B^{\bullet}_{\Gamma} \hookrightarrow \wedge^{0,\bullet}\solvmfd$ induces the isomorphism
\[H^{\bullet}\left(B^{\bullet}_{\Gamma},\, \delbar\right) \stackrel{\simeq}{\to} H_{\bar\partial}^{0,\bullet}(\solvmfd) \;.\]
\end{thm}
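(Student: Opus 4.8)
\emph{Idea of proof.} By the Dolbeault isomorphism one has $H^{0,\bullet}_{\delbar}(\solvmfd)=H^{\bullet}(X,\mathcal{O}_X)$, and the task is to identify the finite-dimensional complex $\left(B^{\bullet}_{\Gamma},\,\delbar\right)$ of \eqref{eq:def-b-holpar} with this cohomology. The plan is to proceed exactly as in the proofs of Theorem \ref{p,q-co} and Corollary \ref{AG}, but with $\delbar$ in place of $\de$: obtain \emph{surjectivity} of the induced map by a Belgun-type symmetrization argument, and \emph{injectivity} by the Hodge-theoretic criterion of Proposition \ref{injCo}. A preliminary (and essentially routine) point is that $B^{\bullet}_{\Gamma}$ really is a $\delbar$-sub-complex of $\wedge^{0,\bullet}\solvmfd$: here one uses that $G$ is a complex Lie group, so the characters $\alpha_{1},\dots,\alpha_{n}$ are holomorphic, the logarithmic derivatives $\de\log\alpha_{j}$ lie in $\duale{\g_{+}}$, and hence $\delbar\!\left(\frac{\bar\alpha_{I}}{\alpha_{I}}\,\bar x_{I}\right)=\frac{\bar\alpha_{I}}{\alpha_{I}}\left(\overline{\de\log\alpha_{I}}\wedge\bar x_{I}+\delbar\bar x_{I}\right)$ expands over the stated generators; this uses the compatibility relations among the characters forced by the Lie-algebra structure, unimodularity of $G$ (J. Milnor's lemma \cite[Lemma 6.2]{milnor}), and Lemma \ref{uniim}.

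For surjectivity, consider the F.~A. Belgun symmetrization map $\mu$ and, for each multi-index $I$ with $\left(\bar\alpha_{I}/\alpha_{I}\right)\!\lfloor_{\Gamma}=1$, the map $\varphi_{I}(\omega):=\frac{\bar\alpha_{I}}{\alpha_{I}}\,\mu\!\left(\frac{\alpha_{I}}{\bar\alpha_{I}}\,\omega\right)$, composed with the projection onto the $\bar x_{I}$-component. Since $\mu$ commutes with $\delbar$ and $\de\log\alpha_{I}$ is left-invariant, the computation in the proof of Theorem \ref{p,q-co} shows each $\varphi_{I}$ is a morphism of $\delbar$-complexes, and $\Ad_{\mathrm{s}}$-diagonalizability (as in Corollary \ref{AG}) guarantees that the projections onto the weight spaces respect $\delbar$, so that $\Phi:=\sum_{I}\varphi_{I}\colon\wedge^{0,\bullet}\solvmfd\to B^{\bullet}_{\Gamma}$ is a morphism of complexes with $\Phi\circ\iota=\id_{B^{\bullet}_{\Gamma}}$, where $\iota\colon B^{\bullet}_{\Gamma}\hookrightarrow\wedge^{0,\bullet}\solvmfd$ is the inclusion; this makes $\iota^{*}$ surjective. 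For injectivity, I would realize $B^{\bullet}_{\Gamma}$ as the row $\bullet=(0,q)$ of a finite-dimensional bi-differential $\Z^{2}$-graded sub-algebra $\left(C^{\bullet,\bullet},\,\del,\,\delbar\right)\hookrightarrow\left(\wedge^{\bullet,\bullet}\solvmfd,\,\del,\,\delbar\right)$ of \pd-type of \pd-dimension $n$ — the complex parallelizable analogue of $C^{\bullet,\bullet}_{\Gamma}$ in \eqref{eq:def-c}, built from the holomorphic invariant forms $x_{j}$, the forms $\frac{\bar\alpha_{I}}{\alpha_{I}}\bar x_{I}$, and their conjugates — which is stable under the Hodge operator $\bar*_{g}$ of the $G$-left-invariant Hermitian metric $g=\sum_{j}\alpha_{j}^{-1}\bar\alpha_{j}^{-1}\,x_{j}\odot\bar x_{j}$; then Proposition \ref{injCo} gives that $\iota^{*}\colon H^{\bullet}(B^{\bullet}_{\Gamma},\delbar)\to H^{0,\bullet}_{\delbar}(\solvmfd)$ is injective, and together with surjectivity the theorem follows.

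The main obstacle, I expect, is twofold: first, verifying that the projections defining $\Phi$ commute with $\delbar$, which forces one to handle the non-holomorphic characters $\bar\alpha_{I}/\alpha_{I}$ carefully and is exactly where Lemma \ref{uniim} and the identity $\alpha_{1\cdots n}\bar\alpha_{1\cdots n}=1$ (unimodularity) enter; second, exhibiting the \pd-type sub-algebra $C^{\bullet,\bullet}$ with the required $\bar*_{g}$-stability — the analogue of the explicit computation in the proof of Corollary \ref{DBARD} for splitting-type solvmanifolds. An alternative, conceptually cleaner route that avoids the second obstacle is to compute $H^{\bullet}(X,\mathcal{O}_X)$ through the holomorphic fibre bundle $\left(\Gamma\cap N\right)\backslash N\to\solvmfd\to T$ over the complex torus $T$ (the quotient of $G/N$ by the image of $\Gamma$), whose fibre is a complex parallelizable nilmanifold: Sakane's theorem \cite{sakane} computes the fibre cohomology by invariant forms, the Leray spectral sequence $E_{2}^{p,q}=H^{p}\!\left(T,\,R^{q}\pi_{*}\mathcal{O}_X\right)\Rightarrow H^{p+q}(X,\mathcal{O}_X)$ has $R^{q}\pi_{*}\mathcal{O}_X$ a sum of flat line bundles on $T$ whose cohomology vanishes except for the holomorphically trivial ones — precisely those indexed by the characters $\bar\alpha_{I}/\alpha_{I}$ that are unitary (automatic, since $\Gamma$ is co-compact) and trivial on $\Gamma$ — and a dimension count against the explicit model $B^{\bullet}_{\Gamma}$ forces degeneration at $E_{2}$.
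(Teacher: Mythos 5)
First, a structural point: the paper does not prove Theorem \ref{MMTT} at all --- it is imported verbatim from \cite[Corollary 6.2]{kasuya-holpar} --- so there is no internal argument to compare yours against. The proof in that reference goes through the structure of the solvable group and cohomology with values in local systems, which is much closer in spirit to your ``alternative route'' than to your primary one.

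Your primary route has a genuine logical gap at the surjectivity step. From the cochain-level retraction $\Phi\circ\iota=\id_{B^{\bullet}_{\Gamma}}$ you conclude that ``this makes $\iota^{*}$ surjective''; but a retraction gives $\Phi^{*}\circ\iota^{*}=\id$ in cohomology, i.e.\ \emph{injectivity} of $\iota^{*}$ (and surjectivity of $\Phi^{*}$), not surjectivity of $\iota^{*}$. This is exactly how the analogous maps are used in the proof of Theorem \ref{p,q-co}: there, surjectivity of $\iota^{*}$ is supplied by the \emph{external} input Theorem \ref{isoc} (that is, \cite[Corollary 7.6]{kasuya-jdg}) together with Proposition \ref{subiso}, and the symmetrization maps $\varphi_{\alpha}$ serve only to show that the resulting inverse respects the $(p,q)$-refinement. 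For the Dolbeault complex the corresponding external input --- that some explicit finite-dimensional subcomplex already computes $H^{0,\bullet}_{\delbar}(\solvmfd)$ --- is precisely the content of Theorem \ref{MMTT} itself, so your argument is circular on the half that carries all the difficulty. (Your injectivity half, via the $\bar*_{g}$-stable \pd-type subalgebra $\wedge^{\bullet}\duale{\g_{+}}\otimes_{\C}B^{\bullet}_{\Gamma}$ and Proposition \ref{injCo}, is sound and coincides with what the paper does immediately after the statement.) The alternative route you sketch --- the holomorphic fibration over the torus, the direct images $R^{q}\pi_{*}\mathcal{O}_{X}$ as sums of flat line bundles, and vanishing except for the holomorphically trivial ones --- is essentially the right strategy, but as written it leaves open both the identification of those direct images and the degeneration of the Leray spectral sequence; a ``dimension count forcing degeneration at $E_{2}$'' presupposes that you already know $\dim_{\C}H^{\bullet}(X,\mathcal{O}_{X})$, which is again the statement to be proved.
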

By this theorem, since $\solvmfd$ is complex parallelizable, for the differential bi-graded algebra $\left( \wedge^\bullet \g_{+}^{\ast}\otimes_\C B^{\bullet}_{\Gamma}, \,\bar\partial \right)$, the inclusion $\wedge^{\bullet_1} \g^{\ast}_{+}\otimes_\C B^{\bullet_2}_{\Gamma} \hookrightarrow \wedge^{\bullet_1,\bullet_2} \solvmfd$ induces the isomorphism
\[ \wedge^{\bullet_1} \g^{\ast}_{+}\otimes_\C H^{\bullet_2}_{\bar\partial}(B^{\bullet}_{\Gamma}) \stackrel{\simeq}{\to} H^{\bullet_1,\bullet_2}_{\bar\partial}(\solvmfd) \;. \]

Consider the $G$-left-invariant Hermitian metric
\[g \;:=\; \sum_{j=1}^{n} x_{j} \odot \bar x_{j} \;. \]
Then, for $x_{I}\wedge\frac{\bar\alpha_{K}}{\alpha_{K} }\,\bar x_{K}\in \wedge^{|I|} \g_{+}^{\ast}\otimes_\C B^{|K|}_{\Gamma}$,
since $G$ is unimodular, \cite[Lemma 6.2]{milnor}, 
we have
\[\bar\ast_{g} \left( x_{I}\wedge\frac{\bar\alpha_{K}}{\alpha_{K} }\,\bar x_{K}\right) \;=\; x_{I^{\prime}}\wedge\frac{\alpha_{K}}{\bar\alpha_{K} }\,\bar x_{K^{\prime}} \;=\; x_{I^{\prime}}\wedge\frac{\bar\alpha_{K^{\prime}}}{\alpha_{K^{\prime}} }\,\bar x_{K^{\prime}} \;\in\; \wedge^{n-|I|}\g_{+}^{\ast}\otimes_\C B^{n-|K|}_{\Gamma} \;,
\]
where $I^{\prime} := \{1,\ldots,n\} \setminus I$ and $K^{\prime} := \{1,\ldots,n\} \setminus K$ are the complements of $I$ and $K$ respectively.
Hence we have $\bar\ast_{g}(\wedge^\bullet \g_{+}^{\ast}\otimes_\C B^{\bullet}_{\Gamma}) \subseteq \wedge^{n-\bullet} \g_{+}^{\ast}\otimes_\C B^{n-\bullet}_{\Gamma}$.

We consider the space
\[\bar B^{\bullet}_{\Gamma} \;=\; \left\langle \frac{\alpha_{I}}{\bar\alpha_{I} } \, x_{I} \;\middle\vert\; I \subseteq \{1,\ldots,n\} \text{ such that } \left.\left(\frac{\alpha_{I}}{\bar\alpha_{I}}\right)\right\lfloor_{{\Gamma}}=1\right\rangle \;.\]
Then the inclusion $\bar B^{\bullet_1}_{\Gamma}\otimes_\C \wedge^{\bullet_2} \g_{-}^{\ast}\subseteq \wedge^{\bullet_1,\bullet_2}\solvmfd $ induces the isomorphism in $\partial$-cohomology
$$ H^{\bullet_1}\left(\bar B^{\bullet}_{\Gamma}\otimes_\C \wedge^{\bullet_2} \g_{-}^{\ast} ,\, \del \right) \stackrel{\simeq}{\to} H^{\bullet_1,\bullet_2}_{\del}\left(\solvmfd\right) \;. $$
Consider
\begin{equation}\label{eq:def-c-cplx-par}
C^{\bullet_1,\bullet_2} \;:=\; \wedge^{\bullet_1} \g_{+}^{\ast} \otimes_\C B^{\bullet_2}_{\Gamma} + \bar B^{\bullet_1}_{\Gamma} \otimes_\C \wedge^{\bullet_2} \g_{-}^{\ast} \;.
\end{equation}
Then we have $\bar\ast_g \left(C^{\bullet_1,\bullet_2}\right)\subseteq C^{n-\bullet_1,n-\bullet_2}$.

As similar to Corollary \ref{DBARD}, we can show the following result.
\begin{cor}
Let $G$ be a connected simply-connected complex solvable Lie group admitting a lattice $\Gamma$. Denote the Lie algebra naturally associated to $G$ by $\g$. Consider the sub-complex $C^{\bullet,\bullet}_{\Gamma}\subseteq \wedge^{\bullet,\bullet}\solvmfd$ as defined in \eqref{eq:def-c-cplx-par}.
\begin{enumerate}
\item The inclusion $C^{\bullet,\bullet}_{\Gamma} \hookrightarrow \wedge^{\bullet,\bullet}\solvmfd$ induces the $\partial$-cohomology isomorphism
\[H^{\bullet,\bullet}(C^{\bullet,\bullet}_{\Gamma},\, \del) \stackrel{\simeq}{\to} H^{\bullet,\bullet}_{\del}(\solvmfd) \;.\]
\item The inclusion $C_{\Gamma}^{\bullet,\bullet} \hookrightarrow \wedge^{\bullet,\bullet}\solvmfd$ induces the $\delbar$-cohomology isomorphism
\[H^{\bullet,\bullet}(C_{\Gamma}^{\bullet,\bullet},\, \delbar) \stackrel{\simeq}{\to} H^{\bullet,\bullet}_{\delbar}(\solvmfd) \;.\]
\item The inclusion $C_{\Gamma}^{\bullet,\bullet} \hookrightarrow \wedge^{\bullet,\bullet}\solvmfd$ induces, for any $(p,q)\in\Z^2$, the surjection
\[\frac{\ker \de\lfloor_{C^{p,q}}}{\de\left(\Tot^{p+q-1} C_{\Gamma}^{\bullet,\bullet}\right)}\to \frac{\ker \de\lfloor_{\wedge^{p,q}\solvmfd}}{\de \left( \wedge^{p+q-1}\solvmfd\otimes_\R \C\right)} \;.
\]
\end{enumerate}
\end{cor}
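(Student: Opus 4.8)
The plan is to follow, step by step, the proof of Corollary~\ref{DBARD}, the only genuinely new inputs being Theorem~\ref{MMTT} (which here takes the place of Theorem~\ref{CORR} and Corollary~\ref{bar}) and Lemma~\ref{uniim}. All the sub-complexes in play --- $\wedge^{\bullet_1}\g_{+}^{\ast}\otimes_\C B^{\bullet_2}_\Gamma$, $\bar B^{\bullet_1}_\Gamma\otimes_\C\wedge^{\bullet_2}\g_{-}^{\ast}$ and $C^{\bullet,\bullet}_\Gamma$ --- are finite-dimensional, so the first (routine) task is to record that each is a bi-differential $\Z^2$-graded subalgebra of $\left(\wedge^{\bullet,\bullet}\solvmfd,\,\del,\,\delbar\right)$ of \pd-type of \pd-dimension $n$, and that each is stable under $\bar\ast_g$. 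The $\bar\ast_g$-stability is exactly what has already been checked in the paragraphs preceding the statement; for the \pd-type structure, closure under $\wedge$ follows from closure of the defining character conditions under union of multi-indices, while for the extreme components one uses that $\alpha_{1\cdots n}=1$ (the character $\alpha_{1\cdots n}$ is unitary because $G$ is unimodular by \cite[Lemma 6.2]{milnor}, hence trivial, being a holomorphic character, by the argument of Lemma~\ref{uniim}).

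Granting this, \emph{(i)} follows from the isomorphism $H^{\bullet_1}\left(\bar B^{\bullet}_\Gamma\otimes_\C\wedge^{\bullet_2}\g_{-}^{\ast},\,\del\right)\stackrel{\simeq}{\to}H^{\bullet_1,\bullet_2}_{\del}\left(\solvmfd\right)$ recorded above, together with Proposition~\ref{subiso} applied to the chain $\bar B^{\bullet_1}_\Gamma\otimes_\C\wedge^{\bullet_2}\g_{-}^{\ast}\hookrightarrow C^{\bullet,\bullet}_\Gamma\hookrightarrow\wedge^{\bullet,\bullet}\solvmfd$ of $\bar\ast_g$-stable \pd-type subalgebras: since the innermost term computes the $\del$-cohomology, so does $C^{\bullet,\bullet}_\Gamma$. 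Symmetrically, \emph{(ii)} follows from $\wedge^{\bullet_1}\g_{+}^{\ast}\otimes_\C H^{\bullet_2}_{\delbar}\left(B^{\bullet}_\Gamma\right)\stackrel{\simeq}{\to}H^{\bullet_1,\bullet_2}_{\delbar}\left(\solvmfd\right)$ and Proposition~\ref{subiso}. (Unwinding Proposition~\ref{subiso} via Proposition~\ref{injCo} and Remark~\ref{rem:other-cohom-inj}: both $\bar B^{\bullet_1}_\Gamma\otimes_\C\wedge^{\bullet_2}\g_{-}^{\ast}\hookrightarrow\wedge^{\bullet,\bullet}\solvmfd$ and $C^{\bullet,\bullet}_\Gamma\hookrightarrow\wedge^{\bullet,\bullet}\solvmfd$ are injective in $\del$-cohomology and their composite is onto, hence the latter is an isomorphism; likewise for $\delbar$.)

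For \emph{(iii)} I would bring in the sub-complex $A^{\bullet}_\Gamma$ of \eqref{eq:def-a1}, which computes $H^{\bullet}_{dR}(\solvmfd;\C)$ by Theorem~\ref{isoc}. Because $G$ is a complex Lie group, $\Ad_g$ --- and hence its semi-simple part $\left(\Ad_{\mathrm s}\right)_g$ --- is $\C$-linear for every $g\in G$; equivalently, in the basis $\{X_1,\dots,X_n\}$ of $\g_{+}$ one has $\left(\Ad_{\mathrm s}\right)_g=\diag\left(\alpha_1(g),\dots,\alpha_n(g)\right)$ on $\g^{1,0}$ and its conjugate on $\g^{0,1}$, so $J\circ\left(\Ad_{\mathrm s}\right)_g=\left(\Ad_{\mathrm s}\right)_g\circ J$ and Corollary~\ref{AG} applies: $A^{\bullet}_\Gamma=\bigoplus_{p+q=\bullet}A^{p,q}_\Gamma$, and for each $(p,q)$ the inclusion induces an isomorphism $\frac{\ker\de\lfloor_{A^{p,q}_\Gamma}}{\de\left(A^{p+q-1}_\Gamma\right)}\stackrel{\simeq}{\to}\frac{\ker\de\lfloor_{\wedge^{p,q}\solvmfd}}{\de\left(\wedge^{p+q-1}\solvmfd\otimes_\R\C\right)}$. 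The one step that requires a real argument --- and the main obstacle --- is then the inclusion $A^{p,q}_\Gamma\subseteq C^{p,q}_\Gamma$ for all $(p,q)$ (in the splitting-type case of Corollary~\ref{DBARD} this was transparent from the explicit $\Ad_{\mathrm s}$-weights; here it goes through Lemma~\ref{uniim}). A generator of $A^{p,q}_\Gamma$ is $\alpha_J\bar\alpha_I\,x_J\wedge\bar x_I$ with $|J|=p$, $|I|=q$ and $\left(\alpha_J\bar\alpha_I\right)\lfloor_\Gamma=1$; being trivial on the co-compact $\Gamma$, this character descends to a function on the compact manifold $\solvmfd$, so its image in $\C^{\ast}$ is a compact subgroup, i.e.\ $\alpha_J\bar\alpha_I$ is unitary; since $\alpha_J$, $\alpha_I$ are holomorphic characters of $G$, Lemma~\ref{uniim} gives $\alpha_J=\alpha_I^{-1}$, whence $\alpha_J\bar\alpha_I=\frac{\bar\alpha_I}{\alpha_I}$ with $\left(\frac{\bar\alpha_I}{\alpha_I}\right)\lfloor_\Gamma=1$ and therefore $\alpha_J\bar\alpha_I\,x_J\wedge\bar x_I=x_J\wedge\frac{\bar\alpha_I}{\alpha_I}\bar x_I\in\wedge^{p}\g_{+}^{\ast}\otimes_\C B^{q}_\Gamma\subseteq C^{p,q}_\Gamma$. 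Consequently $A^{p+q-1}_\Gamma\subseteq\Tot^{p+q-1}C^{\bullet,\bullet}_\Gamma$, and the isomorphism above factors as
\[ \frac{\ker\de\lfloor_{A^{p,q}_\Gamma}}{\de\left(A^{p+q-1}_\Gamma\right)}\;\longrightarrow\;\frac{\ker\de\lfloor_{C^{p,q}_\Gamma}}{\de\left(\Tot^{p+q-1}C^{\bullet,\bullet}_\Gamma\right)}\;\longrightarrow\;\frac{\ker\de\lfloor_{\wedge^{p,q}\solvmfd}}{\de\left(\wedge^{p+q-1}\solvmfd\otimes_\R\C\right)}\;; \]
since the composite is surjective, so is the second map, which is precisely the one in \emph{(iii)}.
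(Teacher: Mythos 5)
Your proposal is correct and follows essentially the same route as the paper: parts \emph{(i)} and \emph{(ii)} via Proposition \ref{subiso} applied to the chains $\bar B^{\bullet_1}_\Gamma\otimes_\C\wedge^{\bullet_2}\g_{-}^{\ast}\hookrightarrow C^{\bullet,\bullet}_\Gamma\hookrightarrow\wedge^{\bullet,\bullet}\solvmfd$ and $\wedge^{\bullet_1}\g_{+}^{\ast}\otimes_\C B^{\bullet_2}_\Gamma\hookrightarrow C^{\bullet,\bullet}_\Gamma\hookrightarrow\wedge^{\bullet,\bullet}\solvmfd$, and part \emph{(iii)} via Corollary \ref{AG} together with the inclusion $A^{\bullet,\bullet}_\Gamma\subseteq C^{\bullet,\bullet}_\Gamma$ obtained from Lemma \ref{uniim}. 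The only (harmless) difference is that you justify $J\circ(\Ad_{\mathrm s})_g=(\Ad_{\mathrm s})_g\circ J$ by the $\C$-linearity of $\Ad_g$, while the paper passes through $\Ad_c\circ J=J\circ\Ad_c$ for $c\in C$; these are the same observation.
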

\begin{proof}
By $\bar\ast_g \left(C^{\bullet_1,\bullet_2}\right)\subseteq C^{n-\bullet_1,n-\bullet_2}$, the first and second assertions follow as similar to the proof of Corollary \ref{DBARD}.

By denoting the complex structure by $J$, for any $c\in C$, since we have ${\Ad}_{c}\circ J=J\circ {\Ad}_{c}$, we have $({\Ad}_{c})_{\mathrm{s}}\circ J=J\circ ({\Ad}_{c})_{\mathrm{s}}$, and hence we have $\left( \Ad_{\mathrm{s}}\right)_g \circ J = J\circ \left(\Ad_{\mathrm{s}}\right)_g$ for any $g\in G$.
We consider the sub-complex $A^{\bullet}_{\Gamma}\subseteq \wedge^{\bullet}\solvmfd \otimes_\R\C$ as in \eqref{eq:def-a1}, see Theorem \ref{isoc}.
By Corollary \ref{AG}, the inclusion $A_{\Gamma}^{\bullet,\bullet} \hookrightarrow \wedge^{\bullet,\bullet}\solvmfd$ induces the isomorphism
\[\frac{\ker \de\lfloor_{A^{p,q}_{\Gamma}}}{\de\left(A^{p+q-1}_{\Gamma}\right)} \stackrel{\simeq}{\to} \frac{\ker \de\lfloor_{\wedge^{p,q} \solvmfd}}{\de\left(\wedge^{p+q-1}\solvmfd\otimes_\R\C\right)} \;.
\]
We have
\[A_{\Gamma}^{\bullet} \;=\; \left\langle \alpha_{I}\, \bar\alpha_{J}\, x_{I} \wedge \bar x_{J} \;\middle\vert\; I, J \subseteq \{1, \ldots, n\} \text{ such that } \left(\alpha_{I}\,\bar\alpha_{J}\right)\lfloor_{\Gamma}=1\right\rangle \;. \]
For $\left(\alpha_{I}\,\bar\alpha_{J}\right)\lfloor_{\Gamma}=1$, since we can regard $\alpha_{I}\,\bar\alpha_{J}$ as a function on $\solvmfd$, the image of $\alpha_{I}\,\bar\alpha_{J}$ is compact and hence it is unitary.
By Lemma \ref{uniim}, we have $\alpha_{I}\,\bar\alpha_{J}=\frac{\bar\alpha_{J}}{\alpha_{J}}$.
Hence we have the inclusion $A_{\Gamma}^{\bullet}\subseteq \Tot^\bullet \wedge^\bullet \g_{+}^{\ast}\otimes B^{\bullet}_{\Gamma}$ and so we have the inclusion $A_{\Gamma}^{\bullet,\bullet}\subseteq C_{\Gamma}^{\bullet,\bullet} \subseteq \wedge^{\bullet,\bullet}\solvmfd$.
Since the composition
\[\frac{\ker\de\lfloor_{A^{p,q}_{\Gamma}}}{\de\left(A^{p+q-1}_{\Gamma}\right)} \to \frac{\ker\de\lfloor_{C^{p,q}}}{\de\left(\Tot^{p+q-1} C_{\Gamma}^{\bullet,\bullet}\right)}\to \frac{\ker\de\lfloor_{\wedge^{p,q}\solvmfd}}{\de\left(\wedge^{p-q-1}\solvmfd\right)}
\] 
is an isomorphism, then the third assertion of the corollary follows.
\end{proof}

By this, we get the following result.
\begin{thm}\label{palBCISO}
Let $G$ be a connected simply-connected complex solvable Lie group admitting a lattice $\Gamma$. Consider the sub-complex $C^{\bullet,\bullet}_{\Gamma}\subseteq \wedge^{\bullet,\bullet}\solvmfd$ as defined in \eqref{eq:def-c-cplx-par}.
The inclusion $C^{\bullet,\bullet}_{\Gamma} \hookrightarrow \wedge^{\bullet,\bullet}\solvmfd$ induces the isomorphism
\[ H\left( C_{\Gamma}^{\bullet-1,\bullet-1} \stackrel{\del\delbar}{\to} C_{\Gamma}^{\bullet,\bullet} \stackrel{\de}{\to} C_{\Gamma}^{\bullet+1,\bullet} \oplus C_{\Gamma}^{\bullet,\bullet+1} \right) \stackrel{\simeq}{\to}  H^{\bullet,\bullet}_{BC}(\solvmfd) \;.\]
\end{thm}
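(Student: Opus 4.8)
The plan is to reproduce the structure of the proof of Theorem \ref{BCISO}, using in place of Corollary \ref{DBARD} the corollary just established for complex parallelizable solvmanifolds. Concretely, I would show that the inclusion $C^{\bullet,\bullet}_\Gamma \hookrightarrow \wedge^{\bullet,\bullet}\solvmfd$ induces a map in Bott-Chern cohomology which is surjective by Theorem \ref{surjBC} and injective by Proposition \ref{injCo}, hence an isomorphism.

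For surjectivity I would apply Theorem \ref{surjBC} with $A^{\bullet,\bullet}:=\wedge^{\bullet,\bullet}\solvmfd$ and $C^{\bullet,\bullet}:=C^{\bullet,\bullet}_\Gamma$, matching its three hypotheses with the three assertions of the preceding corollary. Hypothesis \ref{item:thm-surg-hp-1} requires that, for each fixed $p$, the map $\left(C^{p,\bullet}_\Gamma,\,\delbar\right)\hookrightarrow\left(\wedge^{p,\bullet}\solvmfd,\,\delbar\right)$ be a quasi-isomorphism; this is contained in the $\delbar$-cohomology isomorphism of the corollary, since $H^{q}\left(C^{p,\bullet}_\Gamma,\delbar\right)$ is precisely the $(p,q)$-component of the bi-graded group $H^{\bullet,\bullet}\left(C^{\bullet,\bullet}_\Gamma,\delbar\right)$, and similarly for $\wedge^{\bullet,\bullet}\solvmfd$. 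Hypothesis \ref{item:thm-surg-hp-2} follows symmetrically from the $\del$-cohomology isomorphism of the corollary, while hypothesis \ref{item:thm-surg-hp-3} is exactly the surjection $\ker\de\lfloor_{C^{p,q}_\Gamma}/\de\left(\Tot^{p+q-1}C^{\bullet,\bullet}_\Gamma\right)\to\ker\de\lfloor_{\wedge^{p,q}\solvmfd}/\de\left(\wedge^{p+q-1}\solvmfd\otimes_\R\C\right)$ asserted there. Theorem \ref{surjBC} then gives surjectivity in Bott-Chern cohomology at every bi-degree.

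For injectivity I would invoke Proposition \ref{injCo}, for which it suffices to exhibit $C^{\bullet,\bullet}_\Gamma$ as a finite-dimensional sub-complex of $\wedge^{\bullet,\bullet}\solvmfd$ carrying the structure of a bi-differential $\Z^2$-graded algebra of \pd-type of \pd-dimension $n$, where $2n=\dim_\R\solvmfd$, and such that $\bar*_g\lfloor_{C^{\bullet,\bullet}_\Gamma}\colon C^{\bullet,\bullet}_\Gamma\to C^{n-\bullet,n-\bullet}_\Gamma$. Finite-dimensionality is immediate from \eqref{eq:def-c-cplx-par}, and the inclusion $\bar*_g\left(C^{\bullet_1,\bullet_2}_\Gamma\right)\subseteq C^{n-\bullet_1,n-\bullet_2}_\Gamma$ has already been verified above. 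From this last fact the \pd-type axioms follow with essentially no computation: $\bar*_g\lfloor_{C^{\bullet,\bullet}_\Gamma}$ is a $\C$-anti-linear automorphism of $C^{\bullet,\bullet}_\Gamma$, so $C^{n,n}_\Gamma=\bar*_g\left(C^{0,0}_\Gamma\right)=\bar*_g\left(\C\langle1\rangle\right)$ is one-dimensional; for $0\neq\alpha\in C^{h,k}_\Gamma$ one has $\bar*_g\alpha\in C^{n-h,n-k}_\Gamma$ and $\alpha\wedge\bar*_g\alpha=\norma{\alpha}^2\cdot v\neq0$, giving the non-degeneracy of the wedge pairings; and the vanishing of $\de$ in total degrees $0$ and $2n-1$ is inherited from $\wedge^{\bullet,\bullet}\solvmfd$. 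Proposition \ref{injCo} then gives injectivity, and combined with the previous paragraph this proves the theorem.

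I do not expect any genuine obstacle, the argument being entirely parallel to Theorem \ref{BCISO}; the only point deserving a line of explicit care is the translation of the hypotheses of Theorem \ref{surjBC} into the statements of the preceding corollary, in particular observing that the bi-graded Dolbeault isomorphisms there automatically encode the fixed-index quasi-isomorphisms demanded by hypotheses \ref{item:thm-surg-hp-1} and \ref{item:thm-surg-hp-2}.
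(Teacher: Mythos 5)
Your proposal is correct and is essentially the paper's own argument: the paper deduces Theorem \ref{palBCISO} exactly as it deduces Theorem \ref{BCISO}, namely surjectivity from Theorem \ref{surjBC} via the three assertions of the preceding corollary, and injectivity from Proposition \ref{injCo} using the finite-dimensionality of $C^{\bullet,\bullet}_{\Gamma}$ and the already-established inclusion $\bar*_g\left(C^{\bullet_1,\bullet_2}_{\Gamma}\right)\subseteq C^{n-\bullet_1,n-\bullet_2}_{\Gamma}$. Your extra remarks translating the bi-graded Dolbeault isomorphisms into the fixed-index quasi-isomorphisms required by hypotheses \ref{item:thm-surg-hp-1} and \ref{item:thm-surg-hp-2}, and deriving the \pd-type axioms from the $\bar*_g$-stability, only make explicit what the paper leaves implicit.
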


As an application, we will study the complex parallelizable Nakamura manifold in Example \ref{Nak}.

\subsection{Currents}

Let $X$ be a compact complex manifold, of complex dimension $n$. Denote the space of currents on $X$ by $\correnti^{\bullet,\bullet}X := \correnti_{n-\bullet,n-\bullet}X$, namely, the topological dual space of $\wedge^{n-\bullet,n-\bullet}X$; endow $\correnti^{\bullet,\bullet}X$ with a structure of double complex, by defining $\del\colon \correnti^{\bullet,\bullet}X \to \correnti^{\bullet+1,\bullet}X$ and $\delbar\colon \correnti^{\bullet,\bullet}X \to \correnti^{\bullet,\bullet+1}X$ by duality.

By means of the injective operator
$$ T_\sspace \colon \wedge^{\bullet,\bullet}X \to \correnti^{\bullet,\bullet}X \;, \qquad T_\eta \;:=\; \int_X \eta\wedge \sspace \;,$$
which satisfies $T \circ \del = \del \circ T$ and $T \circ \delbar = \delbar \circ T$, consider the de Rham double complex $\left(\wedge^{\bullet,\bullet}X,\, \del,\, \delbar\right)$ as a double sub-complex of $\left(\correnti^{\bullet,\bullet},\, \del,\, \delbar\right)$.

For $\left(p,q\right)\in\Z^2$, denote the sheaf of $p$-holomorphic forms on $X$ by $\Omega^p_X$, denote the sheaf of $(p,q)$-forms on $X$ by $\mathcal{A}^{p,q}_X$, and denote the sheaf of bi-degree $(p,q)$-currents by $\correntifascio^{p,q}_X$. Recall that, for any fixed $p\in\Z$, both
$$
0 \to \Omega^p_X \to \left(\mathcal{A}^{p,\bullet}_X,\, \delbar\right) \qquad \text{ and } \qquad 0 \to \Omega^p_X \to \left(\correntifascio^{p,\bullet}_X,\, \delbar\right)
$$
are fine (and hence acyclic, see, {\itshape e.g.} \cite[IV.4.19]{demailly-agbook}) resolutions of $\Omega^p_X$, and hence
$$
\frac{\ker\left(\delbar\colon \wedge^{p,\bullet}X \to \wedge^{p,\bullet+1}X\right)}{\imm\left(\delbar\colon \wedge^{p,\bullet-1}X \to \wedge^{p,\bullet}X\right)}
\;\simeq\;
\check{H}^{\bullet}\left(X;\Omega^{p}_X\right)
\;\simeq\;
\frac{\ker\left(\delbar\colon \correnti^{p,\bullet}X \to \correnti^{p,\bullet+1}X\right)}{\imm\left(\delbar\colon \correnti^{p,\bullet-1}X \to \correnti^{p,\bullet}X\right)}
\;,
$$
see, {\itshape e.g.} \cite[IV.6.4]{demailly-agbook}.

\begin{rem}\label{rem:t-qis-dolb}
 More precisely, given $X$ a compact complex manifold, for any $p\in\Z$ and for any $q\in\Z$, the maps $T_\sspace\colon \left(\wedge^{\bullet,q}X,\, \del\right) \to \left(\correnti^{\bullet,q}X,\, \del\right)$ and $T_\sspace\colon \left(\wedge^{p,\bullet}X,\, \delbar\right) \to \left(\correnti^{p,\bullet}X,\, \delbar\right)$ are quasi-isomorphisms.

 Indeed, firstly, we show that $T_\sspace\colon \left(\wedge^{p,\bullet}X,\, \delbar\right) \to \left(\correnti^{p,\bullet}X,\, \delbar\right)$ induces an injective map in cohomology. Fix $g$ a Hermitian metric on $X$. If $T_{[\alpha]}=\left[\delbar S\right]=[0]\in H^\bullet\left(\correnti^{p,\bullet}X,\, \delbar\right)$ with $\alpha$ the $\overline\square_g$-harmonic representative of $[\alpha]\in H^\bullet\left(\wedge^{p,\bullet}X,\, \delbar\right)$ and $S\in\correnti^{p,\bullet-1}X$, then in particular $T_\alpha\lfloor_{\ker\delbar}=0$. Since $\bar*_g\alpha\in\ker\delbar$, it follows that $0=T_\alpha\left(\bar*_g\alpha\right)=\int_X \alpha\wedge\bar*_g\alpha$ and hence $\alpha=0$. Now, since $\frac{\ker\left(\delbar\colon \wedge^{p,\bullet}X \to \wedge^{p,\bullet+1}X\right)}{\imm\left(\delbar\colon \wedge^{p,\bullet-1}X \to \wedge^{p,\bullet}X\right)}$ and $\frac{\ker\left(\delbar\colon \correnti^{p,\bullet}X \to \correnti^{p,\bullet+1}X\right)}{\imm\left(\delbar\colon \correnti^{p,\bullet-1}X \to \correnti^{
p,\bullet}X\right)}$ are isomorphic $\C$-vector spaces of finite dimension, it follows that $T_\sspace\colon \left(\wedge^{p,\bullet}X,\, \delbar\right) \to \left(\correnti^{p,\bullet}X,\, \delbar\right)$ is actually a quasi-isomorphism. By conjugation, also $T_\sspace\colon \left(\wedge^{\bullet,q}X,\, \del\right) \to \left(\correnti^{\bullet,q}X,\, \del\right)$ is a quasi-isomorphism.
\end{rem}

By applying Proposition \ref{spdb} to $\left(\wedge^{p,\bullet}X,\, \delbar\right) \hookrightarrow \left(\correnti^{p,\bullet}X,\, \delbar\right)$, or by noting that both $0 \to \underline{\C}_X \to \left(\mathcal{A}^\bullet_X\otimes \C,\, \de\right)$ and $0 \to \underline{\C}_X \to \left(\correntifascio^\bullet_X\otimes\C,\, \de\right)$ are acyclic resolutions of the constant sheaf $\underline{\C}_X$ over $X$
(where, for $k\in\Z$, the sheaf of $k$-forms on $X$ is denoted by $\mathcal{A}^k_X$, and the sheaf of degree $k$-currents is denoted by $\correntifascio^{k}_X$), one gets that
$$
\frac{\ker\left(\de\colon \wedge^{\bullet}X\otimes_\R\C \to \wedge^{\bullet+1}X\otimes_\R\C\right)}{\imm\left(\de \colon \wedge^{\bullet-1}X\otimes_\R\C \to \wedge^{\bullet}X\otimes_\R\C\right)}
\;\simeq\;
\check{H}^{\bullet}\left(X;\underline{\C}_X\right)
\;\simeq\;
\frac{\ker\left(\de\colon \correnti^{\bullet}X\otimes_\R\C \to \correnti^{\bullet+1}X\otimes_\R\C\right)}{\imm\left(\de\colon \correnti^{\bullet-1}X\otimes_\R\C \to \correnti^{\bullet}X\otimes_\R\C \right)}
\;.
$$

\begin{lem}\label{lem:Teta-surj-BCcoh}
 Let $X$ be a compact complex manifold. For any $\left(p,q\right)\in\Z^2$, the map $T_\sspace \colon \wedge^{\bullet,\bullet}X \to \correnti^{\bullet,\bullet}X$ induces the isomorphism
 $$ T_\sspace \colon \frac{\ker\left(\de\colon \wedge^{p,q}X \to \wedge^{p+q+1}X\otimes_\R\C\right)}{\imm\left(\de\colon \wedge^{p+q-1}X\otimes_\R\C \to \wedge^{p+q}X\otimes_\R\C\right)} \to \frac{\ker\left(\de\colon \correnti^{p,q}X \to \correnti^{p+q+1}X\otimes_\R\C\right)}{\imm\left(\de\colon \correnti^{p+q-1}X\otimes_\R\C \to \correnti^{p+q}X\otimes_\R\C\right)} \;.$$
\end{lem}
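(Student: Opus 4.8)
The plan is to prove injectivity and surjectivity separately, relying on two facts already available: $T_\sspace$ induces the de Rham isomorphism in total degree (recorded just before the statement, via Proposition \ref{spdb} together with Remark \ref{rem:t-qis-dolb}, or via the acyclic resolutions), and $T_\sspace$ is a quasi-isomorphism on every row $\left(\wedge^{\bullet,q}X,\,\del\right)\hookrightarrow\left(\correnti^{\bullet,q}X,\,\del\right)$ and every column $\left(\wedge^{p,\bullet}X,\,\delbar\right)\hookrightarrow\left(\correnti^{p,\bullet}X,\,\delbar\right)$ (Remark \ref{rem:t-qis-dolb}).

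For injectivity I would observe that the natural maps
\[ \frac{\ker\left(\de\lfloor_{\wedge^{p,q}X}\right)}{\imm\de\cap\wedge^{p,q}X} \to H^{p+q}_{dR}(X;\C) \qquad\text{and}\qquad \frac{\ker\left(\de\lfloor_{\correnti^{p,q}X}\right)}{\imm\de\cap\correnti^{p,q}X} \to \frac{\ker\de\lfloor_{\correnti^{p+q}X}}{\imm\de} \]
sending a $\de$-closed pure-type class to its total de Rham class are \emph{injective}: a $\de$-closed $(p,q)$-representative that is $\de$-exact in the total complex already lies in $\imm\de$, and being of bidegree $(p,q)$ it lies in $\imm\de\cap\wedge^{p,q}X$, respectively $\imm\de\cap\correnti^{p,q}X$. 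These two injections fit into a commutative square with $T_\sspace$ on the pure-type groups on the left and the de Rham comparison on the right; since the latter is an isomorphism, a one-line diagram chase forces $T_\sspace$ to be injective on the pure-type groups.

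The substance is surjectivity, which says that every de Rham class admitting a $\de$-closed $(p,q)$-current representative already admits a $\de$-closed $(p,q)$-form representative. Given a $\de$-closed current $S\in\correnti^{p,q}X$, I would first use the de Rham isomorphism to write $S = T_\omega + \de V$ with $\omega\in\wedge^{p+q}X\otimes_\R\C$ a $\de$-closed, in general mixed-type, form and $V$ a current of total degree $p+q-1$. Comparing bidegrees, the components $\omega^{a,b}$ with $(a,b)\neq(p,q)$ satisfy $\omega^{a,b} = -\del V^{a-1,b}-\delbar V^{a,b-1}$, so each is exact in the current complex in a way coupled to $V$. The idea is then to remove these mixed components by subtracting $\de$-exact \emph{forms}: starting from the extremal bidegrees, Lemma \ref{lem:sol-delbar} (applied both to $\left(\correnti^{\bullet,\bullet}X,\,\del,\,\delbar\right)$ and to $\left(\correnti^{\bullet,\bullet}X,\,\delbar,\,\del\right)$) upgrades the current-primitives above to form-primitives, exactly as in Step 1 of the proof of Theorem \ref{surjBC}. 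After finitely many corrections one is left with a $\de$-closed form of pure bidegree $(p,q)$ in the same total de Rham class as $\omega$, hence as $S$; by the injectivity of the right-hand map above, this form represents the class of $S$ in the pure-type current group, giving surjectivity.

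The delicate point, and the step I expect to be the main obstacle, is the bookkeeping of this inductive removal: one must verify that each application of Lemma \ref{lem:sol-delbar} eliminates one mixed component without resurrecting those already cleared, and — crucially — that the procedure genuinely uses the existence of the pure-type current $S$, which is encoded in the coupled equations for the $\omega^{a,b}$, rather than in $\omega$ alone; otherwise every degree-$(p+q)$ class would be pure-type representable, which fails on a general compact complex manifold. This is precisely the combinatorial content handled by the two $\ell$-indexed systems in Step 1 of Theorem \ref{surjBC}, which I would transcribe with $C^{\bullet,\bullet}=\wedge^{\bullet,\bullet}X$ and $A^{\bullet,\bullet}=\correnti^{\bullet,\bullet}X$.
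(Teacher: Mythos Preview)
Your injectivity argument is correct and is essentially what the paper uses implicitly: both pure-type groups inject into the respective total de Rham groups, and the latter are identified by $T_\sspace$.

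For surjectivity, the paper takes a much shorter route than yours. It invokes de Rham's regularization \cite[Theorem III.12]{derham}: there exist linear operators $R\colon \correnti^{\bullet,\bullet}X \to \wedge^{\bullet,\bullet}X$ and $A\colon \correnti^{\bullet}X\otimes_\R\C \to \correnti^{\bullet-1}X\otimes_\R\C$ with $\id = R + \de A + A\de$, $R\lfloor_{\wedge^{\bullet,\bullet}X}=\id$, and---this is the key point---$R$ \emph{preserves bidegree}. For a $\de$-closed $(p,q)$-current $S$ one then has $RS\in\ker\de\cap\wedge^{p,q}X$ and $S-T_{RS}=\de(AS)$, so $[T_{RS}]=[S]$; surjectivity in one line. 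Your inductive scheme, peeling off the extremal components of a mixed-type $\omega$ with iterated use of Lemma~\ref{lem:sol-delbar}, can indeed be made to work, but it is substantially longer, and the payoff (avoiding the external input that de Rham's $R$ respects type) is modest.

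One caution about your write-up: the claim that the needed induction ``is precisely the combinatorial content of Step~1 of Theorem~\ref{surjBC}'' and can be ``transcribed'' is not accurate. Step~1 starts from a \emph{$\de$-exact} pure-type element and pushes the components of its \emph{primitive} $\eta$ into $C^{\bullet,\bullet}$, working modulo $\imm\del\delbar$. Here you start from a \emph{$\de$-closed} mixed-type form $\omega$ and must push $\omega$ itself to pure type modulo $\imm\de$, using the auxiliary current equations $T_{\omega^{a,b}}=-\del V^{a-1,b}-\delbar V^{a,b-1}$. At the first step $\omega^{p+q,0}=-\del V^{p+q-1,0}$ is $\del$-exact in currents, hence in forms; but from the second step on, the new extremal component is of the form $-\del(\cdots)-\delbar V^{\text{top}}$ and is not visibly $\del$-exact: you must first apply Lemma~\ref{lem:sol-delbar} to the now $\del$-closed $V^{\text{top}}$ to write $V^{\text{top}}=\tilde V+\del\hat V$ with $\tilde V$ smooth, then observe that the extremal component plus $\delbar\tilde V$ becomes $\del$-exact in currents, hence in forms, and only then subtract $\de(\xi-\tilde V)$. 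This is a genuine adaptation, not a transcription; if you pursue this route, spell it out.
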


\begin{proof}
 Consider the regularization process in \cite[Theorem III.12]{derham}: there exist $R\colon \correnti^{\bullet,\bullet}X \to \wedge^{\bullet,\bullet}X$ and $A\colon \correnti^{\bullet}X\otimes_\R\C \to \correnti^{\bullet+1}X\otimes_\R\C$ linear operators such that
 $$ \id_{\correnti^{\bullet,\bullet}X} \;=\; R + \de A + A\de \;, \qquad \text{ and } \qquad R\lfloor_{\wedge^{\bullet,\bullet}X} \;=\; \id_{\wedge^{\bullet,\bullet}X} \text{ and } A\lfloor_{\wedge^{\bullet,\bullet}X}\;=\;0 \;. $$
 Take $S \in \frac{\ker\left(\de\colon \correnti^{p,q}X \to \correnti^{p+q+1}X\otimes_\R\C\right)}{\imm\left(\de\colon \correnti^{p+q-1}X\otimes_\R\C \to \correnti^{p+q}X\otimes_\R\C\right)}$.
 Since the map $T_\sspace \colon \wedge^{\bullet,\bullet}X \to \correnti^{\bullet,\bullet}X$ is a quasi-isomorphism, then there exist $\eta\in \ker\de \cap \wedge^{p,q}X$ and $U\in \correnti^{p+q-1}X\otimes_\R\C$ such that
 $$ S \;=\; T_\eta + \de U \;; $$
 hence one gets
 $$ RS \;=\; T_\eta + \de \left( U - AS \right) \;, $$
 and hence the lemma follows.
\end{proof}

As a consequence, by using Theorem \ref{surjBC}, we get another proof of the following result by M. Schweitzer: see \cite{schweitzer}, and also \cite[\S3.4]{kooistra}, where it is noticed as a consequence of the hypercohomological interpretation of the Bott-Chern cohomology, see also \cite[IV.12.1]{demailly-agbook}.

\begin{cor}[{see \cite[\S4.d]{schweitzer}}]
 Let $X$ be a compact complex manifold. Then, for any $\left(p,q\right)\in\Z^2$, the natural map
 $$ T_\sspace \colon \frac{\ker \left(\del+\delbar \colon \wedge^{p,q}X \to \wedge^{p+1,q}X\oplus\wedge^{p,q+1}X\right)}{\imm\left(\del\delbar\colon \wedge^{p-1,q-1}X \to \wedge^{p,q}X\right)} \to \frac{\ker \left(\del+\delbar \colon \correnti^{p,q}X \to \correnti^{p+1,q}X\oplus\correnti^{p,q+1}X\right)}{\imm\left(\del\delbar\colon \correnti^{p-1,q-1}X \to \correnti^{p,q}X\right)} $$
 induced by $T_\sspace \colon \wedge^{\bullet,\bullet}X \ni \eta \mapsto T_\eta:=\int_X \eta\wedge\sspace \in \correnti^{\bullet,\bullet}X$ is an isomorphism.
\end{cor}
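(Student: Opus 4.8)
The plan is to get surjectivity from Theorem \ref{surjBC} and then to upgrade it to an isomorphism by a dimension count. Regard the de Rham double complex $\left(\wedge^{\bullet,\bullet}X,\, \del,\, \delbar\right)$ as the sub-complex $C^{\bullet,\bullet}$ of the bounded double complex $A^{\bullet,\bullet}:=\left(\correnti^{\bullet,\bullet}X,\, \del,\, \delbar\right)$ via the map $T_\sspace$, and fix $\left(p,q\right)\in\Z^2$. First I would check the three hypotheses of Theorem \ref{surjBC}. Conditions \ref{item:thm-surg-hp-1} and \ref{item:thm-surg-hp-2} — that $T_\sspace\colon\left(\wedge^{r,\bullet}X,\,\delbar\right)\hookrightarrow\left(\correnti^{r,\bullet}X,\,\delbar\right)$ and $T_\sspace\colon\left(\wedge^{\bullet,s}X,\,\del\right)\hookrightarrow\left(\correnti^{\bullet,s}X,\,\del\right)$ are quasi-isomorphisms for every $r$ and every $s$ — are exactly Remark \ref{rem:t-qis-dolb}. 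Condition \ref{item:thm-surg-hp-3} — that the induced map on $\frac{\ker\de\cap(\bullet)}{\imm\de}$ in bidegree $\left(p,q\right)$ is surjective (in fact an isomorphism) — is Lemma \ref{lem:Teta-surj-BCcoh}. Hence Theorem \ref{surjBC} yields that $T_\sspace$ induces a surjection $H^{p,q}_{BC}\left(\wedge^{\bullet,\bullet}X\right)\to H^{p,q}_{BC}\left(\correnti^{\bullet,\bullet}X\right)$.

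It remains to prove this surjection is injective. The quickest route is a dimension count via duality. The $\C$-bilinear pairing $\correnti^{p,q}X\times\wedge^{n-p,n-q}X\to\C$, $\left(S,\alpha\right)\mapsto S(\alpha)$, is perfect, and under it the Bott-Chern complex $\correnti^{p-1,q-1}X \stackrel{\del\delbar}{\to} \correnti^{p,q}X \stackrel{\del+\delbar}{\to} \correnti^{p+1,q}X\oplus\correnti^{p,q+1}X$ is, up to signs and the reindexing $\left(p,q\right)\leftrightarrow\left(n-p,n-q\right)$, the transpose of the Aeppli complex $\wedge^{n-p-1,n-q}X\oplus\wedge^{n-p,n-q-1}X \stackrel{\left(\del,\,\delbar\right)}{\to} \wedge^{n-p,n-q}X \stackrel{\del\delbar}{\to} \wedge^{n-p+1,n-q+1}X$. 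Since by M. Schweitzer's elliptic theory \cite{schweitzer} the fourth-order elliptic operator $\tilde\Delta^A_g$ on smooth forms has finite-dimensional kernel and closed range, $H^{n-p,n-q}_A(X)$ is finite-dimensional and computed by $\tilde\Delta^A_g$-harmonic forms, so the cohomology of the transposed complex is the dual of the cohomology of the original one, whence $\dim_\C H^{p,q}_{BC}\left(\correnti^{\bullet,\bullet}X\right)=\dim_\C H^{n-p,n-q}_A(X)$. Combining this with the Hodge-$*$-isomorphism $H^{\bullet,\bullet}_{BC}(X)\stackrel{\simeq}{\to}H^{n-\bullet,n-\bullet}_A(X)$ recalled above, one gets $\dim_\C H^{p,q}_{BC}\left(\correnti^{\bullet,\bullet}X\right)=\dim_\C H^{p,q}_{BC}(X)<+\infty$; a surjective linear map between finite-dimensional vector spaces of equal dimension is an isomorphism, and we are done.

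More in the spirit of the section, one may instead argue injectivity from the commutative diagram with exact rows and columns built in the proof of Theorem \ref{surjBC}: as $T_\sspace$ induces an isomorphism on $\frac{\ker\de\cap(\bullet)}{\imm\de}$ in bidegree $\left(p,q\right)$ (Lemma \ref{lem:Teta-surj-BCcoh}) and, by Step 1 of that proof, a surjection on $\frac{\imm\de\cap(\bullet)}{\imm\del\delbar}$ in bidegree $\left(p,q\right)$, the short five lemma reduces the injectivity of $T_\sspace$ in Bott-Chern cohomology to the injectivity of
$$ \frac{\imm\de\cap\wedge^{p,q}X}{\imm\left(\del\delbar\colon\wedge^{p-1,q-1}X\to\wedge^{p,q}X\right)} \;\to\; \frac{\imm\de\cap\correnti^{p,q}X}{\imm\left(\del\delbar\colon\correnti^{p-1,q-1}X\to\correnti^{p,q}X\right)} \;; $$
equivalently, to the statement that a smooth $\de$-exact form of bidegree $\left(p,q\right)$ which is $\del\delbar$-exact as a current is already $\del\delbar$-exact as a smooth form.

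The main obstacle is precisely this last injectivity. It does not follow formally from the quasi-isomorphism hypotheses: starting from $T_\phi=\del\delbar S$ one can, applying Lemma \ref{lem:sol-delbar} to $\left(\correnti^{\bullet,\bullet}X,\,\del,\,\delbar\right)$ and to its transpose, successively replace $S$ by smooth data and strip off $\del\delbar$-exact smooth terms, but the procedure spits out a residual smooth form of bidegree $\left(p+1,q-1\right)$ enjoying the same property, so a naive chase loops. One closes it either by an induction on the bidegree (the residual bidegree eventually leaves the range $\{0,\dots,n\}^2$, where everything vanishes) together with careful bookkeeping of $\del$- and $\delbar$-primitives, or — as above — by stepping outside the diagram and invoking the finite-dimensionality of $H^{p,q}_{BC}(X)$ and of $H^{n-p,n-q}_A(X)$ from \cite{schweitzer}. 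I would present the dimension argument, as it is shorter and confines the appeal to elliptic theory to the (already used) case of smooth forms.
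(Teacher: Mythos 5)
Your proposal is correct. The surjectivity half coincides with the paper's: both invoke Theorem \ref{surjBC}, with hypotheses \ref{item:thm-surg-hp-1} and \ref{item:thm-surg-hp-2} supplied by Remark \ref{rem:t-qis-dolb} and hypothesis \ref{item:thm-surg-hp-3} by Lemma \ref{lem:Teta-surj-BCcoh}. For injectivity, however, you take a genuinely different route. The paper argues directly: given a class $\mathfrak{a}=[\alpha]$ with $\alpha$ the $\tilde\Delta^{BC}_g$-harmonic representative and $T_\alpha=\del\delbar S$, the current $T_\alpha$ annihilates $\ker\del\delbar$; since harmonicity gives $\bar*_g\alpha\in\ker\del\delbar$, pairing yields $0=T_\alpha\left(\bar*_g\alpha\right)=\int_X\alpha\wedge\bar*_g\alpha=\norma{\alpha}^2$, so $\alpha=0$. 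You instead identify the currents Bott--Chern complex in bidegree $(p,q)$ with the transpose of the smooth Aeppli complex in bidegree $(n-p,n-q)$ and conclude $\dim_\C H^{p,q}_{BC}\left(\correnti^{\bullet,\bullet}X\right)=\dim_\C H^{n-p,n-q}_{A}(X)=\dim_\C H^{p,q}_{BC}(X)$, so the surjection is forced to be an isomorphism. Both arguments ultimately rest on Schweitzer's elliptic decompositions; the paper's is shorter and avoids the functional-analytic step you need (that the cohomology of the transposed complex is the topological dual of the cohomology of the original, which requires the closed-range property for $\del+\delbar$ and $\del\delbar$ in the Fr\'echet topology — true here by the $\tilde\Delta^{A}_g$-decomposition, but worth stating explicitly since it is the only place where your argument could leak). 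Your dimension count has the mild virtue of making the duality $H^{p,q}_{BC}\left(\correnti^{\bullet,\bullet}X\right)\simeq \dualee{H^{n-p,n-q}_{A}(X)}$ explicit, which is of independent interest. Your diagnosis that the na\"ive five-lemma chase only reduces injectivity to the (not formally available) injectivity on $\imm\de\cap\wedge^{p,q}X$ modulo $\imm\del\delbar$ is accurate, and correctly motivates stepping outside the diagram.
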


\begin{proof}
 We firstly prove that $T_\sspace$ induces an injective map in Bott-Chern cohomology. Indeed, let $\mathfrak{a} = \left[ \alpha \right] \in H^{p,q}_{BC}(X)$ be such that $\left[T_\mathfrak{a}\right] = 0 \in \frac{\ker \left(\del+\delbar \colon \correnti^{p,q}X \to \correnti^{p+1,q}X\oplus\correnti^{p,q+1}X\right)}{\imm\left(\del\delbar\colon \correnti^{p-1,q-1}X \to \correnti^{p,q}X\right)}$.
 Choose $g$ a Hermitian metric on $X$, and let $\alpha\in\wedge^{p,q}X$ be the $\tilde\Delta^{BC}$-harmonic representative of $\mathfrak{a}$ with respect to $g$. Therefore, there exists $S\in \correnti^{p-1,q-1}X$ such that $T_\alpha=\del\delbar S$. In particular, $T_\alpha\lfloor_{\ker\del\delbar}=0$. Since $\bar*_g\alpha\in\ker\del\delbar$, it follows that $0=T_\alpha\left(\bar*_g\alpha\right)=\int_X\alpha\wedge\bar*_g\alpha$, and hence $\mathfrak{a}=\left[\alpha\right]=0$.

 We prove now that $T_\sspace$ induces a surjective map in Bott-Chern cohomology. Firstly, by Remark \ref{rem:t-qis-dolb}, for any $p\in\Z$ and for any $q\in\Z$, the maps $T_\sspace\colon \left(\wedge^{\bullet,q}X,\, \del\right) \to \left(\correnti^{\bullet,q}X,\, \del\right)$ and $T_\sspace\colon \left(\wedge^{p,\bullet}X,\, \delbar\right) \to \left(\correnti^{p,\bullet}X,\, \delbar\right)$ are quasi-isomorphisms.
 Furthermore, by Lemma \ref{lem:Teta-surj-BCcoh}, the induced map
 $$ T_\sspace\colon \frac{\ker\left(\de\colon \wedge^{\bullet}X\otimes \C\to \wedge^{\bullet+1}X\otimes\C\right) \cap \wedge^{p,q}X}{\imm\left(\de\colon \wedge^{\bullet-1}X \otimes\C \to \wedge^{\bullet}X\otimes\C\right)} \to \frac{\ker\left(\de\colon \correnti^{\bullet}X\otimes \C\to \correnti^{\bullet+1}X\otimes\C\right) \cap \correnti^{p,q}X}{\imm\left(\de\colon \correnti^{\bullet-1}X \otimes\C \to \correnti^{\bullet}X\otimes\C\right)} $$
 is surjective.
 Hence, Theorem \ref{surjBC} applies, yielding that the map $T_\sspace$ induces a surjective map in Bott-Chern cohomology.
\end{proof}

\begin{rem}
 Given $X$ a compact complex manifold of complex dimension $n$ and $G$ a finite group of biholomorphisms of $X$, consider the compact complex \emph{orbifold} $\tilde X:=\left.X \middle\slash G \right.$ of complex dimension $n$ (namely, \cite[Definition 2]{satake}, $\tilde X$ is a singular complex space whose singularities are locally isomorphic to quotient singularities $\left. \C^n \middle\slash G \right.$ with $G\subset \GL\left(\C^n\right)$ finite; see \cite[Theorem 1]{bochner}, see also \cite[Theorem 1.7.2]{raissy-master-thesis}).

 By extending the action of $G$ on $X$ to $\wedge^\bullet X$, respectively $\wedge^{\bullet,\bullet}X$, set $\wedge^\bullet \tilde X$ the space of $G$-invariant forms in $\wedge^\bullet X$, respectively $\wedge^{\bullet,\bullet}\tilde X$ the space of $G$-invariant forms in $\wedge^{\bullet,\bullet}X$. Analogously, consider $\correnti^\bullet \tilde X$ the space of $G$-invariant currents in $\correnti^\bullet X$, respectively $\correnti^{\bullet,\bullet}\tilde X$ the space of $G$-invariant currents in $\correnti^{\bullet,\bullet}X$.

 Consider the sub-complex $T_\sspace \colon \left(\wedge^{\bullet,\bullet}\tilde X,\, \del,\, \delbar\right) \hookrightarrow \left(\correnti^{\bullet,\bullet}\tilde X,\, \del,\, \delbar\right)$.
 By W.~L. Baily's result \cite[page 807]{baily}, and arguing as in Remark \ref{rem:other-cohom-inj} by means of a Hermitian metric on $\tilde X$, namely, a $G$-invariant Hermitian metric on $X$, it follows that, for any $p\in\Z$, the induced inclusion $T_\sspace \colon \left(\wedge^{p,\bullet} \tilde X ,\, \delbar\right) \hookrightarrow \left(\correnti^{p,\bullet}\tilde X ,\, \delbar\right)$ is a quasi-isomorphism; by conjugation, it follows also that, for any $q\in\Z$, the induced inclusion $T_\sspace \colon \left(\wedge^{\bullet,q} \tilde X ,\, \del\right) \hookrightarrow \left(\correnti^{\bullet,q}\tilde X ,\, \del\right)$ is a quasi-isomorphism. In particular, by using Proposition \ref{spdb}, one recovers that the induced inclusion $T_\sspace \colon \left(\wedge^\bullet \tilde X ,\, \de\right) \hookrightarrow \left(\correnti^\bullet\tilde X ,\, \de\right)$ is a quasi-isomorphism, as proved also by I. Satake, \cite[Theorem 1]{satake}.

 We note that the inclusion $T_\sspace\colon \wedge^{\bullet,\bullet}\tilde X \to \correnti^{\bullet,\bullet}\tilde X$ induces the surjective map
 \begin{eqnarray*}
 \lefteqn{ T_\sspace \colon \frac{\ker\left(\de\colon \wedge^{p+q}\tilde X\otimes_\R\C \to \wedge^{p+q+1}\tilde X\otimes_\R\C \right) \cap \wedge^{p,q}\tilde X}{\imm\left(\de\colon \wedge^{p+q-1}\tilde X\otimes_\R\C \to \wedge^{p+q}\tilde X\otimes_\R\C \right)} } \\[5pt]
 &\to& \frac{\ker\left(\de\colon \correnti^{p+q}\tilde X\otimes_\R\C \to \correnti^{p+q+1}\tilde X\otimes_\R\C \right) \cap \correnti^{p,q}\tilde X}{\imm\left(\de\colon \correnti^{p+q-1}\tilde X\otimes_\R\C \to \correnti^{p+q}\tilde X\otimes_\R\C \right)} \;;
 \end{eqnarray*}
 indeed, since $g^*\circ T \circ g^*=T$ for any $g\in G$, the regularization (see \cite[Theorem III.12]{derham}) of a $G$-invariant current of bidegree $(p,q)$ gives a $G$-invariant $(p,q)$-form.

 Hence, Theorem \ref{surjBC} applies, yielding that, for any $\left(p,q\right)\in\Z^2$, the inclusion $T_\sspace$ induces an isomorphism
 $$ T_\sspace \colon \frac{\ker\left(\de \colon \wedge^{p,q}\tilde X \to \wedge^{p+1,q}\tilde X \oplus \wedge^{p,q+1}\tilde X\right)}{\imm\left(\del\delbar \colon \wedge^{p-1,q-1}\tilde X \to \wedge^{p,q}\tilde X\right)} \stackrel{\simeq}{\to} \frac{\ker\left(\de \colon \correnti^{p,q}\tilde X \to \correnti^{p+1,q}\tilde X \oplus \correnti^{p,q+1}\tilde X\right)}{\imm\left(\del\delbar \colon \correnti^{p-1,q-1}\tilde X \to \correnti^{p,q}\tilde X\right)} \;, $$ 
 as proved also in \cite[Theorem 1]{angella-2}.

 Note that one can argue also by means of the sheaf-theoretic interpretation of the Bott-Chern and Aeppli cohomologies, developed by J.-P. Demailly, \cite[\S V I.12.1]{demailly-agbook} and M. Schweitzer, \cite[\S4]{schweitzer}, see also \cite[\S3.2]{kooistra}.
\end{rem}

\begin{rem}[{\cite{angella-kasuya-3}}]
  We note that the results in Section \ref{sec:subcplx} can be used also to investigate the symplectic Bott-Chern and Aeppli cohomologies, as introduced and studied by L.-S. Tseng and S.-T. Yau in \cite{tseng-yau-1, tseng-yau-2, tseng-yau-3}, for solvmanifolds endowed with left-invariant symplectic structures. In particular, one gets a different proof of the result in \cite[Theorem 3]{macri} by M. Macrì for completely-solvable solvmanifolds, and a generalization for (non-necessarily completely-solvable) solvmanifolds. The complex parallelizable Nakamura manifold $\left. \Gamma \middle\backslash G \right.$ can be investigated explicitly, also in relation with the validity of the $\de\de^{\Lambda}$-lemma, equivalently, the Hard Lefschetz Condition; see also \cite{kasuya-osaka}. We refer to \cite{angella-kasuya-3} for more details.
\end{rem}

\section{Examples}

\begin{ex}[The completely-solvable Nakamura manifold, {\cite[Example 1]{kasuya-mathz}}]\label{CSN}
The completely-solvable Nakamura manifold, firstly studied by I. Nakamura in \cite[page 90]{nakamura}, is an example of a cohomologically K\"ahler non-K\"ahler solvmanifold, \cite{deandres-fernandez-deleon-mencia}, \cite[Example 3.1]{fernandez-munoz-santisteban}, \cite[\S3]{debartolomeis-tomassini}.

Let $G:=\C\ltimes _{\phi}\C^{2}$, where
$$ \phi\left(x+\sqrt{-1}\,y\right) \;:=\;
\left(
\begin{array}{cc}
\esp^{x}& 0  \\
0&    \esp^{-x}  
\end{array}
\right) \in\GL\left(\C^2\right)\;.
$$
Then for some $a\in \R$  the matrix $\left(
\begin{array}{cc}
\esp^{x}& 0  \\
0&    \esp^{-x}  
\end{array}
\right)$
is conjugate to an element of $\mathrm{SL}(2;\Z)$.
We have a lattice $\Gamma := \left(a\,\Z+b\,\sqrt{-1}\,\Z\right)\ltimes_\phi \Gamma^{\prime\prime}$ such that $\Gamma^{\prime\prime} $ is a lattice of $\C^{2}$.
Consider the completely-solvable solvmanifold $\solvmfd$.

(As a matter of notation, we consider holomorphic coordinates $\left\{z_1,\, z_2,\, z_3\right\}$, where $\left\{ z_1:=x+\sqrt{-1}\,y \right\}$ is the holomorphic coordinate on $\C$, and we shorten, for example, $\esp^{-z_1}\de z_{12\bar1}:=\esp^{-z_1}\de z_{1}\wedge \de z_{2}\wedge \de\bar z_{1}$.)

By A. Hattori's theorem, \cite[Corollary 4.2]{hattori}, the de Rham cohomology of $\solvmfd$ does not depend on $\Gamma$ and can be computed using just $G$-left-invariant forms on $\solvmfd$; more precisely, one gets
\begin{eqnarray*}
 H^0_{dR}(\solvmfd;\R) &=& \R\left\langle 1 \right\rangle \;, \\[5pt]
 H^1_{dR}(\solvmfd;\R) &=& \R\left\langle \de z_1 ,\; \de \bar z_1 \right\rangle \;, \\[5pt]
 H^2_{dR}(\solvmfd;\R) &=& \R\left\langle \de z_{23} ,\; \de z_{1\bar1} ,\; \de z_{2\bar3} ,\; \de z_{3\bar2} ,\; \de z_{\bar2\bar3} \right\rangle \;, \\[5pt]
 H^3_{dR}(\solvmfd;\R) &=& \R\left\langle \de z_{123} ,\; \de z_{23\bar1} ,\; \de z_{12\bar3} ,\; \de z_{13\bar2} ,\; \de z_{1\bar2\bar3} ,\; \de z_{2\bar1\bar3} ,\; \de z_{3\bar1\bar2} ,\; \de z_{\bar1\bar2\bar3} \right\rangle \;, \\[5pt]
 H^4_{dR}(\solvmfd;\R) &=& \R\left\langle \de z_{123\bar1} ,\; \de z_{12\bar1\bar3} ,\; \de z_{23\bar2\bar3} ,\; \de z_{13\bar1\bar2} ,\; \de z_{1\bar1\bar2\bar3} \right\rangle \;, \\[5pt]
 H^5_{dR}(\solvmfd;\R) &=& \R\left\langle \de z_{123\bar2\bar3} ,\; \de z_{23\bar1\bar2\bar3} \right\rangle \;, \\[5pt]
 H^6_{dR}(\solvmfd;\R) &=& \R\left\langle \de z_{123\bar1\bar2\bar3} \right\rangle \;,
\end{eqnarray*}
where we have listed the harmonic representatives with respect to the $G$-left-invariant Hermitian metric $g := \de z_1 \odot \de \bar z_1 + \esp^{-z_1-\bar z_1}\de z_2 \odot \de \bar z_2 + \esp^{z_1+\bar z_1}\de z_3 \odot \de \bar z_3$ instead of their cohomology classes.

Here, in the notation as above, we have $\alpha_1(x+\sqrt{-1}\,y)=\exp(x)$ whence $\beta_1(x+\sqrt{-1}\,y)=\gamma_1(x+\sqrt{-1}\,y)=\exp(-\sqrt{-1}\,y)$, and $\alpha_2(x+\sqrt{-1}\,y)=\exp(-x)$ whence $\beta_2(x+\sqrt{-1}\,y)=\gamma_2(x+\sqrt{-1}\,y)=\exp(\sqrt{-1}\,y)$; so that $\alpha_1\beta_1^{-1}=\bar\alpha_1\gamma_1^{-1}=\exp(z)$ and $\alpha_2\beta_2^{-1}=\bar\alpha_2\gamma_2^{-1}=\exp(-z)$.

We consider  $C^{\bullet,\bullet} _{\Gamma}$ as in \eqref{eq:def-c}.
The bi-differential bi-graded algebra $B^{\bullet,\bullet} _{\Gamma}$ varies for a choice of $b$.
By using Theorem \ref{BCISO}, we compute $H^{\bullet,\bullet}_{BC}(\Gamma\backslash G)\simeq H^{\bullet,\bullet}_{BC}(C^{\bullet,\bullet}_{\Gamma})$, case by case:
\begin{enumerate}
\item\label{item:nakamura-1} $b=2m\pi$ for some integer $m\in\Z$;
\item\label{item:nakamura-2}  $b=(2m+1)\pi$ for some integer $m\in\Z$; 
\item\label{item:nakamura-3}$b\not=m\pi$ for any integer $m\in\Z$.
\end{enumerate}
Firstly, we write down $C^{\bullet,\bullet} _{\Gamma}$ case by case in Table \ref{table:c-nakamura-1}, Table \ref{table:c-nakamura-2}, and Table \ref{table:c-nakamura-3}.

\begin{center}
\begin{table}[tb]
 \centering
\begin{tabular}{>{$\mathbf\bgroup}l<{\mathbf\egroup$} || >{$}l<{$}}
\toprule
\text{case \ref{item:nakamura-1}} & C^{\bullet,\bullet} _{\Gamma}\\
\toprule
(0,0) & \C \left\langle 1 \right\rangle \\
\midrule[0.02em]
(1,0) & \C \left\langle \de z_{1},\; \esp^{-z_{1}}\de z_{2},\; \esp^{z_{1}}\de z_{3},\; \esp^{-\bar z_{1}}\de z_{2},\; \esp^{\bar z_{1}}\de z_{3} \right\rangle  \\[5pt]
(0,1) & \C \left\langle \de z_{\bar1},\; \esp^{-z_{1}}\de z_{\bar2},\; \esp^{z_{1}}\de z_{\bar3},\; \esp^{-\bar z_{1}}\de z_{\bar2},\; \esp^{\bar z_{1}}\de z_{\bar3} \right\rangle \\
\midrule[0.02em]
(2,0) & \C \left\langle \esp^{-z_{1}}\de z_{12},\; \esp^{z_{1}}\de z_{13},\; \de z_{23},\; \esp^{-\bar z_{1}}\de z_{12},\; \esp^{\bar z_{1}}\de z_{13} \right\rangle \\[5pt]
(1,1) & \C \left\langle \de z_{1\bar1},\; \esp^{-z_{1}}\de z_{1\bar2},\; \esp^{z_{1}}\de z_{1\bar3},\; \esp^{-z_{1}}\de z_{2\bar1},\; \esp^{-2z_{1}}\de z_{2\bar2},\; \de z_{2\bar3},\; \esp^{z_{1}}\de z_{3\bar1},\; \de z_{3\bar2},\; \esp^{2z_{1}}\de z_{3\bar3}, \right. \\[5pt]
& \left.\esp^{-\bar z_{1}}\de z_{2\bar1},\; \esp^{-\bar z_{1}}\de z_{1\bar2},\; \esp^{\bar z_{1}}\de z_{1\bar3},\; \esp^{\bar z_{1}}\de z_{3\bar1},\; \esp^{-2\bar z_{1}} \de z_{2\bar2},\; \esp^{2\bar z_{1}}\de z_{3\bar3} \right\rangle \\[5pt]
(0,2) & \C \left\langle \esp^{-z_{1}} \de z_{\bar1\bar2},\; \esp^{z_{1}} \de z_{\bar1\bar3},\; \de z_{\bar 2\bar3},\; \esp^{-\bar z_{1}}\de z_{\bar1\bar2},\; \esp^{\bar z_{1}}\de z_{\bar1\bar3} \right\rangle \\
\midrule[0.02em]
(3,0) & \C \left\langle \de z_{123} \right\rangle \\[5pt]
(2,1) & \C \left\langle \esp^{-z_{1}}\de z_{12\bar1},\; \esp^{-2 z_{1}}\de z_{12\bar2},\; \de z_{12\bar3},\; \esp^{z_{1}}\de z_{13\bar1},\; \de z_{13\bar2},\; \esp^{2z_{1}}\de z_{13\bar3},\; \de z_{23\bar1},\; \esp^{-z_{1}}\de z_{23\bar2},\; \esp^{z_{1}}\de z_{23\bar3}, \right. \\[5pt]
& \left. \esp^{-\bar z_{1}}\de z_{12\bar1},\; \esp^{\bar z_{1}}\de z_{13\bar1},\; \esp^{-2\bar z_{1}}\de z_{12\bar2},\; \esp^{-\bar z_{1}}\de z_{23\bar2},\; \esp^{2\bar z_{1}}\de z_{13\bar3},\; \esp^{\bar z_{1}}\de z_{23\bar3} \right\rangle \\[5pt]
(1,2) & \C \left\langle \esp^{-\bar z_{1}}\de z_{1\bar1\bar2},\; \esp^{-2\bar z_{1}}\de z_{2\bar1\bar2},\; \de z_{3\bar1\bar2},\; \esp^{\bar z_{1}}\de z_{1\bar1\bar3},\; \de z_{2\bar1\bar3},\; \esp^{2\bar z_{1}}\de z_{3\bar1\bar3},\; \de z_{1\bar2\bar3},\; \esp^{-\bar z_{1}}\de z_{2\bar2\bar3},\; \esp^{\bar z_{1}}\de z_{3\bar2\bar3}, \right. \\[5pt]
& \left. \esp^{ -z_{1}}\de z_{1\bar1\bar2},\; \esp^{z_{1}}\de z_{1\bar1\bar3},\; \esp^{-2 z_{1}}\de z_{2\bar1\bar2},\; \esp^{-z_{1}}\de z_{2\bar2\bar3},\; \esp^{2 z_{1}}\de z_{3\bar1\bar3},\; \esp^{ z_{1}}\de z_{3\bar2\bar3} \right\rangle \\[5pt]
(0,3) & \C \left\langle \de z_{\bar1\bar2\bar3} \right\rangle \\
\midrule[0.02em]
(3,1) & \C \left\langle \de z_{123\bar1},\; \esp^{-z_{1}}\de z_{123\bar2},\; \esp^{z_{1}}\de z_{123\bar3},\; \esp^{-\bar z_{1}}\de z_{123\bar2},\; \esp^{\bar z_{1}}\de z_{123\bar3} \right\rangle \\[5pt]
(2,2) & \C \left\langle \esp^{-2z_{1}}\de z_{12\bar1\bar2},\; \de z_{12\bar1\bar3},\; \esp^{-z_{1}}\de z_{12\bar2\bar3},\; \de z_{13\bar1\bar2},\; \esp^{2z_{1}}\de z_{13\bar1\bar3},\; \esp^{z_{1}}\de z_{13\bar2\bar3},\; \esp^{-z_{1}}\de z_{23\bar1\bar2},\; \esp^{z_{1}}\de z_{23\bar1\bar3}, \right.\\[5pt]
& \left. \de z_{23\bar2\bar3}, \; \esp^{-2\bar z_{1}}\de z_{12\bar1\bar2},\; \esp^{-\bar z_{1}}\de z_{23\bar1\bar2},\; \esp^{-\bar z_{1}}\de z_{12\bar2\bar3},\; \esp^{\bar z_{1}}\de z_{13\bar2\bar3},\; \esp^{2\bar z_{1}}\de z_{13\bar1\bar3},\; \esp^{\bar z_{1}}\de z_{23\bar1\bar3} \right\rangle \\[5pt]
(1,3) & \C \left\langle \de z_{1\bar1\bar2\bar3},\; \esp^{-\bar z_{1}}\de z_{2\bar1\bar2\bar3},\; \esp^{\bar z_{1}}\de z_{3\bar1\bar2\bar3},\; \esp^{- z_{1}}\de z_{2\bar1\bar2\bar3},\; \esp^{ z_{1}}\de z_{3\bar1\bar2\bar3} \right\rangle \\
\midrule[0.02em]
(3,2) & \C \left\langle \esp^{-z_{1}}\de z_{123\bar1\bar2},\; \esp^{z_{1}}\de z_{123\bar1\bar3},\; \de z_{123\bar2\bar3},\; \esp^{-\bar z_{1}}\de z_{123\bar1\bar2},\; \esp^{\bar z_{1}}\de z_{123\bar1\bar3} \right\rangle \\[5pt]
(2,3) & \C \left\langle \esp^{-z_{1}}\de z_{12\bar1\bar2\bar3},\; \esp^{z_{1}}\de z_{13\bar1\bar2\bar3},\; \de z_{23\bar1\bar2\bar3},\; 
\esp^{-\bar z_{1}}\de z_{12\bar1\bar2\bar3},\; \esp^{\bar z_{1}}\de z_{13\bar1\bar2\bar3} \right\rangle \\
\midrule[0.02em]
(3,3) & \C \left\langle \de z_{123\bar1\bar2\bar3} \right\rangle \\
\bottomrule
\end{tabular}
\caption{The double complex $C^{\bullet,\bullet}_\Gamma$ for the completely-solvable Nakamura manifold in case \ref{item:nakamura-1}.}
\label{table:c-nakamura-1}
\end{table}
\end{center}

\begin{center}
\begin{table}[tb]
 \centering
\begin{tabular}{>{$\mathbf\bgroup}l<{\mathbf\egroup$} || >{$}l<{$}}
\toprule
\text{case \ref{item:nakamura-2}} & C^{\bullet,\bullet}_{\Gamma} \\
\toprule
(0,0) & \C \left\langle 1 \right\rangle \\
\midrule[0.02em]
(1,0) & \C \left\langle \de z_{1} \right\rangle  \\[5pt]
(0,1) & \C \left\langle \de z_{\bar1} \right\rangle \\
\midrule[0.02em]
(2,0) & \C \left\langle \de z_{23} \right\rangle \\[5pt]
(1,1) & \C \left\langle \de z_{1\bar1},\; \esp^{-2z_{1}}\de z_{2\bar2},\; \esp^{-2\bar z_{1}}\de z_{2\bar2},\; \esp^{2z_{1}}\de z_{3\bar3},\; \esp^{2\bar z_{1}}\de z_{3\bar3},\; \de z_{2\bar3},\; \de z_{3\bar2} \right\rangle \\[5pt]
(0,2) & \C \left\langle \de z_{\bar2\bar3} \right\rangle \\
\midrule[0.02em]
(3,0) & \C \left\langle \de z_{123} \right\rangle \\[5pt]
(2,1) & \C \left\langle \de z_{23\bar1},\; \esp^{-2z_{1}}\de z_{12\bar2},\; \esp^{-2\bar z_{1}}\de z_{12\bar2},\; \esp^{2z_{1}}\de z_{13\bar3},\; \esp^{2\bar z_{1}}\de z_{13\bar3},\; \de z_{12\bar3},\; \de z_{13\bar2} \right\rangle \\[5pt]
(1,2) & \C \left\langle \de z_{1\bar2\bar3},\; \esp^{-2z_{1}}\de z_{2\bar1\bar2},\; \esp^{-2\bar z_{1}}\de z_{2\bar1\bar2},\; \esp^{2z_{1}}\de z_{3\bar1\bar3},\; \esp^{2\bar z_{1}}\de z_{3\bar1\bar3},\; \de z_{2\bar1\bar3},\; \de z_{3\bar1\bar2} \right\rangle \\[5pt]
(0,3) & \C \left\langle \de z_{\bar1\bar2\bar3} \right\rangle \\
\midrule[0.02em]
(3,1) & \C \left\langle \de z_{123\bar1} \right\rangle \\[5pt]
(2,2) & \C \left\langle \de z_{12\bar1\bar3},\; \esp^{-2z_{1}}\de z_{12\bar1\bar2},\; \esp^{-2\bar z_{1}}\de z_{12\bar1\bar2},\; \esp^{2z_{1}}\de z_{13\bar1\bar3},\; \esp^{2\bar z_{1}}\de z_{13\bar1\bar3},\; \de z_{23\bar2\bar3},\; \de z_{13\bar1\bar2} \right\rangle \\[5pt]
(1,3) & \C \left\langle \de z_{1\bar1\bar2\bar3} \right\rangle \\
\midrule[0.02em]
(3,2) & \C \left\langle \de z_{123\bar2\bar3}  \right\rangle \\[5pt]
(2,3) & \C \left\langle \de z_{23\bar1\bar2\bar 3} \right\rangle \\
\midrule[0.02em]
(3,3) & \C \left\langle \de z_{123\bar1\bar2\bar3} \right\rangle \\
\bottomrule
\end{tabular}
\caption{The double complex $C^{\bullet,\bullet}_\Gamma$ for the completely-solvable Nakamura manifold in case \ref{item:nakamura-2}.}
\label{table:c-nakamura-2}
\end{table}
\end{center}

\begin{center}
\begin{table}[tb]
 \centering
\begin{tabular}{>{$\mathbf\bgroup}l<{\mathbf\egroup$} || >{$}l<{$}}
\toprule
\text{case \ref{item:nakamura-3}} & C^{\bullet,\bullet}_{\Gamma} \\
\toprule
(0,0) & \C \left\langle 1 \right\rangle \\
\midrule[0.02em]
(1,0) & \C \left\langle \de z_{1} \right\rangle  \\[5pt]
(0,1) & \C \left\langle \de z_{\bar1} \right\rangle \\
\midrule[0.02em]
(2,0) & \C \left\langle \de z_{23} \right\rangle \\[5pt]
(1,1) & \C \left\langle \de z_{1\bar1},\;  \de z_{2\bar3},\; \de z_{3\bar2} \right\rangle \\[5pt]
(0,2) & \C \left\langle \de z_{\bar2\bar3} \right\rangle \\
\midrule[0.02em]
(3,0) & \C \left\langle \de z_{123} \right\rangle \\[5pt]
(2,1) & \C \left\langle \de z_{23\bar1},\; \de z_{12\bar3},\; \de z_{13\bar2} \right\rangle \\[5pt]
(1,2) & \C \left\langle \de z_{1\bar2\bar3},\; \de z_{2\bar1\bar3},\; \de z_{3\bar1\bar2} \right\rangle \\[5pt]
(0,3) & \C \left\langle \de z_{\bar1\bar2\bar3} \right\rangle \\
\midrule[0.02em]
(3,1) & \C \left\langle \de z_{123\bar1} \right\rangle \\[5pt]
(2,2) & \C \left\langle \de z_{12\bar1\bar3},\;  \de z_{23\bar2\bar3},\; \de z_{13\bar1\bar2} \right\rangle \\[5pt]
(1,3) & \C \left\langle \de z_{1\bar1\bar2\bar3} \right\rangle \\
\midrule[0.02em]
(3,2) & \C \left\langle \de z_{123\bar2\bar3} \right\rangle \\[5pt]
(2,3) & \C \left\langle \de z_{23\bar1\bar2\bar 3} \right\rangle \\
\midrule[0.02em]
(3,3) & \C \left\langle \de z_{123\bar1\bar2\bar3}\right\rangle \\
\bottomrule
\end{tabular}
\caption{The double complex $C^{\bullet,\bullet}_\Gamma$ for the completely-solvable Nakamura manifold in case \ref{item:nakamura-3}.}
\label{table:c-nakamura-3}
\end{table}
\end{center}

Note that, since $\del\delbar\left(C^{\bullet,\bullet} _{\Gamma}\right)=\{0\}$ for each case, we have, by using Theorem \ref{BCISO},
$$ H^{\bullet,\bullet}_{BC}(\solvmfd) \;\simeq\; H^{\bullet,\bullet}_{BC}\left(C^{\bullet,\bullet} _{\Gamma}\right) \;=\; \ker \de\lfloor_{C^{\bullet,\bullet} _{\Gamma}} \;. $$
Hence, we compute the Bott-Chern cohomology of the Nakamura manifold case by case in Table \ref{table:BC-nakamura-1} and Table \ref{table:BC-nakamura-2}; note that, in the case \ref{item:nakamura-3}, simply we have:
\begin{equation}\label{eq:BC-nakamura-3}
H_{BC}^{\bullet,\bullet}\left(\solvmfd\right)\simeq C^{\bullet,\bullet}_{\Gamma} \qquad \text{ in case \ref{item:nakamura-3}} \;.
\end{equation}

\begin{center}
\begin{table}[tb]
 \centering
\begin{tabular}{>{$\mathbf\bgroup}l<{\mathbf\egroup$} || >{$}l<{$}}
\toprule
\text{case \ref{item:nakamura-1}} & H^{\bullet,\bullet}_{BC}(\Gamma\backslash G) \\
\toprule
(0,0) & \C \left\langle 1 \right\rangle \\
\midrule[0.02em]
(1,0) & \C \left\langle [\de z_{1}] \right\rangle  \\[5pt]
(0,1) & \C \left\langle [\de z_{\bar1}]\right\rangle \\
\midrule[0.02em]
(2,0) & \C \left\langle [\esp^{-z_{1}}\de z_{12}],\; [\esp^{z_{1}}\de z_{13}],\; [\de z_{23}]\right\rangle \\[5pt]
(1,1) & \C \left\langle [\de z_{1\bar1}],\; [\esp^{-z_{1}}\de z_{1\bar2}],\; [\esp^{z_{1}}\de z_{1\bar3}],\; [ \de z_{2\bar3}],\; [\de z_{3\bar2}],\;
[\esp^{-\bar z_{1}}\de z_{2\bar1}],\;  [ \esp^{\bar z_{1}}\de z_{3\bar1}] \right\rangle \\[5pt]
(0,2) & \C \left\langle [\de z_{\bar 2\bar3}],\; [\esp^{-\bar z_{1}}\de z_{\bar1\bar2}],\; [\esp^{\bar z_{1}}\de z_{\bar1\bar3}] \right\rangle \\
\midrule[0.02em]
(3,0) & \C \left\langle [\de z_{123}] \right\rangle \\[5pt]
(2,1) & \C \left\langle [\esp^{-z_{1}}\de z_{12\bar1}],\; [\esp^{-2z_{1}}\de z_{12\bar2}],\; [\de z_{12\bar3}],\; [\esp^{z_{1}}\de z_{13\bar1}],\; [\de z_{13\bar2}],\; [\esp^{2z_{1}}\de z_{13\bar3}],\; [\de z_{23\bar1}], \right. \\[5pt]
      & \left. [\esp^{-\bar z_{1}}\de z_{12\bar1}],\; [\esp^{\bar z_{1}}\de z_{13\bar1}] \right\rangle \\[5pt]
(1,2) & \C \left\langle [\esp^{-\bar z_{1}}\de z_{1\bar1\bar2}],\; [\esp^{-2\bar z_{1}}\de z_{2\bar1\bar2}],\; [\de z_{3\bar1\bar2}],\; [\esp^{\bar z_{1}}\de z_{1\bar1\bar3}],\; [\de z_{2\bar1\bar3}],\; [\esp^{2\bar z_{1}}\de z_{3\bar1\bar3}],\; [\de z_{1\bar2\bar3}], \right. \\[5pt]
      & \left. [\esp^{ -z_{1}}\de z_{1\bar1\bar2}],\; [\esp^{z_{1}}\de z_{1\bar1\bar3}] \right\rangle \\[5pt]
(0,3) & \C \left\langle [\de z_{\bar1\bar2\bar3}] \right\rangle \\
\midrule[0.02em]
(3,1) & \C \left\langle [\de z_{123\bar1}],\; [\esp^{-z_{1}}\de z_{123\bar2}],\; [\esp^{z_{1}}\de z_{123\bar3}] \right\rangle \\[5pt]
(2,2) & \C \left\langle [\esp^{-2z_{1}}\de z_{12\bar1\bar2}],\; [\de z_{12\bar1\bar3}],\; [\esp^{-z_{1}}\de z_{12\bar2\bar3}],\; [\de z_{13\bar1\bar2}],\; [\esp^{2z_{1}}\de z_{13\bar1\bar3}],\; [\esp^{z_{1}}\de z_{13\bar2\bar3}],\;  [ \de z_{23\bar2\bar3}], \right.\\[5pt]
& \left. [\esp^{-2\bar z_{1}}\de z_{12\bar1\bar2}],\; [\esp^{-\bar z_{1}}\de z_{23\bar1\bar2}],\; [\esp^{2\bar z_{1}}\de z_{13\bar1\bar3}],\; [\esp^{\bar z_{1}}\de z_{23\bar1\bar3} ] \right\rangle \\[5pt]
(1,3) & \C \left\langle [\de z_{1\bar1\bar2\bar3}],\; [\esp^{-\bar z_{1}}\de z_{2\bar1\bar2\bar3}],\; [\esp^{\bar z_{1}}\de z_{3\bar1\bar2\bar3}] \right\rangle \\
\midrule[0.02em]
(3,2) & \C \left\langle [\esp^{-z_{1}}\de z_{123\bar1\bar2}],\; [\esp^{z_{1}}\de z_{123\bar1\bar3}],\; [\de z_{123\bar2\bar3}],\; [\esp^{-\bar z_{1}}\de z_{123\bar1\bar2}],\; [\esp^{\bar z_{1}}\de z_{123\bar1\bar3}] \right\rangle \\[5pt]
(2,3) & \C \left\langle [\esp^{-z_{1}}\de z_{12\bar1\bar2\bar3}],\; [\esp^{z_{1}}\de z_{13\bar1\bar2\bar3}],\; [\de z_{23\bar1\bar2\bar3}],\; 
[\esp^{-\bar z_{1}}\de z_{12\bar1\bar2\bar3}],\; [\esp^{\bar z_{1}}\de z_{13\bar1\bar2\bar3}] \right\rangle \\
\midrule[0.02em]
(3,3) & \C \left\langle [\de z_{123\bar1\bar2\bar3}] \right\rangle \\
\bottomrule
\end{tabular}
\caption{The Bott-Chern cohomology of the completely-solvable Nakamura manifold in case \ref{item:nakamura-1}.}
\label{table:BC-nakamura-1}
\end{table}
\end{center}

\begin{center}
\begin{table}[tb]
 \centering
\begin{tabular}{>{$\mathbf\bgroup}l<{\mathbf\egroup$} || >{$}l<{$}}
\toprule
\text{case \ref{item:nakamura-2}} & H_{BC}^{\bullet,\bullet}(\Gamma \backslash G)\\
\toprule
(0,0) & \C \left\langle 1 \right\rangle \\
\midrule[0.02em]
(1,0) & \C \left\langle [\de z_{1} ]\right\rangle  \\[5pt]
(0,1) & \C \left\langle [\de z_{\bar1}] \right\rangle \\
\midrule[0.02em]
(2,0) & \C \left\langle [\de z_{23}] \right\rangle \\[5pt]
(1,1) & \C \left\langle [\de z_{1\bar1}],\; [\de z_{2\bar3}],\; [\de z_{3\bar2}] \right\rangle \\[5pt]
(0,2) & \C \left\langle [\de z_{\bar2\bar3}] \right\rangle \\
\midrule[0.02em]
(3,0) & \C \left\langle [\de z_{123}] \right\rangle \\[5pt]
(2,1) & \C \left\langle [\de z_{23\bar1}],\; [\esp^{-2z_{1}}\de z_{12\bar2}],\; [ \esp^{2z_{1}}\de z_{13\bar3}],\; [\de z_{12\bar3}],\; [\de z_{13\bar2}] \right\rangle \\[5pt]
(1,2) & \C \left\langle [\de z_{1\bar2\bar3}],\; [\esp^{-2\bar z_{1}}\de z_{2\bar1\bar2}],\; [ \esp^{2\bar z_{1}}\de z_{3\bar1\bar3}],\; [\de z_{2\bar1\bar3}],\; [\de z_{3\bar1\bar2}] \right\rangle \\[5pt]
(0,3) & \C \left\langle [\de z_{\bar1\bar2\bar3}] \right\rangle \\
\midrule[0.02em]
(3,1) & \C \left\langle [\de z_{123\bar1}] \right\rangle \\[5pt]
(2,2) & \C \left\langle [\de z_{12\bar1\bar3}],\; [\esp^{-2z_{1}}\de z_{12\bar1\bar2}],\; [\esp^{-2\bar z_{1}}\de z_{12\bar1\bar2}],\; [\esp^{2z_{1}}\de z_{13\bar1\bar3}],\; [\esp^{2\bar z_{1}}\de z_{13\bar1\bar3}],\; [\de z_{23\bar2\bar3}],\; [\de z_{13\bar1\bar2}] \right\rangle \\[5pt]
(1,3) & \C \left\langle [\de z_{1\bar1\bar2\bar3}] \right\rangle \\
\midrule[0.02em]
(3,2) & \C \left\langle [\de z_{123\bar2\bar3}] \right\rangle \\[5pt]
(2,3) & \C \left\langle [\de z_{23\bar1\bar2\bar 3}] \right\rangle \\
\midrule[0.02em]
(3,3) & \C \left\langle [\de z_{123\bar1\bar2\bar3}] \right\rangle\\
\bottomrule
\end{tabular}
\caption{The Bott-Chern cohomology of the completely-solvable Nakamura manifold in case \ref{item:nakamura-2}.}
\label{table:BC-nakamura-2}
\end{table}
\end{center}

We summarize in Table \ref{table:delbar-BC-nakamura} the results of the computations of the Bott-Chern cohomology as done in Table \ref{table:BC-nakamura-1} and Table \ref{table:BC-nakamura-2} and \eqref{eq:BC-nakamura-3}, and of the Dolbeault cohomology, as done in \cite[Example 1]{kasuya-mathz}.

\begin{center}
\begin{table}[tb]
 \centering
\begin{tabular}{>{$\mathbf\bgroup}c<{\mathbf\egroup$} || >{$}c<{$} || >{$}c<{$} >{$}c<{$} | >{$}c<{$} >{$}c<{$} | >{$}c<{$} >{$}c<{$}}
\toprule
 & & \multicolumn{2}{c|}{case \ref{item:nakamura-1}} & \multicolumn{2}{c|}{case \ref{item:nakamura-2}} & \multicolumn{2}{c}{case \ref{item:nakamura-3}} \\
 & dR & \delbar & BC & \delbar & BC & \delbar & BC \\
\toprule
(0,0) & 1 &  1 & 1 & 1 & 1 & 1 & 1 \\
\midrule[0.02em]
(1,0) & \multirow{2}{*}{2} & 3 &  1 & 1 & 1 & 1 & 1 \\[5pt]
(0,1) & & 3 &  1 & 1 & 1 & 1 & 1 \\
\midrule[0.02em]
(2,0) & \multirow{3}{*}{5} & 3 &  3 & 1 & 1 & 1 & 1 \\[5pt]
(1,1) & & 9 &  7 & 5 & 3 & 3 & 3 \\[5pt]
(0,2) & & 3 &  3 & 1 & 1 & 1 & 1 \\
\midrule[0.02em]
(3,0) & \multirow{4}{*}{8} & 1 &  1 & 1 & 1 & 1 & 1 \\[5pt]
(2,1) & & 9 &  9 & 5 & 5 & 3 & 3 \\[5pt]
(1,2) & & 9 &  9 & 5 & 5 & 3 & 3 \\[5pt]
(0,3) & & 1 &  1 & 1 & 1 & 1 & 1 \\
\midrule[0.02em]
(3,1) & \multirow{3}{*}{5} & 3 &  3 & 1 & 1 & 1 & 1 \\[5pt]
(2,2) & & 9 & 11 & 5 & 7 & 3 & 3 \\[5pt]
(1,3) & & 3 &  3 & 1 & 1 & 1 & 1 \\
\midrule[0.02em]
(3,2) & \multirow{2}{*}{2} & 3 &  5 & 1 & 1 & 1 & 1 \\[5pt]
(2,3) & & 3 &  5 & 1 & 1 & 1 & 1 \\
\midrule[0.02em]
(3,3) & 1 &  1 & 1 & 1 & 1 & 1 & 1 \\
\bottomrule
\end{tabular}
\caption{The dimensions of the de Rham, Dolbeault, and Bott-Chern cohomologies of the completely-solvable Nakamura manifold.}
\label{table:delbar-BC-nakamura}
\end{table}
\end{center}

\begin{rem}\label{rem:nakamura-deldelbar-1}
Note that in any case the canonical map $\Tot^\bullet H_{BC}^{\bullet,\bullet}\left(\solvmfd\right)\to H^{\bullet}_{dR}\left(\solvmfd\right)$ is surjective. (With the notation of \cite{li-zhang, angella-tomassini-1}, this means that, in any case, $\solvmfd$ is \emph{complex-$\mathcal{C}^\infty$-pure-and-full at every stage}, namely, the de Rham cohomology admits a decomposition in pure-type subgroups with respect to the complex structure.)
In the case \ref{item:nakamura-3}, by Proposition \ref{spdb}, we have $H^{\bullet}_{dR}\left(\solvmfd\right) \simeq H^{\bullet}\left(\Tot^\bullet C^{\bullet,\bullet}_{\Gamma}\right)=\Tot^\bullet C^{\bullet,\bullet}_{\Gamma}$ and hence the canonical map $\Tot^\bullet H_{BC}^{\bullet,\bullet}\left(\solvmfd\right) \to H^{\bullet}_{dR}\left(\solvmfd\right)$ induced by the identity is in fact an isomorphism:
this implies that $\solvmfd$ in case \ref{item:nakamura-3} satisfies the $\partial\bar\partial$-Lemma (namely, every $\del$-closed $\delbar$-closed $\de$-exact form is $\del\delbar$-exact too, see \cite{deligne-griffiths-morgan-sullivan}).
In \cite{kasuya-mathz}, it is shown that for some left-invariant Hermitian metric the space of harmonic forms admits the Hodge decomposition and symmetry (see also \cite{kasuya-hodge} for higher dimensional examples with the Hodge decomposition and symmetry).
\end{rem}

\begin{rem}
 In view of \cite[Theorem A, Theorem B]{angella-tomassini-3}, stating that, for every compact complex manifold $X$, for any $k\in\Z$, the inequality
 $$ \sum_{p+q=k} \left( \dim_\C H^{p,q}_{BC}(X) + \dim_\C H^{p,q}_{A}(X) \right) \;\geq\; \sum_{p+q=k} \left( \dim_\C H^{p,q}_{\del}(X) + \dim_\C H^{p,q}_{\delbar}(X) \right) \;\geq\; 2\, \dim_\C H^k_{dR}(X;\C) $$
 holds, and that equalities hold for any $k\in\Z$ if and only if $X$ satisfies the $\del\delbar$-Lemma, one gets that the non-negative integer numbers $\sum_{p+q=k} \left( \dim_\C H^{p,q}_{BC}(X) + \dim_\C H^{p,q}_{A}(X) \right) - 2\, \dim_\C H^k_{dR}(X;\C) \in \N$, varying $k\in\Z$, provide a ``measure'' of the non-K\"ahlerianity of $X$.

 Note that, for the completely-solvable Nakamura manifold, in any case, one has
 $$ \dim_\C H^{p,q}_{BC}(X) + \dim_\C H^{p,q}_{A}(X) \;=\; \dim_\C H^{p,q}_{\del}(X) + \dim_\C H^{p,q}_{\delbar}(X) $$
 for any $(p,q)\in\Z^2$. On the other hand,
 $$
 \sum_{p+q=k} \left( \dim_\C H^{p,q}_{BC}(X) + \dim_\C H^{p,q}_{A}(X) \right) - 2\,\dim_\C H^k_{dR}(X;\C) \;=\;
 \left\{
 \begin{array}{ll}
   8 & \text{ for } k\in \{1,\, 5\} \\[5pt]
  20 & \text{ for } k\in \{2,\, 4\} \\[5pt]
  24 & \text{ for } k=3 \\[5pt]
  0  & \text{ otherwise}
 \end{array}
 \right. \qquad \text{ in case \ref{item:nakamura-1}} \;,
 $$
 and
  $$
 \sum_{p+q=k} \left( \dim_\C H^{p,q}_{BC}(X) + \dim_\C H^{p,q}_{A}(X) \right) - 2\,\dim_\C H^k_{dR}(X;\C) \;=\;
 \left\{
 \begin{array}{ll}
   0 & \text{ for } k\in \{1,\, 5\} \\[5pt]
   4 & \text{ for } k\in \{2,\, 4\} \\[5pt]
   8 & \text{ for } k=3 \\[5pt]
   0 & \text{ otherwise}
 \end{array}
 \right. \qquad \text{ in case \ref{item:nakamura-2}} \;,
 $$
 and
  $$
 \sum_{p+q=k} \left( \dim_\C H^{p,q}_{BC}(X) + \dim_\C H^{p,q}_{A}(X) \right) - 2\,\dim_\C H^k_{dR}(X;\C) \;=\;
 \left\{
 \begin{array}{ll}
   0 & \text{ for } k\in \{1,\, 5\} \\[5pt]
   0 & \text{ for } k\in \{2,\, 4\} \\[5pt]
   0 & \text{ for } k=3 \\[5pt]
   0 & \text{ otherwise}
 \end{array}
 \right. \qquad \text{ in case \ref{item:nakamura-3}} \;.
 $$
 In particular, by \cite[Theorem B]{angella-tomassini-3}, one gets that $\solvmfd$ in case \ref{item:nakamura-3} satisfies the $\del\delbar$-Lemma, as noticed also in Remark \ref{rem:nakamura-deldelbar-1}.
\end{rem}

\end{ex}

\medskip

\begin{ex}[The complex parallelizable Nakamura manifold]\label{Nak}
Let $G=\C\ltimes_{\phi} \C^{2}$ be such that \[\phi(z)=\left(
\begin{array}{cc}
\esp^{z}& 0  \\
0&    \esp^{-z}  
\end{array}
\right) \;.\]
Then there exist $a+\sqrt{-1}\,b \in \C$ and $c+\sqrt{-1}\,d\in \C$ such that $ \Z(a+\sqrt{-1}\,b)+\Z(c+\sqrt{-1}\,d)$ is a lattice in $\C$ and $\phi(a+\sqrt{-1}\,b)$ and $\phi(c+\sqrt{-1}\,d)$ are conjugate to elements of $\mathrm{SL}(4;\Z)$, where we regard $\mathrm{SL}(2;\C)\subset \mathrm{SL}(4;\R)$, see \cite{hasegawa-dga}.
Hence we have a lattice $\Gamma := \left( \Z \left( a+\sqrt{-1}\,b \right) + \Z \left( c+\sqrt{-1}\,d \right) \right) \ltimes_{\phi} \Gamma^{\prime\prime}$ of $G$ such that $\Gamma^{\prime\prime}$ is a lattice of $\C^{2}$. Let $X := \left. \Gamma \middle\backslash G \right.$ be the \emph{complex parallelizable Nakamura manifold}, \cite[\S2]{nakamura}.

We take the connected simply-connected complex nilpotent subgroup $C:=\C \subseteq G$ such that $G = C \cdot N$, where $N$ is the nilradical of $G$. Recall that $\g_+$ denotes the Lie algebra of the $G$-left-invariant holomorphic vector fields on $G$.
For a coordinate set $(z_{1},z_{2},z_{3})$ of $\C\ltimes_{\phi} \C^{2}$, we have the basis $\left\{ \frac{\partial}{\partial z_{1}}, \;  \esp^{z_{1}}\,\frac{\partial}{\partial z_{2}}, \; \esp^{-z_{1}}\,\frac{\partial}{\partial z_{3}} \right\}$ of $\g_{+}$ such that
$$ \left( \Ad_{(z_{1},z_{2},z_{3})} \right)_{\mathrm{s}} \;=\; \diag\left(1,\, \esp^{z_{1}},\, \esp^{-z_{1}}\right) \;\in\; \Aut(\g_+) \;. $$

Here, in the notation as above, we have $\alpha_1(z_1)=1$, $\alpha_2(z_1)=\exp(z_1)$, and $\alpha_3(z_1)=\exp(-z_1)$.

\begin{enumerate}[{\itshape (a)}]
 \item\label{item:ex-nakamura2-1}
If $b \in \pi\,\Z$ and $d \in\pi\,\Z$, then, 
for $z\in \left(a+\sqrt{-1}\,b\right)\,\Z + \left(c+\sqrt{-1}\,d\right)\,\Z$, we have $\phi(z)\in \mathrm{SL}(2;\R)$.
Since $\left.\left(\frac{\esp^{z_{1}}}{\esp^{\bar z_{1}}}\right)\middle\lfloor_{\Gamma}\right.=\left.\left(\esp^{z_{1}-\bar z_{1}}\right)\middle\lfloor_{\Gamma}\right.=1$, we have
$$ B_{\Gamma}^{\bullet} \;=\; \wedge^{\bullet} \C \left\langle \de z_{\bar 1},\; \esp^{ z_1}\de z_{\bar 2},\; \esp^{ z_1}\de z_{\bar 3} \right\rangle \;. $$

Hence the double complex $C^{\bullet,\bullet}_{\Gamma}$ in case {\itshape(\ref{item:ex-nakamura2-1})} is the one in Table \ref{table:c-ex-nakamura2-1}. (We recall that, in order to shorten the notation, we write, for example, $\esp^{\bar z_1}\de z_{1\bar3}:=\esp^{\bar z_1}\de z_{1}\wedge\de \bar z_{3}$.)

\begin{center}
\begin{table}[tb]
 \centering
\begin{tabular}{>{$\mathbf\bgroup}l<{\mathbf\egroup$} || >{$}l<{$}}
\toprule
\text{case {\itshape(\ref{item:ex-nakamura2-1})}} & C^{\bullet,\bullet} _{\Gamma}\\
\toprule
(0,0) & \C \left\langle 1 \right\rangle \\
\midrule[0.02em]
(1,0) & \C \left\langle \de z_{1},\; \esp^{-z_{1}}\de z_{2},\; \esp^{z_{1}}\de z_{3},\; \esp^{-\bar z_{1}}\de z_{2},\; \esp^{\bar z_{1}}\de z_{3} \right\rangle  \\[5pt]
(0,1) & \C \left\langle \de z_{\bar1},\; \esp^{-z_{1}}\de z_{\bar2},\; \esp^{z_{1}}\de z_{\bar3},\; \esp^{-\bar z_{1}}\de z_{\bar2},\; \esp^{\bar z_{1}}\de z_{\bar3} \right\rangle \\
\midrule[0.02em]
(2,0) & \C \left\langle \esp^{-z_{1}}\de z_{12},\; \esp^{z_{1}}\de z_{13},\; \de z_{23},\; \esp^{-\bar z_{1}}\de z_{12},\; \esp^{\bar z_{1}}\de z_{13} \right\rangle \\[5pt]
(1,1) & \C \left\langle \de z_{1\bar1},\; \esp^{-z_{1}}\de z_{1\bar2},\; \esp^{z_{1}}\de z_{1\bar3},\; \esp^{-z_{1}}\de z_{2\bar1},\; \esp^{-2z_{1}}\de z_{2\bar2},\; \de z_{2\bar3},\; \esp^{z_{1}}\de z_{3\bar1},\; \de z_{3\bar2},\; \esp^{2z_{1}}\de z_{3\bar3}, \right. \\[5pt]
& \left.\esp^{-\bar z_{1}}\de z_{2\bar1},\; \esp^{-\bar z_{1}}\de z_{1\bar2},\; \esp^{\bar z_{1}}\de z_{1\bar3},\; \esp^{\bar z_{1}}\de z_{3\bar1},\; \esp^{-2\bar z_{1}} \de z_{2\bar2},\; \esp^{2\bar z_{1}}\de z_{3\bar3} \right\rangle \\[5pt]
(0,2) & \C \left\langle \esp^{-z_{1}} \de z_{\bar1\bar2},\; \esp^{z_{1}} \de z_{\bar1\bar3},\; \de z_{\bar 2\bar3},\; \esp^{-\bar z_{1}}\de z_{\bar1\bar2},\; \esp^{\bar z_{1}}\de z_{\bar1\bar3} \right\rangle \\
\midrule[0.02em]
(3,0) & \C \left\langle \de z_{123} \right\rangle \\[5pt]
(2,1) & \C \left\langle \esp^{-z_{1}}\de z_{12\bar1},\; \esp^{-2 z_{1}}\de z_{12\bar2},\; \de z_{12\bar3},\; \esp^{z_{1}}\de z_{13\bar1},\; \de z_{13\bar2},\; \esp^{2z_{1}}\de z_{13\bar3},\; \de z_{23\bar1},\; \esp^{-z_{1}}\de z_{23\bar2},\; \esp^{z_{1}}\de z_{23\bar3}, \right. \\[5pt]
& \left. \esp^{-\bar z_{1}}\de z_{12\bar1},\; \esp^{\bar z_{1}}\de z_{13\bar1},\; \esp^{-2\bar z_{1}}\de z_{12\bar2},\; \esp^{-\bar z_{1}}\de z_{23\bar2},\; \esp^{2\bar z_{1}}\de z_{13\bar3},\; \esp^{\bar z_{1}}\de z_{23\bar3} \right\rangle \\[5pt]
(1,2) & \C \left\langle \esp^{-\bar z_{1}}\de z_{1\bar1\bar2},\; \esp^{-2\bar z_{1}}\de z_{2\bar1\bar2},\; \de z_{3\bar1\bar2},\; \esp^{\bar z_{1}}\de z_{1\bar1\bar3},\; \de z_{2\bar1\bar3},\; \esp^{2\bar z_{1}}\de z_{3\bar1\bar3},\; \de z_{1\bar2\bar3},\; \esp^{-\bar z_{1}}\de z_{2\bar2\bar3},\; \esp^{\bar z_{1}}\de z_{3\bar2\bar3}, \right. \\[5pt]
& \left. \esp^{ -z_{1}}\de z_{1\bar1\bar2},\; \esp^{z_{1}}\de z_{1\bar1\bar3},\; \esp^{-2 z_{1}}\de z_{2\bar1\bar2},\; \esp^{-z_{1}}\de z_{2\bar2\bar3},\; \esp^{2 z_{1}}\de z_{3\bar1\bar3},\; \esp^{ z_{1}}\de z_{3\bar2\bar3} \right\rangle \\[5pt]
(0,3) & \C \left\langle \de z_{\bar1\bar2\bar3} \right\rangle \\
\midrule[0.02em]
(3,1) & \C \left\langle \de z_{123\bar1},\; \esp^{-z_{1}}\de z_{123\bar2},\; \esp^{z_{1}}\de z_{123\bar3},\; \esp^{-\bar z_{1}}\de z_{123\bar2},\; \esp^{\bar z_{1}}\de z_{123\bar3} \right\rangle \\[5pt]
(2,2) & \C \left\langle \esp^{-2z_{1}}\de z_{12\bar1\bar2},\; \de z_{12\bar1\bar3},\; \esp^{-z_{1}}\de z_{12\bar2\bar3},\; \de z_{13\bar1\bar2},\; \esp^{2z_{1}}\de z_{13\bar1\bar3},\; \esp^{z_{1}}\de z_{13\bar2\bar3},\; \esp^{-z_{1}}\de z_{23\bar1\bar2},\; \esp^{z_{1}}\de z_{23\bar1\bar3}, \right.\\[5pt]
& \left. \de z_{23\bar2\bar3}, \; \esp^{-2\bar z_{1}}\de z_{12\bar1\bar2},\; \esp^{-\bar z_{1}}\de z_{23\bar1\bar2},\; \esp^{-\bar z_{1}}\de z_{12\bar2\bar3},\; \esp^{\bar z_{1}}\de z_{13\bar2\bar3},\; \esp^{2\bar z_{1}}\de z_{13\bar1\bar3},\; \esp^{\bar z_{1}}\de z_{23\bar1\bar3} \right\rangle \\[5pt]
(1,3) & \C \left\langle \de z_{1\bar1\bar2\bar3},\; \esp^{-\bar z_{1}}\de z_{2\bar1\bar2\bar3},\; \esp^{\bar z_{1}}\de z_{3\bar1\bar2\bar3},\; \esp^{- z_{1}}\de z_{2\bar1\bar2\bar3},\; \esp^{ z_{1}}\de z_{3\bar1\bar2\bar3} \right\rangle \\
\midrule[0.02em]
(3,2) & \C \left\langle \esp^{-z_{1}}\de z_{123\bar1\bar2},\; \esp^{z_{1}}\de z_{123\bar1\bar3},\; \de z_{123\bar2\bar3},\; \esp^{-\bar z_{1}}\de z_{123\bar1\bar2},\; \esp^{\bar z_{1}}\de z_{123\bar1\bar3} \right\rangle \\[5pt]
(2,3) & \C \left\langle \esp^{-z_{1}}\de z_{12\bar1\bar2\bar3},\; \esp^{z_{1}}\de z_{13\bar1\bar2\bar3},\; \de z_{23\bar1\bar2\bar3},\; 
\esp^{-\bar z_{1}}\de z_{12\bar1\bar2\bar3},\; \esp^{\bar z_{1}}\de z_{13\bar1\bar2\bar3} \right\rangle \\
\midrule[0.02em]
(3,3) & \C \left\langle \de z_{123\bar1\bar2\bar3} \right\rangle \\
\bottomrule
\end{tabular}
\caption{The double complex $C^{\bullet,\bullet}_{\Gamma}$ in \eqref{eq:def-c-cplx-par} for the complex parallelizable Nakamura manifold in case {\itshape(\ref{item:ex-nakamura2-1})}.}
\label{table:c-ex-nakamura2-1}
\end{table}
\end{center}

We compute the Bott-Chern cohomology for the complex parallelizable Nakamura manifold in case {\itshape(\ref{item:ex-nakamura2-1})} in Table \ref{table:BC-ex-nakamura2-1}.

\begin{center}
\begin{table}[tb]
 \centering
\begin{tabular}{>{$\mathbf\bgroup}l<{\mathbf\egroup$} || >{$}l<{$}}
\toprule
\text{case {\itshape(\ref{item:ex-nakamura2-1})}} & H^{\bullet,\bullet}_{BC}(\Gamma\backslash G) \\
\toprule
(0,0) & \C \left\langle 1 \right\rangle \\
\midrule[0.02em]
(1,0) & \C \left\langle [\de z_{1}] \right\rangle  \\[5pt]
(0,1) & \C \left\langle [\de z_{\bar1}]\right\rangle \\
\midrule[0.02em]
(2,0) & \C \left\langle [\esp^{-z_{1}}\de z_{12}],\; [\esp^{z_{1}}\de z_{13}],\; [\de z_{23}]\right\rangle \\[5pt]
(1,1) & \C \left\langle [\de z_{1\bar1}],\; [\esp^{-z_{1}}\de z_{1\bar2}],\; [\esp^{z_{1}}\de z_{1\bar3}],\; [ \de z_{2\bar3}],\; [\de z_{3\bar2}],\;
[\esp^{-\bar z_{1}}\de z_{2\bar1}],\;  [ \esp^{\bar z_{1}}\de z_{3\bar1}] \right\rangle \\[5pt]
(0,2) & \C \left\langle [\de z_{\bar 2\bar3}],\; [\esp^{-\bar z_{1}}\de z_{\bar1\bar2}],\; [\esp^{\bar z_{1}}\de z_{\bar1\bar3}] \right\rangle \\
\midrule[0.02em]
(3,0) & \C \left\langle [\de z_{123}] \right\rangle \\[5pt]
(2,1) & \C \left\langle [\esp^{-z_{1}}\de z_{12\bar1}],\; [\esp^{-2z_{1}}\de z_{12\bar2}],\; [\de z_{12\bar3}],\; [\esp^{z_{1}}\de z_{13\bar1}],\; [\de z_{13\bar2}],\; [\esp^{2z_{1}}\de z_{13\bar3}], \right. \\[5pt]
& \left. [\de z_{23\bar1}],\;  [\esp^{-\bar z_{1}}\de z_{12\bar1}],\; [\esp^{\bar z_{1}}\de z_{13\bar1}] \right\rangle \\[5pt]
(1,2) & \C \left\langle [\esp^{-\bar z_{1}}\de z_{1\bar1\bar2}],\; [\esp^{-2\bar z_{1}}\de z_{2\bar1\bar2}],\; [\de z_{3\bar1\bar2}],\; [\esp^{\bar z_{1}}\de z_{1\bar1\bar3}],\; [\de z_{2\bar1\bar3}],\; [\esp^{2\bar z_{1}}\de z_{3\bar1\bar3}], \right. \\[5pt]
& \left. [\de z_{1\bar2\bar3}],\;  [\esp^{ -z_{1}}\de z_{1\bar1\bar2}],\; [\esp^{z_{1}}\de z_{1\bar1\bar3}] \right\rangle \\[5pt]
(0,3) & \C \left\langle [\de z_{\bar1\bar2\bar3}] \right\rangle \\
\midrule[0.02em]
(3,1) & \C \left\langle [\de z_{123\bar1}],\; [\esp^{-z_{1}}\de z_{123\bar2}],\; [\esp^{z_{1}}\de z_{123\bar3}] \right\rangle \\[5pt]
(2,2) & \C \left\langle [\esp^{-2z_{1}}\de z_{12\bar1\bar2}],\; [\de z_{12\bar1\bar3}],\; [\esp^{-z_{1}}\de z_{12\bar2\bar3}],\; [\de z_{13\bar1\bar2}],\; [\esp^{2z_{1}}\de z_{13\bar1\bar3}],\; [\esp^{z_{1}}\de z_{13\bar2\bar3}], \right.\\[5pt]
& \left. [\de z_{23\bar2\bar3}], \; [\esp^{-2\bar z_{1}}\de z_{12\bar1\bar2}],\; [\esp^{-\bar z_{1}}\de z_{23\bar1\bar2}],\; [\esp^{2\bar z_{1}}\de z_{13\bar1\bar3}],\; [\esp^{\bar z_{1}}\de z_{23\bar1\bar3} ] \right\rangle \\[5pt]
(1,3) & \C \left\langle [\de z_{1\bar1\bar2\bar3}],\; [\esp^{-\bar z_{1}}\de z_{2\bar1\bar2\bar3}],\; [\esp^{\bar z_{1}}\de z_{3\bar1\bar2\bar3}] \right\rangle \\
\midrule[0.02em]
(3,2) & \C \left\langle [\esp^{-z_{1}}\de z_{123\bar1\bar2}],\; [\esp^{z_{1}}\de z_{123\bar1\bar3}],\; [\de z_{123\bar2\bar3}],\; [\esp^{-\bar z_{1}}\de z_{123\bar1\bar2}],\; [\esp^{\bar z_{1}}\de z_{123\bar1\bar3}] \right\rangle \\[5pt]
(2,3) & \C \left\langle [\esp^{-z_{1}}\de z_{12\bar1\bar2\bar3}],\; [\esp^{z_{1}}\de z_{13\bar1\bar2\bar3}],\; [\de z_{23\bar1\bar2\bar3}],\; 
[\esp^{-\bar z_{1}}\de z_{12\bar1\bar2\bar3}],\; [\esp^{\bar z_{1}}\de z_{13\bar1\bar2\bar3}] \right\rangle \\
\midrule[0.02em]
(3,3) & \C \left\langle [\de z_{123\bar1\bar2\bar3}] \right\rangle \\
\bottomrule
\end{tabular}
\caption{The Bott-Chern cohomology of the complex parallelizable Nakamura manifold in case {\itshape (\ref{item:ex-nakamura2-1})}.}
\label{table:BC-ex-nakamura2-1}
\end{table}
\end{center}

The differential algebra $A_{\Gamma}^{\bullet}$ for the complex parallelizable Nakamura manifold in case {\itshape (\ref{item:ex-nakamura2-1})} is summarized in Table \ref{table:a-nak-cplx-paral-1}.

\begin{center}
\begin{table}[tb]
 \centering
\begin{tabular}{>{$\mathbf\bgroup}l<{\mathbf\egroup$} || >{$}l<{$}}
\toprule
\text{case {\itshape (\ref{item:ex-nakamura2-1})}} & A^{\bullet}_{\Gamma} \\
\toprule
0 & \C \left\langle 1 \right\rangle \\
\midrule[0.02em]
1 & \C \left\langle \de z_{1},\;\de  z_{\bar 1}   \right\rangle \\
\midrule[0.02em]
2 & \C \left\langle  \de z_{1\bar1},\; \de z_{23},\; \de z_{2\bar 3},\; \de z_{3\bar 2},\; \de z_{\bar2\bar3} \right\rangle \\
\midrule[0.02em]
3 & \C \left\langle \de z_{123},\; \de z_{12\bar3},\; \de z_{13\bar2},\; \de z_{3\bar1\bar2},\;\de z_{2\bar1\bar3},\;\de z_{\bar1\bar2\bar3},\;  \de z_{\bar123},\;  \de z_{1\bar2\bar3} \right\rangle \\
\midrule[0.02em]
4 & \C \left\langle \de z_{123\bar1},\;\de z_{13\bar1\bar2},\;\de z_{23\bar2\bar3},\; \de z_{12\bar1\bar3},\; \de z_{1\bar1\bar2\bar3} \right\rangle \\
\midrule[0.02em]
5 & \C \left\langle \de z_{23\bar1\bar2\bar 3} ,\; \de z_{123\bar2\bar3} \right\rangle \\
\midrule[0.02em]
6 & \C \left\langle \de z_{123\bar1\bar2\bar3} \right\rangle \\
\bottomrule
\end{tabular}
\caption{The cochain complex $A_{\Gamma}^{\bullet}$ in \eqref{eq:def-a1} for the complex parallelizable Nakamura manifold in case {\itshape (\ref{item:ex-nakamura2-1})}.}
\label{table:a-nak-cplx-paral-1}
\end{table}
\end{center}

\begin{rem}
Suppose  $b \in 2\pi\,\Z$ and $d \in2\pi\,\Z$.
Considering another Lie group $H:=\C\ltimes_{\psi}\C^{2}$ such that
\[\psi(z) \;:=\; \left(
\begin{array}{cc}
\esp^{\frac{1}{2}(z_{1}+\bar z_{1})}& 0  \\
0&    \esp^{-\frac{1}{2}(z_{1}+\bar z_{1})}  
\end{array}
\right),\]
the correspondence $G \in \left( z_{1},\, z_{2},\, z_{3} \right) \mapsto \left( z_{1},\, z_{2},\, z_{3} \right) \in H$ gives an embedding $\Gamma \hookrightarrow H$ as a lattice and hence we can identify $\solvmfd$ with $\Gamma\backslash H$, see \cite[Section 3]{Yamada-GD}.
Since $H$ is equal to the solvable completely-solvable Lie group in Example \ref{CSN}, this case is identified with case \ref{item:nakamura-1} in Example \ref{CSN}.
Note that $A^\bullet_{\Gamma}$ is not $G$-left-invariant in this case (for example the $2$-form $\de z_{2\bar 3}$ is not $G$-left-invariant)  and hence $H^{\bullet}\left(\wedge^\bullet\g^*,\, \de\right) \not\simeq H^{\bullet}_{dR}\left(\solvmfd;\R\right)$, see also \cite[Corollary 4.2]{debartolomeis-tomassini}.
On the other hand, we have $H^{\bullet}\left(\wedge^\bullet\mathfrak{h}^* ,\, \de\right) \simeq H^{\bullet}_{dR}\left(\left.\Gamma\middle\backslash H\right.;\R\right)$, where $\mathfrak{h}$ is the Lie algebra of $H$.
In \cite[Main Theorem]{console-fino-solvmanifolds}, it is proven that, for any solvmanifold $\solvmfd$, there exist a connected simply-connected solvable Lie group $\tilde G$ and a finite index subgroup $\tilde \Gamma\subseteq \Gamma$ such that $H^{\bullet}\left(\wedge^\bullet{\tilde\g}^* ,\, \de\right) \simeq H^{\bullet}_{dR}\left(\left.\tilde \Gamma\middle\backslash G\right.;\R\right)$, where $\tilde \g$ is the Lie algebra of $\tilde G$.
\end{rem}

 \item\label{item:ex-nakamura2-2}
If $b\not \in \pi\,\Z$ or $d\not \in\pi\,\Z$, then the sub-complex $B_\Gamma^\bullet$ defined in \eqref{eq:def-b-holpar} is
\begin{eqnarray*}
B_{\Gamma}^{1} &=& \C \left\langle \de\bar z_{1} \right\rangle \;, \\[5pt]
B_{\Gamma}^{2} &=& \C \left\langle \de\bar z_{2}\wedge \de\bar z_{3} \right\rangle \;, \\[5pt]
B_{\Gamma}^{3} &=& \C \left\langle \de\bar z_{1} \wedge \de\bar z_{2}\wedge \de\bar z_{3} \right\rangle \;.
\end{eqnarray*}
Then the double complex $C_{\Gamma}^{\bullet,\bullet}$ is given in Table \ref{table:nakamura-cplx-par-2}.
\end{enumerate}

\begin{center}
\begin{table}[tb]
 \centering
\begin{tabular}{>{$\mathbf\bgroup}l<{\mathbf\egroup$} || >{$}l<{$}}
\toprule
\text{case {\itshape (b)}} & C^{\bullet,\bullet}_{\Gamma} \\
\toprule
(0,0) & \C \left\langle 1 \right\rangle \\
\midrule[0.02em]
(1,0) & \C \left\langle \de z_{1},\; \esp^{-z_{1}}\de z_{2},\; \esp^{z_{1}}\de z_{3} \right\rangle  \\[5pt]
(0,1) & \C \left\langle \de  z_{\bar 1},\; \esp^{-\bar z_{1}}\de z_{\bar 2},\; \esp^{\bar z_{1}}\de \bar z_{3}   \right\rangle \\
\midrule[0.02em]
(2,0) & \C \left\langle \esp^{-z_{1}}\de z_{12},\; \esp^{z_{1}}\de z_{13},\; \de z_{23} \right\rangle \\[5pt]
(1,1) & \C \left\langle \de z_{1\bar1},\; \esp^{-z_{1}}\de z_{2\bar1},\; \esp^{z_{1}}\de z_{3\bar1},\; \esp^{-\bar z_1}\de z_{1\bar2},\; \esp^{\bar z_{1}}\de z_{1\bar3} \right\rangle \\[5pt]
(0,2) & \C \left\langle \esp^{-\bar z_{1}} \de z_{\bar1\bar2},\; \esp^{\bar z_{1}}\de z_{\bar1\bar3},\; \de z_{\bar2\bar3} \right\rangle \\
\midrule[0.02em]
(3,0) & \C \left\langle \de z_{123} \right\rangle \\[5pt]
(2,1) & \C \left\langle \esp^{-z_{1}}\de z_{12\bar1},\; \esp^{ z_{1}}\de z_{13\bar1},\; \de z_{23\bar1},\;  \esp^{-\bar z_{1}}\de z_{23\bar2},\; \esp^{\bar z_{1}}\de z_{23\bar3} \right\rangle \\[5pt]
(1,2) & \C \left\langle \esp^{-\bar z_{1}}\de z_{1\bar1\bar2},\; \esp^{\bar z_{1}}\de z_{1\bar1\bar3},\; \de z_{1\bar2\bar3},\;  \esp^{- z_{1}}\de z_{2\bar2\bar3},\; \esp^{ z_{1}}\de z_{3\bar2\bar3} \right\rangle \\[5pt]
(0,3) & \C \left\langle \de z_{\bar1\bar2\bar3} \right\rangle \\
\midrule[0.02em]
(3,1) & \C \left\langle \de z_{123\bar1},\;\esp^{-\bar z_{1}}\de z_{123\bar2},\; \esp^{\bar z_{1}}\de z_{123\bar3} \right\rangle \\[5pt]
(2,2) & \C \left\langle \esp^{- z_{1}}\de z_{12\bar2\bar3},\; \esp^{z_{1}}\de z_{13\bar2\bar3},\;\de z_{23\bar2\bar3},\; \esp^{-\bar z_{1}}\de z_{23\bar1\bar2},\; \esp^{\bar z_{1}}\de z_{23\bar1\bar3}\right\rangle \\[5pt]
(1,3) & \C \left\langle \de z_{1\bar1\bar2\bar3} \;\esp^{-  z_{1}}\de z_{2\bar1\bar2\bar3},\; \esp^{  z_{1}}\de z_{3\bar1\bar2\bar3}\right\rangle \\
\midrule[0.02em]
(3,2) & \C \left\langle \esp^{- \bar z_{1}}\de z_{123\bar1\bar2},\;\esp^{\bar z_{1}}\de z_{123\bar1\bar 3},\; \de z_{123\bar2\bar 3} \right\rangle \\[5pt]
(2,3) & \C \left\langle \esp^{- z_{1}}\de z_{12\bar1\bar2\bar3},\; \esp^{z_{1}}\de z_{13\bar1\bar2\bar3} ,\; \de z_{23\bar1\bar2\bar3} \right\rangle \\
\midrule[0.02em]
(3,3) & \C \left\langle \de z_{123\bar1\bar2\bar3} \right\rangle \\
\bottomrule
\end{tabular}
\caption{The double complex $C^{\bullet,\bullet}_{\Gamma}$ in \eqref{eq:def-c-cplx-par} for the complex parallelizable Nakamura manifold in case {\itshape (\ref{item:ex-nakamura2-2})}.}
\label{table:nakamura-cplx-par-2}
\end{table}
\end{center}

We compute $H^{\bullet,\bullet}_{BC}(\solvmfd)$ in case {\itshape (\ref{item:ex-nakamura2-2})}, summarizing the results in Table \ref{table:bc-nakamura-cplx-parall}.

\begin{center}
\begin{table}[tb]
 \centering
\begin{tabular}{>{$\mathbf\bgroup}l<{\mathbf\egroup$} || >{$}l<{$}}
\toprule
\text{case {\itshape (b)}} & H^{\bullet,\bullet}_{BC}(\solvmfd)\\
\toprule
(0,0) & \C \left\langle 1 \right\rangle \\
\midrule[0.02em]
(1,0) & \C \left\langle [\de z_{1}] \right\rangle  \\[5pt]
(0,1) & \C \left\langle [\de  z_{\bar 1}]   \right\rangle \\
\midrule[0.02em]
(2,0) & \C \left\langle [\esp^{-z_{1}}\de z_{12}],\; [\esp^{z_{1}}\de z_{13}],\; [\de z_{23}] \right\rangle \\[5pt]
(1,1) & \C \left\langle [\de z_{1\bar1}] \right\rangle \\[5pt]
(0,2) & \C \left\langle [\esp^{-\bar z_{1}} \de z_{\bar1\bar2}],\; [\esp^{\bar z_{1}}\de z_{\bar1\bar3}],\; [\de z_{\bar2\bar3}] \right\rangle \\
\midrule[0.02em]
(3,0) & \C \left\langle [\de z_{123}] \right\rangle \\[5pt]
(2,1) & \C \left\langle [\esp^{-z_{1}}\de z_{12\bar1}],\; [\esp^{ z_{1}}\de z_{13\bar1}],\; [\de z_{23\bar1}] \right\rangle \\[5pt]
(1,2) & \C \left\langle [\esp^{-\bar z_{1}}\de z_{1\bar1\bar2}],\; [\esp^{\bar z_{1}}\de z_{1\bar1\bar3}],\; [\de z_{1\bar2\bar3}],\right\rangle \\[5pt]
(0,3) & \C \left\langle [\de z_{\bar1\bar2\bar3} ]\right\rangle \\
\midrule[0.02em]
(3,1) & \C \left\langle [\de z_{123\bar1}] \right\rangle \\[5pt]
(2,2) & \C \left\langle [\esp^{- z_{1}}\de z_{12\bar2\bar3}],\; [\esp^{z_{1}}\de z_{13\bar2\bar3}],\; [\de z_{23\bar2\bar3}],\; [\esp^{-\bar z_{1}}\de z_{23\bar1\bar2}],\; [\esp^{\bar z_{1}}\de z_{23\bar1\bar3}]\right\rangle \\[5pt]
(1,3) & \C \left\langle [\de z_{1\bar1\bar2\bar3} ]\right\rangle \\
\midrule[0.02em]
(3,2) & \C \left\langle [\esp^{- \bar z_{1}}\de z_{123\bar1\bar2}],\;[\esp^{\bar z_{1}}\de z_{123\bar1\bar 3}],\; [\de z_{123\bar2\bar 3}] \right\rangle \\[5pt]
(2,3) & \C \left\langle [\esp^{- z_{1}}\de z_{12\bar1\bar2\bar3}],\; [\esp^{ z_{1}}\de z_{13\bar1\bar2\bar3}] ,\; [\de z_{23\bar1\bar2\bar3}] \right\rangle \\
\midrule[0.02em]
(3,3) & \C \left\langle [\de z_{123\bar1\bar2\bar3}] \right\rangle \\
\bottomrule
\end{tabular}
\caption{The Bott-Chern cohomology of the complex parallelizable Nakamura manifold in case {\itshape (\ref{item:ex-nakamura2-2})}.}
\label{table:bc-nakamura-cplx-parall}
\end{table}
\end{center}

The cochain complex $A_{\Gamma}^{\bullet}$ in \eqref{eq:def-a1} in case {\itshape (\ref{item:ex-nakamura2-2})} is given in Table \ref{table:a-nak-cplx-parall}.

\begin{center}
\begin{table}[tb]
 \centering
\begin{tabular}{>{$\mathbf\bgroup}l<{\mathbf\egroup$} || >{$}l<{$}}
\toprule
\text{case {\itshape (\ref{item:ex-nakamura2-2})}} & A^{\bullet}_{\Gamma} \\
\toprule
0 & \C \left\langle 1 \right\rangle \\
\midrule[0.02em]
1 & \C \left\langle \de z_{1},\;\de  z_{\bar 1}   \right\rangle \\
\midrule[0.02em]
2 & \C \left\langle  \de z_{1\bar1},\; \de z_{23},\;  \de z_{\bar2\bar3} \right\rangle \\
\midrule[0.02em]
3 & \C \left\langle \de z_{123},\;  \de z_{\bar1\bar2\bar3},\;  \de z_{\bar123},\;  \de z_{1\bar2\bar3} \right\rangle \\
\midrule[0.02em]
4 & \C \left\langle \de z_{123\bar1},\;\de z_{23\bar2\bar3},\;  \de z_{1\bar1\bar2\bar3} \right\rangle \\
\midrule[0.02em]
5 & \C \left\langle \de z_{23\bar1\bar2\bar 3} ,\; \de z_{123\bar2\bar3} \right\rangle \\
\midrule[0.02em]
6 & \C \left\langle \de z_{123\bar1\bar2\bar3} \right\rangle \\
\bottomrule
\end{tabular}
\caption{The cochain complex $A_{\Gamma}^{\bullet}$ in \eqref{eq:def-a1} for the complex parallelizable Nakamura manifold in case {\itshape (\ref{item:ex-nakamura2-2})}.}
\label{table:a-nak-cplx-parall}
\end{table}
\end{center}

Finally, we summarize the results of the computations of the dimensions of the de Rham, the Dolbeault, and the Bott-Chern cohomologies in Table \ref{table:cohom-nakamura-cplx-par} (see \cite[Example 2]{kasuya-mathz} for the Dolbeault cohomology).

\begin{center}
\begin{table}[tb]
 \centering
\begin{tabular}{>{$\mathbf\bgroup}c<{\mathbf\egroup$} || >{$}c<{$} | >{$}c<{$} >{$}c<{$}  |>{$}c<{$} | >{$}c<{$} >{$}c<{$}}
\toprule
\dim_\C H_{\sharp}^{\bullet,\bullet}\left(\solvmfd\right) & \multicolumn{3}{c|}{\text{case {\itshape (\ref{item:ex-nakamura2-1})}}} & \multicolumn{3}{c}{\text{case {\itshape (\ref{item:ex-nakamura2-2})}}} \\
& dR & \delbar & BC & dR& \delbar & BC  \\
\toprule
(0,0) & 1 &  1 & 1 & 1& 1 & 1  \\
\midrule[0.02em]
(1,0) & \multirow{2}{*}{2} & 3 &  1 &\multirow{2}{*}{2}& 3 & 1  \\[5pt]
(0,1) & & 3 &  1 && 1 & 1  \\
\midrule[0.02em]
(2,0) & \multirow{3}{*}{5} & 3 &  3 & \multirow{3}{*}{3} & 3&3  \\[5pt]
(1,1) & & 9 &  7 & &3 & 1  \\[5pt]
(0,2) & & 3 &  3 & &1 & 3  \\
\midrule[0.02em]
(3,0) & \multirow{4}{*}{8} & 1 &  1 &\multirow{4}{*}{4} & 1 & 1  \\[5pt]
(2,1) & & 9 &  9 & & 3& 3  \\[5pt]
(1,2) & & 9 &  9 & &3 & 3  \\[5pt]
(0,3) & & 1 &  1 & &1 & 1  \\
\midrule[0.02em]
(3,1) & \multirow{3}{*}{5} & 3 &  3 & \multirow{3}{*}{3} & 1 & 1  \\[5pt]
(2,2) & & 9 & 11 && 3 & 5  \\[5pt]
(1,3) & & 3 &  3 & &3 & 1  \\
\midrule[0.02em]
(3,2) & \multirow{2}{*}{2} & 3 &  5 &\multirow{2}{*}{2}& 1 & 3 \\[5pt]
(2,3) & & 3 &  5 && 3 & 3  \\
\midrule[0.02em]
(3,3) & 1 &  1 & 1&1 & 1 & 1  \\
\bottomrule
\end{tabular}
\caption{Summary of the dimensions of the cohomologies of the complex parallelizable Nakamura manifold.}
\label{table:cohom-nakamura-cplx-par}
\end{table}
\end{center}

\begin{rem}
 Note that, for any $(p,q)\in\Z^2$,
 $$ \dim_\C H^{p,q}_{BC}(X) + \dim_\C H^{p,q}_{A}(X) \;=\; \dim_\C H^{p,q}_{\del}(X) + \dim_\C H^{p,q}_{\delbar}(X) $$
 in both case {\itshape (\ref{item:ex-nakamura2-1})} and case {\itshape (\ref{item:ex-nakamura2-2})}; note also that
 $$
 \sum_{p+q=k} \left( \dim_\C H^{p,q}_{BC}(X) + \dim_\C H^{p,q}_{A}(X) \right) - 2\,\dim_\C H^k_{dR}(X;\C) \;=\;
 \left\{
 \begin{array}{ll}
   8 & \text{ for } k \in \{1 ,\, 5\} \\[5pt]
  20 & \text{ for } k \in \{2 ,\, 4\} \\[5pt]
  24 & \text{ for } k = 3 \\[5pt]
   0 & \text{ otherwise}
 \end{array}
 \right. \qquad \text{ in case {\itshape (\ref{item:ex-nakamura2-1})}} \;,
 $$
 and
  $$
 \sum_{p+q=k} \left( \dim_\C H^{p,q}_{BC}(X) + \dim_\C H^{p,q}_{A}(X) \right) - 2\,\dim_\C H^k_{dR}(X;\C) \;=\;
 \left\{
 \begin{array}{ll}
   4 & \text{ for } k \in \{1 ,\, 5\} \\[5pt]
   8 & \text{ for } k \in \{2 ,\, 4\} \\[5pt]
   8 & \text{ for } k = 3 \\[5pt]
   0 & \text{ otherwise}
 \end{array}
 \right. \qquad \text{ in case {\itshape (\ref{item:ex-nakamura2-2})}} \;.
 $$ 
\end{rem}

\end{ex}

\appendix

\section{Tables}
\processdelayedfloats

\FloatBarrier

\end{document}